\newtheorem{theorem}{Theorem}[section]
\newtheorem{proposition}[theorem]{Proposition}
\newtheorem{lemma}[theorem]{Lemma}
\newtheorem{corollary}[theorem]{Corollary}
\theoremstyle{remark}
\newtheorem{remark}[theorem]{Remark}
\newtheorem{example}[theorem]{Example}
\newtheorem{definition}[theorem]{Definition}
\date{}
\begin{document}
\author{\textsc{M. Castrill\'on L\'opez} \\
Instituto de Ciencias Matem\'aticas CSIC-UAM-UC3M-UCM \\
Departamento de Geometr\'{\i}a y Topolog\'{\i}a \\
Facultad de Matem\'aticas, UCM \\
Avda.\ Complutense s/n, 28040-Madrid, Spain \\
\emph{E-mail:\/} \texttt{mcastri@mat.ucm.es}
\and \textsc{V. Fern\'andez Mateos\thanks{
The second author is deceased (March 20, 2011).
A Memorial Seminar was delivered at Facultad
de Ciencias Matem\'aticas, Universidad Complutense
de Madrid, on May 13, 2011; also see,
http://www.mat.ucm.es/geomfis/HomenajeVictor.html.
We are proud of having been able to work with such
an enthusiastic and generous young researcher as
V\'{\i}ctor.}, J. Mu\~{n}oz Masqu\'e} \\
Instituto de Seguridad de la Informaci\'on, CSIC \\
C/ Serrano 144, 28006-Madrid, Spain \\
\emph{E-mail:\/} \texttt{victor.fernandez@iec.csic.es} \\
\emph{E-mail:\/} \texttt{jaime@iec.csic.es}}
\title{\textbf{The Equivalence Problem of Curves } \\
\textbf{in a Riemannian Manifold}}
\date{}
\maketitle
\tableofcontents

\begin{abstract}
The equivalence problem of curves with values
in a Riemannian manifold, is solved. The domain of validity
of Frenet's theorem is shown to be the spaces of constant
curvature. For a general Riemannian manifold new invariants
must thus be added.

There are two important generic classes of curves; namely,
Frenet curves and a new class, called curves
``in normal position''. They coincide in dimensions
$\leq 4$ only.

A sharp bound for asymptotic stability
of differential invariants is obtained, the complete systems
of invariants are characterized, and a procedure of generation
is presented. Different classes of examples (specially
in low dimensions) are analyzed in detail.
\end{abstract}

\noindent \emph{Mathematics Subject Classification} 2010:
Primary: 53A55; Secondary: 53A04, 53B20, 53B21, 53C35, 58A20.

\medskip

\noindent \emph{Key words and phrases:\/}
Asymptotic stability, complete systems of invariants,
congruence, curvatures of a curve, differential invariant,
Frenet frame, isometry, Killing vector field, Levi-Civita
connection, normal general position, Riemannian metric.

\medskip

\noindent \emph{Acknowledgements:\/} Supported by Ministerio
de Educaci\'on y Ciencia of Spain under grants \#MTM2011--22528
and \#MTM2010--19111.

\section{Introduction}
A fundamental problem in Riemannian Geometry is
that of equivalence of objects in a determined class, namely,
to provide a criterion to know whether two given objects
in this class are congruent under isometries or not. Below,
this problem is solved in full generality for the simplest
case: That of curves with values in a Riemannian manifold.

For the Euclidean space $\mathbb{R}^m $ the equivalence problem
is solved by virtue of the Frenet the theorem: Two curves
parametrized by the arc-length are congruent if and only if
they have the same curvatures, $\kappa _1,\dotsc,\kappa _{m-1}$;
but the domain of validity of Frenet's theorem is too restrictive.
In fact, Theorem \ref{CTE} states that Frenet's theorem
classifies curves in a Riemannian manifold $(M,g)$ if and only if
it is of constant curvature. In consequence, in spaces
of non-constant curvature new invariants are required
(different from curvatures $\kappa _i$) to classify
curves and, although by means of curvatures a given curve
can be reconstructed (see Theorem \ref{ecsfrenet}),
the role of such invariants becomes weaker in spaces
of non-constant curvature, even of low dimension
(see Theorem \ref{3_isometries}).
A generic Riemanniannian metric in a compact manifold
admits no isometry other than the identity map (cf.\
\cite{Ebin}, \cite{Ebin2}). Therefore, the difficulty
of the equivalence problem is closely related to the size
of the isometry group.

Below, the equivalence problem is solved in general
by means of functions that are invariant under
the isometry group of the Riemannian manifold.
In this way, a totally general result
is stated for their solution in Theorem \ref{CGC}:
Essentially, it gives a set of invariants that,
together with the classical Frenet curvatures,
solves the congruence problem; but it has
the inconvenience of using a redundant number
of invariants (cf.\ Remark \ref{remark_2},
Theorem \ref{surfaces}). The remarks following
its proof (see section \ref{remarks_CGC}) show,
however, that this is the best general result
that could be expected. Furthermore, certain classes
of Riemannian manifolds can be characterized
by means of their invariants; e.g., symmetric spaces
(cf.\ Theorem \ref{locSIM}) or Lie groups
with invariant metrics
(cf.\ Proposition \ref{proposition_independent_vectors}).

The study of invariants is developed in section
\ref{Differential_invariants}, where the main questions
on such functions are solved for an arbitrary Riemannian
manifold: The theorem of asymptotic stability
(Theorem \ref{stability} and Corollary \ref{corollary_stability}),
the completeness theorem (Theorem \ref{completeness})
that allows us to solve the general problem of equivalence
by means of a complete system of invariants
and the theorem of generation of invariants
(Theorem \ref{Generating_Invariants}). An interesting consequence
of the generating theorem proves that the ring of invariants
can be generated by means of $m$ invariants (where $m=\dim M$)
by taking successive total derivatives with respect to $t$.

In \cite{Green} the number of differential invariants
with respect to the induced operation of the group $G$
on jet bundles $J^r(\mathbb{R},G/H)$ of the homogeneous
space $G/H$, is calculated without assuming
$G$ is the group of isometries of a metric. According
to \cite [IV, Example 1.3]{KN}, if the subgroup $H$
is compact, the quotient manifold $ G/H$ admits a Riemannian
metric left invariant. The converse statement also holds true,
as the isotropy subgroup of a point in a Riemannian manifold
is compact (cf.\ \cite [I, Corollary 4.8]{KN}).
Moreover, it should be noted that most part of the results
in \cite{Green} hold in general, i.e., without assuming
the manifold to be Riemannian homogeneous, as shown
in Theorem \ref{Generating_Invariants} and Remark
\ref{remark_Green} below.

In section \ref{existence_ths} two basic existence theorems
for the generic class $\mathcal{F}$ of Frenet curves are stated.
The first result (Theorem \ref{ecsfrenet}) is a generalization
to arbitrary Riemannian manifolds of the existence of curves
in Euclidean $3$-space with given curvature and torsion,
but the second one (Theorem \ref{ecsfrenet_bis}) is
completely new.

In addition to Frenet curves, another generic class
$\mathcal{N}$ of curves in a Riemannian manifold is introduced
in Definition \ref{normal_g_position}, which seems
to be the natural setting for the statement of the asymptotic
stability theorem (Theorem \ref{stability}). The classes $\mathcal{F}$
and $\mathcal{N}$ are compared in detail in section \ref{FandN}.
As it is proved in Theorem \ref{comparingFandN}, if either
$\dim M=m\leq 4$ or $g$ is flat at a neighbourhood of $x_0$, then
$\mathcal{F}_{t_0,x_0}^{m-1}(M)=\mathcal{N}_{t_0,x_0}^{m-1}(M)$,
$(t_0,x_0)\in \mathbb{R}\times M$. In general,
however, both generic sets of curves do not coincide, as shown in
the examples \ref{m=5} and \ref{m=6}.

Finally, we illustrate all these results by studying several important
examples in detail: Theorem \ref{Euclidean_invariants} summarizes
the structure of invariants on the Euclidean space and, in particular,
it shows that the curvatures constitute a basis of invariants
on the full set of Frenet curves; in section \ref{FewIsom}
the Riemannian manifolds $(M,g)$ with Killing algebra $\mathfrak{i}(M,g)$
such that $\dim \mathfrak{i}(M,g)\leq \dim M$, are studied;
in section \ref{surf} the case of an arbitrary Riemannian surface
is tackled. Theorem \ref{surfaces} proves that a finite basis of invariants
can efficiently be computed for a generic metric $g$;
section \ref{three_dim} presents the computation of a basis
of differential invariants (with their geometric meaning)
for Riemannian homogeneous complete $3$-dimensional manifolds,
according the dimension of their Killing algebra be of dimension $4$
or $3$, as the solution to the equivalent problem is completely
different in both cases. As a by-product of the results obtained
above a explicit geomeric desciption of invariant functions
in complete Riemannian manifolds of dimensions $2$ and $3$
is provided.
\section{General position}
\subsection{Definitions}
\begin{definition}
A smooth curve $\sigma \colon (a,b)\to M$ taking values
into a manifold $M$ endowed with a linear connection
$\nabla $ is said to be in
\emph{general position up to the order} $r$,
for $1\leq r\leq m=\dim M $, at $t_0\in (a,b)$
if the vector fields
$T^\sigma ,\nabla _{T^\sigma }T^\sigma ,\dotsc,
\nabla _{T^\sigma }^{r-1}T^\sigma $ along $\sigma $
are linearly independent at $t_0$, where $T^\sigma $
is the tangent field to $\sigma $. The curve $\sigma $
is in general position up to the order $r$ if it is
in general position up to this order for every
$t\in (a,b)$.
\end{definition}
Geometrically, a curve in general position is as twisted
as possible. For example, if $(M,g)$ is of constant curvature,
then $\sigma $ is in general position up to order $r$
at $\sigma (t_0)$ if and only if no neighbourhood
$\{ \sigma (t):|t-t_0|<\varepsilon \} $ is contained
into an auto-parallel submanifold (cf.\
\cite[VII, Section 8]{KN}) of $M$ of dimension $<r$.
The condition of being in general position up to first order
is none other than an immersion and hence, it is independent
of $\nabla $; but for $r\geq 2$ the condition of being
in general position up to order $r$ does depend on $\nabla $.
\begin{lemma}
Let $\sigma \colon (a,b)\to M$ be a smooth curve taking
values into a manifold $M$ endowed with a linear connection
$\nabla $. If $(x^1,\dotsc,x^m)$ is a normal coordinate
system with respect to $\nabla $\ centered at
$x_0=\sigma (t_0)$, $a<t_0<b$, then the tangent vectors
\begin{equation}
\label{U^sigma,k}
U_{t_0}^{\sigma ,k}
=\left.
\frac{d^k(x^i\circ \sigma )}{dt^k}(t_0)
\frac{\partial }{\partial x^i}\right\vert _{\sigma (t_0)}
\in T_{\sigma (t_0)}M,
\quad k\geq 1,k\in \mathbb{N},
\end{equation}
do not depend on the particular normal coordinates chosen.
\end{lemma}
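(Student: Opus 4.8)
The plan is to compare two normal coordinate systems $(x^1,\dotsc,x^m)$ and $(\bar x^1,\dotsc,\bar x^m)$ centered at $x_0=\sigma(t_0)$, and show that the vector $U_{t_0}^{\sigma,k}$ defined via either of them coincides. Since both systems are normal with the same center, the transition map $\bar x = F(x)$ is the composition of the inverse exponential charts; in particular $F(0)=0$ and the linear term $dF_0$ is the (constant) linear change of frame between the two bases $\partial/\partial x^i|_{x_0}$ and $\partial/\partial \bar x^i|_{x_0}$ of $T_{x_0}M$. The crucial structural fact I would isolate first is that \emph{the change of normal coordinates centered at the same point is linear}: writing $x=\exp_{x_0}^{-1}$ composed with a fixed basis identification and similarly for $\bar x$, one gets $\bar x^i = a^i_j x^j$ with $(a^i_j)$ constant, because passing from one normal chart to another at the same center only amounts to changing the linear identification $T_{x_0}M\cong\mathbb R^m$ used to coordinatize geodesic rays. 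This is where I would invoke the standard properties of normal coordinates (e.g. as in \cite[III, Section 8]{KN}); this linearity is the heart of the argument.

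Granting that $\bar x^i\circ\sigma = a^i_j\,(x^j\circ\sigma)$ with $a^i_j$ constant, the rest is immediate by the chain rule: differentiating $k$ times in $t$ gives
\[
\frac{d^k(\bar x^i\circ\sigma)}{dt^k}(t_0)=a^i_j\,\frac{d^k(x^j\circ\sigma)}{dt^k}(t_0),
\]
while the bases transform by $\partial/\partial x^j|_{x_0}=a^i_j\,\partial/\partial\bar x^i|_{x_0}$ (the Jacobian of a linear map being the map itself). Substituting both relations into the definition \eqref{U^sigma,k} written in barred coordinates, the matrices $a^i_j$ cancel and one recovers exactly the expression in the unbarred coordinates, so the two definitions of $U_{t_0}^{\sigma,k}$ agree as elements of $T_{\sigma(t_0)}M$.

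I expect the only genuine obstacle to be the justification of linearity of the coordinate change; once that is in hand the computation is one line. An alternative route that sidesteps an explicit appeal to the structure of the exponential map is to characterize $U_{t_0}^{\sigma,k}$ intrinsically: show by induction on $k$ that, in any normal coordinate system centered at $x_0$, one has $U_{t_0}^{\sigma,k}=\nabla_{T^\sigma}^{k-1}T^\sigma|_{t_0}$ modulo a universal linear combination of the lower-order vectors $U_{t_0}^{\sigma,1},\dotsc,U_{t_0}^{\sigma,k-1}$ with coefficients built from the connection and its derivatives — all of which vanish at the center of a normal chart because the Christoffel symbols $\Gamma^i_{jk}$ vanish there. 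Since $\nabla_{T^\sigma}^{k-1}T^\sigma|_{t_0}$ is manifestly chart-independent, so is $U_{t_0}^{\sigma,k}$. I would mention this second viewpoint as a remark, but carry out the proof via the linear-change-of-coordinates argument, as it is shorter and self-contained.
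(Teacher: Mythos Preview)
Your main argument is correct and is exactly the paper's proof: any two normal coordinate systems centered at the same point differ by a constant linear transformation $x'^i=a^i_j x^j$, and then the constants pass through the $k$-th derivative and cancel against the inverse change of basis vectors.

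One caution about your proposed alternative route: the claim that the coefficients relating $U_{t_0}^{\sigma,k}$ to $\nabla_{T^\sigma}^{k-1}T^\sigma|_{t_0}$ ``vanish at the center because the Christoffel symbols vanish there'' is not correct for $k\geq 4$. Those coefficients involve \emph{derivatives} of the Christoffel symbols, which do not vanish at the center of a normal chart; indeed the paper later shows (Theorem~\ref{comparingFandN}) that $(\nabla_{T^\sigma}^3 T^\sigma)_{t_0}=U_{t_0}^{\sigma,4}+\tfrac{1}{3}R^g_{x_0}(T_{t_0}^\sigma,(\nabla_{T^\sigma}T^\sigma)_{t_0})T_{t_0}^\sigma$, with a genuinely nonzero curvature correction. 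The alternative argument can be salvaged---the correction terms, being expressible via curvature and lower $U$'s, are themselves chart-independent---but not for the reason you give. Since you carry out the proof via the linear change of coordinates anyway, this does not affect the validity of your submission.
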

\begin{proof}
If $x^{\prime i}=a_j^ix^j$, $A=(a_j^i)\in Gl(m,\mathbb{R})$,
is another normal coordinate system, then
\begin{equation*}
\frac{\partial }{\partial x^{\prime ^i}}
=b_i^h\frac{\partial }{\partial x^h},
\quad (b_i^h)=A^{-1},
\end{equation*}
and hence
\begin{eqnarray*}
\frac{d^k(x^{\prime i}\circ \sigma )}{dt^k}(t_0)
\left.
\frac{\partial }{\partial x^{\prime i}}
\right\vert _{\sigma (t_0)}
&=&
\left.
\frac{d^k(a_j^ix^j\circ \sigma )}{dt^k}(t_0)
b_i^h\frac{\partial }{\partial x^h}
\right\vert _{\sigma (t_0)}\\
&=& b_i^ha_j^i
\frac{d^k(x^j\circ \sigma )}{dt^k}(t_0)
\left.
\frac{\partial }{\partial x^h}
\right\vert _{\sigma (t_0)}
\\
&=&\frac{d^k(x^j\circ \sigma )}{dt^k}(t_0)
\left.
\frac{\partial }{\partial x^j}
\right\vert _{\sigma (t_0)}.
\end{eqnarray*}
\end{proof}
\begin{definition}
\label{normal_g_position}
A curve $\sigma $ is said to be in
\emph{normal general position up to the order}
$r$ at $t_0\in (a,b)$ if the tangent vectors
$U_{t_0}^{\sigma ,1},U_{t_0}^{\sigma ,2},\dotsc,
U_{t_0}^{\sigma ,r}$ are linearly independent.
The curve $\sigma $ is in normal general position
up to the order $r$ if it is in normal general
position up to this order for every $t\in (a,b)$.
\end{definition}
\subsection{Genericity results}
\begin{lemma}
\label{lem20}
Let $(U;x^1,\dotsc,x^m)$ be a coordinate open domain
in a smooth manifold $M$ endowed with a linear
connection $\nabla $. There exist smooth functions
\begin{equation*}
F^{k,i}\colon J^k\left( \mathbb{R},U\right)
\to \mathbb{R},
\qquad
k\in \mathbb{N},\;1\leq i\leq m,
\end{equation*}
such that,
\begin{equation}
\label{suc}
\left(
\nabla _{T^\sigma }^kT^\sigma
\right) _t
=\left(
\frac{d^{k+1}(x^i\circ \sigma )}
{dt^{k+1}}(t)+F^{k,i}
\left(
j_t^k\sigma
\right)
\right)
\left.
\frac{\partial }{\partial x^i}
\right| _{\sigma (t)},
\end{equation}
for every curve $\sigma \colon \mathbb{R}\to U$
and every $t\in \mathbb{R}$, which are determined
as follows:
\begin{eqnarray}
F^{0,i} &=& 0,
\label{suc0} \\
F^{1,i} &=&\sum _{j,h=1}^m\Gamma _{jh}^ix_1^jx_1^h,
\label{suc1}
\end{eqnarray}
where $\Gamma _{jh}^i$ are the local symbols
of $\nabla $ in $(U;x^1,\dotsc,x^m)$, and
\begin{equation}
\label{R}
F^{k,i}=D_t
\left(
F^{k-1,i}
\right)
+\sum_{h,j=1}^m\Gamma _{hj}^ix_1^j
\left(
x_k^h+F_h^{k-1}
\right) ,
\quad
\forall k\geq 2,
\end{equation}
$(x_l^h)_{0\leq l\leq k}^{1\leq h\leq m}$ being
the coordinates induced by $(x^i)_{i=1}^m$ in
the $k$-jet bundle, i.e.,
\begin{equation*}
x_l^h
\left(
j_t^k\sigma
\right)
=\frac{d^l(x^h\circ \sigma )}{dt^l}(t),
\quad
x_0^h=x^h,
\qquad
0\leq l\leq k,\; 1\leq h\leq m,
\end{equation*}
and $D_t$ denotes the ``total derivative''
with respect to $t$, namely,
\begin{equation*}
D_t=\frac{\partial }{\partial t}
+\sum _{r=0}^\infty x_{r+1}^i
\frac{\partial }{\partial x_r^i}.
\end{equation*}
\end{lemma}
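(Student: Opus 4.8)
The plan is to prove the formula \eqref{suc} by induction on $k$, simultaneously establishing the recursion \eqref{suc0}--\eqref{R}. The base cases $k=0$ and $k=1$ are direct computations: for $k=0$ one has $\nabla_{T^\sigma}^0 T^\sigma = T^\sigma = \tfrac{d(x^i\circ\sigma)}{dt}\,\partial_{x^i}$, which forces $F^{0,i}=0$; for $k=1$ one expands $\nabla_{T^\sigma}T^\sigma$ using the Christoffel symbols, $\nabla_{T^\sigma}T^\sigma = \bigl(\tfrac{d^2(x^i\circ\sigma)}{dt^2} + \Gamma_{jh}^i \tfrac{d(x^j\circ\sigma)}{dt}\tfrac{d(x^h\circ\sigma)}{dt}\bigr)\partial_{x^i}$, which reads off \eqref{suc1} once we recall $x_1^j(j_t^k\sigma) = \tfrac{d(x^j\circ\sigma)}{dt}(t)$.

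For the inductive step, I would write $\nabla_{T^\sigma}^k T^\sigma = \nabla_{T^\sigma}\bigl(\nabla_{T^\sigma}^{k-1}T^\sigma\bigr)$ and apply the coordinate formula for a covariant derivative of a vector field $V = V^i \partial_{x^i}$ along $\sigma$, namely $\nabla_{T^\sigma} V = \bigl(\tfrac{d(V^i\circ\sigma)}{dt} + \Gamma_{hj}^i\, \tfrac{d(x^j\circ\sigma)}{dt}\, (V^h\circ\sigma)\bigr)\partial_{x^i}$. By the induction hypothesis, the components of $V=\nabla_{T^\sigma}^{k-1}T^\sigma$ are $V^i = \tfrac{d^{k}(x^i\circ\sigma)}{dt^{k}} + F^{k-1,i}(j^{k-1}\sigma)$. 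Differentiating the first term along $\sigma$ gives $\tfrac{d^{k+1}(x^i\circ\sigma)}{dt^{k+1}}$; differentiating $F^{k-1,i}$ composed with the $(k-1)$-jet prolongation of $\sigma$ is exactly the action of the total derivative operator $D_t$, so $\tfrac{d}{dt}\bigl(F^{k-1,i}(j_t^{k-1}\sigma)\bigr) = (D_t F^{k-1,i})(j_t^k\sigma)$; and the Christoffel term contributes $\Gamma_{hj}^i x_1^j\bigl(x_k^h + F^{k-1}_h\bigr)$ after substituting $V^h\circ\sigma$ and using the jet coordinates. Collecting these three pieces yields \eqref{R}.

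The main technical point to be careful about — more a bookkeeping obstacle than a genuine difficulty — is the precise meaning of $F^{k-1}_h$ in \eqref{R}: it is the $h$-component of the previous correction, i.e.\ $F^{k-1}_h = F^{k-1,h}$ in the notation of the statement, and one must check that the substitution $\tfrac{d^{k}(x^h\circ\sigma)}{dt^k}(t) = x_k^h(j_t^k\sigma)$ makes $x_k^h + F^{k-1}_h$ equal to the $h$-th component $V^h\circ\sigma$ appearing in the covariant derivative formula. One also needs to verify that the functions $F^{k,i}$ so defined depend only on the jet coordinates up to order $k$ and not higher: the term $D_t F^{k-1,i}$ raises the jet order by one (from $k-1$ to $k$) but no further, and the explicit Christoffel term involves $x_k^h$, so the claimed domain $J^k(\mathbb{R},U)$ is correct. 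Smoothness of $F^{k,i}$ is then immediate by induction, since $D_t$ preserves smoothness and the $\Gamma_{hj}^i$ are smooth on $U$. No deep ideas are required; the whole lemma is a careful unwinding of the coordinate expression for iterated covariant derivatives, organized so that the non-derivative part is packaged as a universal function on jet space.
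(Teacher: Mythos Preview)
Your proposal is correct and is precisely the natural induction argument; the paper in fact states this lemma without proof, treating it as a routine coordinate computation. Your care in identifying $F^{k-1}_h$ with $F^{k-1,h}$ and in checking that $D_tF^{k-1,i}$ lands in $C^\infty(J^k(\mathbb{R},U))$ fills in exactly the details the paper leaves implicit.
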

\begin{proposition}
\label{generic}
Let $M$ be a smooth manifold of dimension $m$
endowed with a linear connection $\nabla $.
The set of curves in general position up to
the order $r\leq m-1$ is a dense open subset
in $C^\infty (\mathbb{R},M)$
with respect to the strong topology.
\end{proposition}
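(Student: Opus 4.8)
The plan is to recognise being in general position up to the order $r$ as the requirement that the $r$-jet prolongation of $\sigma$ avoid a closed, ``thin'' subset of the jet bundle, and then to combine an elementary openness argument with Thom's transversality theorem. By Lemma \ref{lem20}, for $0\leq k\leq r-1$ the vector $(\nabla_{T^\sigma}^kT^\sigma)_t$ depends only on the jet $j_t^{k+1}\sigma$, so whether $T^\sigma,\nabla_{T^\sigma}T^\sigma,\dotsc,\nabla_{T^\sigma}^{r-1}T^\sigma$ are linearly independent at $t$ depends only on $j_t^r\sigma$. Let $\Sigma_r\subset J^r(\mathbb{R},M)$ be the set of $r$-jets $j_t^r\sigma$ for which these $r$ vectors are \emph{linearly dependent}; it is closed, and $\sigma$ is in general position up to the order $r$ precisely when $j^r\sigma\colon\mathbb{R}\to J^r(\mathbb{R},M)$ maps $\mathbb{R}$ into the open set $S_r=J^r(\mathbb{R},M)\setminus\Sigma_r$. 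Write $P_r\subset C^\infty(\mathbb{R},M)$ for the set of such curves.

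Openness of $P_r$ is the standard openness in the strong topology of jet conditions of the form $j^r\sigma(\mathbb{R})\subseteq S_r$ with $S_r$ open: on each member of a locally finite cover of $\mathbb{R}$ by relatively compact open sets the image of $j^r\sigma$ is compact, hence at positive distance from $\Sigma_r$, so it survives sufficiently small strong perturbations.

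For density, stratify $\Sigma_r$ by the rank $k\in\{0,1,\dotsc,r-1\}$ of the family $(T^\sigma,\dotsc,\nabla_{T^\sigma}^{r-1}T^\sigma)$, obtaining subsets $\Sigma_r^{(k)}$. In a chart $(U;x^1,\dotsc,x^m)$, formulas (\ref{suc0})--(\ref{R}) show that the passage from the fibre coordinates $\bigl(d^j(x^i\circ\sigma)/dt^j\bigr)_{1\leq j\leq r}$ to $\bigl(T^\sigma,\dotsc,\nabla_{T^\sigma}^{r-1}T^\sigma\bigr)$ is triangular with the identity on the diagonal blocks, hence a fibrewise diffeomorphism onto $(\mathbb{R}^m)^r$; so over this chart some diffeomorphism of $J^r(\mathbb{R},U)$ carries $\Sigma_r$ onto $\mathbb{R}\times U\times\mathcal{D}$, where $\mathcal{D}\subset(\mathbb{R}^m)^r$ is the set of linearly dependent $r$-tuples of vectors of $\mathbb{R}^m$. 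Stratifying $\mathcal{D}$ by rank then exhibits each $\Sigma_r^{(k)}$ as a locally closed submanifold of $J^r(\mathbb{R},M)$ of codimension $(m-k)(r-k)\geq m-r+1$. Under the hypothesis $r\leq m-1$ this codimension is $\geq2>1=\dim\mathbb{R}$. By Thom's transversality theorem the curves $\sigma$ with $j^r\sigma$ transverse to every $\Sigma_r^{(k)}$ form a residual subset of $C^\infty(\mathbb{R},M)$ for the strong topology; but transversality to a submanifold of codimension $>1$ from a one-dimensional source forces the prolongation to miss that submanifold, so each such $\sigma$ satisfies $j^r\sigma(\mathbb{R})\cap\Sigma_r=\emptyset$, that is, $\sigma\in P_r$. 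Since $C^\infty(\mathbb{R},M)$ with the strong topology is a Baire space, $P_r$ is dense; together with openness, this proves the proposition.

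The main obstacle lies in the density step, and within it in the verification that the fibrewise determinantal stratification of $\Sigma_r$ really assembles into smooth submanifolds of $J^r(\mathbb{R},M)$ of the asserted codimension --- this is exactly where the triangular form of the functions $F^{k,i}$ of Lemma \ref{lem20} is used --- together with the (mild) point-set bookkeeping needed to invoke Thom's theorem over the non-compact source $\mathbb{R}$ in the strong topology. The bound $r\leq m-1$ is sharp for this argument: for $r=m$ the top stratum $\Sigma_m^{(m-1)}$ has codimension $1$, transversality no longer yields avoidance, and genuine obstructions must be expected.
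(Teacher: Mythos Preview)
Your proof is correct and follows essentially the same approach as the paper: both use the triangular change of fibre coordinates coming from Lemma~\ref{lem20} (which the paper packages as a global diffeomorphism $\Phi_\nabla^r\colon J^r(\mathbb{R},M)\to\mathbb{R}\times\oplus^r TM$) to identify $\Sigma_r$ with the locus of linearly dependent $r$-tuples, stratify that locus into locally closed submanifolds of codimension $(m-k)(r-k)\geq m-r+1\geq 2$, and then invoke Thom transversality to force avoidance. The only cosmetic difference is that the paper stratifies more finely---by \emph{which} $k$-element subset of indices is independent---whereas you stratify by the rank $k$ alone; both yield the same codimension bound and the same conclusion.
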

\begin{proof}
By using the formulas \eqref{suc} it follows
that the mapping
\begin{equation}
\begin{array}{l}
\Phi _\nabla ^r\colon J^r(\mathbb{R},M)
\to \mathbb{R}\times
\left(
\oplus ^rTM
\right) ,
\smallskip \\
\Phi _\nabla ^r
\left(
j_{t_0}^r\sigma
\right) =\left(
t_0;T_{t_0}^\sigma ,
\left(
\nabla _{T^\sigma }T^\sigma
\right) _{t_0},\dotsc,
\left(
\nabla _{T^\sigma }^{r-1}T^\sigma
\right) _{t_0}\right) ,
\end{array}
\label{Phi^r}
\end{equation}
is a diffeomorphism inducing the identity
on $J^0(\mathbb{R},M)=\mathbb{R}\times M$.
We set
\begin{equation*}
E=\left\{
\left(
t,X^1,\dotsc,X^r
\right)
\in \mathbb{R}\times
\left(
\oplus ^rTM
\right)
:X^1\wedge \ldots \wedge X^r=0
\right\} ,
\end{equation*}
for every $1\leq k\leq r-1$ and for every
strictly increasing system of indices
$1\leq i_1<\ldots <i_k\leq r$ we set
\begin{equation*}
E_{i_1,\dotsc,i_k}
=\left\{
\begin{array}{r}
\left(
t,X^1,\dotsc,X^r
\right)
\in \mathbb{R}\times
\left(
\oplus ^rTM
\right)
:X^{i_1}\wedge \cdots \wedge X^{i_k}\neq 0, \\
X^{j_1},\dotsc,X^{j_{r-k}}\in
\left\langle
X^{i_1},\dotsc,X^{i_k}
\right\rangle ,
\end{array}
\right\}
\end{equation*}
with $j_1<\ldots <j_{r-k}$ and
$\{ j_1,\dotsc,j_{r-k}\}
=\{ 1,2,\dotsc,r\} \setminus \{i_1,\dotsc,i_k\} $,
and finally we set $E_0=\mathbb{R}\times Z$,
$Z$ being the zero section in $\oplus ^rTM$. Hence
\begin{equation*}
E=E_0\cup \;
\bigcup\limits_{k=1}^{r-1}\;
\bigcup_{i_1<\ldots <i_k}E_{i_1,\dotsc,i_k}.
\end{equation*}
Moreover, if $U_k(M)\subset \oplus ^kTM$ denotes
the open subset of all linearly independent systems
of $k$ vectors, then the mapping
\begin{equation*}
\begin{array}{l}
A_{i_1,\dotsc,i_k}\colon \mathbb{R}^{1+k(r-k)}
\times U_k(M)\to \mathbb{R}\times
\left(
\oplus ^rTM
\right) , \\
A_{i_1,\dotsc,i_k}\!
\left(
t;\lambda _1^1,\dotsc,\lambda _k^1,\dotsc,
\lambda _1^{r-k},\dotsc,\lambda _k^{r-k};
X^1,\dotsc,X^k
\right)
\!=\!
\left(
t;\bar{X}^1,\dotsc,\bar{X}^r
\right) ,
\end{array}
\end{equation*}
\begin{equation*}
\begin{array}{ll}
\bar{X}^{i_h}=X^h, & 1\leq h\leq k, \\
\bar{X}^{j_h}=\sum _{i=1}^k\lambda _i^hX^i,
& 1\leq h\leq r-k,
\end{array}
\end{equation*}
is an injective immersion such that
$\operatorname{im}(A_{i_1,\dotsc,i_k})
=E_{i_1,\dotsc,i_k}$, and we have
\begin{eqnarray*}
\operatorname{codim}E_{i_1,\dotsc,i_k}
&=&\dim
\left(
\mathbb{R}\times
\left(
\oplus ^rTM
\right)
\right)
-\dim E_{i_1,\dotsc,i_k} \\
&=&\left(
1+m+rm
\right)
-\left(
1+k(r-k)+m+km
\right) \\
&=&(m-k)(r-k) \\
&\geq &m+1-r,
\end{eqnarray*}
as the product $(m-k)(r-k)$ takes its minimum value
when $k$ takes its maximum value, i.e., $k=r-1$.
Accordingly, $Y_{i_1,\dotsc,i_k}
=\left(
\Phi _\nabla ^r
\right) ^{-1}
\left(
E_{i_1,\dotsc,i_k}
\right) $
is a submanifold in $J^r(\mathbb{R},M)$ of codimension
$(m-k)(r-k)$. From Thom's transversality theorem (e.g.,
see \cite[VII, Th\'{e}or\`{e}me 4.2]{Tougeron})
the set of curves $\sigma \colon \mathbb{R}\to M$
the $r$-jet extension of which, $j^r\sigma $,
is transversal to $Y_{i_1,\dotsc,i_k}$ is a residual
subset (and hence dense) in $C^\infty (\mathbb{R},M)$
for the strong topology. For such curves,
$\left(
j^r\sigma
\right) ^{-1}
\left(
Y_{i_1,\dotsc,i_k}
\right) $ is a submanifold of the real line of codimension
$(m-k)(r-k)\geq m+1-r$. If $r\leq m-1$, then it is only
possible if such a submanifold is the empty set.
Consequently, for $r\leq m-1$, the following formula holds:
\begin{align*}
\digamma ^r&
=\left\{
\sigma \in C^\infty (\mathbb{R},M):j^r\sigma
\text{ is transversal to every }Y_{i_1,\dotsc,i_k}
\right\} \\
& =\left\{
\sigma \in C^\infty (\mathbb{R},M):
\left(
j^r\sigma
\right)
\left(
\mathbb{R}
\right)
\cap Y=\emptyset
\right\} ,
\end{align*}
where $Y=Y_0\cup
\;\bigcup\nolimits_{k=1}^{r-1}
\bigcup_{i_1<\ldots <i_k}Y_{i_1,\dotsc,i_k}$.
Therefore $\Phi _\nabla ^r
\left(
j^r\sigma (\mathbb{R})
\right)
\cap E=\emptyset $
if $\sigma \in \digamma ^r$; in other words,
$\sigma $ is a curve in general position
up to order $r$ with respect to $\nabla $.

Finally, we prove that $\digamma ^r$ is an open susbet.
If $d$ is a complete distance function defining
the topology in $J^r(\mathbb{R},M)$, then for every
$\sigma \in \digamma ^r$ the function
$\delta _\sigma \colon \mathbb{R}\to \mathbb{R}^+$,
$\delta _\sigma (t)=d
\left(
j_t^r\sigma ,Y
\right) >0$
makes sense as $Y$ is a closed subset and
\begin{equation*}
N(\sigma )
=\left\{
\gamma \in C^\infty (\mathbb{R},M):d
\left(
j_t^r\sigma ,j_t^r\gamma
\right)
<\delta _{\sigma }(t),\forall t\in \mathbb{R}
\right\}
\end{equation*}
is a neigbourhood of $\sigma $ in the strong topology
of order $r$ and hence, also in the strong topology
of order $\infty $. As $\gamma \in N(\sigma )$
implies $\gamma \in \digamma ^r$, we can conclude.
\end{proof}
\begin{remark}
The statement of Proposition \ref{generic} is the best
possible, as the curves in general position up to the order
$m=\dim M$ with respect to a linear connection $\nabla $
are not dense in $C^\infty (\mathbb{R},M)$ for the strong
topology, because inflection points are unavoidable.
In fact, with the similar notations as in the proof
of Proposition \ref{generic}, we set
\begin{eqnarray*}
\digamma ^m &=&
\left\{
\sigma \in C^\infty (\mathbb{R},M):
j^m\sigma (\mathbb{R})\cap Y
=\emptyset
\right\} , \\
\bar{\digamma }^m &=&
\left\{
\sigma \in C^\infty (\mathbb{R},M):
j^m\sigma
\text{ is transversal to every }Y_{i_1,\dotsc,i_k}
\right\} .
\end{eqnarray*}
The set $\bar{\digamma }^m$ is dense
in $C^\infty (\mathbb{R},M)$ as it is residual
and $\digamma ^m$ coincides with the set of curves
in general position up to order $m$. In order to prove
that $\digamma ^m$ is not dense, it suffices to obtain
an open subset contained in its complementary set.
We set
\begin{equation*}
Y^\prime =\bigcup\limits_{k=0}^{m-2}
\; \bigcup_{i_1<\ldots <i_k}Y_{i_1,\dotsc,i_k};
\qquad
Y_i
=\left(
\Phi _\nabla ^m
\right) ^{-1}
\left(
E_i
\right) ,
\quad 1\leq i\leq m,
\end{equation*}
where $E_i$ is the set of points
$(t,X^1,\dotsc,X^m)\in \mathbb{R}\times (\oplus ^mTM)$
such that,
\begin{enumerate}
\item[i)]
$X^1\wedge \ldots \wedge \widehat{X^i}
\wedge \ldots \wedge
X^m\neq 0$,
\item[ii)]
$X^i\in
\left\langle
X^1,\dotsc,\widehat{X^i},\dotsc,X^m
\right\rangle $.
\end{enumerate}
Then, $Y=Y^\prime \cup Y_1\cup \ldots \cup Y_m$
and $Y_0=Y_1\cap \ldots \cap Y_m$ is an open subset
in each $Y_i$; hence $Y_0$ is a submanifold
of codimension $1$ in $J^m(\mathbb{R},M)$. According
to a classical result (see \cite[Theorem 6.1]{Mather})
there exists a curve $\sigma \colon \mathbb{R}\to  M$
such that, 1) $j^m\sigma $ is transversal to $Y_0$,
and 2) $j^m\sigma (\mathbb{R})\cap Y_0\neq \emptyset $.
Therefore, $j^m\sigma (\mathbb{R})\cap Y\neq \emptyset $.
Moreover, according to \cite[Lemma 1, p.\ 45]{Levine},
given a neighbourhood $U$ of $t$, there exists
a neigbourhood $E_\sigma $ of $\sigma $\ in the weak
(and hence, in the strong) topology, such that
$\tau \in E_\sigma $ implies $j^m\tau $ cuts transversally
to $Y$ at some point $t^\prime \in U$. Hence,
$\tau \in E_\sigma $ implies $j^m\tau (\mathbb{R})
\cap Y\neq \emptyset $, i.e., $\tau \notin \digamma ^m$,
and $\sigma $ thus possesses a neighbourhood of curves
not belonging to $\digamma ^m$.
\end{remark}
\begin{proposition}
\label{generic_bis}
Let $M$ be a smooth manifold of dimension $m$ endowed
with a linear connection $\nabla $. The set of curves
in normal general postion up to the order $r\leq m-1$
is a dense open subset in $C^\infty (\mathbb{R},M)$
with respect to the strong topology.
\end{proposition}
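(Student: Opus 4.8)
The plan is to repeat the argument of Proposition \ref{generic}, replacing the diffeomorphism $\Phi_\nabla^r$ of \eqref{Phi^r} by the analogous map built out of the vectors $U_{t_0}^{\sigma,k}$ of \eqref{U^sigma,k}. For $x\in M$, let $\exp_x$ be the exponential map of $\nabla$, defined near $0\in T_xM$; if $\sigma$ is a curve with $\sigma(t_0)=x_0$, set $c=\exp_{x_0}^{-1}\circ\sigma$ near $t_0$, a curve in $T_{x_0}M$ with $c(t_0)=0$. I would then consider
\[
\Psi_\nabla^r\colon J^r(\mathbb{R},M)\to\mathbb{R}\times\left(\oplus^{r}TM\right),\qquad \Psi_\nabla^r\left(j_{t_0}^r\sigma\right)=\left(t_0;U_{t_0}^{\sigma,1},\dotsc,U_{t_0}^{\sigma,r}\right).
\]
Since $\exp_{x_0}$ identifies $T_{x_0}M$ with a normal coordinate chart centered at $x_0$, one has $U_{t_0}^{\sigma,k}=\frac{d^{k}c}{dt^{k}}(t_0)\in T_{x_0}M$; hence $\Psi_\nabla^r$ depends only on $j_{t_0}^r\sigma$, it is independent of the chosen normal coordinates by the invariance of \eqref{U^sigma,k}, and, since $U_{t_0}^{\sigma,1}=T_{t_0}^{\sigma}$, it induces the identity on $J^0(\mathbb{R},M)=\mathbb{R}\times M$.

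The first thing to check is that $\Psi_\nabla^r$ is a diffeomorphism. Given a point $(t_0;X^1,\dotsc,X^r)$ of $\mathbb{R}\times(\oplus^{r}TM)$, with $X^1,\dotsc,X^r$ lying in a common fibre $T_{x_0}M$, the curve $\sigma(t)=\exp_{x_0}\!\left(\sum_{k=1}^{r}\frac{(t-t_0)^{k}}{k!}X^k\right)$ is, up to order $r$ at $t_0$, the unique curve whose image under $\Psi_\nabla^r$ is the given point; this gives bijectivity, while smoothness of $\Psi_\nabla^r$ and of its inverse follows from that of $\exp$ by a routine computation in local trivializations of the jet bundle.

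Once this is granted, the rest is word for word the proof of Proposition \ref{generic}. The curve $\sigma$ is in normal general position up to order $r$ at $t_0$ precisely when $U_{t_0}^{\sigma,1}\wedge\ldots\wedge U_{t_0}^{\sigma,r}\neq 0$, i.e.\ when $\Psi_\nabla^r(j_{t_0}^r\sigma)\notin E$, with $E$, $E_0$ and $E_{i_1,\dotsc,i_k}$ exactly as there. Setting $Y_{i_1,\dotsc,i_k}=(\Psi_\nabla^r)^{-1}(E_{i_1,\dotsc,i_k})$ and $Y_0=(\Psi_\nabla^r)^{-1}(E_0)$, these are submanifolds of $J^r(\mathbb{R},M)$ of codimensions $(m-k)(r-k)$ and $rm$ respectively, both $\geq m+1-r$. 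By Thom's transversality theorem the curves whose $r$-jet extension is transversal to each of them form a residual, hence dense, subset of $C^\infty(\mathbb{R},M)$ for the strong topology; when $r\leq m-1$ the bound $m+1-r\geq 2>1$ forces the corresponding transversal preimages inside $\mathbb{R}$ to be empty, so for such $\sigma$ one gets $j^r\sigma(\mathbb{R})\cap Y=\emptyset$, where $Y=Y_0\cup\bigcup_{k=1}^{r-1}\bigcup_{i_1<\ldots<i_k}Y_{i_1,\dotsc,i_k}$; that is, $\sigma$ is in normal general position up to order $r$. Openness follows exactly as in Proposition \ref{generic}, using a complete distance function $d$ on $J^r(\mathbb{R},M)$ and the positivity of $t\mapsto d(j_t^r\sigma,Y)$ for such $\sigma$, since $Y$ is closed.

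The only genuinely new ingredient is the claim that $\Psi_\nabla^r$ is a global diffeomorphism, and the point to be careful about is that, although each $\exp_{x_0}$ is only defined near $0\in T_{x_0}M$, the vectors $U_{t_0}^{\sigma,k}$ depend solely on $j_{t_0}^r\sigma$, so the construction does descend to a well-defined smooth map on all of $J^r(\mathbb{R},M)$; this has to be verified in local trivializations, after which everything else is an exact transcription of Proposition \ref{generic}. Alternatively, one may avoid $\exp$ altogether and relate the $U_{t_0}^{\sigma,k}$ to the $(\nabla_{T^\sigma}^{j}T^\sigma)_{t_0}$ through a fibre-preserving, triangular polynomial change of coordinates on $\mathbb{R}\times(\oplus^{r}TM)$, obtained by specializing the functions $F^{k,i}$ of Lemma \ref{lem20} to normal coordinates centered at $\sigma(t_0)$; but the description via the exponential map seems more transparent.
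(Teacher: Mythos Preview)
Your proposal is correct and follows exactly the approach sketched in the paper: introduce the map $\Psi_\nabla^r(j_{t_0}^r\sigma)=(t_0;U_{t_0}^{\sigma,1},\dotsc,U_{t_0}^{\sigma,r})$, observe it is a diffeomorphism over $\mathbb{R}\times M$, and then repeat verbatim the transversality argument of Proposition~\ref{generic}. The paper's own proof is a one-line reference to this same strategy; your version simply fills in the details (the description via $\exp_{x_0}$ and the explicit inverse), which are all correct.
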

\begin{proof}
It is similar to the proof of Proposition \ref{generic}
by using the fact that the mapping
\begin{equation*}
\begin{array}{l}
\Psi _\nabla ^r\colon J^r(\mathbb{R},M)
\to \mathbb{R}\times
\left(
\oplus ^rTM
\right) ,
\smallskip \\
\Psi _\nabla ^r
\left(
j_{t_0}^r\sigma
\right)
=\left(
t_0;U_{t_0}^{1,\sigma },U_{t_0}^{2,\sigma },
\dotsc,U_{t_0}^{r,\sigma }
\right) ,
\end{array}
\end{equation*}
is a diffeomorphism over $\mathbb{R}\times M$.
\end{proof}
\section{Frenet curves}
\subsection{A Frenet curve defined}
\begin{definition}
A curve $\sigma \colon (a,b)\to M$ with values
into a Riemannian manifold $(M,g)$ is said to be
a \emph{Frenet\ curve}\ if $\sigma $ is in general
position up to order $m-1$ with respect to the
Levi-Civita connection of the metric $g$.
\end{definition}
\begin{proposition}
[Frenet frame, \protect\cite{Berard},
\protect\cite{Gluck1}, \protect\cite{Gluck2},
\protect\cite{Griffiths}, \protect\cite{Jensen},
\protect\cite{Rochowski}]
\label{referenciafrenet}
If $(M,g)$ is an oriented connected Riemannian
manifold of dimension $m$ and $\sigma \colon (a,b)\to M$
is a Frenet curve, then there exist unique vector fields
$X_1^\sigma ,\dotsc,X_m^\sigma $ defined along $\sigma $
and smooth functions $\kappa _0^\sigma ,\dotsc,
\kappa _{m-1}^\sigma \colon (a,b)\to \mathbb{R}$
with $\kappa _j^\sigma >0$, $0\leq j\leq m-2$, such that,
\begin{enumerate}
\item[\emph{(i)}]
$(X_1^\sigma (t),\dotsc,X_m^\sigma (t))$ is a positively
oriented orthonormal linear frame, $\forall t\in (a,b)$.
\item[\emph{(ii)}]
The systems $(X_1^\sigma (t),\dotsc,X_i^\sigma (t))$,
$(T_t^\sigma ,(\nabla _{T^\sigma }T^\sigma )_t,\dotsc,
(\nabla _{T^\sigma }^{i-1}T^\sigma )_t)$
span the same vector subspace and they are equally
oriented for every $1\leq i\leq m-1$ and every
$t\in (a,b)$.
\item[\emph{(iii)}]
The following formulas\ hold:
\begin{enumerate}
\item[\emph{(a)}]
$T^\sigma =\kappa _0^\sigma X_1$,
\item[\emph{(b)}]
$\nabla _{X_1^\sigma }X_1^\sigma
=\kappa _1^\sigma X_2^\sigma $,
\item[\emph{(c)}]
$\nabla _{X_1^\sigma }X_i^\sigma
=-\kappa _{i-1}^\sigma X_{i-1}^\sigma
+\kappa _i^\sigma X_{i+1}^\sigma ,
\quad 2\leq i\leq m-1$,
\item[\emph{(d)}]
$\nabla _{X_1^\sigma }X_{m}^\sigma
=-\kappa _{m-1}^\sigma X_{m-1}^\sigma $.
\end{enumerate}
\end{enumerate}
\end{proposition}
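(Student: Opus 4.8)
The plan is to construct the Frenet frame by applying the Gram--Schmidt process to the linearly independent system $(T^\sigma,\nabla_{T^\sigma}T^\sigma,\dotsc,\nabla_{T^\sigma}^{m-2}T^\sigma)$ along $\sigma$, which is available precisely because $\sigma$ is a Frenet curve. First I would set $X_1^\sigma=T^\sigma/\|T^\sigma\|$, which forces $\kappa_0^\sigma=\|T^\sigma\|>0$ and gives formula (iii)(a). Then, inductively for $2\leq i\leq m-1$, I would define $X_i^\sigma$ as the normalization of the component of $\nabla_{T^\sigma}^{i-1}T^\sigma$ orthogonal to $\langle X_1^\sigma,\dotsc,X_{i-1}^\sigma\rangle=\langle T^\sigma,\dotsc,\nabla_{T^\sigma}^{i-2}T^\sigma\rangle$; linear independence guarantees this component is nowhere zero, so $X_i^\sigma$ is a well-defined smooth unit field, and the span/orientation condition (ii) holds by construction. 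Finally $X_m^\sigma$ is the unique unit vector making $(X_1^\sigma,\dotsc,X_m^\sigma)$ a positively oriented orthonormal frame, giving (i).

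The substance of the proof is then deriving the recursion formulas (iii)(b)--(d) and the positivity $\kappa_j^\sigma>0$ for $0\leq j\leq m-2$. The key structural input is that $\nabla_{X_1^\sigma}$ is, up to the positive factor $\kappa_0^\sigma$, the covariant derivative $\nabla_{T^\sigma}$ along $\sigma$, so $\nabla_{X_1^\sigma}X_i^\sigma$ makes sense as a field along $\sigma$. Writing $\nabla_{X_1^\sigma}X_i^\sigma=\sum_{j=1}^m a_{ij}X_j^\sigma$, I would extract two facts: antisymmetry $a_{ij}=-a_{ji}$, obtained by differentiating the orthonormality relations $g(X_i^\sigma,X_j^\sigma)=\delta_{ij}$ along the curve; and a band structure $a_{ij}=0$ for $j>i+1$, obtained from condition (ii): since $X_i^\sigma\in\langle T^\sigma,\dotsc,\nabla_{T^\sigma}^{i-1}T^\sigma\rangle$, applying $\nabla_{T^\sigma}$ lands in $\langle T^\sigma,\dotsc,\nabla_{T^\sigma}^{i}T^\sigma\rangle=\langle X_1^\sigma,\dotsc,X_{i+1}^\sigma\rangle$. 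Combining antisymmetry with the band structure kills everything except the sub- and super-diagonal, yielding $\nabla_{X_1^\sigma}X_i^\sigma=-a_{i-1,i}X_{i-1}^\sigma+a_{i,i+1}X_{i+1}^\sigma$; setting $\kappa_{i-1}^\sigma=a_{i-1,i}$ (with $\kappa_1^\sigma=a_{12}$) produces (iii)(b)--(d). The positivity $\kappa_i^\sigma>0$ for $0\le i\le m-2$ follows because in the Gram--Schmidt construction the coefficient of $X_{i+1}^\sigma$ in the expansion of $\nabla_{T^\sigma}^{i}T^\sigma$ relative to the orthonormal frame is exactly the (positive) norm of the orthogonal component, which is $\kappa_0^\sigma\cdots$ up to earlier positive factors; one checks by comparing the two triangular expansions that $a_{i,i+1}$ inherits this sign.

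For uniqueness, suppose $(\tilde X_i^\sigma,\tilde\kappa_j^\sigma)$ is another such system. Conditions (i) and (ii) force $\tilde X_i^\sigma=\pm X_i^\sigma$ for $1\le i\le m-1$, and the equal-orientation clause in (ii) together with $\tilde\kappa_j^\sigma>0$ pins down the sign as $+$ inductively (starting from $\tilde X_1^\sigma=X_1^\sigma$ since $\tilde\kappa_0^\sigma>0$ and $T^\sigma=\tilde\kappa_0^\sigma\tilde X_1^\sigma$); then $\tilde X_m^\sigma=X_m^\sigma$ by the orientation normalization, and the $\tilde\kappa_j^\sigma$ are determined by (iii). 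I expect the main obstacle to be bookkeeping rather than conceptual: carefully tracking orientations in condition (ii) through the induction and verifying the sign of $a_{i,i+1}$, since this is where the hypothesis $\kappa_j^\sigma>0$ only up to $j=m-2$ (and not $j=m-1$) genuinely enters — the last curvature $\kappa_{m-1}^\sigma$ can have either sign precisely because $X_m^\sigma$ is fixed by orientation, not by Gram--Schmidt. This is the classical argument; the only thing to be careful about is that it is carried out intrinsically with $\nabla$ along $\sigma$, exactly as in the Euclidean case with $d/dt$ replaced by covariant differentiation, so no curvature of $(M,g)$ enters at this stage.
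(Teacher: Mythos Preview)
Your proposal is correct and is precisely the classical Gram--Schmidt construction of the Frenet frame. Note, however, that the paper does not supply its own proof of this proposition: it is stated with attribution to \cite{Berard}, \cite{Gluck1}, \cite{Gluck2}, \cite{Griffiths}, \cite{Jensen}, \cite{Rochowski} and treated as a known result, so there is no in-paper argument to compare against. Your outline matches the standard proof found in those references (in particular Gluck's), with the covariant derivative $\nabla_{T^\sigma}$ along $\sigma$ replacing the ordinary derivative; the bookkeeping you flag concerning the sign of $\kappa_{m-1}^\sigma$ is exactly the point, and the paper later uses this implicitly (e.g.\ in Proposition~\ref{lemdelta}, where $\varepsilon_{m-1}=\pm 1$ appears).
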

\begin{definition}
The frame $(X_1^\sigma ,\dotsc,X_{m}^\sigma )$
along $\sigma $ determined by the conditions
(i)-(iii) above is called the \emph{Frenet frame}
of $\sigma $, and the functions
$\kappa _0^\sigma ,\dotsc,\kappa _{m-1}^\sigma $
are the \emph{curvatures} of $\sigma $.
\end{definition}
\subsection{Basic formulas}
According to the item (ii) of Proposition
\ref{referenciafrenet} there exist functions
$f_{ij}^\sigma \in C^\infty (a,b)$,
$1\leq i\leq j\leq m$, such that,
\begin{equation}
\label{F4}
\nabla _{T^\sigma }^{j-1}T^\sigma
=\sum _{i=1}^jf_{ij}^\sigma X_i^\sigma ,
\quad 1\leq j\leq m,
\end{equation}
and by using the equations (a)-(d) in the item (iii)
above the following recurrence formulas are obtained
for these functions:
\begin{equation}
\left\{
\begin{array}{l}
f_{11}^\sigma
=\kappa _0^\sigma ,
\smallskip \\
f_{12}^\sigma
=\dfrac{df_{11}^\sigma }{dt},
\smallskip \\
f_{22}^\sigma
=f_{11}^\sigma \kappa _0^\sigma \kappa _1^\sigma ,
\end{array}
\right.
\label{der0}
\end{equation}
\begin{equation}
3\leq j\leq m\quad
\left\{
\begin{array}{l}
f_{1j}^\sigma
=\dfrac{df_{1,j-1}^\sigma }{dt}
-f_{2,j-1}^\sigma \kappa _0^\sigma \kappa _1^\sigma ,
\smallskip \\
f_{ij}^\sigma
=\dfrac{df_{i,j-1}^\sigma }{dt}
-f_{i+1,j-1}^\sigma \kappa _0^\sigma \kappa _i^\sigma
+f_{i-1,j-1}^\sigma \kappa _0^\sigma \kappa _{i-1}^\sigma ,\\
2\leq i\leq j-2,
\smallskip \\
f_{j-1,j}^\sigma =
\dfrac{df_{j-1,j-1}^\sigma }{dt}
+f_{j-2,j-1}^\sigma \kappa _0^\sigma \kappa _{j-2}^\sigma ,
\smallskip \\
f_{jj}^\sigma
=f_{j-1,j-1}^\sigma \kappa _0^\sigma \kappa _{j-1}^\sigma .
\end{array}
\right.
\label{der}
\end{equation}
\begin{proposition}
\label{lemdelta}
If $\sigma \colon (a,b)\to  M$ is a Frenet curve
in an oriented connected Riemannian manifold $(M,g)$,
then
\begin{equation*}
\begin{array}{ll}
\kappa _0^\sigma
=\sqrt{\Delta _1^\sigma },
\smallskip &  \\
\kappa _1^\sigma
=\sqrt{\dfrac{\Delta _2^\sigma }{(\Delta _1^\sigma )^3}},
\smallskip &  \\
\kappa _i^\sigma
=\dfrac{
\varepsilon _i
\sqrt{\Delta _{i-1}^\sigma \Delta _{i+1}^\sigma }
}
{\sqrt{\Delta _1^\sigma }\Delta _i^\sigma },
& 2\leq i\leq m-1,
\end{array}
\end{equation*}
where
\begin{equation}
\left\{
\begin{array}{l}
\Delta _k^\sigma
=\det
\left(
g
\left(
\nabla _{T^\sigma }^{i-1}T^\sigma ,
\nabla _{T^\sigma }^{j-1}T^\sigma
\right)
\right)
_{i,j=1}^k,
\smallskip \\
\varepsilon _i=1
\text{ for }2\leq i\leq m-2,
\text{ and }
\varepsilon _{m-1}=\pm 1.
\end{array}
\right.  \label{Delta_k^sigma}
\end{equation}
\end{proposition}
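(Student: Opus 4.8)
The plan is to express the curvatures $\kappa_i^\sigma$ in terms of the functions $f_{ij}^\sigma$ of \eqref{F4} and then recognize the Gram determinants $\Delta_k^\sigma$ as explicit polynomials in the $f_{ij}^\sigma$. The starting point is the observation that, by \eqref{F4} together with item (i) of Proposition \ref{referenciafrenet} (orthonormality of the Frenet frame), the Gram matrix $G_k = \bigl(g(\nabla_{T^\sigma}^{i-1}T^\sigma, \nabla_{T^\sigma}^{j-1}T^\sigma)\bigr)_{i,j=1}^k$ factors as $G_k = A_k A_k^{\mathsf T}$, where $A_k = (f_{ij}^\sigma)_{1\le i,j\le k}$ is the lower-triangular (in fact, since $f_{ij}^\sigma$ is defined only for $i\le j$, upper-triangular once we view the index $i$ as the row) change-of-basis matrix from the adapted frame $(X_1^\sigma,\dots,X_k^\sigma)$ to the derivative frame $(T^\sigma,\dots,\nabla_{T^\sigma}^{k-1}T^\sigma)$. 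Hence
\[
\Delta_k^\sigma = (\det A_k)^2 = \Bigl(\prod_{j=1}^k f_{jj}^\sigma\Bigr)^{2}.
\]
This is the key algebraic identity; everything else is bookkeeping with the recurrences \eqref{der0}--\eqref{der}.

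Next I would substitute the diagonal recurrences. From \eqref{der0} we have $f_{11}^\sigma = \kappa_0^\sigma$, $f_{22}^\sigma = f_{11}^\sigma \kappa_0^\sigma \kappa_1^\sigma = (\kappa_0^\sigma)^2 \kappa_1^\sigma$, and from the last line of \eqref{der}, $f_{jj}^\sigma = f_{j-1,j-1}^\sigma\, \kappa_0^\sigma\, \kappa_{j-1}^\sigma$ for $3\le j\le m$. An immediate induction then gives the closed form
\[
f_{jj}^\sigma = (\kappa_0^\sigma)^{j}\,\kappa_1^\sigma \kappa_2^\sigma \cdots \kappa_{j-1}^\sigma, \qquad 1\le j\le m,
\]
(with the empty product equal to $1$ when $j=1$). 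Feeding this into $\Delta_k^\sigma = \bigl(\prod_{j=1}^k f_{jj}^\sigma\bigr)^2$ yields $\Delta_k^\sigma$ as an explicit monomial in the curvatures,
\[
\Delta_k^\sigma = (\kappa_0^\sigma)^{k(k+1)}\,\prod_{i=1}^{k-1}(\kappa_i^\sigma)^{2(k-i)}.
\]

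Finally I would invert these relations. From $\Delta_1^\sigma = (\kappa_0^\sigma)^2$ and $\kappa_0^\sigma>0$ we get $\kappa_0^\sigma = \sqrt{\Delta_1^\sigma}$. Taking the ratio $\Delta_{k}^\sigma \Delta_{k-1}^\sigma/(\Delta_{k-1}^\sigma)^2 $, or more directly computing $\Delta_{i+1}^\sigma\Delta_{i-1}^\sigma/(\Delta_i^\sigma)^2$ from the monomial formula, all the $\kappa_0^\sigma$ and $\kappa_j^\sigma$ with $j\ne i$ cancel and one is left with a power of $\kappa_i^\sigma$ times a power of $\kappa_0^\sigma$; solving and using $\kappa_0^\sigma=\sqrt{\Delta_1^\sigma}$ gives exactly the stated formulas for $\kappa_1^\sigma$ and for $\kappa_i^\sigma$, $2\le i\le m-1$. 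The sign subtlety is the only nonroutine point: for $1\le i\le m-2$ all of $\kappa_i^\sigma>0$ and all $\Delta_k^\sigma>0$ (the derivative vectors being linearly independent up to order $m-1$), so the positive square root is forced and $\varepsilon_i=1$; but $\kappa_{m-1}^\sigma$ is \emph{not} assumed positive — it is the signed curvature fixing the orientation via item (i) and (iii)(d) — so its sign is not determined by the $\Delta_k^\sigma$ alone, which is precisely why $\varepsilon_{m-1}=\pm1$ must be allowed. I expect the main obstacle to be a careful verification of the factorization $G_k = A_k A_k^{\mathsf T}$ together with the indexing conventions in \eqref{F4} (upper-triangular structure, so that $\det A_k$ is genuinely the product of the diagonal $f_{jj}^\sigma$), and the clean tracking of exponents when inverting the monomial expressions for $\Delta_k^\sigma$; the rest is a direct induction on the recurrences \eqref{der0}--\eqref{der}.
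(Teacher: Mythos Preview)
Your proposal is correct and follows exactly the paper's approach: the paper's proof consists of the single identity $\Delta_k^\sigma = (\det(f_{ij}^\sigma)_{i,j=1}^k)^2 = \prod_{i=1}^k (f_{ii}^\sigma)^2$ together with an appeal to \eqref{F4}, \eqref{der0}, \eqref{der}, and you have spelled out precisely that computation (including the closed form for $f_{jj}^\sigma$ and the inversion step) in more detail than the paper does. One small notational point: with the convention of \eqref{F4} the factorization is $G_k = A_k^{\mathsf T} A_k$ rather than $A_k A_k^{\mathsf T}$, but this is immaterial for the determinant and you already flag the indexing as the place requiring care.
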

\begin{proof}
The formulas in the statement follow from \eqref{F4},
\eqref{der0}, and \eqref{der} taking the identity
\begin{eqnarray*}
\Delta _k^\sigma
&=&
\left(
\det
\left(
f_{ij}^\sigma
\right)
_{i,j=1}^k
\right) ^2 \\
&=&\prod\nolimits_{i=1}^k
\left( f_{ii}^\sigma
\right) ^2
\end{eqnarray*}
into account.
\end{proof}
For a smooth curve $\sigma $, the property of being
a Frenet curve at $t$ depends on $j_t^{m-1}\sigma $ only;
hence for every $t\in \mathbb{R}$ we can speak
about the open subset
$\mathcal{F}_t^{m-1}(M)\subset J_t^{m-1}(\mathbb{R},M)$
of Frenet jets. Let
\begin{equation*}
f_{ij}\colon (\pi _{m-1}^m)^{-1}(\mathcal{F}^{m-1}(M))
\to \mathbb{R},
\quad 1\leq i\leq j\leq m,
\end{equation*}
be the mapping defined by
$f_{ij}(j_t^m\sigma )=f_{ij}^\sigma (t)$,
$\pi _l^k\colon J^k(\mathbb{R},M)\to J^l(\mathbb{R},M)$,
$k\geq l$, being the canonical projections. Similarly, let
\begin{equation}
\label{Delta_k}
\Delta _k\colon J^k(\mathbb{R},M)\to \mathbb{R},
\quad 1\leq k\leq m,
\end{equation}
be the mapping given by
$\Delta _k(j_t^k\sigma )=\Delta _k^\sigma (t)$,
which is well defined according to the formula
\eqref{Delta_k^sigma}.
\begin{proposition}
\label{lem2}
If $\sigma \colon (a,b)\to M$,
$\bar{\sigma }\colon (a,b)\to \bar{M}$ are two Frenet
curves with values in Riemannian manifolds $(M,g)$,
$(\bar{M},\bar{g})$, then
$\left|
\kappa _i^\sigma
\right|
=\left|
\kappa _i^{\bar{\sigma }}
\right| $,
$0\leq i\leq m-1$, if and only if,
\begin{equation}
\label{L1}
g\left(
\nabla _{T^\sigma }^{i-1}
T^\sigma ,\nabla _{T^\sigma }^{j-1}T^\sigma
\right)
=\bar{g}
\left(
\bar{\nabla }_{T^{\bar{\sigma }}}^{i-1}
T^{\bar{\sigma }},
\bar{\nabla }_{T^{\bar{\sigma }}}^{j-1}
T^{\bar{\sigma }}
\right) ,
\quad i,j=1,\dotsc,m,
\end{equation}
$\nabla ,\bar{\nabla}$ being the Levi-Civita
connections associated to $g$, $\bar{g}$.
\end{proposition}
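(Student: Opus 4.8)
The plan is to reduce the equivalence of curvatures to the equality of the Gram matrix entries \eqref{L1} by exploiting the triangular relationship \eqref{F4} between the derivative fields $\nabla_{T^\sigma}^{j-1}T^\sigma$ and the Frenet frame $X_i^\sigma$. First I would observe that, since the change of basis matrix $(f_{ij}^\sigma)$ is upper triangular with the orthonormality of $(X_i^\sigma)$ giving $g(X_i^\sigma,X_k^\sigma)=\delta_{ik}$, one has the matrix identity
\begin{equation*}
\left( g\left(\nabla_{T^\sigma}^{i-1}T^\sigma,\nabla_{T^\sigma}^{j-1}T^\sigma\right)\right)_{i,j=1}^{k}
= \left(f_{ij}^\sigma\right)_{i,j=1}^{k}^{\!\top}\left(f_{ij}^\sigma\right)_{i,j=1}^{k},
\end{equation*}
which is precisely the Cholesky-type factorization underlying Proposition \ref{lemdelta} and its proof. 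Consequently, the left side of \eqref{L1} is determined by, and determines, the entries $f_{ij}^\sigma$ for $1\le i\le j\le m$: going one way is the displayed product formula; going the other way, the diagonal entries $f_{jj}^\sigma$ are recovered as $\sqrt{\Delta_j^\sigma/\Delta_{j-1}^\sigma}$ (with $f_{jj}^\sigma>0$ since by \eqref{der0}, \eqref{der} and $\kappa_j^\sigma>0$ for $j\le m-2$ each $f_{jj}^\sigma$ is a product of positive factors up to the last one; the sign of $f_{mm}^\sigma$ matches $\varepsilon_{m-1}$), and then the strictly-upper entries are obtained by back-substitution in the triangular system.

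Next, for the direction $(\Rightarrow)$ I would assume $|\kappa_i^\sigma|=|\kappa_i^{\bar\sigma}|$ for all $i$. The recurrence \eqref{der0}–\eqref{der} expresses every $f_{ij}^\sigma$ as a universal polynomial-differential expression in $\kappa_0^\sigma,\dotsc,\kappa_{j-2}^\sigma$ and their $t$-derivatives, in which — this is the point to check carefully — the curvatures $\kappa_1^\sigma,\dotsc,\kappa_{m-2}^\sigma$ appear only through the products $\kappa_0^\sigma\kappa_i^\sigma$ and, crucially, each $f_{ij}^\sigma$ is built from $\kappa_0^\sigma$ and from the $\kappa_\ell^\sigma$ with $\ell\ge 1$ in such a way that the absolute values alone do not suffice unless signs are tracked. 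Here I expect the main obstacle: a priori $f_{ij}^\sigma$ depends on the actual signs of $\kappa_\ell^\sigma$, not just their moduli. The resolution is that $\kappa_\ell^\sigma>0$ for $0\le\ell\le m-2$ by Proposition \ref{referenciafrenet}, so on that range $|\kappa_\ell^\sigma|=\kappa_\ell^\sigma$ and there is nothing to track; the only ambiguous sign is that of $\kappa_{m-1}^\sigma$, and inspecting \eqref{der} shows $\kappa_{m-1}^\sigma$ enters only the single coefficient $f_{m-1,m}^\sigma$ via the term $f_{m-2,m-1}^\sigma\kappa_0^\sigma\kappa_{m-1}^\sigma$ — wait, more precisely it enters $f_{mm}^\sigma=f_{m-1,m-1}^\sigma\kappa_0^\sigma\kappa_{m-1}^\sigma$, which changes sign with $\kappa_{m-1}^\sigma$. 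But \eqref{L1} involves only $(f_{mm}^\sigma)^2$ through the Gram entries (the bottom-right entry of the Gram matrix equals $\sum_{i\le m}(f_{im}^\sigma)^2$), and more to the point $\Delta_m^\sigma=\prod_i(f_{ii}^\sigma)^2$ is insensitive to that sign; so the Gram matrix is in fact a function of $\kappa_0^\sigma,\dotsc,\kappa_{m-2}^\sigma$ and $(\kappa_{m-1}^\sigma)^2=|\kappa_{m-1}^\sigma|^2$ alone. Thus equal moduli of all curvatures force equal Gram matrices, giving \eqref{L1}.

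For the converse $(\Leftarrow)$, assuming \eqref{L1} I would invoke Proposition \ref{lemdelta} directly: the determinants $\Delta_k^\sigma$ are polynomial functions of the Gram entries, hence $\Delta_k^\sigma=\Delta_k^{\bar\sigma}$ for $1\le k\le m$ by \eqref{L1}; then the explicit formulas $\kappa_0^\sigma=\sqrt{\Delta_1^\sigma}$, $\kappa_1^\sigma=\sqrt{\Delta_2^\sigma/(\Delta_1^\sigma)^3}$, and $\kappa_i^\sigma=\varepsilon_i\sqrt{\Delta_{i-1}^\sigma\Delta_{i+1}^\sigma}/(\sqrt{\Delta_1^\sigma}\,\Delta_i^\sigma)$ give $\kappa_i^\sigma=\kappa_i^{\bar\sigma}$ for $0\le i\le m-2$ outright (all these are positive) and $|\kappa_{m-1}^\sigma|=|\kappa_{m-1}^{\bar\sigma}|$ since only $\varepsilon_{m-1}=\pm1$ is undetermined. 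One small point worth stating explicitly is that $\Delta_k^\sigma>0$ along a Frenet curve (Gram determinant of linearly independent vectors in a Euclidean space), so all the square roots are legitimate and non-vanishing, and the division by $\Delta_i^\sigma$ is valid. I would then close by remarking that the argument is entirely pointwise in $t$, so it holds on all of $(a,b)$, which completes the equivalence.
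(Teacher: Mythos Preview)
Your proposal is correct and follows essentially the same route as the paper: for $(\Leftarrow)$ you read off $\Delta_k^\sigma=\Delta_k^{\bar\sigma}$ from \eqref{L1} and apply Proposition~\ref{lemdelta}, exactly as the paper does; for $(\Rightarrow)$ you use that $\kappa_i>0$ for $0\le i\le m-2$ together with the recurrences \eqref{der0}--\eqref{der} to conclude $f_{ij}^\sigma=f_{ij}^{\bar\sigma}$ for $(i,j)\neq(m,m)$ and $|f_{mm}^\sigma|=|f_{mm}^{\bar\sigma}|$, whence the Gram entries agree via the identity $g(\nabla^{i-1}_{T^\sigma}T^\sigma,\nabla^{j-1}_{T^\sigma}T^\sigma)=\sum_{h\le\min(i,j)} f_{hi}^\sigma f_{hj}^\sigma$, which is precisely the paper's computation. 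The only cosmetic difference is that you phrase the triangular relation as a Cholesky factorization and spell out why $f_{mm}$ enters the Gram matrix only through its square, whereas the paper leaves that implicit in the index range $k\le i$ of its displayed sum.
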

\begin{proof}
If \eqref{L1} holds for every $i,j=1,\dotsc,m$,
then $\Delta _k^\sigma =\Delta _k^{\bar{\sigma }}$
for $1\leq k\leq m$.
From the formulas \eqref{Delta_k^sigma}
in Proposition \ref{lemdelta} we deduce
$\kappa _i^\sigma =\kappa _i^{\bar{\sigma}}$
for $0\leq i\leq m-2$ and $|\kappa _{m-1}^\sigma |
=|\kappa _{m-1}^{\bar{\sigma}}|$. Conversely, if
$\left\vert
\kappa _i^\sigma
\right\vert
=\left\vert
\kappa _i^{\bar{\sigma}}
\right\vert $,
$0\leq i\leq m-1$, then from the formulas \eqref{der0}
and \eqref{der} by recurrence on the subindex
of $\kappa _i$ we obtain
$\left\vert
f_{mm}^\sigma
\right\vert
=\left\vert
f_{mm}^{\bar{\sigma}}
\right\vert $
and $f_{ij}^\sigma =f_{ij}^{\bar{\sigma }}$ otherwise.
Hence, for every $i,j=1,\dotsc,m$, we have
\begin{eqnarray*}
g\left(
\nabla _{T^\sigma }^{i-1}T^\sigma ,
\nabla _{T^\sigma }^{j-1}T^\sigma T
\right)
&=& \sum _{k=1}^i\sum _{l=1}^j
f_{ki}^\sigma f_{lj}^\sigma \delta _{kl} \\
&=& \sum _{k=1}^i\sum _{l=1}^j
f_{ki}^{\bar{\sigma}}f_{lj}^{\bar{\sigma }}
\delta _{kl} \\
&=& \bar{g}
\left(
\bar{\nabla }_{T^{\bar{\sigma }}}^{i-1}
T^{\bar{\sigma }},\bar{\nabla }_{T^{\bar{\sigma }}}^{j-1}
T^{\bar{\sigma }}
\right) .
\end{eqnarray*}
\end{proof}
\subsection{Existence theorems}\label{existence_ths}
The fundamental theorem for curves in Euclidean $3$-space
(cf.\ \cite[pp.\ 29--31]{Struik}) states that if two smooth
functions $\kappa (s)>0$, $\tau (s)$ are given, then there
exists a unique curve for which $s$ is the arc length,
$\kappa $ the curvature, and $\tau $ the torsion,
the moving trihedron of which at $s=s_0$ coincides
with the coordinate axes. The full generalization
of this result is as follows:
\begin{theorem}
\label{ecsfrenet}
Let $(M,g)$ be an $m$-dimensional oriented Riemannian
manifold and let $(v_1,\dotsc,v_m)$ be a positively
oriented orthonormal basis for $T_{x_0}M$.
Given functions
$\kappa _j\in C^\infty (t_0-\delta ,t_0+\delta )$,
$0\leq j\leq m-1$,
with $\kappa _j>0$ for $0\leq j\leq m-2$, there
exists $0<\varepsilon \leq \delta $ and
a unique Frenet curve
$\sigma \colon (t_0-\varepsilon ,t_0+\varepsilon )\to M$
such that,
\begin{enumerate}
\item[\emph{(i)}]
$\sigma (t_0)=x_0$,
\item[\emph{(ii)}]
$X_j^\sigma (t_0)=v_j$ for $1\leq j\leq m$,
\item[\emph{(iii)}]
$\kappa _j^\sigma =\kappa _j$ for $0\leq j\leq m-1$.
\end{enumerate}
\end{theorem}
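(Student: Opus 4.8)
The plan is to read the Frenet system (a)--(d) of Proposition~\ref{referenciafrenet}, together with $T^\sigma=\kappa_0^\sigma X_1^\sigma$, as a first order ODE for an unknown curve in $M$ together with an unknown orthonormal frame along it, and to take $\sigma$ to be the projection to $M$ of its integral curve issuing from $(x_0;v_1,\dotsc,v_m)$.

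First I would set up the system. With the conventions $\kappa_m\equiv 0$, $X_0=X_{m+1}=0$, consider the equations
\begin{equation*}
\dot\sigma=\kappa_0(t)\,X_1,\qquad
\frac{DX_i}{dt}=\kappa_0(t)\bigl(-\kappa_{i-1}(t)\,X_{i-1}+\kappa_i(t)\,X_{i+1}\bigr),\quad 1\le i\le m,
\end{equation*}
where the unknowns are a curve $\sigma$ in $M$ and vector fields $X_1,\dotsc,X_m$ along $\sigma$, and $D/dt$ is the covariant derivative along $\sigma$ for the Levi-Civita connection. In a coordinate chart about $x_0$, writing each $X_i$ in the coordinate frame and expanding $D/dt$ by means of the Christoffel symbols, this becomes an ordinary first order system in the $m+m^2$ unknown functions $(x^k\circ\sigma,X_i^k)$ with smooth right hand side on $(t_0-\delta,t_0+\delta)$. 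Hence there is a unique maximal solution with $\sigma(t_0)=x_0$, $X_i(t_0)=v_i$; its interval of existence is an open interval containing $t_0$ and contained in $(t_0-\delta,t_0+\delta)$, hence it contains $(t_0-\varepsilon,t_0+\varepsilon)$ for some $0<\varepsilon\le\delta$. Take $\sigma$ to be the $M$-component of this solution. Since the coefficient matrix $A(t)=(a_{ij}(t))$ determined by $DX_i/dt=\sum_k a_{ik}X_k$ has $a_{i,i+1}=\kappa_0\kappa_i=-a_{i+1,i}$ and zero diagonal, it is skew-symmetric; therefore $G(t)=\bigl(g(X_i(t),X_j(t))\bigr)$ satisfies $\dot G=AG-GA$, which has $G\equiv I$ as a solution, so $G(t)\equiv I$ because $G(t_0)=I$. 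Thus $(X_1(t),\dotsc,X_m(t))$ is for every $t$ an orthonormal basis of $T_{\sigma(t)}M$, positively oriented by continuity.

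Next I would check that $\sigma$, $(X_i)$ and $(\kappa_j)$ are the Frenet curve, Frenet frame and curvatures asked for. As $T^\sigma=\kappa_0 X_1$ with $\kappa_0>0$, on fields along $\sigma$ one has $\nabla_{X_1}=\kappa_0^{-1}\,D/dt$, so the $X_i$ and $\kappa_j$ satisfy exactly the equations (a)--(d). The same induction that yields the recurrences \eqref{der0}--\eqref{der} then gives $\nabla_{T^\sigma}^{j-1}T^\sigma=\sum_{i=1}^j f_{ij}X_i$ with $f_{jj}=\kappa_0^{\,j}\kappa_1\cdots\kappa_{j-1}$; for $1\le j\le m-1$ every factor is positive by hypothesis, so $T^\sigma,\nabla_{T^\sigma}T^\sigma,\dotsc,\nabla_{T^\sigma}^{m-2}T^\sigma$ are linearly independent, i.e.\ $\sigma$ is a Frenet curve, and the triangularity with positive diagonal shows that $(X_1,\dotsc,X_i)$ and $(T^\sigma,\dotsc,\nabla_{T^\sigma}^{i-1}T^\sigma)$ span the same subspace with the same orientation. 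Hence $(X_i)$ and $(\kappa_j)$ satisfy (i)--(iii) of Proposition~\ref{referenciafrenet}, so by the uniqueness part of that proposition they are the Frenet frame and the curvatures of $\sigma$; in particular (i), (ii) and (iii) of the theorem hold for this $\sigma$.

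Finally, uniqueness: if $\sigma'\colon(t_0-\varepsilon',t_0+\varepsilon')\to M$ is another Frenet curve satisfying (i)--(iii), then by Proposition~\ref{referenciafrenet}(iii) its Frenet frame $(X_i^{\sigma'})$ and curvatures $\kappa_j^{\sigma'}=\kappa_j$ make $(\sigma',X_i^{\sigma'})$ a solution of the above ODE with initial data $(x_0;v_1,\dotsc,v_m)$; by uniqueness of solutions, $(\sigma',X_i^{\sigma'})$ and $(\sigma,X_i)$ coincide on the intersection of their domains, so in particular $\sigma'=\sigma$ there. I do not expect any deep difficulty here: the step that needs care is the verification that the integral curve is genuinely a Frenet curve, which is exactly where the hypothesis $\kappa_j>0$ for $0\le j\le m-2$ enters (it makes $f_{jj}\ne0$ for $j\le m-1$), together with the bookkeeping identifying, via the uniqueness in Proposition~\ref{referenciafrenet}, the constructed frame with the Frenet frame, so that the prescribed $\kappa_j$ really become the curvatures of $\sigma$.
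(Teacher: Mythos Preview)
Your proof is correct and follows essentially the same approach as the paper: both recast the Frenet equations as a first-order ODE for a curve in $M$ together with a frame along it, invoke local existence and uniqueness, prove orthonormality of the resulting frame by an ODE/uniqueness argument (your matrix equation $\dot G=AG-GA$ with $A$ skew is the compact form of the paper's componentwise system for $\varphi_{ij}=g(X_i,X_j)$), obtain positive orientation by continuity, and then verify property~(ii) of Proposition~\ref{referenciafrenet} by the same triangular induction, using $\kappa_j>0$ for $j\le m-2$ to get $f_{jj}\neq0$. The only cosmetic difference is that you appeal to the uniqueness clause of Proposition~\ref{referenciafrenet} to identify $(X_i)$ with the Frenet frame, whereas the paper checks the defining properties directly; the content is the same.
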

\begin{proof}
Let $(U;\,x^1,\dotsc,x^m)$ be the normal coordinate
system centered at $x_0$ associated to the orthonormal
linear frame $(v_1,\dotsc,v_m)$ given in the statement,
let $p_M^m\colon \oplus ^mTM\to  M$ be the bundle
projection, and let denote by $(x^i,y_k^j)$,
$i,j,k=1,\dotsc,m$, the induced coordinate system
on $(p_M^m)^{-1}(U)$, i.e.,
\begin{equation*}
u_j=y_j^i(u)
\left.
\frac{\partial }{\partial x^i}
\right| _x,
\qquad
\forall u=(u_1,\dotsc,u_m)\in \oplus ^mT_{x}M,\;
x\in U.
\end{equation*}
First of all, we prove that the Frenet formulas
are locally equivalent to a system of first-order
ordinary differential equations on the manifold
$\oplus ^mTM$. In fact, as a computation shows,
the formulas (a)-(d) in Proposition
\ref{referenciafrenet} can be written in local
coordinates as follows:
\begin{equation}
\label{x^j_sigma}
\dfrac{d(x^j\circ \sigma )}{dt}
=\kappa _0^\sigma (y_1^j\circ X^\sigma ),
\end{equation}
\begin{equation}
\label{y^j_1}
\dfrac{d(y_1^j\circ X^\sigma )}{dt}
=\kappa _0^\sigma \kappa _1^\sigma
(y_2^j\circ X^\sigma )
-\kappa _0^\sigma
(\Gamma _{hi}^j\circ \sigma )
(y_1^h\circ X^\sigma )
(y_1^i\circ X^\sigma ),
\end{equation}
\begin{equation}
\left\{
\begin{array}{ll}
\dfrac{d(y_i^{c}\circ X^\sigma )}{dt}
= & \!\! \kappa _0^\sigma
[\kappa _i^\sigma (y_{i+1}^c\circ X^\sigma )
-\kappa _{i-1}^\sigma (y_{i-1}^c\circ X^\sigma )] \\
& \!\! -\kappa _0^\sigma
\left(
\Gamma _{ab}^c\circ \sigma
\right)
(y_1^a\circ X^\sigma )(y_i^b\circ X^\sigma ),
\quad 2\leq i\leq m-1,
\end{array}
\right.
\label{y^c_i}
\end{equation}
\begin{equation}
\label{y^ c_m}
\dfrac{d(y_{m}^{c}\circ X^\sigma )}{dt}
=-\kappa _0^\sigma \kappa _{m-1}^\sigma
(y_{m-1}^c\circ X^\sigma )
-\kappa _0^\sigma
\left(
\Gamma _{ab}^{c}\circ \sigma
\right)
(y_1^a\circ X^\sigma )(y_m^b\circ X^\sigma ),
\end{equation}
where $\Gamma _{ab}^{c}$ are the components
of the Levi-Civita connection $\nabla $ of $g$
with respect to the coordinate system
$(x^h)_{h=1}^m$ and
$X^\sigma \colon (a,b) \to \oplus ^mTM$,
$a<t_0<b$, is the curve given by
$X^\sigma (t)
=t(X_1^\sigma (t),\dotsc,X_m^\sigma (t))$,
$\forall t\in (a,b)$.

Hence the functions
$x^h\circ \sigma ,y_j^i\circ X^\sigma
\colon (a,b)\to \mathbb{R}$, $h,i,j=1,\dotsc,m$,
are the only solutions to the system
\eqref{x^j_sigma}--\eqref{y^ c_m} satisfying
the initial conditions (i), (ii) in the statement;
i.e., $(x^h\circ \sigma )(t_0)=x^h(x_0)$,
$(y_j^i\circ X^\sigma )(t_0)
=y_j^i(v_1,\dotsc,v_m)=\delta _j^i$.

Conversely, if $X^\sigma $ and the curvatures
$\kappa _{j-1}^\sigma $, $1\leq j\leq m$,
are replaced by an arbitrary smooth curve
$X=(X_1,\dotsc,X_{m})
\colon (t_0-\delta ,t_0+\delta )
\to \oplus ^mTM$, and the given functions
$\kappa _{j-1}$, $1\leq j\leq m$,
respectively, with $\sigma =p_M^m\circ X$,
into the equations
\eqref{x^j_sigma}--\eqref{y^ c_m} above,
then the following system is obtained:
\begin{equation}
\label{x^j_sigma_BIS}
\dfrac{d(x^j\circ \sigma )}{dt}
=\kappa _0(y_1^j\circ X),
\end{equation}
\begin{equation}
\label{y^j_1_BIS}
\dfrac{d(y_1^j\circ X)}{dt}
=\kappa _0\kappa _1(y_2^j\circ X)
-\kappa _0(\Gamma _{hi}^j\circ \sigma )
(y_1^h\circ X)(y_1^i\circ X),
\end{equation}
\begin{equation}
\left\{
\begin{array}{ll}
\dfrac{d(y_i^{c}\circ X)}{dt}
= & \!\!\kappa _0
[\kappa _i(y_{i+1}^c\circ X)
-\kappa _{i-1}(y_{i-1}^c\circ X)] \\
& \!\! -\kappa _0
\left(
\Gamma _{ab}^c\circ \sigma
\right)
(y_1^a\circ X)(y_i^b\circ X),
\quad 2\leq i\leq m-1,
\end{array}
\right.
\label{y^c_i_BIS}
\end{equation}
\begin{equation}
\label{y^c_m_BIS}
\dfrac{d(y_m^c\circ X)}{dt}
=-\kappa _0\kappa _{m-1}
(y_{m-1}^c\circ X)-\kappa _0
\left(
\Gamma _{ab}^c\circ \sigma
\right)
(y_1^a\circ X)(y_m^b\circ X),
\end{equation}
We claim that the only solution
$x^h\circ \sigma ,y_j^i\circ X
\colon (t_0-\varepsilon ,t_0+\varepsilon )
\to \mathbb{R}$ to the system
\eqref{x^j_sigma_BIS}--\eqref{y^c_m_BIS}
satisfying the initial conditions
\begin{equation*}
\begin{array}{ll}
(x^h\circ \sigma )(t_0)=x^h(x_0),
& 1\leq h\leq m, \\
(y_j^i\circ X)(t_0)=\delta _j^i,
& i,j=1,\dotsc,m,
\end{array}
\end{equation*}
provides the desired Frenet curve.

First, we observe that from the very definition
of \eqref{x^j_sigma_BIS}--\eqref{y^c_m_BIS},
the linear frame $(X_1,\dotsc,X_{m})$---defined
along the curve $\sigma $ with components
$x^h\circ \sigma $---determined by $X$, i.e.,
$X_j=(y_j^i\circ X)\frac{\partial }{\partial x^i}$,
satisfies the following equations:
\begin{equation}
\left\{
\begin{array}{ll}
T^\sigma =\kappa _0X_1,
&  \\
\nabla _{X_1}X_1=\kappa _1X_2,
&  \\
\nabla _{X_1}X_i
=-\kappa _{i-1}X_{i-1}
+\kappa _iX_{i+1},
& 2\leq i\leq
m-1, \\
\nabla _{X_1}X_m
=-\kappa _{m-1}X_{m-1}.
&
\end{array}
\right.
\label{Frenet_bis}
\end{equation}
Next, the item (i) in Proposition
\ref{referenciafrenet} is proved
to hold for this linear frame. In fact,
the functions $\varphi _{ij}(t)=g(X_i(t),X_j(t))$,
$|t-t_0|<\varepsilon $ , $1\leq i\leq j\leq m$,
are the only solution to the system
\begin{equation*}
\begin{array}{l}
\dfrac{d\varphi _{11}}{dt}
=2\kappa _0\kappa _1\varphi _{12},
\medskip \\
\dfrac{d\varphi _{1j}}{dt}=\kappa _0
\left(
\kappa _1\varphi _{2j}
-\kappa _{j-1}\varphi _{1,j-1}
+\kappa _j\varphi _{1,j+1}
\right) , \\
2\leq j\leq m-1,
\medskip \\
\dfrac{d\varphi _{1m}}{dt}
=\kappa _0
\left(
\kappa _1\varphi _{2m}
-\kappa _{m-1}\varphi _{2,m-1}
\right) ,
\medskip \\
\dfrac{d\varphi _{ij}}{dt}
=\kappa _0
\left(
\kappa _i\varphi _{i+1,j}
+\kappa _j\varphi _{i,j+1}
-\kappa _{i-1}\varphi _{i-1,j}
-\kappa _{j-1}\varphi _{i,j-1}
\right) , \\
2\leq i\leq j\leq m-1,\medskip \\
\dfrac{d\varphi _{im}}{dt}
=\kappa _0
\left(
\kappa _i\varphi _{i+1,m}
-\kappa _{i-1}\varphi _{i-1,m}
-\kappa _{m-1}\varphi _{i,m-1}
\right) ,
\\
2\leq i\leq m-1,\medskip \\
\dfrac{d\varphi _{mm}}{dt}
=-2\kappa _0\kappa _{m-1}\varphi _{m-1,m},
\end{array}
\end{equation*}
such that $\varphi _{ij}(0)=\delta _{ij}$,
but Kronecker deltas are readily seen to be also
a solution to this system; hence
$g(X_i(t),X_j(t))=\delta _{ij}$. By virtue
of the assumption, one has
$\mathrm{vol}_g(X_1(0),\dotsc,X_m(0))
=\mathrm{vol}_g(v_1,\dotsc,v_m)=1$,
and accordingly,
$\mathrm{vol}_g(X_1(t),\dotsc,X_m(t))=1$
for every $t\in (t_0-\varepsilon ,t_0+\varepsilon )$.

Finally, as the curvatures $\kappa _j^\sigma $,
$0\leq j\leq m-1$, are completely determined
by the Frenet formulas, it suffices to prove
that the linear frame $(X_1,\dotsc,X_m)$
satisfies the property (ii) of Proposition
\ref{referenciafrenet}, which is equivalent
to prove the existence of functions
$h_{ij}\in C^\infty (t_0-\varepsilon ,t_0+\varepsilon )$,
$1\leq i\leq j\leq m$, such that
$\nabla _{T^\sigma }^{j-1}T^\sigma
=\sum_{i=1}^jh_{ij}X_i$ for $1\leq j\leq m $.
If $j=1$, then this formula follows from the first
formula in \eqref{Frenet_bis} with $h_{11}=\kappa _0$.
Hence we can proceed by recurrence on $j\geq 2$.
By applying the operator $\nabla _{T^\sigma }$
to both sides of the equation
$\nabla _{T^\sigma }^{j-2}T^\sigma
=\sum _{i=1}^{j-1}h_{i,j-1}X_i$, we have
$\nabla _{T^\sigma }^{j-1}T^\sigma
=\sum _{i=1}^{j-1}((dh_{i,j-1}/dt)X_i
+\nabla _{T^\sigma }X_i)$, and the result follows
by replacing the term $\nabla _{T^\sigma }X_i
=\kappa _0\nabla _{X_1}X_i$ by its expression deduced
from the formulas in \eqref{Frenet_bis} above.
\end{proof}
\begin{theorem}
\label{ecsfrenet_bis}
Let $(M,g)$ be an $m$-dimensional oriented
Riemannian manifold. Given a system of functions
$\kappa =(\kappa _0,\dotsc,\kappa _{m-1})$,
with $\kappa _j\in C^\infty (t_0-\delta ,t_0+\delta )$
for $0\leq j\leq m-1$ and $\kappa _j>0 $
for $0\leq j\leq m-2$, let
$f_{ij}^\kappa \colon (t_0-\delta ,t_0+\delta )
\to \mathbb{R}$, $1\leq i\leq j\leq m$, be the functions
defined by the following recurrence relations:
\begin{equation}
\left\{
\begin{array}{l}
f_{11}^\kappa =\kappa _0,
\smallskip \\
f_{12}^\kappa =\dfrac{df_{11}^\kappa }{dt},
\smallskip \\
f_{22}^\kappa =f_{11}^\kappa \kappa _0\kappa _1,
\end{array}
\right.  \label{der0_kappa}
\end{equation}
\begin{equation}
3\leq j\leq m
\quad
\left\{
\begin{array}{l}
f_{1j}^\kappa
=\dfrac{df_{1,j-1}^\kappa }{dt}
-f_{2,j-1}^\kappa
\kappa _0\kappa _1,
\smallskip \\
f_{ij}^\kappa
=\dfrac{df_{i,j-1}^\kappa }{dt}
-f_{i+1,j-1}^\kappa \kappa _0\kappa _i
+f_{i-1,j-1}^\kappa \kappa _0\kappa _{i-1}, \\
2\leq i\leq j-2,
\smallskip \\
f_{j-1,j}^\kappa
=\dfrac{df_{j-1,j-1}^\kappa }{dt}
+f_{j-2,j-1}^\kappa
\kappa _0\kappa _{j-2},
\smallskip \\
f_{jj}^\kappa =
f_{j-1,j-1}^\kappa \kappa _0\kappa _{j-1}.
\end{array}
\right.  \label{der_kappa}
\end{equation}
Let $w_j\in T_{x_0}M$, $1\leq j\leq m$,
be vectors such that the system
$(w_1,\dotsc,w_{m-1})$ is linearly
independent. The necessary and sufficient
conditions for a Frenet curve
$\sigma \colon (t_0-\varepsilon ,t_0+\varepsilon )\to M$,
$0<\varepsilon \leq \delta $, to exist such that,
\begin{enumerate}
\item[\emph{(i)}]
$\sigma (t_0)=x_0$,
\item[\emph{(ii)}]
$\left(
\nabla _{T^\sigma }^{j-1}T^\sigma
\right)
(t_0)=w_j$ for $1\leq j\leq m$,
\item[\emph{(iii)}]
$\kappa _{j-1}^\sigma =\kappa _{j-1}$
for $1\leq j\leq m$,
\end{enumerate}
\noindent are the following:
\begin{equation}
\label{scalar_products}
g\left(
w_i,w_j
\right)
=\sum _{h=1}^if_{hi}^\kappa (t_0)f_{hj}^\kappa (t_0),
\quad
1\leq i\leq j\leq m.
\end{equation}
\end{theorem}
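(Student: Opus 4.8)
The key observation is that Proposition~\ref{lem2} already tells us, essentially, that the conditions \eqref{scalar_products} are exactly the obstruction. So the plan is to reduce this theorem to Theorem~\ref{ecsfrenet} by a linear-algebra argument and an application of Proposition~\ref{lem2}.

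\emph{Necessity.} If such a Frenet curve $\sigma$ exists, then by definition $f_{ij}^\sigma = f_{ij}^\kappa$ for all $1\leq i\leq j\leq m$, since the $f_{ij}^\sigma$ are built from the $\kappa^\sigma_{j-1}=\kappa_{j-1}$ by exactly the recurrences \eqref{der0}, \eqref{der}, which coincide with \eqref{der0_kappa}, \eqref{der_kappa}. On the other hand, from \eqref{F4} and the orthonormality of the Frenet frame $(X_1^\sigma,\dotsc,X_m^\sigma)$ we get
\begin{equation*}
g\!\left(\nabla_{T^\sigma}^{i-1}T^\sigma,\nabla_{T^\sigma}^{j-1}T^\sigma\right)
=\sum_{h=1}^{i} f_{hi}^\sigma f_{hj}^\sigma ,
\quad 1\leq i\leq j\leq m,
\end{equation*}
exactly as in the computation at the end of the proof of Proposition~\ref{lem2}. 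Evaluating at $t_0$ and using (ii) gives $g(w_i,w_j)=\sum_{h=1}^i f_{hi}^\kappa(t_0)f_{hj}^\kappa(t_0)$, which is \eqref{scalar_products}.

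\emph{Sufficiency.} Assume \eqref{scalar_products} holds. First I would extract from it that the Gram matrix of $(w_1,\dotsc,w_{m-1})$ has the same determinants $\Delta_k^\kappa(t_0):=(\det(f_{ij}^\kappa)_{i,j=1}^k)^2$ as in Proposition~\ref{lemdelta}; since $\kappa_j>0$ for $0\leq j\leq m-2$ forces $f_{kk}^\kappa(t_0)>0$ for $1\leq k\leq m-1$ (by the recurrences and an easy induction), all these determinants for $k\leq m-1$ are strictly positive, consistent with $(w_1,\dotsc,w_{m-1})$ being linearly independent as assumed. Now fix the positively oriented orthonormal basis $(v_1,\dotsc,v_m)$ of $T_{x_0}M$ obtained by Gram--Schmidt from $(w_1,\dotsc,w_{m-1})$ (completed to an oriented basis), so that $w_j=\sum_{h=1}^{j} c_{hj} v_h$ for $1\leq j\leq m-1$ with $c_{hh}>0$; the scalar-product relations \eqref{scalar_products} for $i,j\leq m-1$ then force $c_{hj}=f_{hj}^\kappa(t_0)$. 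For $j=m$ one has to check $w_m$ lands correctly: write $w_m=\sum_{h=1}^m c_{hm}v_h$; the relations $g(w_i,w_m)=\sum_h f_{hi}^\kappa f_{hm}^\kappa$ for $i\leq m-1$ determine $c_{1m},\dotsc,c_{m-1,m}$ to equal $f_{hm}^\kappa(t_0)$, and the relation for $i=m$ fixes $c_{mm}^2=(f_{mm}^\kappa(t_0))^2$, i.e. $c_{mm}=\pm f_{mm}^\kappa(t_0)$ — the sign being exactly the orientation freedom $\varepsilon_{m-1}$. Having matched the frame, invoke Theorem~\ref{ecsfrenet} with this $(v_1,\dotsc,v_m)$ and these $\kappa_j$ to get a unique Frenet curve $\sigma$ with $\sigma(t_0)=x_0$, $X_j^\sigma(t_0)=v_j$, $\kappa_j^\sigma=\kappa_j$. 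Then $f_{ij}^\sigma=f_{ij}^\kappa$ and \eqref{F4} at $t_0$ gives $\nabla_{T^\sigma}^{j-1}T^\sigma(t_0)=\sum_{h=1}^j f_{hj}^\kappa(t_0)v_h=w_j$, as required.

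\emph{Main obstacle.} The delicate point is the top index $j=m$: the vectors $v_1,\dotsc,v_m$ form an orthonormal basis, so $f_{mm}^\kappa(t_0)$ need not be positive and there is a genuine sign ambiguity. I expect the cleanest way to handle this is to choose the orientation of $v_m$ so that $c_{mm}=+f_{mm}^\kappa(t_0)$ when $f_{mm}^\kappa(t_0)\neq 0$ (and to observe that if $f_{mm}^\kappa(t_0)=0$ the condition \eqref{scalar_products} already forces $w_m\in\langle w_1,\dotsc,w_{m-1}\rangle$, consistent with any oriented completion); this is precisely the role of $\varepsilon_{m-1}=\pm1$ in Proposition~\ref{lemdelta}. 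Everything else is the bookkeeping identity $g(\nabla_{T^\sigma}^{i-1}T^\sigma,\nabla_{T^\sigma}^{j-1}T^\sigma)=\sum_{h=1}^i f_{hi}^\sigma f_{hj}^\sigma$ together with a triangular linear-algebra change of basis, so no hard analysis is needed beyond Theorem~\ref{ecsfrenet} itself.
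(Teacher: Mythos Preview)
Your proposal is correct and follows essentially the same route as the paper: apply Gram--Schmidt to $(w_1,\dotsc,w_{m-1})$, complete to an orthonormal frame $(v_1,\dotsc,v_m)$, invoke Theorem~\ref{ecsfrenet} with this frame and the given $\kappa_j$, and then identify $(\nabla_{T^\sigma}^{j-1}T^\sigma)(t_0)$ with $w_j$ via the inner-product identities $g(w_i,w_j)=\sum_h f^\kappa_{hi}f^\kappa_{hj}$. The only cosmetic difference is that the paper packages your explicit triangular computation $c_{hj}=f^\kappa_{hj}(t_0)$ into a single linear-algebra lemma (two linearly independent systems sharing the same Gram--Schmidt orthonormalization and the same Gram matrix must coincide), and it takes $v_m$ to be the positively oriented completion rather than choosing its sign to match $c_{mm}$.
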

\begin{proof}
If the curve $\sigma $ in the statement exists,
then $f_{ij}^\kappa =f_{ij}^\sigma $,
where the functions $f_{ij}^\sigma $ are given
in the formulas \eqref{der0} and \eqref{der},
and from the formulas \eqref{F4} we have
\begin{eqnarray*}
g\left(
w_i,w_j
\right)
&=& g\left(
\nabla _{T^\sigma }^{i-1}
T^\sigma ,
\nabla _{T^\sigma }^{j-1}T^\sigma
\right)
\left( t_0
\right) \\
&=& \sum _{h=1}^if_{hi}^\sigma (t_0)f_{hj}^\sigma (t_0).
\end{eqnarray*}
Hence, all the conditions \eqref{scalar_products}
are necessary for the curve $\sigma $ to exist.

Let $(v_1,\dotsc,v_{m-1})$ be the orthonormal system
in $T_{x_0}M$ obtained by applying the Gram-Schmidt
process to the system $(w_1,\dotsc,w_{m-1})$, and let
$v_{m}$ be the only unitary tangent vector orthogonal
to $v_1,\dotsc,v_{m-1}$ for which the basis
$(v_1,\dotsc,v_{m-1},v_m)$ of $T_{x_0}M$ is positively
oriented. According to Theorem \ref{ecsfrenet},
there exists a Frenet curve
$\sigma \colon (t_0-\varepsilon ,t_0+\varepsilon )\to M$
such that,

a) $\sigma (t_0)=x_0$;
b) $X_j^\sigma (t_0)=v_j$, $1\leq j\leq m$;
c) $\kappa _j^\sigma =\kappa _j$, $0\leq j\leq m-1$.

Hence $f_{ij}^\kappa =f_{ij}^\sigma $, as follows
from the formulas \eqref{der0}, \eqref{der},
\eqref{der0_kappa}, and \eqref{der_kappa}, and from
\eqref{F4} we obtain
$(\nabla _{T^\sigma }^{j-1}T^\sigma )(t_0)
=\sum_{i=1}^jf_{ij}^\sigma (t_0)v_i$, $1\leq j\leq m$.
Consequently, the Gram-Schmidt process applied
to $(T^\sigma (t_0),\dotsc,
(\nabla _{T^\sigma }^{m-2}T^\sigma )(t_0))$
also leads one to the orthonormal system
$(v_1,\dotsc,v_{m-1})$. By virtue of \eqref{scalar_products}
we thus have
$g(w_i,w_j)=g(\nabla _{T^\sigma }^{i-1}T^\sigma ,
\nabla _{T^\sigma }^{j-1}T^\sigma )(t_0)$
for $1\leq i\leq j\leq m$, and we can conclude by simply
recalling the following fact: If $(u_1,\dotsc,u_k)$,
$(u_1^\prime ,\dotsc,u_k^\prime )$ are two linearly
independent systems such that,
1st) the Gram-Schmidt process applied
to $(u_1,\dotsc,u_k)$, as well as
to $(u_1^\prime ,\dotsc,u_k^\prime )$, leads
to the same orthonormal system, and
2nd) $g(u_i,u_j)=g(u_i^\prime ,u_j^\prime )$
for $1\leq i\leq j\leq k$, then both systems coincide,
i.e., $u_i=u_i^\prime $ for $i=1,\dotsc,k$.

Finally, the Frenet frame at $t_0$ of any Frenet curve
$\sigma \colon (t_0-\varepsilon ,t_0+\varepsilon )\to  M$
satisfying (i)-(iii) in the statement coincides
with the system $(v_1,\dotsc,v_m)$ and we can conlude
its uniqueness from Theorem \ref{ecsfrenet}.
\end{proof}
\begin{remark}
The explicit formulas for the functions $f_{ij}^\kappa $
are rather involved, but their computational evaluation
is quite feasible; for example,
\begin{equation*}
\begin{array}{l}
\text{For }m=3\text{:} \\
\multicolumn{1}{r}{f_{33}^\kappa
=\left(
\frac{d^2\kappa _0}{dt^2}
\right) ^2
-2\frac{d^2\kappa _0}{dt^2}
(\kappa _0)^3(\kappa _1)^2+(\kappa _0)^6(\kappa _1)^4
+9\left(
\frac{d\kappa _0}{dt}
\right) ^2
(\kappa _0)^2(\kappa _1)^2} \\
\multicolumn{1}{r}{+6\frac{d\kappa _0}{dt}
\frac{d\kappa _1}{dt}(\kappa _0)^3\kappa _1
+\left(
\frac{d\kappa _1}{dt}
\right) ^4(\kappa _0)^4
+(\kappa _0)^6(\kappa _1)^2(\kappa _2)^2.}
\end{array}
\end{equation*}
\begin{equation*}
\begin{array}{l}
\text{For }m=4\text{:} \\
\multicolumn{1}{r}{f_{24}^\kappa
=\frac{d\kappa _0}{dt}\frac{d^3\kappa _0}{dt^3}
-3\left( \frac{d\kappa _0}{dt}
\right) ^2
(\kappa _0)^2(\kappa _1)^2
+2\frac{d\kappa _0}{dt}\frac{d\kappa _1}{dt}
(\kappa _0)^3\kappa _1
+4\frac{d^2\kappa _0}{dt^2}
(\kappa _0)^3(\kappa _1)^2} \\
\multicolumn{1}{r}
{+\frac{d^2\kappa _1}{dt^2}(\kappa _0)^4
\kappa _1-(\kappa _0)^6(\kappa _1)^2(\kappa _2)^2
-(\kappa _0)^{6}(\kappa _1)^4.}
\end{array}
\end{equation*}
\end{remark}
\begin{remark}
Although the formulas \eqref{Delta_k^sigma}
in the statement of Proposition \ref{lemdelta}
are necessary for the curve $\sigma $ to exist,
they are not sufficient. More formally, let
$\Xi _\kappa
=(\pi ^{\prime m},\Xi _\kappa ^1,\dotsc,\Xi _\kappa ^m)
\colon J^m(\mathbb{R},M)\to M\times \mathbb{R}^m$
be the mapping of fibred manifolds over $M$ given by,
\begin{equation*}
\begin{array}{l}
\Xi _{\kappa }^1
=\Delta _1\circ \pi _1^m-(\kappa _0\circ \pi ^m)^2,
\smallskip \\
\Xi _{\kappa }^2
=\Delta _2\circ \pi _2^m-(\kappa _1\circ \pi ^m)^2
\left(
\Delta _1\circ \pi _1^m
\right) ^3,
\smallskip \\
\Xi _{\kappa }^i
=\left(
\Delta _{i-2}\circ \pi _{i-2}^m
\right)
\left(
\Delta _i\circ \pi _i^m
\right)
-(\kappa _{i-1}\circ \pi ^m)^2
\left(
\Delta _1\circ \pi _1^m
\right)
\left(
\Delta _{i-1}\circ \pi _{i-1}^m
\right) ^2, \\
3\leq i\leq m-1, \\
\Xi _\kappa ^m
\left(
j_{t_0}^m\sigma
\right)
=\mathrm{vol}_g
\left(
T^\sigma ,
\nabla _{T^\sigma }T^\sigma ,
\dotsc,
\nabla _{T^\sigma }^{m-1}T^\sigma
\right) (t_0)
\sqrt{\Delta _{m-2}^\sigma (t_0)} \\
\qquad
\qquad
\qquad
\qquad
\qquad
\qquad
\qquad
\qquad
\qquad
-\kappa _{m-1}(t_0)
\sqrt{\Delta _1^\sigma (t_0)}
\Delta _{m-1}^\sigma (t_0),
\end{array}
\end{equation*}
$\pi ^m\colon J^m(\mathbb{R},M)\to \mathbb{R}$,
$\pi ^{\prime m}\colon J^m(\mathbb{R},M)\to M$
being the canonical projections and
$\Delta _k\colon J^m(\mathbb{R},M)\to \mathbb{R}$
being the mapping defined in \eqref{Delta_k}.
Because of the expression for $\Xi _\kappa ^m$,
the mapping $\Xi _\kappa $ is quasi-linear.
If $\kappa =\kappa ^\sigma $ for a Frenet curve
$\sigma $, then it is readily checked that
the equations
$\Xi _{\kappa ^\sigma }\circ j^m\sigma =0$
are equivalent to the formulas
\eqref{Delta_k^sigma}.

The fibred submanifold
$R^m=(\Xi _\kappa )^{-1}\{ 0\}
\cap (\pi _{m-1}^m)^{-1}(\mathcal{F}^{m-1}(M))$
is not formally integrable, even in the simplest
cases. For $M=\mathbb{R}^2$, $g=dx^2+dy^2$,
the submanifold $R^2$ is defined by the equations
$\dot{x}^2+\dot{y}^2=(\kappa _0)^2$,
$\dot{x}\ddot{y}-\ddot{x}\dot{y}
=(\kappa _0)^3\kappa _1$, whereas its first
prolongation $R^3$ is obtained by adding
to the equations of $R^2$ the following:
$\dot{x}\ddot{x}+\dot{y}\ddot{y}
=\kappa _0\dot{\kappa}_0$,
$\dot{x}\dddot{y}-\dddot{x}\dot{y}
=3(\kappa _0)^2\dot{\kappa}_0\kappa _1
+(\kappa _0)^3\dot{\kappa}_1$.
Hence the projection
$\pi _2^3\colon R^3\to R^2$ is not surjective.
For $M=\mathbb{R}^3$, $g=dx^2+dy^2+dz^2$,
the submanifold $R^3$\ is defined
by the following equations:
\begin{eqnarray*}
\dot{x}^2+\dot{y}^2+\dot{z}^2
&=& (\kappa _0)^2, \\
\left\vert
\begin{array}{cc}
\dot{x}^2+\dot{y}^2+\dot{z}^2
& \dot{x}\ddot{x}
+\dot{y}\ddot{y}
+\dot{z}\ddot{z} \\
\dot{x}\ddot{x}
+\dot{y}\ddot{y}
+\dot{z}\ddot{z}
& \ddot{x}^2+\ddot{y}^2
+\ddot{z}^2
\end{array}
\right\vert
&=&(\kappa _0)^6(\kappa _1)^2, \\
\left\vert
\begin{array}{ccc}
\dot{x} & \dot{y} & \dot{z} \\
\ddot{x} & \ddot{y} & \ddot{z} \\
\dddot{x} & \dddot{y} & \dddot{z}
\end{array}
\right\vert
&=& (\kappa _0)^6(\kappa _1)^2\kappa _2,
\end{eqnarray*}
and one needs to reach the second prolongation
$R^5\subset J^5(\mathbb{R},M)$ of $R^3$ in order
to obtain an integrable system. For higher
dimensions similar conclusions are obtained.
\end{remark}
\subsection{$\mathcal{F}^{m-1}(M)$ and $\mathcal{N}^{m-1}(M)$}
\label{FandN}
\begin{theorem}
\label{comparingFandN}
Let $(M,g)$ be a Riemannian manifold. Let
$\mathcal{F}_{t_0,x_0}^{m-1}(M) $ be the subset
of Frenet jets
$j_{t_0}^{m-1}\sigma \in J^{m-1}(\mathbb{R},M)$
such that, $\sigma (t_0)=x_0$. Let
$\mathcal{N}_{t_0,x_0}^{m-1}(M)$
be the subset of jets
$j_{t_0}^{m-1}\sigma \in J^{m-1}(\mathbb{R},M)$
such that,
\emph{i)} $\sigma (t_0)=x_0$,
\emph{ii)} the curve $\sigma $ is normal general
position up to the order\ $m-1$ at $t_0$, i.e.,
the tangent vectors
$U_{t_0}^{\sigma ,1},
U_{t_0}^{\sigma ,2},
\dotsc,
U_{t_0}^{\sigma ,r}$
defined in \emph{\eqref{U^sigma,k}}
are linearly independent.

If either $\dim M=m\leq 4$ or $g$
is a flat metric at a neighbourhood
of $x_0$, then
$\mathcal{F}_{t_0,x_0}^{m-1}(M)
=\mathcal{N}_{t_0,x_0}^{m-1}(M)$,
$\forall t_0\in \mathbb{R}$,
$\forall x_0\in M$.

In the general case,
$\mathcal{F}_{t_0,x_0}^{m-1}(M)
\backslash \mathcal{N}_{t_0,x_0}^{m-1}(M)$
\emph{(}resp.\
$\mathcal{N}_{t_0,x_0}^{m-1}(M)
\backslash \mathcal{F}_{t_0,x_0}^{m-1}(M)$\emph{)}
is not empty but nowhere dense in
$\mathcal{F}_{t_0,x_0}^{m-1}(M)$
\emph{(}resp.\
$\mathcal{N}_{t_0,x_0}^{m-1}(M)$\emph{)}.
\end{theorem}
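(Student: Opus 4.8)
The key is to understand the precise relationship between the two families of vectors attached to a curve at $t_0$: the ``connection derivatives'' $\bigl(\nabla_{T^\sigma}^{k-1}T^\sigma\bigr)(t_0)$ and the ``normal-coordinate derivatives'' $U_{t_0}^{\sigma,k}$. By Lemma~\ref{lem20} (formula~\eqref{suc}) the former are expressed in any coordinate chart in terms of the ordinary derivatives $d^k(x^i\circ\sigma)/dt^k$ plus correction terms $F^{k,i}(j_t^k\sigma)$ built from the Christoffel symbols. The plan is to write both families in the \emph{normal} coordinate chart centered at $x_0$ associated to $\nabla$, and exploit the defining property of normal coordinates: $\Gamma_{jh}^i(x_0)=0$, and moreover the symmetrized Christoffel symbols vanish to all orders along radial directions, so in particular $F^{k,i}(j_{t_0}^k\sigma)$ vanishes \emph{at} $t_0$ whenever $k$ is small relative to how much of the curvature tensor can enter. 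Concretely, $F^{1,i}(j_{t_0}^1\sigma)=\Gamma_{jh}^i(x_0)\,\dot x^j\dot x^h=0$, and from the recurrence~\eqref{R} the terms $F^{2,i}, F^{3,i}$ also vanish at $t_0$ because every summand still carries an undifferentiated (or once-differentiated) Christoffel symbol evaluated at $x_0$. Thus at $t_0$ the two frames agree through order $3$, i.e.\ $\bigl(\nabla_{T^\sigma}^{k-1}T^\sigma\bigr)(t_0)=U_{t_0}^{\sigma,k}$ for $k\le 4$; this immediately gives equality of the linear-independence conditions, hence $\mathcal F_{t_0,x_0}^{m-1}(M)=\mathcal N_{t_0,x_0}^{m-1}(M)$ when $m-1\le 4$, i.e.\ $m\le 4$ -- wait, one must be slightly careful: independence up to order $m-1$ involves $U^{\sigma,1},\dots,U^{\sigma,m-1}$, so one needs agreement through index $m-1$, which the above gives for $m-1\le 4$. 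For $m\le 4$ this is $m-1\le 3$, comfortably inside the range where the $F$-terms vanish at $t_0$.

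For the \emph{flat} case the argument is cleaner and dimension-independent: if $g$ is flat near $x_0$, one may choose normal coordinates in which $\Gamma_{jh}^i\equiv 0$ on a whole neighbourhood, so \emph{all} $F^{k,i}$ vanish identically near $t_0$, and $\bigl(\nabla_{T^\sigma}^{k-1}T^\sigma\bigr)(t)=U_t^{\sigma,k}$ for all $k$ and all $t$ in a neighbourhood of $t_0$; hence $\Phi_\nabla^{m-1}$ and $\Psi_\nabla^{m-1}$ from Propositions~\ref{generic}, \ref{generic_bis} coincide on $J^{m-1}_{t_0,x_0}$ and the two sets of jets are literally the same.

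For the general case one argues that, still working in the normal chart at $x_0$, both $\bigl(\nabla_{T^\sigma}^{k-1}T^\sigma\bigr)(t_0)$ and $U_{t_0}^{\sigma,k}$ depend only on $j_{t_0}^{m-1}\sigma$, and they differ by terms polynomial in the lower derivatives with coefficients involving the curvature tensor and its covariant derivatives at $x_0$; since $g$ is not flat at $x_0$, at least one such coefficient is nonzero, which lets one exhibit explicit jets lying in one set but not the other (this is what the later Examples~\ref{m=5}, \ref{m=6} do concretely for $m=5,6$), so both difference sets are nonempty. To see they are nowhere dense: $\mathcal F^{m-1}_{t_0,x_0}(M)$ is, via the diffeomorphism $\Phi_\nabla^{m-1}$ restricted to the fibre over $(t_0,x_0)$, an open dense subset of $\oplus^{m-1}T_{x_0}M$ (its complement is the zero set of the single function $X^1\wedge\cdots\wedge X^{m-1}=0$, a proper analytic subvariety), and similarly for $\mathcal N^{m-1}_{t_0,x_0}(M)$ via $\Psi_\nabla^{m-1}$. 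Transporting one open dense condition through the polynomial change of frame $\Psi_\nabla^{m-1}\circ(\Phi_\nabla^{m-1})^{-1}$ shows $\mathcal F^{m-1}_{t_0,x_0}(M)\setminus\mathcal N^{m-1}_{t_0,x_0}(M)$ is the intersection of an open dense set with a proper analytic subvariety, hence nowhere dense, and symmetrically for the other difference.

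The main obstacle is the bookkeeping in the first part: one must verify carefully, from the recurrence~\eqref{R}, exactly how many orders of the $F^{k,i}$ vanish at $t_0$ in normal coordinates -- i.e.\ that the first place where the curvature tensor can produce a nonzero contribution to $\bigl(\nabla_{T^\sigma}^{k-1}T^\sigma\bigr)(t_0)-U_{t_0}^{\sigma,k}$ is exactly at the index that distinguishes dimension $4$ from dimension $5$. This requires the standard normal-coordinate expansion $\Gamma_{jh}^i(x)=-\tfrac13\bigl(R^i{}_{jhk}+R^i{}_{hjk}\bigr)(x_0)\,x^k+O(|x|^2)$ and a short induction tracking which summands in~\eqref{R} survive at $x=x_0$; everything else is routine linear algebra and an appeal to the genericity Propositions~\ref{generic} and~\ref{generic_bis} together with the analyticity/denseness structure already established.
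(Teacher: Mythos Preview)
Your overall strategy matches the paper's: work in normal coordinates at $x_0$, use Lemma~\ref{lem20} to express $(\nabla_{T^\sigma}^{k-1}T^\sigma)(t_0)-U_{t_0}^{\sigma,k}$ as $F^{k-1,i}(j_{t_0}^{k-1}\sigma)\,\partial/\partial x^i$, and determine for which $k$ this correction vanishes. The flat case and the nowhere-dense argument are handled correctly, the latter a bit less explicitly than the paper's rank characterisation of the matrix $\mathbf{\mu}(m)$, but validly.

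There is, however, a genuine error in your bookkeeping. You assert that $F^{2,i}$ and $F^{3,i}$ vanish at $t_0$ ``because every summand still carries an undifferentiated (or once-differentiated) Christoffel symbol evaluated at $x_0$''. This reasoning is wrong: first derivatives of the Christoffel symbols at the origin of normal coordinates do \emph{not} vanish---indeed, as you yourself write later, $(\partial_k\Gamma_{jh}^i)(x_0)=-\tfrac13(R^i{}_{jhk}+R^i{}_{hjk})(x_0)$. What makes $F^{2,i}(j_{t_0}^2\sigma)=0$ true is that the only surviving term, $(\partial_k\Gamma_{jh}^i)(x_0)\,x_1^jx_1^hx_1^k$, is \emph{totally symmetric} in $j,h,k$, and the fully symmetrised derivative $\partial_{(k}\Gamma^i_{jh)}$ does vanish at the centre of normal coordinates. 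By contrast, in $F^{3,i}$ a term of the form $(\partial_k\Gamma_{jh}^i)(x_0)\,x_1^kx_1^hx_2^j$ appears, which is \emph{not} fully symmetrised (only in $k,h$), and it does not vanish. The paper computes explicitly
\[
(\nabla_{T^\sigma}^3T^\sigma)_{t_0}=U_{t_0}^{\sigma,4}+\tfrac13 R^g_{x_0}\bigl(T^\sigma_{t_0},(\nabla_{T^\sigma}T^\sigma)_{t_0}\bigr)T^\sigma_{t_0},
\]
so $F^{3,i}(j_{t_0}^3\sigma)\neq 0$ in general and your claim of agreement through $k\le 4$ is false; agreement holds only through $k\le 3$.

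Fortunately the conclusion for $m\le 4$ survives, since you only need $U^{\sigma,1},\dots,U^{\sigma,m-1}$ with $m-1\le 3$, hence only $F^{0,i},F^{1,i},F^{2,i}$ to vanish at $t_0$. But you must replace your incorrect justification for $F^{2,i}(t_0)=0$ by the symmetrisation argument above, and drop the claim about $F^{3,i}$. This is precisely the delicate point you flag in your final paragraph, and it is where the boundary between $m=4$ and $m=5$ actually lies.
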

\begin{proof}
If $(U,x^1,\dotsc,x^m)$ is the normal
coordinate system attached to an
orthonormal basis for $T_{x_0}M$ with respect
to the Levi-Civita connection $\nabla $ of $g$,
then for every smooth curve
$(t_0-\varepsilon ,t_0+\varepsilon )\to M$,
$\sigma (t_0)=x_0$, the following formulas hold:
\begin{equation}
\begin{array}{r}
T_{t_0}^\sigma =U_{t_0}^{\sigma ,1},
\smallskip \\
\left(
\nabla _{T^\sigma }T^\sigma
\right) _
{t_0}=U_{t_0}^{\sigma ,2},
\smallskip \\
\left(
\nabla _{T^\sigma }^2T^\sigma
\right)
_{t_0}=U_{t_0}^{\sigma ,3},
\end{array}
\label{1-2-3}
\end{equation}
and for $r\geq 3$, from the formulas \eqref{suc},
\eqref{suc0}, \eqref{suc1}, \eqref{R}, we conclude
the existence of a polynomial $P_i^r$ in the values
$(\partial ^{|I|}\Gamma _{jk}^h/\partial x^i)(x_0)$,
$I\in \mathbb{N}^m$, $|I|\leq r-2$, $\Gamma _{jk}^h$
being the Christoffel symbols of $\nabla $ with respect
to the coordinates chosen, and the components
$(d^k(x^i\circ \sigma )/dt^k)(t_0)$ (also in such
coordinates) of the tangent vectors
$U_{t_0}^{\sigma ,k}$, $1\leq k\leq r-1 $, defined
in \eqref{U^sigma,k} such that,
\begin{equation}
\label{nabla_U}
\left(
\nabla _{T^\sigma }^rT^\sigma
\right)
_{t_0}=U_{t_0}^{\sigma ,r+1}+P_i^r
\left.
\frac{\partial }{\partial x^i}
\right| _{x_0}.
\end{equation}
As the values
$(\partial ^{|I|}\Gamma _{jk}^h/\partial x^i)(x_0)$,
$1\leq |I|\leq k$, can be written as a polynomial
(e.g., see \cite{Gray1}, \cite{Kulkarni})
in the components of the curvature tensor field $R^g$
and its covariant derivatives
$\nabla R^g,\dotsc,\nabla ^{k-1}R^g$ at $x_0 $,
we conclude that the same holds for $P_i^r$.
For example,
\begin{eqnarray*}
\left(
\nabla _{T^\sigma }^3T^\sigma
\right) _{t_0}
\!\!\!\!
&=& \!\!\!\!
U_{t_0}^{\sigma ,4}
+\tfrac{1}{3}R_{x_0}^g
\! \left(
\! T_{t_0}^\sigma ,
\left(
\nabla _{T^\sigma }T^\sigma
\right)
_{t_0}
\right) \!
T_{t_0}^\sigma , \\
\left(
\nabla _{T^\sigma }^{4}T^\sigma
\right) _{t_0}
\!\!\!\!
&=& \!\!\!\!
U_{t_0}^{\sigma ,5}
+2\left(
\nabla R^g
\right) _{x_0}
\!
\left(
T_{t_0}^\sigma ,
\left(
\nabla _{T^\sigma }T^\sigma
\right) _{t_0},
T_{t_0}^\sigma ,T_{t_0}^\sigma
\right) \\
&& \!\!\!
+3R_{x_0}^g \!
\left(
T_{t_0}^\sigma ,
\left(
\nabla _{T^\sigma }T^\sigma
\right) _{t_0}
\right) \!
\left(
\nabla _{T^\sigma }T^\sigma
\right) _{t_0}
\! + \! \tfrac{7}{3}R_{x_0}^g \!
\left(
T_{t_0}^\sigma ,
\left(
\nabla _{T^\sigma }^2T^\sigma
\right) _{t_0}
\right) \!
T_{t_0}^\sigma .
\end{eqnarray*}
For $m\leq 4$ from the formulas
\eqref{1-2-3} we conclude
\begin{equation}
\label{equality}
\mathcal{F}_{t_0,x_0}^{m-1}(M)
=\mathcal{N}_{t_0,x_0}^{m-1}(M).
\end{equation}
Moreover, the equation
$(\nabla _t^rT)_{t_0}=U_{t_0}^{\sigma ,r+1}$
holds for $0\leq r\leq m-2$ if and only if,
$P_i^r=0$ for $0\leq r\leq m-2$, $1\leq i\leq m$.
In particular, this happens when $g$ is flat
at a neighbourhood of $x_0$; hence the equality
\eqref{equality} also holds in this case.
If the tangent vectors $T_{t_0}^\sigma ,
(\nabla _{T^\sigma }T^\sigma )_{t_0},
\dotsc,
(\nabla _{T^\sigma }^{m-2}T^\sigma )_{t_0}$
are linearly independent but there exists
a non-trivial linear combination, i.e.,
$0=\sum _{h=1}^{m-1}\lambda _hU_{t_0}^{\sigma ,h}$,
then from \eqref{nabla_U} we deduce
\begin{equation}
\label{linear_combinat}
\sum\nolimits_{r=0}^{m-2}\lambda _{r+1}
\left\{
\left(
\nabla _{T^\sigma }^rT^\sigma
\right) _{t_0}
-P_i^r
\left(
\partial /\partial x^i
\right) _{x_0}
\right\} =0,
\end{equation}
which implies that at least one of the vectors
$P_i^r
\left(
\partial /\partial x^i
\right) _{x_0}$,
$3\leq r\leq m-2$, does not vanish.

Letting $N_{t_0}=T_{t_0}
\times (\nabla _tT)_{t_0}\times
\cdots
\times (\nabla _t^{m-2}T)_{t_0}$, where
$\times $ stands for cross product,
we obtain a basis $(T_{t_0}^\sigma ,
(\nabla _{T^\sigma }T^\sigma )_{t_0},
\dotsc,
(\nabla _{T^\sigma }^{m-2}T^\sigma )_{t_0},
N_{t_0})$
for $T_{x_0}M$, and we can write,
\begin{equation*}
P_i^r
\left(
\partial /\partial x^i
\right) _{x_0}
=\sum\nolimits_{q=0}^{m-2}\mu _{q}^r
\left(
\nabla _{T^\sigma }^{q}T^\sigma
\right) _{t_0}
+\mu ^rN_{t_0},
\quad 0\leq r\leq m-2,
\end{equation*}
for some scalars $\mu _{q}^r,\mu ^r$, agreeing
that $P_i^r=0$ for $0\leq r\leq 2$; hence
$\mu _{q}^r=\mu ^r=0$ for $0\leq r\leq 2$. Then,
\eqref{linear_combinat} is equivalent to saying
that the homogeneous linear system
\begin{eqnarray*}
\sum\nolimits_{r=0}^{m-2}\mu ^r\lambda _{r+1}
&=&0, \\
\sum\nolimits_{r=0}^{m-2}
\left(
\mu _{q}^r-\delta _q^r
\right) \lambda _{r+1}
&=& 0,\quad 0\leq q\leq m-2,
\end{eqnarray*}
of $m$ equations in the $m-1$ unknowns
$\lambda _1,\dotsc,\lambda _{m-1}$
admits a non-trivial solution, i.e., the rank
of the $m\times (m-1)$ matrix
\begin{equation*}
\mathbf{\mu }(m)=\left(
\begin{array}{rrrccc}
0 & 0 & 0 & \mu ^3 & \ldots & \mu ^{m-2} \\
-1 & 0 & 0 & \mu _0^3 & \ldots & \mu _0^{m-2} \\
0 & -1 & 0 & \mu _1^3 & \ldots & \mu _1^{m-2} \\
0 & 0 & -1 & \mu _2^3 & \ldots & \mu _2^{m-2} \\
0 & 0 & 0 & \mu _3^3-1 & \ldots & \mu _3^{m-2} \\
\vdots & \vdots & \vdots & \vdots & \ddots & \vdots \\
0 & 0 & 0 & \mu _{m-2}^3 & \ldots & \mu _{m-2}^{m-2}-1
\end{array}
\right)
\end{equation*}
must be $\leq m-2$. This condition characterizes
$\mathcal{F}_{t_0,x_0}^{m-1}(M)
\backslash \mathcal{N}_{t_0,x_0}^{m-1}(M)$.
The proof for $\mathcal{N}_{t_0,x_0}^{m-1}(M)
\backslash \mathcal{F}_{t_0,x_0}^{m-1}(M)$
is similar.
\end{proof}
\begin{example}
\label{m=5}
For $m=5$ jets in $\mathcal{F}_{t_0,x_0}^4(M)
\backslash \mathcal{N}_{t_0,x_0}^4(M)$
are given by,
$\operatorname{rk}\mathbf{\mu }(5)=3 $.
Hence
$\mu ^3=0$, $\mu _3^3=1$; $P_i^3
\left(
\partial /\partial x^i
\right) _{x_0}
=\sum\nolimits_{q=0}^2\mu _q^3
\left(
\nabla _{T^\sigma }^{q}T^\sigma
\right) _{t_0}
+\left(
\nabla _{T^\sigma }^3T^\sigma
\right) _{t_0}$, i.e.,
\begin{equation}
\label{belongs}
R_{x_0}^g
\left(
T_{t_0}^\sigma ,
(\nabla _{T^\sigma }T^\sigma )_{t_0}
\right)
T_{t_0}^\sigma
-\left(
\nabla _{T^\sigma }^3T^\sigma
\right) _{t_0}
\in \left\langle
T_{t_0}^\sigma ,
(\nabla _{T^\sigma }T^\sigma )_{t_0},
(\nabla _{T^\sigma }^2T^\sigma )_{t_0}
\right\rangle .
\end{equation}

In addition, assume $(x^i)_{i=1}^5$ is the normal
coordinate system defined by the Frenet frame
$\left( X_i^\sigma (t_0)\right) _{i=1}^{5}$.
From the formulas \eqref{F4}, \eqref{der0},
and \eqref{der} it follows the formula
\eqref{belongs} can be reformulated by saying
that the tangent vector
\begin{multline*}
\left(
\kappa _0^\sigma
\right) ^4\kappa _1^\sigma R_{x_0}^g
\left(
X_1^\sigma (t_0),X_2^\sigma (t_0)
\right)
X_1^\sigma (t_0)-f_{14}^\sigma (t_0)
X_1^\sigma (t_0)-f_{24}^\sigma (t_0)
X_2^\sigma (t_0)\\
-f_{34}^\sigma (t_0)X_3^\sigma (t_0)
-f_{44}^\sigma (t_0)X_4^\sigma (t_0)
\end{multline*}
must belong to
$\left\langle
X_1^\sigma
\left( t_0
\right) ,
X_2^\sigma
\left(
t_0
\right) ,
X_3^\sigma
\left(
t_0
\right)
\right\rangle $, or equivalently,
\begin{equation*}
\begin{array}{l}
g\left(
R_{x_0}^g
\left(
X_1^\sigma
\left(
t_0
\right) ,
X_2^\sigma
\left(
t_0
\right)
\right)
X_1^\sigma
\left(
t_0
\right) ,
X_4^\sigma
\left(
t_0
\right)
\right)
=\kappa _2^\sigma
\left(
t_0
\right)
\kappa _3^\sigma
\left(
t_0
\right) ,
\\
g
\left(
R_{x_0}^g
\left(
X_1^\sigma
\left(
t_0
\right) ,
X_2^\sigma
\left(
t_0
\right)
\right)
X_1^\sigma
\left(
t_0
\right) ,
X_5^\sigma
\left(
t_0
\right)
\right) =0.
\end{array}
\end{equation*}
\end{example}
\begin{example}
\label{m=6}
For $m\! =\! 6$ jets in
$\mathcal{F}_{t_0,x_0}^5\!(M)
\backslash \mathcal{N}_{t_0,x_0}^5\!(M)$
are given by,
$\operatorname{rk}\mathbf{\mu }(6)\leq 4$.
The rank of $\mathbf{\mu }(6)$ is $3$
if and only if, $\mu ^3=\mu ^4=\mu _4^3
=\mu _3^4=0$, $\mu _3^3=\mu _4^4=1$,
or equivalently,
\begin{equation*}
\begin{array}{l}
P_i^3
\left(
\partial /\partial x^i
\right) _{x_0}
=\mu _0^3T_{t_0}^\sigma +\mu _1^3
\left(
\nabla _{T^\sigma }T^\sigma
\right) _{t_0}+\mu _2^3
\left(
\nabla _{T^\sigma }^2T^\sigma
\right) _{t_0}
+\left(
\nabla _{T^\sigma }^3T^\sigma
\right) _{t_0}, \\
P_i^{4}
\left(
\partial /\partial x^i
\right) _{x_0}
=\mu _0^4T_{t_0}^\sigma +\mu _1^4
\left(
\nabla _{T^\sigma }T^\sigma
\right) _{t_0}
+\mu _2^4
\left(
\nabla _{T^\sigma }^2T^\sigma
\right) _{t_0}
+\left(
\nabla _{T^\sigma }^4T^\sigma
\right) _{t_0}.
\end{array}
\end{equation*}
In other words, $P_i^3
\left(
\partial /\partial x^i
\right)
_{x_0}
-\left(
\nabla _{T^\sigma }^3T^\sigma
\right) _{t_0}$ and
$P_i^4
\left(
\partial /\partial x^i
\right) _{x_0}
-\left(
\nabla _{T^\sigma }^4T^\sigma
\right) _{t_0}$ belong to the subspace spanned
by $T_{t_0}^\sigma $,
$\left(
\nabla _{T^\sigma }T^\sigma
\right) _{t_0}$, and
$\left(
\nabla _{T^\sigma }^2T^\sigma
\right) _{t_0}$.

The rank of $\mathbf{\mu }(6)$ is $4$ if and only if,
\begin{equation*}
\begin{array}{l}
\mu _4^3\mu _3^4=(\mu _3^3-1)(\mu _4^4-1), \\
\mu _4^3\mu ^4=\mu ^3(\mu _4^4-1), \\
\mu _3^4\mu ^3=\mu ^4(\mu _3^3-1),
\end{array}
\end{equation*}
but
$\left(
\mu ^3,\mu ^4,\mu _4^3,
\mu _3^4,\mu _3^3,\mu _4^4
\right)
\neq (0,0,0,0,1,1)$.
\end{example}
\section{The equivalence problem\label{criterio_general}}
\subsection{Necessary conditions for congruence}
\begin{definition}
Two curves $\sigma \colon (a,b)\to (M,g)$,
$\bar{\sigma }\colon (a,b)\to  (\bar{M},\bar{g})$
with values in two Riemannian manifolds are said
to be \emph{congruent} if an open neighbourhood $U$
of the image of $\sigma $ in $M$ and an isometric
embedding $\phi \colon U\to \bar{M}$ exist such that,
$\bar{\sigma }=\phi \circ \sigma $. If $M$ and
$\bar{M}$ are oriented, then $\phi $ is assumed
to preserve the orientation.
\end{definition}
\begin{proposition}
\label{curvaturas}
The curvatures of a Frenet curve with values
into an oriented Riemannian manifold $(M,g)$
are invariant by congruence.
\end{proposition}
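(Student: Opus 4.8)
The plan is to show that the orientation-preserving isometric embedding $\phi\colon U\to\bar M$ realizing the congruence carries the whole Frenet apparatus of $\sigma$ onto that of $\bar\sigma$; the equality of the curvatures then falls out of the Frenet equations. The point of departure is the standard fact that $\phi$ is in particular a local isometry, so it intertwines the Levi-Civita connections: $\phi_*(\nabla_XY)=\bar\nabla_{\phi_*X}(\phi_*Y)$. Applying this along $\sigma$ and using $T^{\bar\sigma}=\phi_*T^\sigma$ (since $\bar\sigma=\phi\circ\sigma$), a one-line induction on $k$ yields
\begin{equation*}
\bigl(\bar\nabla_{T^{\bar\sigma}}^{\,k}T^{\bar\sigma}\bigr)_t=\phi_{*,\sigma(t)}\bigl(\nabla_{T^\sigma}^{\,k}T^\sigma\bigr)_t,\qquad k\ge0,\ t\in(a,b).
\end{equation*}
Because $\phi_{*,\sigma(t)}$ is a linear isomorphism, $\bar\sigma$ is again in general position up to order $m-1$, hence a Frenet curve; and because $\phi$ is an isometry, $\bar g(\bar\nabla_{T^{\bar\sigma}}^{\,i-1}T^{\bar\sigma},\bar\nabla_{T^{\bar\sigma}}^{\,j-1}T^{\bar\sigma})=g(\nabla_{T^\sigma}^{\,i-1}T^\sigma,\nabla_{T^\sigma}^{\,j-1}T^\sigma)$ for $1\le i,j\le m$, which is exactly condition \eqref{L1}. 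By Proposition \ref{lem2} this already gives $\kappa_i^\sigma=\kappa_i^{\bar\sigma}$ for $0\le i\le m-2$ (these curvatures being positive) and $|\kappa_{m-1}^\sigma|=|\kappa_{m-1}^{\bar\sigma}|$.

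It remains only to fix the sign of the top curvature $\kappa_{m-1}$, and here the orientation-preserving hypothesis is what does the work. I would show that the pushed-forward frame $(\phi_*X_1^\sigma,\dotsc,\phi_*X_m^\sigma)$ is the Frenet frame of $\bar\sigma$: it is orthonormal since $\phi$ is an isometry; it is positively oriented since $(X_1^\sigma,\dotsc,X_m^\sigma)$ is and $\phi$ preserves orientation; and for $1\le i\le m-1$ the subsystem $(\phi_*X_1^\sigma,\dotsc,\phi_*X_i^\sigma)$ spans the same subspace as $(T^{\bar\sigma},\dotsc,\bar\nabla_{T^{\bar\sigma}}^{\,i-1}T^{\bar\sigma})$ and is equally oriented, by the displayed identity together with the elementary remark that a fixed linear isomorphism carries equally oriented bases of a subspace to equally oriented bases of the image. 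Thus $(\phi_*X_i^\sigma)$ satisfies conditions (i)--(ii) of Proposition \ref{referenciafrenet} for $\bar\sigma$, so by the uniqueness asserted there $\phi_*X_i^\sigma=X_i^{\bar\sigma}$, $1\le i\le m$. Pushing the Frenet equations (iii)(a)--(d) for $\sigma$ forward by $\phi_*$ and invoking $\phi_*\nabla=\bar\nabla$ then exhibits the same functions $\kappa_0^\sigma,\dotsc,\kappa_{m-1}^\sigma$ as the curvatures of $\bar\sigma$; the uniqueness of the latter gives $\kappa_i^\sigma=\kappa_i^{\bar\sigma}$ for all $0\le i\le m-1$, signs included.

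The genuine obstacle --- in fact the only step that is not a purely formal consequence of ``metric invariants are preserved by isometries'' --- is this control of the sign of $\kappa_{m-1}$: one must check that orientation-preservation of $\phi$ is precisely what forces $\phi_*X_m^\sigma=X_m^{\bar\sigma}$ rather than $-X_m^{\bar\sigma}$, equivalently that the ambiguous sign $\varepsilon_{m-1}$ of Proposition \ref{lemdelta} is transported correctly. Everything else is the routine observation that the Levi-Civita connection, the covariant derivatives of $T^\sigma$, their mutual inner products and the Riemannian volume form are all preserved by a local isometry.
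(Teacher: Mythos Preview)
Your proof is correct and the first half coincides with the paper's: both push forward the iterated covariant derivatives via the affine/isometric property of $\phi$, obtain the equality of inner products \eqref{L1}, and invoke Proposition~\ref{lem2} to conclude $\kappa_i^\sigma=\kappa_i^{\bar\sigma}$ for $0\le i\le m-2$ and $|\kappa_{m-1}^\sigma|=|\kappa_{m-1}^{\bar\sigma}|$.

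The only difference is in how the sign of $\kappa_{m-1}$ is pinned down. The paper does it in one line by observing that $\phi$, being orientation-preserving, carries $\mathrm{vol}_g(T^\sigma,\dotsc,\nabla_{T^\sigma}^{m-1}T^\sigma)$ to $\mathrm{vol}_{\bar g}(T^{\bar\sigma},\dotsc,\bar\nabla_{T^{\bar\sigma}}^{m-1}T^{\bar\sigma})$, which via Proposition~\ref{lemdelta} fixes the sign $\varepsilon_{m-1}$. You instead verify that $(\phi_*X_i^\sigma)$ satisfies conditions (i)--(ii) of Proposition~\ref{referenciafrenet} and then appeal to uniqueness of the Frenet frame to get $\phi_*X_i^\sigma=X_i^{\bar\sigma}$, after which the Frenet equations transport verbatim. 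Your route is slightly longer but buys you more: you have in effect also proved the first assertion of Proposition~\ref{frenetframe} (that $\phi$ carries the Frenet frame to the Frenet frame), which the paper establishes separately. The paper's volume-form shortcut is quicker if all you want is the curvatures; your argument is the more structural one and makes the next proposition almost a corollary.
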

\begin{proof}
Let $\sigma \colon (a,b)\to M$,
$\bar{\sigma }\colon (a,b)\to \bar{M}$ be two
Frenet curves with values in two oriented
Riemannian manifolds $(M,g)$, $(\bar{M},\bar{g})$.
Let $\phi \colon U\to \bar{M}$ be an isometric
embedding preserving the orientation
defined on a neighbourhood $U$ of the image
of $\sigma $, such that
$\bar{\sigma }=\phi \circ \sigma $.
Since $\phi $ is an affine mapping,
from \cite[VI, Proposition 1.2]{KN} we know
$\phi \cdot (\nabla _{T^\sigma }^jT^\sigma )
=\bar{\nabla}_{T^{\bar{\sigma }}}^jT^{\bar{\sigma }}$,
for all $j\in \mathbb{N}$. As $\phi $ is an isometry,
for every $i,j=1,\dotsc,m$, the following formula holds:
\begin{eqnarray*}
\bar{g}
\left(
\bar{\nabla }_{T^{\bar{\sigma }}}^{i-1}T^{\bar{\sigma }},
\bar{\nabla}_{T^{\bar{\sigma }}}^{j-1}T^{\bar{\sigma }}
\right)
&=&\bar{g}
\left(
\phi _\ast
\left(
\nabla _{T^\sigma }^{i-1}T^\sigma
\right) ,
\phi _\ast
\left(
\nabla _{T^\sigma }^{j-1}T^\sigma
\right)
\right) \\
&=& g
\left(
\nabla _{T^\sigma }^{i-1}
T^\sigma ,\nabla _{T^\sigma }^{j-1}T^\sigma
\right) .
\end{eqnarray*}
From Proposition \ref{lem2} it follows
$\kappa _i^\sigma =\kappa _i^{\bar{\sigma}}$,
$0\leq i\leq  m-2$, $|\kappa _{m-1}^\sigma |
=|\kappa _{m-1}^{\bar{\sigma}}|$ and since $\phi $
is orientation-preserving, we have
\begin{equation*}
\mathrm{vol}_g
\left(
T^\sigma ,\dotsc,\nabla _{T^\sigma }^{m-1}T^\sigma
\right)
=\mathrm{vol}_{\bar{g}}(T^{\bar{\sigma}},
\dotsc,\bar{\nabla }_{T^{\bar{\sigma }}}^{m-1}
T^{\bar{\sigma }}).
\end{equation*}
Thus $\kappa _{m-1}^\sigma
=\kappa _{m-1}^{\bar{\sigma }}$,
and the proof is complete.
\end{proof}
\begin{proposition}
\label{frenetframe}
Let $(M,g)$, $(\bar{M},\bar{g})$ be two oriented
Riemannian manifolds with associated Levi-Civita
connections $\nabla ,\bar{\nabla}$, respectively,
and let $\sigma \colon (a,b)\to M$,
$\bar{\sigma}\colon (a,b)\to \bar{M}$, be two
Frenet curves which are congruent under
the isometric embedding $\phi $. Then,
$\phi \cdot X_i^\sigma =X_i^{\bar{\sigma }}$,
$\omega _\sigma ^i
=\phi ^\ast \omega _{\bar{\sigma }}^i$,
for $1\leq i\leq m$,
$\left(
\omega _\sigma ^1,\dotsc,\omega _\sigma ^m
\right) $,
$\left(
\omega _{\bar{\sigma }}^1,
\dotsc,\omega _{\bar{\sigma }}^m
\right) $
being the dual coframes of the Frenet frames
of $\sigma $, $\bar{\sigma }$, respectively.
Moreover,
\begin{equation*}
\phi _\ast
\left( \nabla ^jR
\left( X_{i_1}^\sigma ,
\dotsc,X_{i_{j+3}}^\sigma ,\omega _\sigma ^i
\right)
\right)
\left(
\sigma (t)
\right)
=\bar{\nabla}^j\bar{R}
\left(
X_{i_1}^{\bar{\sigma }},
\dotsc,X_{i_{j+3}}^{\bar{\sigma }},\omega _{\bar{\sigma}}^i
\right)
\left(
\bar{\sigma }(t)
\right) ,
\end{equation*}
for all $j\in \mathbb{N}$, $t\in (a,b)$,
and all systems of indices
$i,i_1,\dotsc,i_{j+3}=1,\dotsc,m$,
where $R$, $\bar{R}$ are the curvature tensors
of $(M,g)$, $(\bar{M},\bar{g})$,
respectively.
\end{proposition}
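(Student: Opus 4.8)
The plan is to derive the whole statement from the single fact, already used in the proof of Proposition~\ref{curvaturas}, that $\phi$ is an affine map: being an isometric embedding between $m$-dimensional Riemannian manifolds it is a local isometry, hence $\phi^\ast\bar g=g$ forces $\phi$ to carry the Levi-Civita connection $\nabla$ to $\bar\nabla$, and more generally to intertwine every object canonically built from $g$. First I would record, exactly as in Proposition~\ref{curvaturas}, that by \cite[VI, Proposition~1.2]{KN} and $\phi_\ast T^\sigma=T^{\bar\sigma}$ (which follows from $\bar\sigma=\phi\circ\sigma$) one has $\phi_\ast\big((\nabla_{T^\sigma}^{k}T^\sigma)_t\big)=(\bar\nabla_{T^{\bar\sigma}}^{k}T^{\bar\sigma})_t$ for every $k\in\mathbb{N}$ and every $t\in(a,b)$.

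Next I would establish $\phi\cdot X_i^\sigma=X_i^{\bar\sigma}$. By items (i) and (ii) of Proposition~\ref{referenciafrenet}, the frame $(X_1^\sigma,\dots,X_{m-1}^\sigma)$ is precisely the Gram--Schmidt orthonormalization of the linearly independent system $(T^\sigma,\nabla_{T^\sigma}T^\sigma,\dots,\nabla_{T^\sigma}^{m-2}T^\sigma)$, and $X_m^\sigma$ is the unique unit vector orthogonal to $X_1^\sigma,\dots,X_{m-1}^\sigma$ for which $(X_1^\sigma,\dots,X_m^\sigma)$ is positively oriented. At each $t$ the differential $\phi_\ast\colon T_{\sigma(t)}M\to T_{\bar\sigma(t)}\bar M$ is a linear isometry, and the Gram--Schmidt procedure uses only inner products, norms and linear combinations, all of which $\phi_\ast$ preserves; together with the first step this gives $\phi_\ast X_i^\sigma=X_i^{\bar\sigma}$ for $1\le i\le m-1$, and since $\phi$ preserves the orientation, the same uniqueness characterization forces $\phi_\ast X_m^\sigma=X_m^{\bar\sigma}$. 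Dualizing, for all $i,j$ one gets $(\phi^\ast\omega_{\bar\sigma}^i)(X_j^\sigma)=\omega_{\bar\sigma}^i(\phi_\ast X_j^\sigma)=\omega_{\bar\sigma}^i(X_j^{\bar\sigma})=\delta_j^i=\omega_\sigma^i(X_j^\sigma)$, hence $\omega_\sigma^i=\phi^\ast\omega_{\bar\sigma}^i$.

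For the last assertion I would use that $\phi\colon U\to\phi(U)$ is an isometry onto an open submanifold of $\bar M$, so it carries the curvature tensor and all its iterated covariant derivatives to those of $\bar M$: being affine, $\phi_\ast$ intertwines $\nabla$ and $\bar\nabla$ on every tensor field by the Leibniz rule, whence $\phi_\ast\big((\nabla^{j}R)(Y_1,\dots,Y_{j+3})\big)=(\bar\nabla^{j}\bar R)(\phi_\ast Y_1,\dots,\phi_\ast Y_{j+3})$ for every $j$. Evaluating $\nabla^{j}R$ along $\sigma$ on the Frenet vectors $X_{i_1}^\sigma,\dots,X_{i_{j+3}}^\sigma$, pairing the resulting vector with $\omega_\sigma^i=\phi^\ast\omega_{\bar\sigma}^i$, and substituting $\phi_\ast X_{i_k}^\sigma=X_{i_k}^{\bar\sigma}$ from the second step then turns the scalar $(\nabla^{j}R)(X_{i_1}^\sigma,\dots,X_{i_{j+3}}^\sigma,\omega_\sigma^i)$ at $\sigma(t)$ into $(\bar\nabla^{j}\bar R)(X_{i_1}^{\bar\sigma},\dots,X_{i_{j+3}}^{\bar\sigma},\omega_{\bar\sigma}^i)$ at $\bar\sigma(t)$, for all $j$, all $t\in(a,b)$, and all index systems, which is the displayed formula.

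The argument is entirely formal. The only steps needing a little care are checking that Gram--Schmidt genuinely commutes with the pointwise linear isometry $\phi_\ast$ and that the orientation-determined frame vector $X_m^\sigma$ is handled by invoking orientation-preservation, together with the routine induction extending $\phi$-relatedness of the connections from vector fields to the $(1,3)$-tensor $R$ and to $\nabla^{j}R$; none of these presents a genuine obstacle, all being standard properties of an affine isometry already tacitly used in Proposition~\ref{curvaturas}.
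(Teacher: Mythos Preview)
Your proof is correct. The argument differs slightly in execution from the paper's: the paper first invokes Proposition~\ref{curvaturas} to obtain $\kappa_i^\sigma=\kappa_i^{\bar\sigma}$, deduces from the recurrences \eqref{der0}, \eqref{der} that $f_{ij}^\sigma=f_{ij}^{\bar\sigma}$, and then compares the expansions $\nabla_{T^\sigma}^{j-1}T^\sigma=\sum_i f_{ij}^\sigma X_i^\sigma$ and $\bar\nabla_{T^{\bar\sigma}}^{j-1}T^{\bar\sigma}=\sum_i f_{ij}^{\bar\sigma}X_i^{\bar\sigma}$ with the relation $\phi_\ast(\nabla_{T^\sigma}^{j-1}T^\sigma)=\bar\nabla_{T^{\bar\sigma}}^{j-1}T^{\bar\sigma}$ to solve for $\phi_\ast X_i^\sigma=X_i^{\bar\sigma}$. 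You bypass the curvatures and the $f_{ij}$ altogether by observing directly that the Gram--Schmidt process (which, by item~(ii) of Proposition~\ref{referenciafrenet}, produces $X_1^\sigma,\dots,X_{m-1}^\sigma$ from the $\nabla_{T^\sigma}^{k}T^\sigma$) commutes with the pointwise linear isometry $\phi_\ast$. Your route is shorter and more transparent; the paper's route has the incidental benefit of making the coefficients $f_{ij}$ visible, which it reuses later (e.g.\ in Theorem~\ref{CGC}). The treatment of the second assertion via the affine naturality of $\nabla^j R$ is the same in both proofs.
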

\begin{proof}
From Proposition \ref{curvaturas} and the formulas
\eqref{der0}, \eqref{der}, we obtain
$f_{ij}^\sigma =f_{ij}^{\bar{\sigma}}$ for every
$i,j=1,\dotsc,m$. Therefore
\begin{eqnarray*}
\sum _{i=1}^mf_{im}^\sigma
X_i^{\bar{\sigma }}
&=& \sum _{i=1}^mf_{im}^{\bar{\sigma }}
X_i^{\bar{\sigma }} \\
&=& \bar{\nabla}_{T^{\bar{\sigma }}}^{m-1}
T^{\bar{\sigma }} \\
&=& \phi _\ast
 \left(
\nabla _{T^\sigma }^{m-1}T^\sigma
\right) \\
&=& \phi _\ast
\Bigl(
\sum_{i=1}^mf_{im}^\sigma X_i^\sigma
\Bigr) \\
&=& \sum_{i=1}^mf_{im}^\sigma
\left(
\phi _\ast X_i^\sigma
\right) .
\end{eqnarray*}
Thus $\bar{X}_i^{\bar{\sigma }}
=\phi _\ast X_i^\sigma $
(and therefore $\omega _\sigma ^i
=\phi ^\ast \omega _{\bar{\sigma }}^i$),
$1\leq  i\leq  m$. This proves the first
part of the statement. The second one derives
from \cite[VI, Proposition 1.2]{KN}.
\end{proof}
\subsection{General criterion of congruence}
\begin{theorem}
\label{CGC}
Let $(M,g)$, $(\bar{M},\bar{g})$ be two oriented
connected Riemannian manifolds of class $C^\omega $
of the same dimension, $m=\dim M=\dim \bar{M}$,
with Levi-Civita connections $\nabla $, $\bar{\nabla}$,
and let $\sigma \colon (a,b)\to M$,
$\bar{\sigma}\colon (a,b)\to \bar{M}$ be two Frenet
curves of class $C^\omega $ with tangent fields $T$,
$\bar{T}$, respectively. If $x_0=\sigma (t_0)$,
$\bar{x}_0=\bar{\sigma }(t_0)$, $a<t_0<b$, then
$\sigma $ and $\bar{\sigma }$
are congruent on some neigbourhoods of $x_0$ and $\bar{x}_0$,
respectively, if and only if the following conditions hold:
\begin{enumerate}
\item[\emph{(i)}] For every $j\in \mathbb{N}$ and every
$0\leq i\leq m-1$,
\begin{equation}
\label{CU}
\frac{d^j\kappa _i^\sigma }{dt^j}(t_0)
=\frac{d^j\kappa _i^{\bar{\sigma }}}{dt^j}(t_0),
\end{equation}
\item[\emph{(ii)}]
For every $j\in \mathbb{N}$ and all systems of indices
$i,i_1,\dotsc,i_{j+3}=1,\dotsc,m$, the following formula
holds:
\begin{equation}
\left(
\nabla ^jR
\right)
\left(
X_{i_1}^\sigma ,
\dotsc,
X_{i_{j+3}}^\sigma ,
\omega _\sigma ^i
\right)
\left( x_0
\right)
=\left(
\bar{\nabla }^j\bar{R}
\right)
\left(
X_{i_1}^{\bar{\sigma }},
\dotsc,
X_{i_{j+3}}^{\bar{\sigma }},
\omega _{\bar{\sigma }}^i
\right)
\left(
\bar{x}_0
\right) ,
\label{DC}
\end{equation}
where
$\left(
\omega _{\sigma }^1,
\dotsc,
\omega _{\sigma }^m
\right) $,
$\left(
\omega _{\bar{\sigma }}^1,
\dotsc,
\omega _{\bar{\sigma }}^m
\right) $
are the dual coframes of the Frenet frames
$\left(
X_1^\sigma ,\dotsc,X_m^\sigma
\right) $,
$\left(
X_1^{\bar{\sigma }},\dotsc,X_m^{\bar{\sigma }}
\right) $
of $\sigma $, $\bar{\sigma }$, and $R$,
$\bar{R}$ are the curvature tensors of $(M,g)$,
$(\bar{M},\bar{g})$, respectively.
\end{enumerate}
\end{theorem}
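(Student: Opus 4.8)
The necessity of conditions (i) and (ii) is essentially already in hand: condition (i) follows from Proposition~\ref{curvaturas} (curvatures are congruence invariants), differentiating the equality $\kappa_i^\sigma=\kappa_i^{\bar\sigma}$ repeatedly with respect to $t$ and evaluating at $t_0$; condition (ii) is the content of the last displayed formula in Proposition~\ref{frenetframe}, evaluated at $t=t_0$. So the whole work is in the sufficiency direction, and the strategy is the classical one for fundamental theorems of this type: build the isometry along the curve first and then thicken it using normal coordinates and the fact that a real-analytic metric is determined near a point by its curvature tensor and all covariant derivatives at that point.

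First I would construct the candidate isometry on the images. Using the exponential maps, let $(U;x^1,\dots,x^m)$ be the normal coordinate chart at $x_0$ adapted to the Frenet frame $(X_1^\sigma(t_0),\dots,X_m^\sigma(t_0))$, and similarly $(\bar U;\bar x^1,\dots,\bar x^m)$ at $\bar x_0$ adapted to $(X_1^{\bar\sigma}(t_0),\dots,X_m^{\bar\sigma}(t_0))$; let $\phi_0\colon U\to\bar U$ be the map reading the same coordinates, i.e. $\phi_0=\exp^{\bar g}_{\bar x_0}\circ L\circ (\exp^g_{x_0})^{-1}$ where $L\colon T_{x_0}M\to T_{\bar x_0}\bar M$ sends $X_i^\sigma(t_0)\mapsto X_i^{\bar\sigma}(t_0)$. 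This $L$ is a positively-oriented linear isometry by item~(i) of Proposition~\ref{referenciafrenet}. The claim is that $\phi_0$ is an isometry and that $\bar\sigma=\phi_0\circ\sigma$ near $t_0$.

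For the metric-matching step I would invoke the standard description (cited already in the paper, \cite{Gray1}, \cite{Kulkarni}) of the Taylor coefficients of $g_{ij}$ in normal coordinates as universal polynomials in the components of $R^g,\nabla R^g,\nabla^2R^g,\dots$ at $x_0$, all expressed in the chosen orthonormal frame. Condition (ii), together with $L$ being a linear isometry, says precisely that these curvature components agree for $g$ at $x_0$ and $\bar g$ at $\bar x_0$ in the respective Frenet frames; hence every Taylor coefficient of $g_{ij}$ agrees with that of $(\phi_0^*\bar g)_{ij}$ at $x_0$, and real-analyticity upgrades "same $\infty$-jet" to "equal on a neighbourhood". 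Thus $\phi_0$ is an orientation-preserving isometric embedding on a possibly smaller neighbourhood. For the curve-matching step I would use the existence-and-uniqueness Theorem~\ref{ecsfrenet}: both $\sigma$ and $\phi_0^{-1}\circ\bar\sigma$ are Frenet curves in $(M,g)$ through $x_0$ at $t_0$ with the same Frenet frame at $t_0$ (here one uses that $\phi_0$, being affine by \cite[VI, Prop.~1.2]{KN}, carries the Frenet frame of $\bar\sigma$ back to a Frenet frame, and $L^{-1}$ matches it to that of $\sigma$ at $t_0$) and, by condition (i), the same curvature functions $\kappa_0,\dots,\kappa_{m-1}$ on a neighbourhood of $t_0$; Theorem~\ref{ecsfrenet} then forces $\sigma=\phi_0^{-1}\circ\bar\sigma$, i.e. $\bar\sigma=\phi_0\circ\sigma$.

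The main obstacle is the bookkeeping that links condition (ii) to the normal-coordinate Taylor expansion: one must check carefully that the frame entering the statement of (ii) (the Frenet frame along $\sigma$, evaluated at $x_0$) is exactly the orthonormal frame defining the normal chart, so that the universal polynomials expressing $\partial^I g_{ij}(x_0)$ in curvature data are being evaluated on matching inputs on both sides; the orientation clause and the distinction $\varepsilon_{m-1}=\pm1$ in Proposition~\ref{lemdelta} also have to be tracked so that $L$ is genuinely orientation-preserving rather than merely a linear isometry. A secondary point, already flagged in the hypotheses, is real-analyticity: it is what converts the formal agreement of all derivatives into an honest local isometry, and it is essential — in the merely $C^\infty$ category one would only obtain agreement of $\infty$-jets. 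Once these two points are handled, the three pieces (linear isometry at the point $\Rightarrow$ isometry of charts $\Rightarrow$ equality of curves via the uniqueness theorem) assemble routinely.
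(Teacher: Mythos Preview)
Your proposal is correct and follows essentially the same route as the paper: define the linear isometry $L$ (the paper calls it $A$) between tangent spaces via the Frenet frames, push it to a local diffeomorphism $\phi_0$ through the exponential maps, use condition~(ii) together with real-analyticity to show $\phi_0$ is an isometry, and then invoke the uniqueness part of Theorem~\ref{ecsfrenet} with condition~(i) to force $\bar\sigma=\phi_0\circ\sigma$. The only cosmetic difference is that where you unpack the Taylor expansion of $g_{ij}$ in normal coordinates (via \cite{Gray1}, \cite{Kulkarni}) to conclude $\phi_0^\ast\bar g=g$, the paper instead cites \cite[VI, Theorem~7.2]{KN} to get that $\phi_0$ is affine and then \cite[Lemma~2.3.1]{Wolf} to upgrade this to an isometry---but these are two packagings of the same argument.
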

\begin{proof}
From Proposition \ref{curvaturas} (resp.\
Propositon \ref{frenetframe}) the equations
\eqref{CU} (resp.\ \eqref{DC}) follow.
To prove the converse, let $A\colon T_{x_0}M
\to T_{\bar{x}_0}\bar{M}$ be the linear isometry
given by,
$A\left(
X_i^\sigma (t_0)
\right)
=X_i^{\bar{\sigma } }(t_0)$, $1\leq  i\leq  m$.
The condition \eqref{DC} implies that $A$ maps
the tensor $(\nabla ^jR)_{x_0}$ into the tensor
$(\bar{\nabla }^j\,\bar{R})_{\bar{x}_0}$,
for all $j\in \mathbb{N}$.
From \cite[VI, Theorem 7.2]{KN} we conclude
that the polar map $\phi \colon U\to \overline{U}$,
$\phi =\exp _{\bar{X}_0}\circ A\circ \exp _{x_0}^{-1}$,
is an affine isomorphism and from
\cite[Lemma 2.3.1]{Wolf} it follows that $\phi $
is an isometry. In order to finish the proof,
it suffices to check that
$\phi (\sigma (t))=\bar{\sigma}(t)$ for
$|t-t_0|<\varepsilon $,
and a small enough $\varepsilon >0$. The Frenet curve
$\gamma =\phi \circ \sigma \colon (a,b)\to \bar{M}$
satisfies $\gamma (t_0)=\bar{x}_0$,
$X_i^\gamma (t_0)=\phi _\ast
\left(
X_i^\sigma (t_0)
\right)
=X_i^{\bar{\sigma }}(t_0)$.
As the curvatures are of class $C^\omega $,
from the condition \eqref{CU} we deduce
$\kappa _j^{\bar{\sigma}}=\kappa _j^\sigma $,
$0\leq  j\leq  m-1$, and since the curvatures
are invariant by congruence, we know
$\kappa _j^\sigma =\kappa _j^{\gamma }$; hence
$\kappa _j^{\bar{\sigma }}=\kappa _j^{\gamma }$,
$0\leq  j\leq  m-1$. Taking the formulas \eqref{F4},
\eqref{der} and the condition \eqref{CU} into account
for $1\leq  j\leq  m$ we have
\begin{eqnarray*}
A\left(
\left(
\nabla _{T^\sigma }^{j-1}T^\sigma
\right) _{t_0}
\right)
&=& \sum _{i=1}^mf_{ij}^\sigma (t_0)A
\left(
X_i^\sigma (t_0)
\right) \\
&=& \sum _{i=1}^mf_{ij}^{\bar{\sigma}}
(t_0)X_i^{\bar{\sigma }}(t_0)\\
&=&
\left(
\bar{\nabla }_{T^{\bar{\sigma }}}^{j-1}
T^{\bar{\sigma }}
\right) _{t_0}.
\end{eqnarray*}
Therefore
\begin{eqnarray*}
\left(
\bar{\nabla }_{T^\gamma }^{j-1}T^\gamma
\right) _{t_0}
&=& \phi _\ast
\left(
\left(
\nabla _{T^\sigma }^{j-1}T^\sigma
\right) _{t_0}
\right) \\
&=& A
\left(
\left(
\nabla _{T^\sigma }^{j-1}T^\sigma
\right) _{t_0}
\right) \\
&=&
\left(
\bar{\nabla }_{T^{\bar{\sigma }}}^{j-1}
T^{\bar{\sigma }}
\right) _{t_0},
\end{eqnarray*}
for $1\leq j\leq m$. By applying
Theorem \ref{ecsfrenet} we conclude
$\bar{\sigma}=\gamma =\phi \circ \sigma $
on $(t_0-\varepsilon ,t_0+\varepsilon )$.
\end{proof}
\begin{corollary}
\label{CorolCGC}
Let $(M,g)$, $(\bar{M},\bar{g})$
be two oriented connected Riemannian manifolds
of class $C^\omega $ of the same dimension,
$m=\dim M=\dim \bar{M}$, and let
$\sigma \colon (a,b)\to M$,
$\bar{\sigma}\colon (a,b)\to \bar{M}$
be two Frenet curves, respectively.
If $x_0=\sigma (t_0)$,
$\bar{x}_0=\bar{\sigma}(t_0)$, $a<t_0<b$,
then $\sigma $ and $\bar{\sigma }$ are congruent
on some neigbourhoods $U$ and $\bar{U}$ of $x_0$
and $\bar{x}_0$, respectively if, and only if,
the following conditions hold:
\begin{enumerate}
\item[\emph{(i)}]
For every $j\in \mathbb{N}$ and every
$0\leq i\leq m-1$, it holds
$\kappa _i^\sigma (t)
=\kappa _i^{\bar{\sigma }}(t)$,
for $|t-t_0|<\varepsilon $.
\item[\emph{(ii)}]
For every $j\in \mathbb{N}$ and every system
of indices $i,i_1,\dotsc,i_{j+3}\in
\left\{
1,\dotsc,m
\right\} $,
\begin{equation*}
(\nabla ^jR)
\left(
X_{i_1}^\sigma ,
\dotsc,X_{i_{j+3}}^\sigma ,
\omega _\sigma ^i
\right)
\left( x_0
\right)
=(\bar{\nabla }^j\bar{R})
\left(
X_{i_1}^{\bar{\sigma }},
\dotsc,
X_{i_{j+3}}^{\bar{\sigma }},
\omega _{\bar{\sigma }}^i
\right)
\left(
\bar{x}_0
\right) .
\end{equation*}
\end{enumerate}
\end{corollary}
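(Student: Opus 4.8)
The plan is to deduce Corollary \ref{CorolCGC} directly from Theorem \ref{CGC}. The two statements differ only in the form of hypothesis (i): Theorem \ref{CGC} requires that all derivatives of the curvatures at $t_0$ coincide, whereas the Corollary requires that the curvature functions themselves coincide on a whole neighbourhood of $t_0$; hypothesis (ii) is literally identical in the two. The key observation is that, since the ambient manifolds are of class $C^\omega$ and the Frenet curves may be taken analytic, each curvature $\kappa_i^\sigma$, $\kappa_i^{\bar{\sigma}}$ is a real-analytic function of $t$ near $t_0$ --- this follows from Proposition \ref{lemdelta}, the $\Delta_k^\sigma$ being analytic expressions in the derivatives of $\sigma$. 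Hence two such curvatures agree on a neighbourhood of $t_0$ if and only if all their derivatives at $t_0$ agree, by the identity theorem for analytic functions; this single fact makes the two versions of (i) equivalent.

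For the ``only if'' implication I would argue as follows. If $\sigma$ and $\bar{\sigma}$ are congruent, then Proposition \ref{curvaturas} gives $\kappa_i^\sigma = \kappa_i^{\bar{\sigma}}$ as functions on $(a,b)$, in particular on any neighbourhood of $t_0$, which is (i); and Proposition \ref{frenetframe} (equivalently, the necessity part of Theorem \ref{CGC}) yields the tensorial identities of (ii).

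For the converse, assuming (i) and (ii): by the observation above, (i) forces $d^j\kappa_i^\sigma/dt^j(t_0) = d^j\kappa_i^{\bar{\sigma}}/dt^j(t_0)$ for every $j\in\mathbb{N}$ and every $0\le i\le m-1$, which is precisely condition \eqref{CU} of Theorem \ref{CGC}; and (ii) is exactly condition \eqref{DC}. Theorem \ref{CGC} then applies and delivers the desired orientation-preserving isometric embedding carrying $\sigma$ onto $\bar{\sigma}$ near $t_0$.

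I do not expect a genuine obstacle here. The only point requiring some care is ensuring the curves (hence their curvatures) may legitimately be taken analytic, so that the two formulations of (i) are really equivalent; if one wishes to allow merely smooth curves, the argument must instead rerun the proof of Theorem \ref{CGC} with hypothesis (i) of the Corollary inserted at the step where that proof invokes analyticity of the curvatures to pass from \eqref{CU} to the equality $\kappa_j^{\bar{\sigma}} = \kappa_j^\sigma$ near $t_0$ --- the polar isometry $\phi$ is still produced from \eqref{DC} via \cite[VI, Theorem 7.2]{KN} and \cite[Lemma 2.3.1]{Wolf}, and $\phi\circ\sigma = \bar{\sigma}$ still follows from the uniqueness in Theorem \ref{ecsfrenet}.
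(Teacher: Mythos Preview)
Your proposal is correct and matches the paper's intent: the paper gives no separate proof of Corollary~\ref{CorolCGC}, treating it as an immediate consequence of Theorem~\ref{CGC}, and your argument supplies exactly the obvious details. One small simplification: to pass from the Corollary's condition (i) to the Theorem's condition \eqref{CU} you do not need analyticity or the identity theorem at all---if two smooth functions agree on a neighbourhood of $t_0$ then all their derivatives at $t_0$ agree by mere differentiation---so the appeal to Theorem~\ref{CGC} is direct, and your final paragraph already handles the case of merely smooth curves correctly by re-entering the proof of Theorem~\ref{CGC} at the point where the local equality of curvatures is used.
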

\subsection{Remarks on the criterion of congruence}
\label{remarks_CGC}
\begin{remark}
\label{remark_1.5}
The condition \eqref{DC} of Theorem \ref{CGC}
is \emph{not} equivalent to the following:
\begin{equation}
\label{unodos}
R
\left(
X_j^\sigma ,
X_k^\sigma ,
X_l^\sigma ,
\omega _\sigma ^i
\right)
\left(
\sigma (t)
\right)
=\bar{R}
\left(
X_j^{\bar{\sigma }},
X_k^{\bar{\sigma }},
X_l^{\bar{\sigma }},
\omega _{\bar{\sigma }}^i
\right)
\left(
\bar{\sigma}(t)
\right) ,
\quad
\left|
t-t_0\right| <\varepsilon .
\end{equation}
Differentiating the left-hand side
of \eqref{unodos} we have
\begin{eqnarray*}
\frac{d}{dt}R
\left(
X_j^\sigma ,
X_k^\sigma ,
X_l^\sigma ,
\omega _\sigma ^i
\right)
\left(
\sigma (t)
\right)
&=&
\left(
\nabla _{T^\sigma }R
\right)
\left(
X_j^\sigma ,
X_k^\sigma ,
X_l^\sigma ,
\omega _\sigma ^i
\right)
\left( \sigma (t)
\right) \\
&& +R
\left(
\nabla _{T^\sigma }X_j^\sigma ,
X_k^\sigma ,
X_l^\sigma ,
\omega _\sigma ^i
\right)
\left(
\sigma (t)
\right)
+\ldots \\
&& +R
\left(
X_j^\sigma ,
X_k^\sigma ,
X_l^\sigma ,
\nabla _{T^\sigma }\omega _\sigma ^i
\right)
\left(
\sigma (t)
\right) \\
&=& \kappa _0^\sigma (t)
\nabla R
\left(
X_1^\sigma ,
X_j^\sigma ,
X_k^\sigma ,
X_l^\sigma ,\omega _\sigma ^i
\right)
\left(
\sigma (t)
\right)
+\ldots
\end{eqnarray*}
As the first argument of $\nabla R$
in the formula above is $X_1^\sigma $, the function
\begin{equation*}
\nabla R
\left(
X_h^\sigma ,
X_j^\sigma ,
X_k^\sigma ,
X_l^\sigma ,
\omega _\sigma ^i
\right) ,
\quad h\neq 1,
\end{equation*}
cannot be recovered from
$R\left(
X_j^\sigma ,
X_k^\sigma ,
X_l^\sigma ,
\omega _\sigma ^i
\right)
\left(
\sigma (t)
\right) $.
Therefore the formulas \eqref{unodos}
do not imply the formulas \eqref{DC},
although \eqref{DC} do imply \eqref{unodos}
as the manifolds involved are analytic.
\end{remark}
\begin{example}
\label{example_1}
Let us consider the bidimensional torus
$\mathbb{T}\subset \mathbb{R}^3$ with implicit
equation $(x^2+y^2+z^2+3)^2=16(x^2+y^2)$.
On the radius-$2$ circumference
$C=\mathbb{T}\cap \left\{ z=1\right\} $
the Gaussian curvature of $T$ vanishes
and $C$ is a regular curve of positive constant
curvature. The curvature tensor of $\mathbb{R}^2$
vanishes in particular along any curve
$C^\prime \subset \mathbb{R}^2$ with the same
curvature as $C$; but $C$ and $C^\prime $ a
re not congruent since the Gaussian curvature
of $\mathbb{T}$ does not vanish at every
neighbourhood of a point of $C$.
\end{example}
\begin{remark}
\label{remark_2}
From Proposition \ref{referenciafrenet}
we deduce that the Frenet frame of a Frenet curve
$\sigma $ and its dual frame at a point
$\sigma (t_0)$ depend on $j_{t_0}^{m-1}\sigma $
only. Hence, for every system of indices
$j\in \mathbb{N}$,
$i_1,\dotsc,i_{j+3},i\in
\left\{
1,\dotsc,m
\right\} $,
a function
$I_{i_1\ldots i_{j+3},i}^j\colon \mathcal{F}^{m-1}(M)
\to \mathbb{R}$ can be defined on the open subset
$\mathcal{F}^{m-1}(M)\subset J^{m-1}(\mathbb{R},M)$
of the jets of order $m-1$ of Frenet curves with
values in $M$ by setting
\begin{equation}
\label{Ii1...i_j+3_i}
I_{i_1\ldots i_{j+3},i}^j(j_t^{m-1}\sigma )
=(\nabla ^jR)
\left(
X_{i_1}^\sigma ,\dotsc,X_{i_{j+3}}^\sigma ,
\omega _\sigma ^i
\right)
\left( \sigma (t)
\right) .
\end{equation}
Similarly, for every $0\leq i\leq m-1$,
a function
\begin{equation}
\label{varkappa}
\varkappa _i
\colon (\pi _{m-1}^m)^{-1}\mathcal{F}^{m-1}(M)
\subset J^m(\mathbb{R},M)
\to \mathbb{R}
\end{equation}
can be defined by setting
$\varkappa _i(j_t^m\sigma )=\kappa _i^\sigma (t)$.

From Theorem \ref{CGC} it follows that all these functions
are invariant by isometry (see the section
\ref{Differential_invariants} below). Since
$\dim \mathcal{F}^{m-1}(M)=m^2+1$, only a finite number
(not greater than $m^2+1$) of such functions can be
functionally independent generically. Hence, the infinite
number of conditions given in \eqref{DC} can be reduced
to a finite number. Nevertheless, it is not easy
to determine a bound for the index $j$, which measures
the times one has to differentiate covariantly the curvature
tensor. In Theorem \ref{surfaces} below this bound is proved
to be $2$ in the case of a surface.
\end{remark}
\begin{remark}
\label{remark_2.1}
For the sake of simplicity, here we use the Riemann curvature
tensor $R_4$ of $g$ rather than the curvature tensor $R$
(cf.\ \cite[V, Section 2]{KN}), i.e.,
\begin{equation*}
R_4(X,Y,T,Z)=g(R(T,Z)Y,X).
\end{equation*}
For $j=0$, all the functions
\begin{eqnarray*}
I_{i_1i_2i_3i_4}\colon \mathcal{F}^{m-1}(M)
&\to &\mathbb{R},
\\
I_{i_1i_2i_3i_4}(j_t^{m-1}\sigma )
&=& R_4
\left(
X_{i_1}^\sigma (t),
X_{i_2}^\sigma (t),
X_{i_3}^\sigma (t),
X_{i_4}^\sigma (t)
\right)
\end{eqnarray*}
can be written in terms of the functions
\begin{equation*}
\begin{array}{l}
I_{ij}\colon FM\to \mathbb{R}, \\
I_{ij}(X_1,\dotsc,X_{m})=R_4(X_i,X_j,X_i,X_j), \\
X_i,X_j\in T_{x}M,\;1\leq i<j\leq m,
\end{array}
\end{equation*}
where $FM$ is the bundle of linear frames
of $M$, as follows from the polarization formula,
namely
\begin{eqnarray*}
6R(X,Y,T,Z)
\!\!\!
&=&
\!\!\!
R(X,Z,X,Z)\!+\!R(T,Y,T,Y)
\! - \! R(X,T,X,T)\!-\!R(Z,Y,Z,Y) \\
&& - \! R(X,Y \! + \! Z,X,Y \! + \! Z)
\! + \! R(X,Y \! + \! T,X,Y \! + \! T) \\
&& - \! R(T,Y \! + \! Z,T,Y \! + \! Z)
\! + \! R(Z,Y\!+\!T,Z,Y\!+\!T) \\
&& + \!
R(X \! + \! T,Y \! + \! Z,X \! +\! T,Y \! + \! Z)
\! + \! R(X \! + \! Z,T,X \! + \! Z,T) \\
&& - \! R(X \! + \! T,Y,X \! + \! T,Y)
\!-\!R(X\!+\!T,Z,X\!+\!T,Z) \\
&& - \!
R(X \! + \! Z,T \! + \! Y,X \! + \! Z,T \! + \! Y)
\! + \! R(X \! + \! Z,Y,X \! + \! Z,Y).
\end{eqnarray*}
In fact, if
$\mathtt{f}_{M}\mathcal{\colon F}^{m-1}(M)\to FM$,
$s_{ij}\colon FM\to  FM$ are the maps
\begin{equation*}
\begin{array}{l}
\mathtt{f}_{M}(j_t^{m-1}\sigma )=
(X_1^\sigma (t),\dotsc,X_m^\sigma (t)),\\
s_{ij}(X_1,\dotsc,X_m)
=(X_1,
\dotsc,X_i,
\dotsc,X_i+X_j,
\dotsc,X_m),\;i<j,
\end{array}
\end{equation*}
then
\begin{align*}
6I_{i_1i_2i_3i_4}\!&
=\!\left(
I_{i_1i_4}
+I_{i_2i_3}
-I_{i_1i_3}
-I_{i_2i_4}
-I_{i_1i_4}\circ s_{i_2i_4}
+I_{i_1i_3}\circ s_{i_2i_3}
-I_{i_3i_4}\circ s_{i_2i_4}
\right. \\
& \quad
+I_{i_3i_4}\circ s_{i_2i_3}
+I_{i_3i_4}\circ s_{i_2i_4}\circ s_{i_1i_3}
+I_{i_3i_4}\circ s_{i_1i_4}
-I_{i_2i_3}\circ s_{i_1i_3} \\
& \quad
\left.
-I_{i_3i_4}\circ s_{i_1i_3}
-I_{i_3i_4}\circ s_{i_2i_3}\circ s_{i_1i_4}
+I_{i_2i_4}\circ s_{i_1i_4}
\right)
\circ \mathtt{f}_{M}.
\end{align*}
\end{remark}
\begin{remark}
\label{remark_2.2}Theorem \ref{CGC} is the most general
result we can expect without imposing any additional
condition on $(M,g)$ and $(\bar{M},\bar{g})$ except
for the fact of being analytic. This is principally
due to the fact that \cite[VI, Theorem 7.2]{KN} cannot
be generalized to non-analytic manifolds, as shown
in the next example.
\end{remark}
\begin{example}
\label{example_2}
Let $g$, $\bar{g}$ be the two Riemannian metrics
on $M=\bar{M}=\mathbb{R}^m$, $m\geq 2$, defined by,
\begin{eqnarray*}
g_{ij}(x) &=& \delta _{ij}+\exp (-|x|^{-2}), \\
\bar{g}_{ij}(x) &=& \delta _{ij},
\end{eqnarray*}
respectively; hence $(M,g)$ is not analytic at the origin.
If $R$ is the curvature tensor of $(M,g)$ and $\nabla $
is its associated Levi-Civita connection, then
$(\nabla ^nR)(0)=0$ for all $n\in \mathbb{N}$.
The identity map $Id\colon T_0M\to  T_{\bar{0}}\bar{M}$
is an isometry, since $g_{ij}(0)=\bar{g}_{ij}(\bar{0})
=\delta _{ij}$. Moreover, $(\nabla ^jR)(0)
=(\bar{\nabla }^j\,\bar{R})(\bar{0})=0$, where
$\bar{R}$ (resp.\  $\bar{\nabla}$) is the curvature
tensor (resp.\  the Levi-Civita connection) of $\bar{g}$.
If there exists an affine isomorphism
$\phi \colon U\to  \bar{U}=\bar{M}$, defined on normal
neighbourhoods of $0$, such that $\phi _{\ast ,0}=Id$,
then taking \cite[Lemma 2.3.1]{Wolf} into account,
$\phi $ must necessarily be an isometry. Hence $\phi $
maps the tensor $\nabla ^jR$ into the tensor
$\bar{\nabla}^j\,\bar{R}=0$, for all $j\in \mathbb{N}$.
Consequently, $\nabla ^jR$ must vanish in a normal
neighbourhood of $0$, but this is not true. In fact,
as $g_{ij}=\delta _{ij}+h(|x|)$, with
$h(s)=\exp (-s^{-2})$, we have
\begin{equation*}
g^{ij}=\delta _{ij}-\frac{h(|x|)}{1+mh(|x|)}.
\end{equation*}
Following the notation in \cite{KN},
the Christoffel symbols are,
\begin{equation*}
\Gamma _{ij}^k=\frac{h^\prime (|x|)}{|x|}
\left(
x^i+x^j-x^k
+\frac{h(|x|)}{1+mh(|x|)}
\left(
\sum _{a=1}^mx^a-m(x^i+x^j)
\right)
\right) .
\end{equation*}
If $x_t=\left(
t,\dotsc,t\right)
\in \mathbb{R}^m$, $t\neq 0$, then
\begin{equation*}
\Gamma _{ij}^k
\left(
x_t
\right)
=\frac{h^\prime (|x_t|)t}
{|x_t|
\left(
1+mh(|x_t|)
\right)
} \neq 0,
\end{equation*}
\begin{eqnarray*}
\Gamma _{ii}^a
\left( x_t
\right)
\Gamma _{aj}^j
\left(
x_t
\right)
&=& \left(
\frac{h^\prime (|x_t|)}
{|x_t|
\left(
1+mh(|x_t|)
\right) }
\right) ^2t^2 \\
&=&\Gamma _{ji}^a
\left(
x_t
\right)
\Gamma _{ai}^j
\left(
x_t
\right) .
\end{eqnarray*}
Consequently, $\Gamma _{ii}^a
\left( x_t
\right)
\Gamma _{aj}^j
\left(
x_t
\right)
-\Gamma _{ji}^a
\left(
x_t
\right)
\Gamma _{ai}^j
\left(
x_t
\right)
=0$, and hence
\begin{eqnarray*}
R_{iji}^j
\left(
x_t
\right)
&=&\frac{\partial \Gamma _{ii}^j}{\partial
x^j}
\left(
x_t
\right)
-\frac{\partial \Gamma _{ji}^j}{\partial x^i}
\left(
x_t
\right) \\
&=&\frac{-2h^\prime (|x_t|)}{|x_t|} \\
&\neq & 0,
\end{eqnarray*}
Thus, $R_{iji}^j$ does not vanish at $x_t$ for small enough
$t\neq 0$.
\end{example}
\section{Differential invariants}\label{Differential_invariants}
\subsection{Basic definitions}\label{definitions}
Let $\mathfrak{I}(M,g)$ be the group of isometries
of a complete Riemannian connected manifold $(M,g)$
endowed with its structure of Lie transformation
group (cf.\ \cite[VI, Theorem 3.4]{KN}) and let
$\mathfrak{i}(M,g)$ be its Lie algebra, which is
anti-isomorphic to the algebra of Killing vector fields.

Every diffeomorphism $\phi \colon M\to M$ induces
a transformation $\phi ^{(r)}$ on $J^r
\left(
\mathbb{R},M
\right) $
given by $\phi ^{(r)}
\left(
j_t^r\sigma
\right)
=j_t^r
\left( \phi \circ \sigma
\right) $,
and a natural action (on the left) of the group
$\mathfrak{I}(M,g)$ on $J^r
\left(
\mathbb{R},M\right) $ can be defined by
$\phi \cdot j_t^r\sigma =\phi ^{(r)}
\left(
j_t^r\sigma
\right) $.
Each $X\in \mathfrak{i}(M,g)$ induces a flow
$\phi _t$ and its jet prolongation
$\phi _t^{(r)}$ determines a flow on $J^r
\left(
\mathbb{R},M
\right) $,
the infinitesimal generator of which is
the vector field denoted by $X^{(r)}\in \mathfrak{X}
\left(
J^r
\left(
\mathbb{R},M
\right)
\right) $.
The tangent spaces to the orbits of the action
of $\mathfrak{I}(M,g)$ on $J^r
\left(
\mathbb{R},M
\right) $
coincide with the fibres of the distribution
$\mathfrak{D}^r\subset \mathfrak{X}
\left( J^r\left( \mathbb{R},M\right) \right) $
spanned by the vector fields $X^{(r)}$;
more precisely,
we have
\begin{equation*}
T_{j_t^r\sigma }
\left(
\mathfrak{I}(M,g)\cdot j_t^r\sigma
\right)
=\mathfrak{D}_{j_t^r\sigma }^r
=\left\{
X_{j_t^r\sigma }^{(r)}:X\in \mathfrak{i}(M,g)
\right\} .
\end{equation*}
\begin{definition}
A smooth function
$I\colon J^r(\mathbb{R},M)\to \mathbb{R}$
is said to be an \emph{invariant} of order
$r$ (cf.\ \cite[7, 4.1]{AVL}, \cite{Kumpera})
if, $I\circ \phi ^{(r)}=I$,
$\forall \phi \in \mathfrak{I}(M,g)$.
A first integral
$f\colon J^r(\mathbb{R},M)\to \mathbb{R}$
of the distribution $\mathfrak{D}^r$
is called a \emph{differential invariant }
of order $r$; i.e., $X^{(r)}(f)=0$,
$\forall X\in \mathfrak{i}(M,g)$.
\end{definition}
\begin{remark}
A differential invariant is an invariant
with respect to the connected component
of the identity $\mathfrak{I}^0(M,g)$
in $\mathfrak{I}(M,g)$.
\end{remark}
\begin{lemma}
[cf.\ \protect\cite{N}]
\label{lemma_rank}
The distribution $\mathfrak{D}^r$ is involutive
and its rank is locally constant on a dense open subset
$\mathcal{U}^r\subseteq J^r(\mathbb{R},M)$. If $N_r$
denotes the maximal number of functionally independent
differential invariants of order $r\geq 0$, then
\begin{eqnarray}
N_r &=& \dim J^r
\left(
\mathbb{R},M
\right)
-\operatorname{rk}
\left.
\mathfrak{D}^r
\right\vert _{\mathcal{U}^r}
\label{N_r} \\
&=& m(r+1)
+1-\operatorname{rk}
\left.
\mathfrak{D}^r
\right\vert _{\mathcal{U}^r}.
\notag
\end{eqnarray}
\end{lemma}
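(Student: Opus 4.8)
The plan is to recognize Lemma \ref{lemma_rank} as an instance of the standard theory of involutive \emph{generalized} (possibly singular) distributions generated by a finite-dimensional Lie algebra of vector fields, which is exactly the setting of \cite{N}; so the work is to set up the dictionary and then read off the count. First I would record that $\mathfrak{i}(M,g)$ is finite-dimensional (the Killing algebra of an $m$-manifold has dimension at most $m(m+1)/2$), fix a basis $X_1,\dots,X_d$ of $\mathfrak{i}(M,g)$, and note that $\mathfrak{D}^r$ is then generated pointwise by the finitely many vector fields $X_1^{(r)},\dots,X_d^{(r)}$ on $J^r(\mathbb{R},M)$, since the condition $X^{(r)}(f)=0$ for all $X\in\mathfrak{i}(M,g)$ is linear in $X$. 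Because the $r$-jet prolongation $X\mapsto X^{(r)}$ intertwines flows with their prolongations, it is a homomorphism of Lie algebras: $[X_a^{(r)},X_b^{(r)}]=[X_a,X_b]^{(r)}=\sum_c c_{ab}^{c}X_c^{(r)}$ with $c_{ab}^{c}$ the structure constants of $\mathfrak{i}(M,g)$. Hence the $C^\infty$-module generated by $X_1^{(r)},\dots,X_d^{(r)}$ is closed under Lie bracket, i.e.\ $\mathfrak{D}^r$ is involutive.

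Next I would handle the rank. In any local chart on $J^r(\mathbb{R},M)$ the rank of $\mathfrak{D}^r$ at a point is the rank of the $d\times(m(r+1)+1)$ matrix formed by the components of $X_1^{(r)},\dots,X_d^{(r)}$, which is a lower semicontinuous, integer-valued function. Therefore the set $\mathcal{U}^r$ of points admitting a neighbourhood on which the rank is locally constant is open, and it is dense by the usual argument: given any nonempty open $W$, the rank attains a maximum value $\rho_W$ somewhere in $W$, and the set where the rank is $\geq\rho_W$ is open and contained in the set where it equals $\rho_W$, so it meets $W$; thus $W\cap\mathcal{U}^r\neq\emptyset$. (This is precisely the statement invoked from \cite{N}.) On $\mathcal{U}^r$ the rank is a well-defined number $\rho:=\operatorname{rk}\mathfrak{D}^r|_{\mathcal{U}^r}$.

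Finally, to get the formula I would apply the Frobenius theorem on $\mathcal{U}^r$, where $\mathfrak{D}^r$ is now a genuine integrable distribution of constant rank $\rho$: locally there are coordinates in which $\mathfrak{D}^r=\langle\partial/\partial u^1,\dots,\partial/\partial u^{\rho}\rangle$ on an open subset of the $n$-dimensional manifold $J^r(\mathbb{R},M)$, with $n=\dim J^r(\mathbb{R},M)=m(r+1)+1$. The first integrals of $\mathfrak{D}^r$ there are exactly the functions of the remaining $n-\rho$ coordinates, so one obtains $n-\rho$ functionally independent differential invariants. Conversely, if $f$ is any differential invariant, then $X_a^{(r)}(f)=0$ for $a=1,\dots,d$, so $df$ annihilates $\mathfrak{D}^r$; at a point of $\mathcal{U}^r$ this forces $df$ into a subspace of dimension $n-\rho$, whence no more than $n-\rho$ differential invariants can be functionally independent. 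Combining the two estimates yields $N_r=n-\rho=m(r+1)+1-\operatorname{rk}\mathfrak{D}^r|_{\mathcal{U}^r}$.

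The only genuinely delicate points, and the places I would be most careful, are the density of $\mathcal{U}^r$ (handled by the semicontinuity argument above) and the passage from the local Frobenius count to the \emph{global} notion ``maximal number of functionally independent differential invariants'': once one agrees — as is standard — that functional independence of $f_1,\dots,f_N$ means $df_1\wedge\cdots\wedge df_N\neq 0$ on a dense open set, the computation on $\mathcal{U}^r$ is exactly what is needed, and patching local first integrals is unnecessary for merely attaining the number $N_r$. Everything else is routine Lie-theoretic bookkeeping.
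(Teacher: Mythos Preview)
Your proof is correct and follows essentially the same route as the paper: involutivity via $[X^{(r)},Y^{(r)}]=[X,Y]^{(r)}$, density of the locally-constant-rank set via the lower-semicontinuity/local-maximum argument, and the count via Frobenius. If anything, you are slightly more explicit than the paper in justifying both the lower bound (constructing $n-\rho$ independent first integrals) and the upper bound (any invariant has $df$ annihilating $\mathfrak{D}^r$), whereas the paper simply says the last part ``follows directly from the Frobenius theorem.''
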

\begin{proof}
$\mathfrak{D}^r$ is involutive
as $[X_1^{(r)},X_2^{(r)}]=[X_1,X_2]^{(r)}$,
$\forall X_1,X_2\in \mathfrak{i}(M,g)$.
Let $\mathcal{U}^r$ be the subset defined
as follows: A point
$\xi =j_t^r\sigma \in J^r(\mathbb{R},M)$
belongs to $\mathcal{U}^r$ if and only if
$\xi $ admits an open neigbourhood
$N_\xi $ such that
$\dim \mathfrak{D}_{\xi ^\prime }^r
=\dim \mathfrak{D}_\xi ^r$ for every
$\xi ^\prime \in N_\xi $. As
$N_\xi \subseteq \mathcal{U}^r$, it follows
that $\mathcal{U}^r$ is an open subset,
which is non-empty as the dimension of the fibres
of $\mathfrak{D}^r$ is uniformly bounded and hence,
$\mathcal{U}^r$ contains the points $\xi $
for which $\dim \mathfrak{D}_\xi ^r
=\max _{\xi ^\prime \in J^r(\mathbb{R},M)}
\dim \mathfrak{D}_{\xi ^\prime }^r=d$. In fact,
if this equation holds, then there exists
an open neighbourhood $N_\xi $ of $\xi $ such that
the dimension of the fibres of $\mathfrak{D}^r$
over the points $\xi ^\prime \in N_\xi $ is
at least $d$, as if $(X_i^{(r)})_\xi $,
$1\leq i\leq d$, is a basis for $\mathfrak{D}_\xi ^r$,
then the vector fields $(X_i^{(r)})$ are
linearly independent at each point of an open neigbourhood
and hence, they are also a basis, $d$ being the maximal
value of the dimension of the fibres of $\mathfrak{D}^r$.
From the very definition of $\mathcal{U}^r$ we thus
conclude that $N_{\xi }\subseteq \mathcal{U}^r$.
The same argument proves that the rank of $\mathfrak{D}^r$
is locally constant over $\mathcal{U}^r $. Next,
we prove that $\mathcal{U}^r$ is dense. If
$O\subset J^r(\mathbb{R},M)$ is a non-empty open subset,
then there exists $\xi \in O$ such that
$\dim \mathfrak{D}_\xi ^r=\max_{\xi ^\prime \in O}
\dim \mathfrak{D}_{\xi ^\prime }^r$ and we can conclude
as above. The last part of the statement follows directly
from the Frobenius theorem.
\end{proof}
If $f$ is a differential invariant of order $r$,
then $D_t(f)$ is a differential invariant of order $r+1$.
This fact follows from the formula $X^{(r+1)}\circ D_t
=D_t\circ X^{(r)}$ for every $X\in \mathfrak{i}(M,g)$,
which, in its turn, follows from the formula
\begin{equation}
\label{F^r}
X^{(r)}=\sum _{j=0}^r(D_t)^j(f^i)
\frac{\partial }{\partial x_j^i},
\end{equation}
for every $X\in \mathfrak{X}(M)$ with local expression
\begin{equation}
\label{LocalExprX}
X=f^i\frac{\partial }{\partial x^i},
\quad f^i\in C^\infty (M).
\end{equation}
If $\pi _l^k\colon J^k(\mathbb{R},M)
\to J^l(\mathbb{R},M)$ is the canonical projection
for $k>l$, then
\begin{equation*}
(\pi _{r-1}^r)_\ast X^{(r)}=X^{(r-1)},
\quad \forall X\in \mathfrak{X}(M),
\end{equation*}
and the following exact sequence defines
the subdistribution $\mathfrak{D}^{r,r-1}$:
\begin{equation}
\label{ExactSeq}
0\to \mathfrak{D}_{j_t^r\sigma }^{r,r-1}
\to  \mathfrak{D}_{j_t^r\sigma }^r
\overset{(\pi _{r-1}^r)_\ast }{\longrightarrow }
\mathfrak{D}_{j_t^{r-1}\sigma }^{r-1}
\to  0,
\qquad
\forall j_t^r\sigma \in J^r(\mathbb{R},M).
\end{equation}
\subsection{Stability}
\begin{theorem}
\label{stability}
Let $(M,g)$ be a complete Riemannian connected manifold
and let $\sigma \colon (a,b)\to M$ be a smooth curve
such that $j_{t_0}^{m-1}\sigma \in \mathcal{N}^{m-1}(M)$,
$a\leq t_0\leq b$, with the same notations as
in \emph{section \ref{FandN}}. If $X\in \mathfrak{i}(M,g)$
is a Killing vector field such that
$X_{j_{t_0}^{m-1}\sigma }^{(m-1)}=0$, $m=\dim M$,
then $X=0$.
\end{theorem}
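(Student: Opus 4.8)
The plan is to translate the hypothesis $X^{(m-1)}_{j_{t_0}^{m-1}\sigma}=0$ into differential conditions on the components of $X$ along $\sigma$, and to show these force $X$ to vanish identically near $x_0=\sigma(t_0)$; since $M$ is connected, a Killing vector field vanishing on a nonempty open set vanishes everywhere (cf.\ \cite{KN}), so that suffices. First I would take an arbitrary chart $(U;x^1,\dotsc,x^m)$ around $x_0$ and write $X=f^i\partial/\partial x^i$ as in \eqref{LocalExprX}. By the prolongation formula \eqref{F^r}, the component of $X^{(m-1)}$ along $\partial/\partial x_j^i$ at $j_{t_0}^{m-1}\sigma$ equals $(D_t^jf^i)(j_{t_0}^{m-1}\sigma)=d^j(f^i\circ\sigma)/dt^j\,(t_0)$, so the hypothesis is equivalent to
\begin{equation*}
\frac{d^j(f^i\circ\sigma)}{dt^j}(t_0)=0,\qquad 1\le i\le m,\quad 0\le j\le m-1.
\end{equation*}
In particular ($j=0$) $f^i(x_0)=0$, i.e.\ $X_{x_0}=0$; the remaining equations carry the rest of the information.

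The crucial step is to choose the chart to be a normal coordinate system centered at $x_0$. Because $X$ is a Killing field with $X_{x_0}=0$ and $(M,g)$ is complete, the flow $\{\phi_t\}$ of $X$ is a one-parameter group of isometries fixing $x_0$; since an isometry fixing a point commutes with the exponential map at that point, in normal coordinates each $\phi_t$ is nothing but its linear part $(\phi_t)_{*,x_0}\in O(m)$. Hence $X$ is a \emph{linear} vector field in these coordinates, $f^i(x)=B^i_kx^k$ with $B=(B^i_k)$ skew-symmetric. Substituting $f^i\circ\sigma=B^i_k(x^k\circ\sigma)$ into the displayed conditions and recalling the definition \eqref{U^sigma,k} of the tangent vectors $U_{t_0}^{\sigma,j}$, I obtain $B(U_{t_0}^{\sigma,j})=0$ for $1\le j\le m-1$.

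Finally I would invoke the hypothesis $j_{t_0}^{m-1}\sigma\in\mathcal{N}^{m-1}(M)$: the vectors $U_{t_0}^{\sigma,1},\dotsc,U_{t_0}^{\sigma,m-1}$ are linearly independent, so $B$ annihilates an $(m-1)$-dimensional subspace of $T_{x_0}M$ and therefore $\operatorname{rk}B\le 1$; being skew-symmetric, $B$ has even rank, so $B=0$. Then $f^i\equiv 0$ on $U$, hence $X$ vanishes on the open set $U$ and thus on all of $M$.

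I expect the main obstacle to be the rigorous justification that a Killing field vanishing at $x_0$ is exactly linear in normal coordinates (this uses completeness, so that the flow is defined, and the fact that isometries commute with $\exp_{x_0}$). This is also the step that reveals why $m-1$ is the correct order: only an $(m-1)$-dimensional span of the $U_{t_0}^{\sigma,j}$ is available, and it is precisely the skew-symmetry of $B$ that upgrades the bound $\operatorname{rk}B\le 1$ to $B=0$; for a curve merely in normal general position up to order $m-2$ one could instead have $\operatorname{rk}B=2$, realized by a nonzero rotational Killing field.
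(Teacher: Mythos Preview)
Your proof is correct and follows essentially the same route as the paper: pass to normal coordinates at $x_0$, use that the flow of a Killing field fixing $x_0$ is linear there so that $f^i=B^i_kx^k$ with $B\in\mathfrak{so}(m)$, and read off $B(U_{t_0}^{\sigma,j})=0$ for $1\le j\le m-1$ from \eqref{F^r}. The only cosmetic difference is the endgame: the paper observes that skew-symmetry forces $B(U_{t_0}^{\sigma,1}\times\cdots\times U_{t_0}^{\sigma,m-1})=0$ as well, whereas you argue via $\operatorname{rk}B\le 1$ and even rank; these are equivalent one-line arguments.
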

\begin{proof}
Let $x_0=\sigma (t_0)$ and let $U\subset T_{x_0}(M)$
be an open neighbourhood of the origin on which
the exponential mapping $\exp \colon T_{x_0}(M)\to  M$
is a diffeomorphim onto its image. Let $(X_j)_{j=1}^m$
be a $g$-orthonormal basis for $T_{x_0}M$ with dual
basis $(w^i)_{i=1}^m$, $w^i\in T_{x_0}^\ast (M)$
(i.e., $w^i(X_j)=\delta _j^i$) and let
$x^i=w^i\circ (\exp |_{U})^{-1}$, $1\leq i\leq m$,
be the corresponding normal coordinate system.

If $\phi \colon M\to M$ is an affine transformation
of the Levi-Civita connection of $g$ (in particular,
if $\phi $ is an isometry of $g $) leaving the point
$x_0$ invariant, then (cf.\ \cite[VI, Proposition 1.1]{KN}),
$\phi \circ \exp =\exp \circ \phi _\ast $,
$\phi _\ast \colon T_{x_0}(M)\to  T_{x_0}(M)$
being the Jacobian mapping at $x_0$. Hence
$x^i\circ \phi
=(w^i\circ \phi _\ast )\circ (\exp |_{U})^{-1}$.
If $\phi _\ast (X_j)=a_j^iX_i$, then
$w^i\circ \phi _\ast =a_h^iw^h$ and hence,
$x^i\circ \phi =a_h^ix^h$. In particular,
if $\phi _\tau $ is the flow of a Killing vector
field $X$ locally given as in \eqref{LocalExprX},
then
\begin{equation*}
\begin{array}{lll}
x^i\circ \phi _\tau
=a_h^i(\tau )x^h,
& (a_h^i(\tau ))_{h,i=1}^m\in O(m),
& \forall \tau \in \mathbb{R},
\smallskip \\
f^i=b_h^ix^h,
& b_h^i=\dfrac{da_h^i}{d\tau }(0),
&
B=(b_h^i)_{h,i=1}^m\in \mathfrak{so}(m).
\end{array}
\end{equation*}
According to \eqref{F^r}, the assumption
on $X$ in the statement is equivalent
to saying
\begin{equation*}
\frac{d^k(f^i\circ \sigma )}{dt^k}(t_0)
=0,
\quad
1\leq i\leq m,\; 0\leq k\leq m-1,
\end{equation*}
or equivalently,
$b_h^i(d^k(x^h\circ \sigma )/dt^k)(t_0)=0$,
i.e., $B(U_{t_0}^{\sigma ,k})=0$,
$1\leq k\leq m-1$, the tangent
vectors $U_{t_0}^{\sigma ,k}$ being defined
in the formula \eqref{U^sigma,k}. As $B$
is skew-symmetric, we also have
$B(U_{t_0}^{\sigma ,1}\times \cdots
\times U_{t_0}^{\sigma ,m-1})=0$.
Therefore, $B=0$.
\end{proof}
\begin{corollary}
\label{corollary_stability}
On a complete Riemannian connected manifold $(M,g)$
the order of asymptotic stability \emph{(cf.\
\cite{Arnold}, \cite{Kumpera})} of the algebra
of differential invariants, is $\leq m$. Accordingly,
$N_r=(r+1)m+1-\dim \mathfrak{i}(M,g)$,
$\forall r\geq m-1$.
\end{corollary}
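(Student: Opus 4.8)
The plan is to deduce the corollary from Theorem~\ref{stability} by computing the generic rank of the distribution $\mathfrak{D}^{r}$ for $r\geq m-1$ and substituting it into formula~\eqref{N_r} of Lemma~\ref{lemma_rank}.

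First I would fix $r\geq m-1$ and set $\mathcal{W}^{r}=(\pi_{m-1}^{r})^{-1}\bigl(\mathcal{N}^{m-1}(M)\bigr)\subseteq J^{r}(\mathbb{R},M)$. The set $\mathcal{N}^{m-1}(M)$ is dense and open in $J^{m-1}(\mathbb{R},M)$: under the diffeomorphism $\Psi_{\nabla}^{m-1}$ used in the proof of Proposition~\ref{generic_bis} it corresponds to $\mathbb{R}$ times the fibrewise dense open subset of $\oplus^{m-1}TM$ consisting of linearly independent $(m-1)$-tuples. Since $\pi_{m-1}^{r}$ is a surjective submersion, $\mathcal{W}^{r}$ is dense and open in $J^{r}(\mathbb{R},M)$.

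The decisive step is to check that the linear map $\mathfrak{i}(M,g)\to T_{\xi}J^{r}(\mathbb{R},M)$, $X\mapsto X^{(r)}_{\xi}$, is injective for every $\xi=j_{t_{0}}^{r}\sigma\in\mathcal{W}^{r}$. Indeed, from the prolongation formula~\eqref{F^r} one has $(\pi_{m-1}^{r})_{\ast}X^{(r)}=X^{(m-1)}$, so $X^{(r)}_{\xi}=0$ forces $X^{(m-1)}_{j_{t_{0}}^{m-1}\sigma}=0$, and since $j_{t_{0}}^{m-1}\sigma\in\mathcal{N}^{m-1}(M)$, Theorem~\ref{stability} yields $X=0$. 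Because the image of $X\mapsto X^{(r)}_{\xi}$ is exactly the fibre $\mathfrak{D}^{r}_{\xi}$, and $\dim\mathfrak{i}(M,g)$ is finite on a complete manifold, injectivity on $\mathcal{W}^{r}$ gives $\dim\mathfrak{D}^{r}_{\xi}=\dim\mathfrak{i}(M,g)$ there; as $\dim\mathfrak{D}^{r}_{\xi'}\leq\dim\mathfrak{i}(M,g)$ at every point $\xi'$, this is the maximal value, which by the construction of $\mathcal{U}^{r}$ in the proof of Lemma~\ref{lemma_rank} is the generic rank, i.e.\ $\operatorname{rk}\mathfrak{D}^{r}|_{\mathcal{U}^{r}}=\dim\mathfrak{i}(M,g)$ for all $r\geq m-1$.

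Substituting into \eqref{N_r} gives $N_{r}=(r+1)m+1-\dim\mathfrak{i}(M,g)$ for every $r\geq m-1$. In particular $\operatorname{rk}\mathfrak{D}^{r}=\operatorname{rk}\mathfrak{D}^{m-1}$ for all $r\geq m-1$, equivalently $N_{r}-N_{r-1}=m$ for every $r\geq m$; since this constant increment is exactly the stable one (namely $\dim M$, the number of functions contributed by the total derivative $D_{t}$ at each step), the algebra of differential invariants is asymptotically stable of order $\leq m$. The whole argument is bookkeeping on top of Theorem~\ref{stability}; the only delicate point is the last sentence of the third paragraph --- passing from ``no nonzero Killing field annihilates a generic $(m-1)$-jet'' to the equality of generic ranks --- which relies on $\dim\mathfrak{i}(M,g)<\infty$ together with $\mathcal{U}^{r}$ containing the maximal-rank locus. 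I anticipate no real obstacle.
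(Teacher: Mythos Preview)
Your argument is correct and follows essentially the same route as the paper: both deduce from Theorem~\ref{stability} that the prolongation map $\mathfrak{i}(M,g)\to\mathfrak{D}^{r}_{\xi}$ is injective over $\mathcal{N}^{m-1}(M)$ for $r\geq m-1$, hence $\operatorname{rk}\mathfrak{D}^{r}=\dim\mathfrak{i}(M,g)$ generically, and then substitute into~\eqref{N_r}. The only cosmetic difference is that the paper phrases this via the exact sequence~\eqref{ExactSeq} (observing $\mathfrak{D}^{m,m-1}=0$), whereas you argue injectivity directly; the content is identical.
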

\begin{proof}
This follows from the previous theorem and the exact
sequence \eqref{ExactSeq}, taking account of the fact
that $X_{j_t^{m-1}\sigma }^{(m-1)}=0$ if and only if
$X_{j_t^m\sigma }^{(m)}
\in \mathfrak{D}_{j_t^m\sigma }^{m,m-1}$.
\end{proof}
\begin{corollary}
On a complete Riemannian connected manifold $(M,g)$
the distribution $\mathfrak{D}^{m-1}$ takes
its maximal rank on $\mathcal{N}^{m-1}(M)$.
\end{corollary}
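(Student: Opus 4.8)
The plan is to derive this statement directly from Theorem \ref{stability}. First I would recall the structural description of $\mathfrak{D}^{m-1}$ already used in Lemma \ref{lemma_rank}: at each point $\xi=j_{t_0}^{m-1}\sigma\in J^{m-1}(\mathbb{R},M)$ the fibre $\mathfrak{D}^{m-1}_\xi$ is, by definition, the image of the linear evaluation map
\[
\mathrm{ev}_\xi\colon \mathfrak{i}(M,g)\longrightarrow T_\xi J^{m-1}(\mathbb{R},M),
\qquad
\mathrm{ev}_\xi(X)=X^{(m-1)}_\xi .
\]
Since the isometry group of a Riemannian manifold is a Lie group, $\mathfrak{i}(M,g)$ is finite-dimensional; write $d=\dim\mathfrak{i}(M,g)$. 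Then $\operatorname{rk}\mathfrak{D}^{m-1}_\xi=\dim\operatorname{im}\mathrm{ev}_\xi\leq d$ for every $\xi$, so $d$ is an a priori upper bound for the rank of $\mathfrak{D}^{m-1}$.

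Next I would invoke Proposition \ref{generic_bis} with $r=m-1$ to note that $\mathcal{N}^{m-1}(M)$ is nonempty (indeed dense and open). Fix any $\xi=j_{t_0}^{m-1}\sigma\in\mathcal{N}^{m-1}(M)$; by definition of $\mathcal{N}^{m-1}(M)$ the tangent vectors $U^{\sigma,1}_{t_0},\dots,U^{\sigma,m-1}_{t_0}$ of \eqref{U^sigma,k} are linearly independent, so Theorem \ref{stability} applies and gives that $X^{(m-1)}_\xi=0$ implies $X=0$, i.e. $\ker\mathrm{ev}_\xi=0$. Hence $\mathrm{ev}_\xi$ is injective and $\operatorname{rk}\mathfrak{D}^{m-1}_\xi=d$.

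Combining the two observations: the rank of $\mathfrak{D}^{m-1}$ never exceeds $d$, and it equals $d$ at every point of $\mathcal{N}^{m-1}(M)$; therefore $d$ is exactly the maximal value of $\operatorname{rk}\mathfrak{D}^{m-1}$ and it is attained on $\mathcal{N}^{m-1}(M)$, which is the assertion. (This matches Corollary \ref{corollary_stability}, which identifies the generic rank $\operatorname{rk}\mathfrak{D}^{m-1}|_{\mathcal{U}^{m-1}}$ with $\dim\mathfrak{i}(M,g)$; here we additionally locate a concrete stratum on which the maximum is realised.) I do not expect a genuine obstacle: the whole weight of the argument lies in Theorem \ref{stability}, and the only supplementary input is the nonemptiness of $\mathcal{N}^{m-1}(M)$, needed to ensure that the bound $d$ is actually achieved and not merely an upper bound.
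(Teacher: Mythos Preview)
Your proof is correct and follows essentially the same route as the paper: both arguments simply observe that, by Theorem \ref{stability}, the evaluation map $\mathfrak{i}(M,g)\to\mathfrak{D}^{m-1}_\xi$, $X\mapsto X^{(m-1)}_\xi$, is injective (hence an isomorphism onto $\mathfrak{D}^{m-1}_\xi$) at every $\xi\in\mathcal{N}^{m-1}(M)$, so the rank there equals $\dim\mathfrak{i}(M,g)$, which is the largest possible value. Your extra remark that Proposition \ref{generic_bis} guarantees $\mathcal{N}^{m-1}(M)\neq\emptyset$ (so that this value is actually attained and is the global maximum) is a harmless and reasonable clarification.
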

\begin{proof}
If $j_t^{m-1}\sigma \in \mathcal{N}^{m-1}(M)$,
then from Theorem \ref{stability} it follows
that the linear map $\mathfrak{i}(M,g)
\to \mathfrak{D}_{j_t^{m-1}\sigma }^{m-1}$,
$X\mapsto X_{j_t^{m-1}\sigma }^{(m-1)}$,
is an isomorphism.
\end{proof}
\subsection{Completeness}
Let a Lie group $G$ act on a manifold $N$.
If the quotient manifold $q\colon N\to N/G$
exists, then the image of the mapping
$q^\ast \colon C^\infty (N/G)\to C^\infty (N)$,
$f\mapsto f\circ q$, is the subalgebra
of $G$-invariant functions, namely
$C^\infty (N)^G=q^\ast C^\infty (N/G)$.
Below we are concerned with the case
$G=\mathcal{I}^0(M,g)$ acting
on $N=J^r(\mathbb{R},M)$ as defined
at the beginning of the section \ref{definitions}.
\begin{definition}
Let $O^r\subseteq J^r(\mathbb{R},M)$ be an invariant
open subset under the natural action of the group
$\mathcal{I}^0(M,g)$. A system of invariant functions
$I_i\colon O^r\to  \mathbb{R}$, $1\leq i\leq \nu $,
is said to be \emph{complete} if the equations
$I_i(j_{t_0}^r\sigma )=I_i(j_{t_0}^r\sigma ^\prime )$,
$1\leq i\leq \nu $, $j_{t_0}^r\sigma $,
$j_{t_0}^r\sigma ^\prime \in O^r$, imply that $\sigma $
and $\sigma ^\prime $ are congruent on a neighbourhood
of $t_0$.
\end{definition}
\begin{proposition}
\label{proposition_diagram}
Let $\nabla $ be a linear connection on $M$.
The mapping $\Phi _{\nabla }^r$ defined
in the formula \emph{(\ref{Phi^r})} makes the diagram
\begin{equation*}
\begin{array}{ccc}
J^r(\mathbb{R},M)
& \overset{\Phi _\nabla ^r}{\longrightarrow }
& \mathbb{R}\times \oplus ^rTM
\smallskip \\
{\scriptstyle\phi ^{(r)}} \!
\downarrow
&  &
\quad
\downarrow
{\scriptstyle(1_{\mathbb{R}},\oplus ^r\phi _\ast )}\\
J^r(\mathbb{R},M)
& \overset{\Phi _{\phi \cdot \nabla }^r}{\longrightarrow }
& \mathbb{R}\times \oplus ^rTM
\end{array}
\end{equation*}
commutative for every $\phi \in \mathrm{Diff}(M)$.
\end{proposition}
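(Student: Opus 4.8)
The plan is to unwind both legs of the square and reduce commutativity to the single fact that an affine diffeomorphism intertwines iterated covariant derivatives along a curve. Here $\phi\cdot\nabla$ denotes the linear connection obtained by transporting $\nabla$ by $\phi$, i.e.\ the unique connection for which $\phi\colon(M,\nabla)\to(M,\phi\cdot\nabla)$ is affine; concretely $(\phi\cdot\nabla)_{\phi_\ast X}(\phi_\ast Y)=\phi_\ast(\nabla_X Y)$. Writing $\Phi_\nabla^r(j_{t_0}^r\sigma)=\bigl(t_0;(\nabla_{T^\sigma}^{k-1}T^\sigma)_{t_0}\bigr)_{k=1}^r$ and applying $(1_{\mathbb{R}},\oplus^r\phi_\ast)$ gives $\bigl(t_0;\phi_\ast\bigl((\nabla_{T^\sigma}^{k-1}T^\sigma)_{t_0}\bigr)\bigr)_{k=1}^r$. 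On the other leg, $\phi^{(r)}(j_{t_0}^r\sigma)=j_{t_0}^r(\phi\circ\sigma)$, so its image under $\Phi_{\phi\cdot\nabla}^r$ is $\bigl(t_0;\bigl((\phi\cdot\nabla)_{T^{\phi\circ\sigma}}^{k-1}T^{\phi\circ\sigma}\bigr)_{t_0}\bigr)_{k=1}^r$. The $t_0$-slots visibly coincide, so the claim amounts to
\[
\phi_\ast\bigl((\nabla_{T^\sigma}^{k-1}T^\sigma)_t\bigr)=\bigl((\phi\cdot\nabla)_{T^{\phi\circ\sigma}}^{k-1}T^{\phi\circ\sigma}\bigr)_t,\qquad 1\leq k\leq r.
\]

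I would establish this by induction on $k$. The base case $k=1$ is just $T^{\phi\circ\sigma}=\phi_\ast T^\sigma$, the chain rule. For the step, I would invoke the curve version of \cite[VI, Proposition 1.2]{KN}: if $\phi$ is affine from $(M,\nabla)$ to $(M,\nabla')$ and $V$ is a vector field along $\sigma$, then $\phi_\ast(\nabla_{T^\sigma}V)=\nabla'_{T^{\phi\circ\sigma}}(\phi_\ast V)$. Taking $\nabla'=\phi\cdot\nabla$, $V=\nabla_{T^\sigma}^{k-1}T^\sigma$ and substituting the inductive hypothesis $\phi_\ast V=(\phi\cdot\nabla)_{T^{\phi\circ\sigma}}^{k-1}T^{\phi\circ\sigma}$ produces the identity for $k$.

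There is no real obstacle; the only points demanding a little care are the conventions for the pushforward of a vector field along a curve and the observation that $\phi$ is affine with respect to $\phi\cdot\nabla$ by construction — both routine. If one prefers an entirely elementary argument, one can instead work in a chart and use the recursion \eqref{suc}: in coordinates both legs of the square evaluate to the same polynomial in the derivatives $d^k(x^i\circ\sigma)/dt^k$ and the Christoffel symbols, the transformation law of the $\Gamma^i_{jk}$ under $\phi$ exactly absorbing the effect of pushing forward; but the coordinate-free induction is cleaner and I would present that.
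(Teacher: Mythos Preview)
Your proposal is correct. The paper's one-line proof merely cites formula \eqref{suc} and the definition of $\phi\cdot\nabla$, i.e.\ it points to the coordinate computation you sketch at the end as an alternative; your primary argument---the coordinate-free induction using that $\phi\colon(M,\nabla)\to(M,\phi\cdot\nabla)$ is affine by construction and then \cite[VI, Proposition 1.2]{KN}---is cleaner and makes the mechanism transparent, while the coordinate route has the virtue of showing concretely how the recursion \eqref{R} for the $F^{k,i}$ transforms under the change of Christoffel symbols $\Gamma\mapsto\phi\cdot\Gamma$.
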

\begin{proof}
The proof is a consequence of the formula \eqref{suc}
and Lemma \ref{ecsfrenet}, taking the definition
of $\phi \cdot \nabla $ into account.
\end{proof}
\begin{theorem}
\label{completeness}
Let $(M,g)$ be a complete oriented
connected Riemannian manifold of class $C^\omega $. If
\begin{equation}
\label{complete_system}
I_i\colon (\pi _{m-1}^r)^{-1}\mathcal{F}^{m-1}(M)
\to \mathbb{R},\quad r\geq m,\;1\leq i\leq \nu ,
\end{equation}
is a complete system of invariants, then there exists
a dense open subset $O^r$ in
$(\pi _{m-1}^r)^{-1}\mathcal{F}^{m-1}(M)$ such that
$I_i|_{O^r}$, $1\leq i\leq \nu $, generate the ring
of differential invariants under the group
$\mathcal{I}^0(M,g)$ on an open neighbourhood
$N^r\subseteq O^r$ of every point $j_t^r\sigma \in O^r$,
i.e.,
\begin{equation*}
C^\infty
\left(
N^r
\right) ^{\mathcal{I}^0(M,g)}
=\left(
\left.
I_1
\right\vert _{N^r},\dotsc,
\left. I_\nu
\right\vert _{N^r}
\right)
^\ast C^\infty (\mathbb{R}^\nu ).
\end{equation*}
Conversely, if a system of functions as
in \emph{(\ref{complete_system})} locally generates
the ring of invariants over a dense subset
$\tilde{O}^r
\subseteq (\pi _{m-1}^r)^{-1}\mathcal{F}^{m-1}(M)$,
then it is complete.
\end{theorem}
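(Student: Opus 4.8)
The plan is to compare the family $I=(I_1,\dotsc,I_\nu)$ with the Frobenius normal form of the distribution $\mathfrak{D}^r$. I would begin by shrinking to a convenient dense open subset of $(\pi_{m-1}^r)^{-1}\mathcal{F}^{m-1}(M)$. By Lemma \ref{lemma_rank} the involutive distribution $\mathfrak{D}^r$ has locally constant rank on a dense open set $\mathcal{U}^r\subseteq J^r(\mathbb{R},M)$, and by Corollary \ref{corollary_stability} that rank equals $\dim\mathfrak{i}(M,g)$ when $r\geq m$; intersecting $\mathcal{U}^r$ with $(\pi_{m-1}^r)^{-1}\mathcal{F}^{m-1}(M)$ and, by Theorem \ref{comparingFandN}, with the (dense in $\mathcal{F}^{m-1}(M)$) locus where the jets also lie in $\mathcal{N}^{m-1}(M)$, I obtain an open set on which Theorem \ref{stability} guarantees that no nonzero Killing field belongs to the isotropy algebra of a jet. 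On this set the Frobenius theorem provides, around each point, a neighbourhood $N^r$ and a submersion $q\colon N^r\to Q$ onto a manifold of dimension $N_r=m(r+1)+1-\dim\mathfrak{i}(M,g)$ whose fibres are the (connected) plaques of $\mathfrak{D}^r$, and the ring of differential invariants on $N^r$ is exactly $q^\ast C^\infty(Q)$; moreover, by the triviality of the isotropy algebra, each plaque is an open piece of a single $\mathcal{I}^0(M,g)$-orbit.

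Each $I_i$ is an isometry invariant, hence a first integral of $\mathfrak{D}^r$, so (after shrinking $N^r$) $I_i=q^\ast\bar I_i$ for a smooth $\bar I_i\colon Q\to\mathbb{R}$; set $\bar I=(\bar I_1,\dotsc,\bar I_\nu)\colon Q\to\mathbb{R}^\nu$. The engine of the proof is the assertion that $(I_i)$ is a complete system if and only if $\bar I$ is an immersion on a dense open subset of $Q$. Granting the nontrivial half of this (completeness $\Rightarrow$ immersion, treated below), an immersion is locally an embedding, so locally $\bar I^\ast C^\infty(\mathbb{R}^\nu)=C^\infty(Q)$ and therefore $I^\ast C^\infty(\mathbb{R}^\nu)=q^\ast\bar I^\ast C^\infty(\mathbb{R}^\nu)=q^\ast C^\infty(Q)$, which is precisely the generation statement on the appropriately shrunk $O^r$. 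For the converse, if $(I_i)$ locally generates $q^\ast C^\infty(Q)$ over a dense set then $\bar I^\ast$ is locally surjective onto $C^\infty(Q)$, forcing $\bar I$ to be a local embedding there; hence near each such point the level sets of $I=\bar I\circ q$ are the plaques of $\mathfrak{D}^r$, two nearby jets with equal $I$-values lie in one plaque and thus in one $\mathcal{I}^0(M,g)$-orbit, so the corresponding curves satisfy $\sigma'=\phi\circ\sigma$ near $t_0$ for a genuine isometry $\phi$ and are congruent; a connectedness argument along $\tilde O^r$ removes the restriction to nearby jets.

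The step I expect to be the main obstacle is completeness $\Rightarrow$ $\bar I$ generically an immersion. Suppose not: on some open set $\operatorname{rk}d\bar I<\dim Q$, and passing to the open subset where this rank is locally maximal one finds two distinct plaques $L_1\neq L_2$ of $\mathfrak{D}^r$ — hence two distinct $\mathcal{I}^0(M,g)$-orbits — on which $I$ takes the same values. Choosing $j_{t_0}^r\sigma\in L_1$ and $j_{t_0}^r\sigma'\in L_2$ with $I(j_{t_0}^r\sigma)=I(j_{t_0}^r\sigma')$, completeness makes $\sigma$ and $\sigma'$ congruent on a neighbourhood of $t_0$, so $j_{t_0}^r\sigma'=\phi^{(r)}(j_{t_0}^r\sigma)$ for some isometric embedding $\phi$. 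To reach a contradiction I must place these two jets in the same $\mathcal{I}^0(M,g)$-orbit, and this is the delicate point: using the polar-map construction of the proof of Theorem \ref{CGC} together with \cite[VI, Theorem 7.2]{KN}, \cite[Lemma 2.3.1]{Wolf} and the completeness and analyticity of $(M,g)$, one identifies $\phi$, near $\sigma(t_0)$, with an isometry of $M$; and then the properness of the action of $\mathfrak{I}(M,g)$ on $J^r(\mathbb{R},M)$ shows that its orbit through $j_{t_0}^r\sigma$ is an embedded submanifold whose component through that point is $\mathcal{I}^0(M,g)\cdot j_{t_0}^r\sigma=L_1$, so that $j_{t_0}^r\sigma'$, lying arbitrarily close to $j_{t_0}^r\sigma$, must belong to $L_1$ — contradicting $L_1\neq L_2$. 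Everything else is routine Frobenius bookkeeping, together with the equivariance of $\Phi_\nabla^r$ recorded in Proposition \ref{proposition_diagram}; making this analytic step airtight — controlling the loci where a local isometry need not extend, or where the isometry-group orbit need not be embedded — is what requires care.
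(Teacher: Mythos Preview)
Your approach differs genuinely from the paper's. The paper constructs the \emph{global} quotient manifold
\[
q^m\colon (\pi_{m-1}^m)^{-1}\mathcal{F}^{m-1}(M)\longrightarrow Q^m=(\pi_{m-1}^m)^{-1}\mathcal{F}^{m-1}(M)/\mathcal{I}^0(M,g),
\]
by proving the action is free (via $\Phi_\nabla^m$ and Proposition~\ref{proposition_diagram}: an isometry fixing a Frenet $m$-jet fixes a frame, hence is the identity) and proper (by exhibiting a Sasakian-type metric $\mathbf{g}^m$ on $J^m(\mathbb{R},M)$ for which the prolonged isometries act isometrically, and identifying the image of $\mathcal{I}^0(M,g)$ in $\mathcal{I}^0(J^m(\mathbb{R},M),\mathbf{g}^m)$ as a closed subgroup via the contact conditions). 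Once $Q^m$ exists, completeness says exactly that the induced map $\Upsilon=(\bar I_1,\dotsc,\bar I_\nu)\colon Q^m\to\mathbb{R}^\nu$ is \emph{injective}; then the elementary observation that a smooth map has locally constant rank on a dense open set, combined with injectivity, forces $\Upsilon$ to be an immersion there, and the forward direction follows. This bypasses your contradiction argument and, in particular, your ``delicate point'' of extending a local isometric embedding to an element of $\mathcal{I}^0(M,g)$: in the paper that passage is absorbed into the single line ``completeness $\Rightarrow$ $\Upsilon$ injective''. Your local-Frobenius route reaches the same forward conclusion, but at the cost of that extension step and the properness/embeddedness discussion you flag.

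The genuine gap is in your converse. You establish only that \emph{nearby} jets with equal $I$-values lie in the same plaque, hence the same orbit; the phrase ``a connectedness argument along $\tilde O^r$ removes the restriction to nearby jets'' does not do what you need. Two jets $j_{t_0}^r\sigma$ and $j_{t_0}^r\sigma'$ with $I(j_{t_0}^r\sigma)=I(j_{t_0}^r\sigma')$ are in general not connected by a path inside $\tilde O^r$ along which $I$ is constant, so there is no way to propagate the local ``same orbit'' conclusion from one to the other. The paper's argument genuinely needs the global quotient here: if $q^m(j_{t_0}^m\sigma)\neq q^m(j_{t_0}^m\sigma')$, one picks $\rho\in C^\infty(q^m\tilde O^m)$ separating the two images; then $\rho\circ q^m$ is a \emph{globally defined} invariant on $\tilde O^m$, and the hypothesis forces it to factor through $(I_1,\dotsc,I_\nu)$, so some $I_i$ already separates the two jets. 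Without a global object playing the role of $Q^m$, your local Frobenius charts cannot manufacture such a separating invariant, and the converse does not close.
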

\begin{proof}
According to Theorem \ref{stability}, we can confine
ourselves to prove the statement for $r=m$.
First of all, we prove that the quotient manifold
\begin{equation*}
q^m\colon (\pi _{m-1}^m)^{-1}\mathcal{F}^{m-1}(M)
\to (\pi _{m-1}^m)^{-1}\mathcal{F}^{m-1}(M)/
\mathcal{I}^0(M,g)
=Q^m
\end{equation*}
exists. To this end, by applying \cite[Theorem 9.16]{Lee},
we only need to prove that the following two conditions hold:
\begin{enumerate}
\item
The isotropy subgroup $\mathcal{I}^0(M,g)_{j_t^m\sigma }$
reduces to the identity map of $M$ for every $j_t^m\sigma $
in $(\pi _{m-1}^m)^{-1}\mathcal{F}^{m-1}(M)$.
\item
$\mathcal{I}^0(M,g)$ acts properly on
$(\pi _{m-1}^m)^{-1}\mathcal{F}^{m-1}(M)$.
\end{enumerate}
The image of $(\pi _{m-1}^m)^{-1}(\mathcal{F}^{m-1})$
by the diffeomorphism $\Phi _\nabla ^m$ is equal
to the subset $U^m\subset \mathbb{R}\times \oplus ^mTM$
of elements $(t,X_1,\dotsc,X_m)$ such that
$(X_1,\dotsc,X_{m-1})$ are linearly independent tangent
vectors. From Proposition \ref{proposition_diagram}
we deduce that an isometry $\phi $ belongs to the isotropy
subgroup $\mathcal{I}^0(M,g)_{j_t^m\sigma }$ of a point
$j_t^m\sigma $ in $(\pi _{m-1}^r)^{-1}\mathcal{F}^{m-1}(M)$,
if and only
${(1_{\mathbb{R}},\oplus ^m\phi _\ast (\sigma (t)))}$
belongs to the isotropy subgroup of the point
$\Phi _\nabla ^m(j_t^m\sigma )=(t,X_1,\dotsc,X_m)\in U^m$.
Hence $\phi =Id_M$ and consequently, $\mathcal{I}^0(M,g)$
acts freely on $(\pi _{m-1}^m)^{-1}(\mathcal{F}^{m-1})$,
thus proving the first item above.

Moreover, if $g_1$ is the Sasakian metric induced by $g$
on $TM$ (e.g., see \cite[1.K]{Besse}, \cite[Section 7]{GK},
\cite[IV, Section 1]{YI}), then $\mathcal{I}^0(M,g)$
acts by isometries of the metric on $J^m(\mathbb{R},M)$
given by
\begin{equation*}
\mathbf{g}^m=\left(
\Phi _\nabla ^m
\right) ^\ast
\left(
dt^2+\sum_{i=1}^m(\mathrm{pr}_i)^\ast g_1
\right) ,
\end{equation*}
where
$\mathrm{pr}_i\colon \mathbb{R}\times \oplus ^mTM\to TM$
is the projection $\mathrm{pr}_i(t,X_1,\dotsc,X_m)=X_i$,
and the image of the mapping
$\mathcal{I}^0(M,g)
\to \mathcal{I}^0(J^m(\mathbb{R},M),\mathbf{g}^m)$,
$\phi \mapsto \phi ^{(m)}$, is closed as it is defined
by the following closed conditions:
\[
\varphi ^\ast (t)=t,
\;
(j^m\sigma )^\ast (\varphi ^\ast \omega )=0,
\quad
\varphi \in \mathcal{I}^0(J^m(\mathbb{R},M),\mathbf{g}^m),
\]
for every $\sigma \in C^\infty (\mathbb{R},M)$
and every contact $1$-form $\omega $
on $J^m(\mathbb{R},M)$, by virtue of
\cite[Theorem 3.1]{Yamaguchi}.
From \cite[5.2.4. Proposition]{PalaisTerng}
we conclude the second item above.

The invariant functions
$I_i\colon (\pi _{m-1}^m)^{-1}
\mathcal{F}^{m-1}(M)\to \mathbb{R}$,
$1\leq i\leq \nu $, induce smooth functions
on the quotient manifold,
$\bar{I}_i\colon Q^m\to \mathbb{R}$.
As $I_1,\dotsc,I_{\nu }$ is a complete system
of invariants, the mapping
$\Upsilon \colon Q^m\to \mathbb{R}^{\nu }$
whose components are $\bar{I}_1,\dotsc,\bar{I}_\nu $,
is injective.

The same argument as in the proof
of Lemma \ref{lemma_rank} states the following
property: If $\phi \colon N\to  N^\prime $ is
an smooth mapping, then the subset of the points
$x\in N$ for which there exists an open
neighbourhood $U(x)\subseteq N$ such that
$\phi |_{U(x)}$ is a mapping of constant rank,
is a dense open subset in $N$. Hence an injective
smooth map $\phi \colon N\to  N^\prime $ is
an immersion on a dense open subset in $N$
(cf.\ \cite[Theorem 7.15-(b)]{Lee}).
By applying this result to $\Upsilon $,
we conclude the existence of a dense open
subset $\bar{O}^m\subseteq Q^m$ such that
$\Upsilon |_{\bar{O}^m}$ is an injective
immersion. Hence for every
$q^m(j_t^m\sigma )\in \bar{O}^m$ there exists
a system of coordinates on $Q^m$ defined
on an open neighbourhood of $q^m(j_t^m\sigma )$
constituted by some functions
$\bar{I}_{i_1},\dotsc,\bar{I}_{i_k}$,
$k=\dim Q^m$. As $(q^m)^\ast C^\infty (Q^m)$
can be identified to the ring of differential
invariants, we can take
$O^m=(q^m)^{-1}(\bar{O}^m)$.

Conversely, if
$j_{t_0}^m\sigma ,j_{t_0}^m\sigma ^\prime
\in \tilde{O}^m$ are such that
$q^m(j_{t_0}^m\sigma ^\prime )
\neq q^m(j_{t_0}^m\sigma )$, then there exists
$\rho \in C^\infty (q^m\tilde{O}^m)$
satisfying $\rho (q^m(j_{t_0}^m\sigma ))=0$,
$\rho (q^m(j_{t_0}^m\sigma ^\prime ))=1$.
As $\rho \circ q^m$ is an invariant function
on $\tilde{O}^m$ by virtue of the hypothesis
there exists $f\in C^\infty (\mathbb{R}^\nu )$
such that $\rho \circ (q^m)=f\circ
\left(
\left.
I_1
\right\vert _{\tilde{O}^m},
\dotsc,
\left.
I_\nu
\right\vert _{\tilde{O}^m}
\right) $. Hence an index $i$ must exists
for which $I_i(j_{t_0}^m\sigma )
\neq I_i(j_{t_0}^m\sigma ^\prime )$,
thus proving that $I_1,\cdots,I_\nu $
is a complete system of invariants.
\end{proof}
\begin{remark}
If the injective immersion
$\Upsilon \colon \bar{O}^m\to \mathbb{R}^\nu $
is a closed map, then one has
\begin{equation*}
C^\infty
\left(
O^m
\right) ^{\mathcal{I}^0(M,g)}
=\left(
I_1,\dotsc,I_\nu
\right) ^\ast
C^\infty (\mathbb{R}^\nu ).
\end{equation*}
\end{remark}
\subsection{Generating complete systems of invariants}
\begin{theorem}
\label{Generating_Invariants}
For every $r\in \mathbb{N}$, let $k_r$ be the maximal
number of generically functionally independent $r$-th
order invariants not belonging to the closed---in
the $C^\infty $ topology---subalgebra generated by
the invariants of order $<r$, and their derivatives
with respect to the operator $D_t$. Then
\begin{eqnarray}
k_r &=& N_r-1-\sum _{i=0}^{r-1}(r+1-i)k_i,
\label{formula1} \\
m &=& \sum _{i=0}^mk_i.
\label{formula2}
\end{eqnarray}
Hence for every complete Riemannian manifold
of dimension $m$, there exist $m$ generically
independent invariants generating a complete
system of invariant functions by adding
their derivatives with respect to $D_t$
for every order $r\leq m$. Moreover,
$k_r=0$, $\forall r>m$.
\end{theorem}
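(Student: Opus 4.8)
The plan is to isolate a single structural fact — a ``filtration lemma'' — and deduce all four assertions from it by elementary counting, using Lemma~\ref{lemma_rank} and Corollary~\ref{corollary_stability} as the input on $N_r$. All statements are generic, so I would fix, for every order $r$ entering the argument, a dense open subset $O\subseteq(\pi_{m-1}^r)^{-1}\mathcal{F}^{m-1}(M)$ on which $\operatorname{rk}\mathfrak D^r$ (hence $N_r$) is locally constant and all the Jacobian ranks below are maximal. For each $i$ I choose differential invariants $I_i^1,\dots,I_i^{k_i}$ of order $i$ that are functionally independent modulo the subalgebra $\mathcal A_i$ generated by the invariants of order $<i$ and their $D_t$-derivatives; by the definition of $k_i$ this is possible and $\mathcal A_i$ together with $I_i^1,\dots,I_i^{k_i}$ functionally generates all invariants of order $i$. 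The filtration lemma asserts: for every $r$ the functions
\begin{equation*}
\{t\}\cup\{\,D_t^{\,s}I_i^{\,a}\ :\ 0\le i\le r,\ 1\le a\le k_i,\ 0\le s\le r-i\,\}
\end{equation*}
form a functional basis of the ring of differential invariants of order $\le r$.

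I would prove the filtration lemma by induction on $r$. Generation is immediate: $D_t$ carries invariants of order $<r$ to invariants of order $\le r$, so the displayed set contains $\mathcal A_r$, and adjoining $I_r^1,\dots,I_r^{k_r}$ produces all invariants of order $\le r$ by the defining property of $k_r$. Functional independence is the crux, and the device is the leading symbol at order $r$: for $H$ of order $\le r$, the covector $\ell(H)=(\partial H/\partial x_r^j)_j$. From $D_t=\partial/\partial t+\sum x_{\ell+1}^i\,\partial/\partial x_\ell^i$ one gets $\partial(D_t G)/\partial x_{\ell+1}^j=\partial G/\partial x_\ell^j$ whenever $G$ has order $\le\ell$, so the leading symbol of $D_t^{\,r-i}I_i^a$ at order $r$ equals the ``original'' symbol $\ell_i^a=(\partial I_i^a/\partial x_i^j)_j$, pushed up. Because the order-$i$ symbols of $\mathcal A_i$ are spanned exactly by $\{\ell_{i'}^{a'}:i'<i\}$, the requirement defining $I_i^a$ is precisely that $\ell_i^a$ be linearly independent of the $\ell_{i'}^{a'}$ ($i'<i$) and of the earlier $\ell_i^b$ ($b<a$); hence the covectors $\{\ell_i^a:i\le r,\,a\}$ are linearly independent in $(\mathbb R^m)^\ast$. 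This makes the top-order functions appearing at each new level functionally independent, and together with the inductive hypothesis and the fact that $t$ (with $D_t t=1$) contributes exactly one function, the lemma follows. One global input is needed here: that $N_r$ functionally independent invariants of order $\le r$ actually functionally generate the ring — equivalently, that over $O$ their common level sets are the $\mathcal I^0(M,g)$-orbits — which I would extract from the quotient-manifold construction in the proof of Theorem~\ref{completeness}.

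Granting the lemma, the rest is bookkeeping. Counting the basis gives the master identity
\begin{equation*}
N_r=1+\sum_{i=0}^{r}(r+1-i)\,k_i ,
\end{equation*}
and isolating the term $i=r$ is exactly \eqref{formula1}. Forming $N_r-N_{r-1}$ collapses the identity to $N_r-N_{r-1}=\sum_{i=0}^{r}k_i$ for all $r\ge1$. By Corollary~\ref{corollary_stability}, $N_r=(r+1)m+1-\dim\mathfrak i(M,g)$ for $r\ge m-1$, hence $N_r-N_{r-1}=m$ for every $r\ge m$; comparing the two expressions gives $\sum_{i=0}^{r}k_i=m$ for all $r\ge m$. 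Taking $r=m$ is \eqref{formula2}, and for $r>m$ one gets $k_r=\sum_{i=0}^{r}k_i-\sum_{i=0}^{r-1}k_i=m-m=0$.

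Finally, since $\sum_{i=0}^m k_i=m$ and $k_r=0$ for $r>m$, the whole collection of ``new'' invariants $\{I_i^a:0\le i\le m,\ 1\le a\le k_i\}$ has exactly $m$ members; by the filtration lemma at $r=m$, these $m$ invariants together with $t$ and their $D_t$-derivatives of total order $\le m$ form a functional basis of the ring of invariants of order $\le m$, and hence — by the converse half of Theorem~\ref{completeness} — a complete system. Thus $m$ generically independent invariants generate a complete system by successive total differentiation with respect to $t$, only derivatives up to order $m$ being required. I expect the functional-independence half of the filtration lemma — the leading-symbol / upper-triangular argument, and with it the passage from functional independence to functional generation over the correct dense open set — to be the main obstacle; everything after it is formal.
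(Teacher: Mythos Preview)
Your proposal is correct and follows essentially the same route as the paper: both arguments hinge on the identity $\partial(D_tG)/\partial x_{\ell+1}^j=\partial G/\partial x_\ell^j$ to exhibit a block upper-triangular Jacobian (your ``leading symbol'' device), yielding the count $N_r=1+\sum_{i=0}^r(r+1-i)k_i$, and then invoke Corollary~\ref{corollary_stability} to get $N_r-N_{r-1}=m$ for $r\ge m$ and hence \eqref{formula2} and $k_r=0$ for $r>m$. Your packaging into a single ``filtration lemma'' and the explicit appeal to Theorem~\ref{completeness} for the completeness claim are tidier than the paper's presentation, but the mathematical content is the same.
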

\begin{proof}
Let $t,I_i^0\in C^\infty (M)$,
$1\leq i\leq k_0\leq m$, $N_0=1+k_0$, be
a maximal system of invariant functions
of order zero. (If $(M,g)$ is not homogeneous,
then there exist zero-order differential
invariants independent of $t$; because
of this the proof must start on this order.)
Therefore, the rank of the Jacobian matrix
$\mathcal{J}^0(I_1^0,\dotsc,I_{k_0}^0)
=\left(
\partial I_i^0/\partial x^j
\right) _{1\leq j\leq m}^{1\leq i\leq k_0}$
must be maximal; namely,
$\operatorname{rk}\mathcal{J}^0
(I_1^0,\dotsc,I_{k_0}^0)=k_0$.
Moreover, one has
\begin{equation*}
\frac{\partial (D_tf)}{\partial x_{r+1}^i}
=\frac{\partial f}{\partial x_r^i},
\quad
\forall f\in C^\infty (J^r(\mathbb{R},M)),
\end{equation*}
as $\left[
\partial /\partial x_{r+1}^i,D_t
\right]
=\partial /\partial x_r^i$,
$\forall r\in \mathbb{N}$.
Hence the Jacobian matrix of the functions
$I_1^0,
\dotsc,
I_{k_0}^0,
D_tI_1^0,
\dotsc,
D_tI_{k_0}^0$ on $J^1(\mathbb{R},M)$ is of the form
\begin{equation*}
\mathcal{J}^1(I_1^0,
\dotsc,I_{k_0}^0,D_tI_1^0,
\dotsc,D_tI_{k_0}^0)
=\left(
\begin{array}{cc}
\left(
\frac{\partial I_i^0}{\partial x^j}
\right) &
0\smallskip \\
\star &
\left(
\frac{\partial I_i^0}{\partial x^j}
\right)
\end{array}
\right) ,
\end{equation*}
and $\operatorname{rk}\mathcal{J}^1
(I_1^0,
\dotsc,
I_{k_0}^0,
D_tI_1^0,
\dotsc,
D_tI_{k_0}^0)=2k_0\leq N_1-1$.
We can thus complete the previous
system with $k_1=N^1-1-2k_0$ new functionally
independent invariants
$I_1^1,\dotsc,I_{k_1}^1$, in such a way
that the Jacobian matrix of the full
system is as follows:
\begin{equation*}
\mathcal{J}^1(I_1^0,
\dotsc,I_{k_0}^0,D_tI_1^0,
\dotsc,D_tI_{k_0}^0,I_1^1,
\dotsc,I_{k_1}^1)
=\left(
\begin{array}{cc}
\left(
\frac{\partial I_i^0}{\partial x^j}
\right)
& 0\smallskip \\
\star
&
\left(
\frac{\partial I_i^0}{\partial x^j}
\right)
\smallskip
\\
\star
& \left(
\frac{\partial I_i^1}{\partial x_1^j}
\right)
\end{array}
\right) ,
\end{equation*}
with $\operatorname{rk}\mathcal{J}^1
(I_1^0,
\dotsc,I_{k_0}^0,D_tI_1^0,
\dotsc,D_tI_{k_0}^0,I_1^1,
\dotsc,I_{k_1}^1)
=2k_0+k_1=N_1-1$. Let us consider
the second-order invariants
\begin{eqnarray*}
&& I_1^0,
\dotsc,I_{k_0}^0, \\
&& D_tI_1^0,
\dotsc,D_tI_{k_0}^0,I_1^1,
\dotsc,I_{k_1}^1,
\\
&& D_t^2I_1^0,
\dotsc,D_t^2I_{k_0}^0,D_tI_1^1,
\dotsc,D_tI_{k_1}^1,
\end{eqnarray*}
the Jacobian matrix of which is
\begin{equation*}
\left(
\begin{array}{ccc}
\left(
\frac{\partial I_i^0}{\partial x^j}
\right)
& 0 & 0\
\smallskip \\
\star &
\left(
\frac{\partial I_i^0}{\partial x^j}
\right)
& 0
\smallskip
\\
\star &
\left(
\frac{\partial I_i^1}{\partial x_1^j}
\right) & 0
\smallskip \\
\star & \star &
\left(
\frac{\partial I_i^0}{\partial x^j}
\right)
\smallskip \\
\star & \star &
\left( \frac{\partial I_i^1}{\partial x_1^j}
\right)
\end{array}
\right)
\end{equation*}
and its rank is equal to $3k_0+2k_1$.
Hence we need to choose $k_2$ new second-order
invariants $I_1^2,\dotsc,I_{k_2}^2$, with
$k_2=N_2-1-(3k_0+2k_1)$, such that the matrix
\begin{equation*}
\left(
\begin{array}{ccc}
\left(
\frac{\partial I_i^0}{\partial x^j}
\right)
& 0 & 0
\smallskip \\
\star &
\left(
\frac{\partial I_i^0}{\partial x^j}
\right)
& 0
\smallskip
\\
\star &
\left(
\frac{\partial I_i^1}{\partial x_1^j}
\right)
& 0
\smallskip \\
\star & \star &
\left(
\frac{\partial I_i^0}{\partial x^j}
\right)
\smallskip \\
\star & \star &
\left(
\frac{\partial I_i^1}{\partial x_1^j}
\right)
\smallskip \\
\star & \star &
\left(
\frac{\partial I_i^2}{\partial x_2^j}
\right)
\smallskip
\end{array}
\right)
\end{equation*}
is of maximal rank, i.e., $3k_0+2k_1+k_2=N_2-1$.
Proceeding step by step in the same way, we conclude
that the formula \eqref{formula1} in the statement
holds true. Moreover, from this formula we obtain
$N_{r}-N_{r-1}=\sum_{i=0}^rk_i$. According
to Corollary \ref{corollary_stability}, we have
$N_r=(r+1)m+1-\dim \mathfrak{i}(M,g)$,
$\forall r\geq m-1$, and then
$\sum _{i=0}^mk_i=N_m-N_{m-1}=m$, thus proving
the formula \eqref{formula2} in the statement
and finishing the proof.
\end{proof}
\begin{remark}
\label{remark_Green}
Following the same notations as in \cite{Green},
let $H_{r+1}=\mathcal{I}^0(M,g)_{j_t^r\sigma }$
be the isotropy subgroup of a point
$j_t^r\sigma $ belonging to the open subset
$\mathcal{U}^r$ where the distribution
$\mathfrak{D}^r$ is of constant rank
(see Lemma \ref{lemma_rank}). In \cite{Green}
the following formula is mentioned:
$k_r=\dim H_{r-1}+\dim H_{r+1}-2\dim H_r$
in the homogeneous case, i.e., $M=G/H$.
Nevertheless, this formula holds
on an arbitrary Riemannian manifold
and it is an easy consequence of the formulas
\eqref{N_r} and \eqref{formula1}.
In fact, one has,
$\dim H_{r+1}=\dim \mathcal{I}^0(M,g)
-\operatorname{rk}
\left.
\mathfrak{D}^r
\right\vert _{\mathcal{U}^r}$.
Hence
\begin{equation*}
\dim H_{r-1}+\dim H_{r+1}-2\dim H_r
=N_{r-2}+N_r-2N_{r-1}=k_r.
\end{equation*}
\end{remark}
\section{Congruence on symmetric manifolds}
\begin{theorem}
\label{CCS}
Let $(M,g)$, $(\bar{M},\bar{g})$ be two
locally symmetric Riemannian manifolds
of the same dimension,
$m=\dim M=\dim \bar{M}$, and let
$\sigma \colon (a,b)\to M$,
$\bar{\sigma}\colon (a,b)\to \bar{M}$ be
two Frenet curves. If $x_0=\sigma (t_0)$,
$\bar{x}_0=\bar{\sigma}(t_0)$, $a<t_0<b$,
then $\sigma $ and $\bar{\sigma}$ are
congruent on some neigbourhoods $U$
and $\bar{U}$ of $x_0$ and $\bar{x}_0$,
respectively if, and only if,
the following conditions hold:
\begin{enumerate}
\item[\emph{1.}]
For every $j\in \mathbb{N}$
and every $0\leq i\leq m-1$,
\begin{equation}
\label{Symmetric0}
\kappa _i^\sigma (t)
=\kappa _i^{\bar{\sigma}}(t) , \quad
\left\vert t-t_0
\right\vert <\varepsilon .
\end{equation}
\item[\emph{2.}]
For every $i,j,k,l=1,\dotsc,m$,
\begin{equation}
\label{Symmetric}
R(X_i^\sigma ,
X_j^\sigma ,
X_k^\sigma ,
\omega _\sigma ^l)(x_0)
=\bar{R}(X_i^{\bar{\sigma }},
X_j^{\bar{\sigma }},
X_k^{\bar{\sigma }},
\omega _{\bar{\sigma }}^l)(\bar{x}_0)),
\end{equation}
$\left(
X_1^\sigma ,\dotsc,X_m^\sigma
\right) $,
$\left(
X_1^{\bar{\sigma }},\dotsc,X_m^{\bar{\sigma }}
\right) $
being the Frenet frames of $\sigma $,
$\bar{\sigma }$, with dual coframes
$\left(
\omega _\sigma ^1,\dotsc,\omega _\sigma ^m
\right) $,
$\left(
\omega _{\bar{\sigma }}^1,
\dotsc,
\omega _{\bar{\sigma }}^m
\right) $, and $R$, $\bar{R}$
the curvature tensors of $(M,g)$,
$(\bar{M},\bar{g})$, respectively.
\end{enumerate}
\end{theorem}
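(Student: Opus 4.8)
The plan is to deduce Theorem~\ref{CCS} from Corollary~\ref{CorolCGC}, the whole point being that on a locally symmetric space every covariant derivative of the curvature tensor vanishes, so that the infinite list of conditions \eqref{DC} collapses to a single one. First I would record that a locally symmetric Riemannian manifold is real-analytic in its normal coordinate systems (being locally isometric to a symmetric space $G/K$), so both $(M,g)$ and $(\bar M,\bar g)$ meet the $C^\omega$ hypothesis of Corollary~\ref{CorolCGC}; since congruence is a local matter near $x_0$ and $\bar x_0$, I may also restrict to connected oriented coordinate neighbourhoods and fix orientations there, so that the Frenet frames and all the curvatures $\kappa_0^\sigma,\dotsc,\kappa_{m-1}^\sigma$ of Proposition~\ref{referenciafrenet} make sense. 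Granting this, the necessity of conditions~1 and~2 is just the special case $j=0$ of the necessity part of Corollary~\ref{CorolCGC} (equivalently, it follows from Propositions~\ref{curvaturas} and~\ref{frenetframe}).

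For the sufficiency I would argue as follows. Since $(M,g)$ is locally symmetric, $\nabla R=0$, hence $\nabla^{j}R=0$ for every $j\ge 1$; likewise $\bar\nabla^{j}\bar R=0$ for every $j\ge 1$. Therefore, in condition~(ii) of Corollary~\ref{CorolCGC}, for $j\ge 1$ both sides vanish identically and the condition is automatically fulfilled, whereas for $j=0$ that condition reads precisely \eqref{Symmetric}, that is, hypothesis~2. Since hypothesis~1, namely \eqref{Symmetric0}, is exactly condition~(i) of Corollary~\ref{CorolCGC}, all the hypotheses of that Corollary hold, and one concludes that $\sigma$ and $\bar\sigma$ are congruent on suitable neighbourhoods of $x_0$ and $\bar x_0$. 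Note that I invoke Corollary~\ref{CorolCGC} rather than Theorem~\ref{CGC} precisely because the curves here are only assumed smooth and condition~1 is equality of curvatures on a whole neighbourhood of $t_0$, which is what Corollary~\ref{CorolCGC} requires.

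The one point worth spelling out is why \eqref{Symmetric} is the right tensorial input for the argument inside the proof of Theorem~\ref{CGC}. There the congruence is produced by the polar map built from the linear isometry $A\colon T_{x_0}M\to T_{\bar x_0}\bar M$ with $A\bigl(X_i^\sigma(t_0)\bigr)=X_i^{\bar\sigma}(t_0)$, and what is used is that $A$ carry $R_{x_0}$ to $\bar R_{\bar x_0}$, after which \cite[VI, Theorem~7.2]{KN} (now trivial, as all higher $\nabla^jR$ vanish) together with \cite[Lemma~2.3.1]{Wolf} makes the polar map an isometry. Because $\bigl(X_i^{\bar\sigma}(t_0)\bigr)_{i=1}^m$ is a basis of $T_{\bar x_0}\bar M$ with dual coframe $\bigl(\omega_{\bar\sigma}^i(t_0)\bigr)$, and because the Frenet frame (hence $A$) depends only on $j_{t_0}^{m-1}\sigma$ as recorded in Remark~\ref{remark_2}, the tensor identity $A\cdot R_{x_0}=\bar R_{\bar x_0}$ is equivalent to the scalar equalities $R(X_i^\sigma,X_j^\sigma,X_k^\sigma,\omega_\sigma^l)(x_0)=\bar R(X_i^{\bar\sigma},X_j^{\bar\sigma},X_k^{\bar\sigma},\omega_{\bar\sigma}^l)(\bar x_0)$ for all $i,j,k,l$, that is, to \eqref{Symmetric}.

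I do not expect a substantial obstacle: Theorem~\ref{CCS} is essentially a corollary of Theorem~\ref{CGC}, and the only care needed lies in the reductions — the passage to the analytic (and, for the sign of $\kappa_{m-1}$, oriented) setting, and the observation that $\nabla R=0$ annihilates every condition \eqref{DC} with $j\ge 1$. If one preferred not to quote Corollary~\ref{CorolCGC}, one could instead invoke \'E.\ Cartan's classical theorem that a linear isometry of tangent spaces matching the curvature tensors extends to a local isometry of locally symmetric spaces, in place of \cite[VI, Theorem~7.2]{KN}; but the route through Corollary~\ref{CorolCGC} is the shortest and keeps all the bookkeeping inside results already established.
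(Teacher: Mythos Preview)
Your proposal is correct and follows essentially the same route as the paper's own proof: invoke the analyticity of locally symmetric spaces (the paper cites \cite[VI, Theorem~7.7]{KN} for this), then apply Corollary~\ref{CorolCGC}, noting that $\nabla R=0$ kills every condition \eqref{DC} with $j\ge 1$ so that only \eqref{Symmetric} survives. Your additional paragraph unpacking why \eqref{Symmetric} is equivalent to $A\cdot R_{x_0}=\bar R_{\bar x_0}$ is a helpful gloss but not logically required once you are quoting Corollary~\ref{CorolCGC} as a black box.
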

\begin{proof}
From \cite[VI, Theorem 7.7]{KN} we know
every locally symmetric Riemannian manifold
is analytic, and its associated Levi-Civita
connection $\nabla $ also is, so we can apply
Corollary \ref{CorolCGC}. In this case,
the conditions \eqref{DC} are simply
\eqref{Symmetric}.
\end{proof}
\begin{theorem}
\label{locSIM}
Let $(M,g)$ be an arbitrary Riemannian manifold
verifying the following property: Two Frenet curves
$\sigma ,\bar{\sigma}\colon (a,b)\to M$,
$\sigma (t_0)=\bar{\sigma }(t_0)=x_0$,
are congruent on some neighbourhood of $x_0$
(preserving the orientation if $\dim M$ is even
and reversing the orientation if $\dim M$ is odd)
if and only if the conditions
\emph{\eqref{Symmetric0}} and
\emph{\eqref{Symmetric}}
of \emph{Theorem \ref{CCS}} hold. Then,
$(M,g)$ is locally symmetric.
\end{theorem}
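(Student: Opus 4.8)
The plan is to establish the converse of Theorem \ref{CCS} directly: assuming the stated property, we show that $(\nabla R)_{x_0}=0$ for every $x_0\in M$, which is precisely the statement that $(M,g)$ is locally symmetric. The mechanism exploits the gap between the \emph{general} congruence criterion of Theorem \ref{CGC}---whose necessary half is Propositions \ref{curvaturas} and \ref{frenetframe}, and which requires \emph{all} the tensors $\nabla^jR$ to agree---and the hypothesized criterion, which only constrains $R$ itself. The one algebraic fact that does the work: the map $-\mathrm{Id}$ on a tangent space acts trivially on the $(1,3)$-tensor $R$ (because $R(X_i,X_j,X_k,\omega^l)$ has $4$ arguments), but multiplies $(\nabla^jR)(X_{i_1},\dotsc,X_{i_{j+3}},\omega^i)$ by $(-1)^{j+4}$, hence by $-1$ when $j$ is odd; and $\det(-\mathrm{Id})=(-1)^{m}$, which is exactly why the statement separates the parity of $m$.

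Fix $x_0\in M$, a local orientation $\mathfrak{o}$ near $x_0$, and let $\sigma$ be any Frenet curve with $\sigma(t_0)=x_0$ (such curves exist by Theorem \ref{ecsfrenet}); write $(v_1,\dotsc,v_m)$ for its Frenet frame $(X_1^\sigma(t_0),\dotsc,X_m^\sigma(t_0))$ at $t_0$ and $\kappa_i=\kappa_i^\sigma$ for its curvatures. Let $\mathfrak{o}'$ be the orientation $\mathfrak{o}$ if $m$ is even and $-\mathfrak{o}$ if $m$ is odd, so that $(-v_1,\dotsc,-v_m)$ is positively oriented for $\mathfrak{o}'$. Applying Theorem \ref{ecsfrenet} inside $(M,g,\mathfrak{o}')$ produces a Frenet curve $\bar\sigma$ with $\bar\sigma(t_0)=x_0$, Frenet frame $(-v_1,\dotsc,-v_m)$ at $t_0$, and $\kappa_i^{\bar\sigma}=\kappa_i$ for $0\le i\le m-1$. (Equivalently one can build $\bar\sigma$ straight from $\sigma$ via Theorem \ref{ecsfrenet_bis}: replacing $w_j=(\nabla_{T^\sigma}^{j-1}T^\sigma)(t_0)$ by $-w_j$ leaves the compatibility relations \eqref{scalar_products} unchanged, being bilinear in the $w$'s, and the Gram--Schmidt process commutes with $-\mathrm{Id}$.) By construction $\kappa_i^\sigma=\kappa_i^{\bar\sigma}$ as functions, so \eqref{Symmetric0} holds; and since the curvature tensor is independent of the orientation and $\omega_{\bar\sigma}^l(t_0)=-\omega_\sigma^l(t_0)$, multilinearity gives
\[
\bar R\bigl(X_i^{\bar\sigma},X_j^{\bar\sigma},X_k^{\bar\sigma},\omega_{\bar\sigma}^l\bigr)(x_0)=(-1)^4\,R\bigl(X_i^\sigma,X_j^\sigma,X_k^\sigma,\omega_\sigma^l\bigr)(x_0)=R\bigl(X_i^\sigma,X_j^\sigma,X_k^\sigma,\omega_\sigma^l\bigr)(x_0),
\]
so \eqref{Symmetric} holds as well. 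Moreover $\mathfrak{o}'=\mathfrak{o}$ exactly when $m$ is even, so a congruence $\bar\sigma=\phi\circ\sigma$ is orientation preserving for $m$ even and orientation reversing for $m$ odd, matching the parenthetical in the statement.

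By the hypothesis, $\sigma$ and $\bar\sigma$ are therefore congruent on a neighbourhood of $x_0$, say $\bar\sigma=\phi\circ\sigma$ for an isometric embedding $\phi$; then $\phi(x_0)=x_0$, and Proposition \ref{frenetframe} gives $\phi_\ast v_i=X_i^{\bar\sigma}(t_0)=-v_i$, i.e.\ $\phi_{\ast,x_0}=-\mathrm{Id}$, together with, for every $j\in\mathbb{N}$ and all indices, evaluated at $t_0$ and using $\bar\nabla^j\bar R=\nabla^jR$, $X_i^{\bar\sigma}(t_0)=-v_i$ and $\omega_{\bar\sigma}^i(t_0)=-\omega_\sigma^i(t_0)$,
\[
(\nabla^jR)\bigl(X_{i_1}^\sigma,\dotsc,X_{i_{j+3}}^\sigma,\omega_\sigma^i\bigr)(x_0)=(-1)^{j+4}(\nabla^jR)\bigl(X_{i_1}^\sigma,\dotsc,X_{i_{j+3}}^\sigma,\omega_\sigma^i\bigr)(x_0).
\]
For odd $j$ the sign is $-1$, so every such contraction of $(\nabla^jR)_{x_0}$ vanishes; since $(v_1,\dotsc,v_m)$ is a basis of $T_{x_0}M$, this forces $(\nabla^jR)_{x_0}=0$ for all odd $j$. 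In particular $(\nabla R)_{x_0}=0$, and as $x_0$ was arbitrary, $\nabla R\equiv 0$ on $M$; thus $(M,g)$ is locally symmetric.

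The one genuinely delicate point is the orientation bookkeeping for odd $m$: inside $(M,\mathfrak{o})$ one cannot realize $-(\sigma\text{'s frame})$ as the Frenet frame of any Frenet curve, since Frenet frames are positively oriented, so one is forced to let $\bar\sigma$ be a Frenet curve for the opposite orientation $\mathfrak{o}'$ (equivalently, to let $\mathfrak{o}'$ govern the sign of $\kappa_{m-1}$); this is harmless because $\kappa_0,\dotsc,\kappa_{m-2}$ are orientation independent (Proposition \ref{lemdelta}) and $\kappa_{m-1}^{\bar\sigma}$ is prescribed consistently by Theorem \ref{ecsfrenet} in $(M,g,\mathfrak{o}')$---and this is exactly what the parenthetical clause in the statement accommodates. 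Everything else---existence of $\sigma$ and $\bar\sigma$, the invariance of \eqref{scalar_products} under $w_j\mapsto -w_j$, the commutation of Gram--Schmidt with $-\mathrm{Id}$, and the multilinearity sign counts---is routine, and no analyticity of $(M,g)$ is required, since only the necessary half of the congruence criterion (Propositions \ref{curvaturas} and \ref{frenetframe}) enters.
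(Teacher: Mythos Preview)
Your proof is correct and follows essentially the same construction as the paper: build $\bar\sigma$ with Frenet frame $-v_i$ at $t_0$ and the same curvatures as $\sigma$ (handling orientation via the parity of $m$), verify \eqref{Symmetric0} and \eqref{Symmetric} by the $(-1)^4$ count, and invoke the hypothesis to obtain a local isometry $\phi$ fixing $x_0$ with $\phi_{\ast,x_0}=-\mathrm{Id}$. The only difference is in the last line: the paper stops at $\phi_{\ast,x_0}=-\mathrm{Id}$ and concludes directly from the geometric characterization of local symmetry (every point admits a local geodesic symmetry), whereas you push one step further and use Proposition~\ref{frenetframe} to deduce $(\nabla^jR)_{x_0}=0$ for odd $j$, hence $\nabla R\equiv0$. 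Both endings are valid and equivalent; yours is a bit more explicit and avoids appealing to the geodesic-symmetry characterization.
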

\begin{proof}
Let us fix an orientation on a neighbourhood
of $x_0\in M$, and let $(v_i)_{i=1}^m$ be
a positive orthonormal basis of $T_{x_0}M$.
Let $\kappa _j\in C^\infty (t_0-\delta ,t_0+\delta )$,
$0\leq  j\leq  m-1$, $\delta >0$, be functions
such that $\kappa _j>0$ for $0\leq j\leq m-2$.
From Theorem \ref{ecsfrenet} we know there exist
two Frenet curves $\sigma ,\bar{\sigma }\colon
(t_0-\varepsilon ,t_0+\varepsilon )\to M$,
$0<\varepsilon <\delta $, such that
$\sigma (t_0)=\bar{\sigma }(t_0)=x_0$ and
$X_i^\sigma (t_0)=-X_i^{\bar{\sigma }}(t_0)=v_i$
(hence $\omega _{\sigma }^i(t_0)
=-\omega _{\bar{\sigma }}^i(t_0)$)
for $1\leq  i\leq  m$, with the same curvatures
$\kappa _j$, $0\leq  j\leq  m-1$.
If $\dim M$ is odd, we considered the opposite
orientation to construct $\bar{\sigma }$.
According to this choice of orientations constructing
the Frenet curves $\sigma $ and $\bar{\sigma }$,
for every $i,j,k,l=1,\dotsc,m$, we have
\begin{eqnarray*}
R\left(
X_i^\sigma ,
X_j^\sigma ,
X_k^\sigma ,
\omega _\sigma ^l
\right) (x_0)
&=&
R\left(
-X_i^\sigma ,
-X_j^\sigma ,
-X_k^\sigma ,
-\omega _\sigma ^l
\right) (x_0) \\
&=&
R\left(
X_i^{\bar{\sigma }},
X_j^{\bar{\sigma }},
X_k^{\bar{\sigma }},
\omega _{\bar{\sigma}}^l
\right) (x_0).
\end{eqnarray*}
From the hypothesis, an open neighbourhood
$U$ of the image of $\sigma $ and an isometric
embedding $\phi \colon U\to  M$ (leaving $x_0$ fixed)
exist such that $\bar{\sigma }=\phi \circ \sigma $.
Moreover $\phi _\ast (X_i^\sigma (t_0))
=X_i^{\bar{\sigma}}(t_0)=-X_i^\sigma (t_0) $,
$1\leq  i\leq  m$. Thus $\phi _\ast =-Id_{T_{x_0}M}$.
Since $x_0\in M$ is arbitrary, we can conclude.
\end{proof}
\begin{remark}
\label{remark_2.3}
Let $(M,g)$ be a Riemannian symmetric space.
Let $G=\mathfrak{I}^0(M,g)$ be the connected component
of the identity in the group of isometries and let $H$
be the isotropy subgroup of the point $x_0$. As usual,
we set $\mathfrak{g}=\mathfrak{h}\oplus \mathfrak{m}$
and $\mathfrak{m}$ is identified to $T_{x_0}M$.
According to \cite[XI, Theorem3.2]{KN}, the following
formula holds: $R(X,Y)Z=-[[X,Y],Z]$ for every
$X,Y,Z\in \mathfrak{m}$. Therefore, if a basis
$(Y_1,\dotsc,Y_\mu )$ for $\mathfrak{h}$ is fixed, then
\begin{equation*}
\begin{array}{ll}
\lbrack X_i^\sigma ,X_j^\sigma ]
=c_{ij}^\alpha Y_\alpha ,
& c_{ij}^\alpha \in \mathbb{R}, \\
\lbrack Y_\alpha ,X_k^\sigma ]
=d_{\alpha k}^hX_h^\sigma ,
& d_{\alpha k}^h\in \mathbb{R}.
\end{array}
\end{equation*}
Hence, $g([[X_i^\sigma ,X_j^\sigma ],
X_k^\sigma ],X_l^\sigma )
=c_{ij}^\alpha d_{\alpha k}^l$. Consequently,
the condition \eqref{Symmetric} can be rewritten
as $c_{ij}^\alpha d_{\alpha k}^l
=\bar{c}_{ij}^\alpha \bar{d}_{\alpha k}^l$,
with the obvious notations for the curve
$\bar{\sigma }$.
\end{remark}
\begin{remark}
\label{remark_2.4}
According to Theorem \ref{CCS}, on a locally symmetric
Riemannian manifold\ $(M,g)$ the $\frac{1}{2}m(m+1)$
functions $I_{ij}\circ \mathtt{f}_{M}$, $1\leq i<j\leq m$,
and $\varkappa _k$, $0\leq k\leq m-1$, defined
in the formulas \eqref{Ii1...i_j+3_i} and \eqref{varkappa}
of Remark \ref{remark_2}, respectively, constitute
a complete system of differential invariants in the sense
that two curves with values in $(M,g)$ are congruent
if and only if the functions $I_{ij}\circ \mathtt{f}_{M}$,
$\varkappa _k$ take the same values on both curves.
\end{remark}
\begin{example}
For $M=\mathbb{C}P^n$, from the formula for the curvature
tensor in \cite[XI, p.\ 277]{KN}, the Riemann curvature reads
\begin{eqnarray*}
R_4\left(
X_i,X_j,X_k,X_l
\right)
\!\!\! &=& \!\!\!
\tfrac{c}{4}
\left\{
\delta _{jl}\delta _{ik}
-\delta _{jk}\delta _{il}
+g\left(
X_j,JX_{l}
\right)
g
\left(
JX_k,X_i
\right)
\right. \\
&& \left.
-g\left(
X_j,JX_k
\right)
g
\left(
JX_{l},X_i
\right)
+2g
\left(
X_k,JX_{l}
\right)
g
\left(
JX_j,X_i
\right)
\right\} ,
\end{eqnarray*}
$J$ being the canonical complex structure.
In particular,
\begin{equation*}
R_4
\left( X_i,X_j,X_i,X_j
\right)
=\tfrac{c}{4}
\left(
1-\delta _{ij}+3\omega
\left(
X_i,X_j
\right) ^2
\right) ,
\end{equation*}
where $\omega $ is the canonical K\"{a}hler $2$-form
in $\mathbb{C}P^n$. Therefore, if we define the funcions
$\varpi _{ij}\colon (\pi _{m-1}^m)^{-1}\mathcal{F}^{m-1}(M)
\subset J^m(\mathbb{R},M)\to \mathbb{R}$, $1\leq i<j\leq m$,
as
\begin{equation*}
\varpi _{ij}(j_{t_0}^m\sigma )
=\omega (X_i^\sigma (t_0),X_j^\sigma (t_0)),
\end{equation*}
the family $\{\varpi _{ij}\}_{i<j}$ together with
$\{\varkappa _i\}_{i=0}^{m-1}$ constitute
a complete system of differential invariants.
\end{example}
\section{Congruence on constant curvature manifolds}
\label{ccc}
\begin{theorem}
\label{CTE}
Two Frenet curves
$\sigma ,\bar{\sigma}\colon (a,b)\to (M,g)$
taking values in an oriented Riemannian manifold
of constant curvature are congruent on some
neigbourhoods $U$ and $\bar{U}$ of $x_0=\sigma (t_0)$
and $\bar{x}_0=\bar{\sigma}(t_0)$,
$a<t_0<b$, respectively if and only,
$\kappa _i^\sigma (t) =\kappa _i^{\bar{\sigma}}(t)$
for $0\leq i\leq m-1$ and small enough $|t-t_0|$.
Conversely, if on an oriented Riemannian manifold
$(M,g)$ two arbitrary Frenet curves
$\sigma ,\bar{\sigma }\colon (a,b)\to (M,g)$
are congruent on some neighbourhhods of
$x_0=\sigma (t_0)$, $\bar{x}_0=\bar{\sigma }(t_0)$,
$a<t_0<b$, if and only if
$\kappa _i^\sigma (t) =\kappa _i^{\bar{\sigma }}(t)$
for $0\leq i\leq m-1$ and small enough $|t-t_0|$,
then $(M,g)$ is a manifold of constant curvature.
\end{theorem}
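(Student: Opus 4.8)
The plan is to prove the two implications separately. The direct one — constant curvature forces the curvatures to classify Frenet curves — will be read off from Theorem \ref{CCS}; for the converse I would feed the hypothesis into the existence theorem (Theorem \ref{ecsfrenet}) and Proposition \ref{frenetframe} to produce a large linear isotropy and local homogeneity at every point, and then conclude via the algebraic classification of invariant curvature tensors.

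For the first implication, let $(M,g)$ have constant curvature $c$. Then $(M,g)$ is locally symmetric, hence analytic (this is the input of \cite[VI, Theorem 7.7]{KN} already used in the proof of Theorem \ref{CCS}), and its curvature tensor is the universal expression $R(X,Y)Z=c\,(g(Y,Z)X-g(X,Z)Y)$. Therefore, evaluated on the orthonormal Frenet frame of any Frenet curve $\sigma$ and its dual coframe, $R(X_i^\sigma,X_j^\sigma,X_k^\sigma,\omega_\sigma^l)(x_0)$ is a fixed combination of Kronecker deltas multiplied by $c$, so it takes the same value for $\sigma$ as for any other Frenet curve $\bar\sigma$ in $(M,g)$. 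Thus condition \eqref{Symmetric} of Theorem \ref{CCS} holds automatically, and applying Theorem \ref{CCS} with $\bar M=M$, $\bar g=g$ shows that $\sigma$ and $\bar\sigma$ are congruent near $t_0$ if and only if their curvatures coincide near $t_0$, which is condition \eqref{Symmetric0}.

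For the converse, suppose that on the oriented manifold $(M,g)$ congruence of Frenet curves is equivalent to coincidence of the curvatures. Fix $x_0\in M$, a positively oriented $g$-orthonormal basis $(v_1,\dots,v_m)$ of $T_{x_0}M$, and curvature data $\kappa_0,\dots,\kappa_{m-1}$ with $\kappa_j>0$ for $j\leq m-2$. Given $A\in\mathrm{SO}(T_{x_0}M)$, Theorem \ref{ecsfrenet} yields Frenet curves $\sigma,\bar\sigma$ with $\sigma(t_0)=\bar\sigma(t_0)=x_0$, with the same curvatures $\kappa_j$, and with Frenet frames $(v_i)$ and $(Av_i)$ at $t_0$ respectively (note $(Av_i)$ is again a positively oriented orthonormal basis). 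By hypothesis they are congruent: $\bar\sigma=\phi\circ\sigma$ for an orientation-preserving local isometry $\phi$, necessarily with $\phi(x_0)=x_0$, and by Proposition \ref{frenetframe} one has $\phi_{\ast}v_i=\phi_{\ast}X_i^\sigma(t_0)=X_i^{\bar\sigma}(t_0)=Av_i$, so $\phi_{\ast,x_0}=A$. Hence the linear isotropy group at $x_0$ contains $\mathrm{SO}(T_{x_0}M)$. Running the same construction with two distinct base points $x_0,x_1$ and identical curvature data produces a local isometry carrying $x_0$ to $x_1$, so $(M,g)$ is also locally homogeneous.

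Finally, since isometries preserve the curvature tensor, $R_{x_0}$ is invariant under all of $\mathrm{SO}(T_{x_0}M)$; the only algebraic curvature tensors with this symmetry are the multiples of $R_0(X,Y,Z,W)=g(X,Z)g(Y,W)-g(X,W)g(Y,Z)$, so $(M,g)$ has pointwise constant sectional curvature $c(x_0)$. Local homogeneity then makes $c$ a constant function, since a local isometry between neighbourhoods of $x_0$ and $x_1$ carries $c(x_0)$ to $c(x_1)$; equivalently, for $m\geq 3$ one may invoke Schur's theorem, the case $m=2$ being already covered by local homogeneity. Therefore $(M,g)$ is of constant curvature. I expect the delicate part to be not any single deduction but the bookkeeping in the converse: one must keep the curvature functions of $\sigma$ and $\bar\sigma$ literally equal so that Proposition \ref{frenetframe} applies, track orientations so that $\phi_{\ast,x_0}$ lies in $\mathrm{SO}$ rather than merely $\mathrm{O}$, and quote correctly the representation-theoretic fact that the space of $\mathrm{SO}(m)$-invariant algebraic curvature tensors is one-dimensional.
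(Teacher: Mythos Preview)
Your proof is correct and follows essentially the same route as the paper: the forward direction is identical (constant curvature makes condition \eqref{Symmetric} automatic, then invoke Theorem \ref{CCS}), and for the converse both arguments feed arbitrary positively oriented orthonormal frames at $x_0$ into Theorem \ref{ecsfrenet} with common curvature data, use the hypothesis to obtain a local isometry, and read off via Proposition \ref{frenetframe} that the linear isotropy at $x_0$ is all of $\mathrm{SO}(T_{x_0}M)$. The only difference is that the paper concludes in one line by citing that an isotropic Riemannian manifold has constant curvature, whereas you unpack this step by hand---invoking the representation-theoretic classification of $\mathrm{SO}(m)$-invariant algebraic curvature tensors and adding a separate local-homogeneity argument (two base points, same curvature data) to pin down the constant across $M$; this extra care is harmless and makes the $m=2$ case transparently self-contained.
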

\begin{proof}
On a manifold of constant curvature $k$ one has
(\cite[V. Corollary 2.3]{KN}):
$R(X,Y)Z=k(g(Y,Z)X-g(X,Z)Y)$; hence
\begin{equation*}
R\left(
X_i^\sigma ,
X_j^\sigma ,
X_k^\sigma ,
\omega _\sigma ^l
\right)
=k\left(
\delta _{jk}\delta _i^l-\delta _{ik}\delta _j^l
\right) .
\end{equation*}
It follows that on a manifold of constant curvature
the equation \eqref{Symmetric} holds identically.
The first part of the statement thus follows
from Theorem \ref{CCS}.

Let $(v_1,\dotsc,v_m)$, $(w_1,\dotsc,w_m)$ be two
positively-oriented orthonormal bases in $T_{x_0}M$.

From Theorem \ref{ecsfrenet} there exist two Frenet
curves $\sigma ,\bar{\sigma}
\colon (t_0-\varepsilon ,t_0+\varepsilon )\to M$,
$\varepsilon >0$, such that,

\smallskip

\noindent i) $\sigma (t_0)=\bar{\sigma }(t_0)=x_0$,

\noindent ii) $X_i^\sigma (t_0)=v_i$,
$X_i^{\bar{\sigma }}(t_0)=w_i$, $1\leq i\leq m$,

\noindent iii) $\kappa _i^\sigma (t)
=\kappa _i^{\bar{\sigma }}(t)$, $0\leq i\leq m-1$,
$|t-t_0|<\varepsilon $.

\smallskip

By virtue of the hypothesis, there exists an isometry
$\phi $ defined on a neighbourhood of $x_0$, fixing $x_0$,
such that $\phi \circ \sigma =\bar{\sigma }$. Hence
$\phi _\ast (v_i)=w_i$, $1\leq i\leq m$, and accordingly
$M$ is an isotropic manifold (e.g., see \cite{Thirring})
and therefore of constant curvature.
\end{proof}
Two $r$-jets of curves
$j_{t_0}^r\sigma ,j_{t_0}^r\bar{\sigma }
\in J_{t_0}^r\left( \mathbb{R},M\right) $
are said to be $r$\emph{-congruent}, which is denoted
by $j_{t_0}^r\sigma \sim _r\,j_{t_0}^r\bar{\sigma }$,
if there exists an isometry $\phi $ defined
on a neighbourhood of $\sigma (t_0)$ such that
$\phi ^{(r)}(j_{t_0}^r\sigma )=j_{t_0}^r\bar{\sigma }$.
It is obvious that ``to be $r$-congruent''
is an equivalence relation on
$J_{t_0}^r\left( \mathbb{R},M\right) $.
\begin{theorem}
For every $1\leq r\leq m$, let $x_{hi}=x_{ih}$,
$h,i=1,\dotsc,r$, be the standard coordinate system
in $S^2(\mathbb{R}^r)$. Given an oriented Riemannian
manifold $(M,g)$ of constant curvature, the mapping
$f_M^r\colon J_{t_0}^r(\mathbb{R},M)
\to S^2(\mathbb{R}^r)$ with components
$(x_{hi}\circ f_M^r)(j_{t_0}^r\sigma )
=g(\nabla _{T^\sigma }^{h-1}T^\sigma ,
\nabla _{T^\sigma }^{i-1}T^\sigma )(t_0)$,
$h,i=1,\dotsc,r$, induces a homeomorphism between
$J_{t_0}^r(\mathbb{R},M)/\sim _r$
and the submanifold with corners
$Q^r\subset S^2(\mathbb{R}^r)$
of the positive semidefinite symmetric matrices.
Hence, if $M$ is simply connected and complete, then
$J_{t_0}^r(\mathbb{R},M)/\mathfrak{I}(M,g)\cong Q^r$.
\end{theorem}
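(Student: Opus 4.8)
The plan is to check three things: that $f_M^r$ is constant on $\sim_r$-classes with image exactly $Q^r$, that the induced map $\bar f_M^r$ on $J_{t_0}^r(\mathbb{R},M)/\!\sim_r$ is injective, and that $f_M^r$ is an open map. The first two make $\bar f_M^r$ a continuous bijection onto $Q^r$; the third makes $f_M^r$ a quotient map, so $\bar f_M^r$ becomes a homeomorphism. The final assertion will then follow because on a simply connected complete (hence, by constant curvature and \cite[VI, Theorem 7.7]{KN}, analytic) manifold every isometry between connected open sets extends to a global one, so the $\mathfrak{I}(M,g)$-orbit relation coincides with $\sim_r$.

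For invariance and surjectivity: if $\phi$ is an isometry defined near $\sigma(t_0)$ it is affine for the Levi-Civita connection, so by \cite[VI, Proposition 1.2]{KN} one has $\phi_\ast(\nabla_{T^\sigma}^{h-1}T^\sigma)(t_0)=(\nabla_{T^{\phi\circ\sigma}}^{h-1}T^{\phi\circ\sigma})(t_0)$, and since $\phi$ preserves $g$ we get $f_M^r\circ\phi^{(r)}=f_M^r$; thus $f_M^r$ descends to the quotient, and its image lies in $Q^r$ because every Gram matrix is positive semidefinite. Conversely, fixing any $x_0\in M$, a Cholesky-type factorization of a given $G\in Q^r$ together with $r\le m$ yields vectors $v_1,\dots,v_r\in T_{x_0}M$ with $g(v_i,v_j)=G_{ij}$, and by bijectivity of the diffeomorphism $\Phi_\nabla^r$ of \eqref{Phi^r} there is a jet $j_{t_0}^r\sigma$ with $(\nabla_{T^\sigma}^{i-1}T^\sigma)(t_0)=v_i$, so $f_M^r(j_{t_0}^r\sigma)=G$.

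The injectivity of $\bar f_M^r$ is the crux, and the only place where constant curvature is used. Given $f_M^r(j_{t_0}^r\sigma)=f_M^r(j_{t_0}^r\bar\sigma)$, put $x_0=\sigma(t_0)$, $\bar x_0=\bar\sigma(t_0)$, $v_i=(\nabla_{T^\sigma}^{i-1}T^\sigma)(t_0)$, $\bar v_i=(\nabla_{T^{\bar\sigma}}^{i-1}T^{\bar\sigma})(t_0)$; since $g(v_i,v_j)=g(\bar v_i,\bar v_j)$ the correspondence $v_i\mapsto\bar v_i$ is a well-defined linear isometry between the spans, and it extends to a linear isometry $A\colon T_{x_0}M\to T_{\bar x_0}M$ (the spans have equal dimension $\le m$). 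Because $(M,g)$ has constant curvature $k$, $R(X,Y)Z=k(g(Y,Z)X-g(X,Z)Y)$ and $\nabla R=0$, so $A$ carries $(\nabla^j R)_{x_0}$ to $(\nabla^j R)_{\bar x_0}$ for all $j$; by \cite[VI, Theorem 7.2]{KN} and \cite[Lemma 2.3.1]{Wolf} the polar map $\phi=\exp_{\bar x_0}\circ A\circ\exp_{x_0}^{-1}$ is an isometry of a normal neighbourhood of $x_0$ with $\phi_{\ast,x_0}=A$. As $\phi$ is affine, $\phi\circ\sigma$ and $\bar\sigma$ share all derivatives $(\nabla^{i-1}T)(t_0)=\bar v_i$ for $1\le i\le r$, so $\Phi_\nabla^r$ forces $j_{t_0}^r(\phi\circ\sigma)=j_{t_0}^r\bar\sigma$, i.e.\ $j_{t_0}^r\sigma\sim_r j_{t_0}^r\bar\sigma$. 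Hence the fibres of $f_M^r$ are exactly the $\sim_r$-classes.

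It remains to prove $f_M^r$ open. Over a domain $U\subseteq M$ carrying an orthonormal frame field one has $\oplus^r TM|_U\cong U\times(\mathbb{R}^m)^r$, and in these coordinates $f_M^r$ is $(u,w)\mapsto \mathrm{Gram}(w)$, the composite of the open projection onto $(\mathbb{R}^m)^r$ with the Gram map $(\mathbb{R}^m)^r\to Q^r$; for $m\ge r$ this last map is classically the orbit map of the $O(m)$-action $w\mapsto w\cdot Q$ (two tuples have the same Gram matrix precisely when they differ by an element of $O(m)$), and orbit maps are always open, so $f_M^r$ is open, hence a quotient map, and $\bar f_M^r$ is a homeomorphism; combining with the extension of local isometries in the simply connected complete case gives $J_{t_0}^r(\mathbb{R},M)/\mathfrak{I}(M,g)\cong Q^r$. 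I expect the main obstacle to be the injectivity step above (it is exactly there that constant curvature cannot be dropped, by Theorem \ref{CTE}); on the topological side the only point needing care is the openness of the Gram map, which I would quote as a standard fact about the cone of positive semidefinite matrices.
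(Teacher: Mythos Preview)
Your argument is correct and follows the paper's approach in all essential points: invariance of the Gram data under isometries, surjectivity via a factorization $G=BB^t$ together with the diffeomorphism $\Phi_\nabla^r$, and injectivity by realizing the linear isometry of tangent spaces as a local isometry of the constant-curvature manifold. The only difference is the final topological step---the paper concludes that $\bar f_M^r$ is a homeomorphism by asserting $f_M^r$ is proper, whereas you instead show $f_M^r$ is open via the $O(m)$-orbit description of the Gram map; both routes make $f_M^r$ a quotient map and hence give the same conclusion.
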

\begin{proof}
A symmetric matrix $A\in S^2(\mathbb{R}^r)$ belongs
to $Q^r$ if and only if all its principal minors
are nonnegative, thus proving that $Q^r$ is a submanifold
with corners. Certainly, the following properties hold:
i) $f_M^r(J_{t_0}^r(\mathbb{R},M))\subset Q^r$,
and ii) $f_M^r(j_{t_0}^r\sigma )
=f_M^r(j_{t_0}^r\bar{\sigma })$ if $j_{t_0}^r\sigma $
and $j_{t_0}^r\bar{\sigma }$ are $r$-congruent.

Firstly, we prove that
$f_M^r\colon J_{t_0}^r(\mathbb{R},M)\to Q^r$,
$r\leq m$, is surjective. Given
$A=(a_{hi})_{h,i=1}^r\in Q^r$, let
$A^\prime =(a_{hi}^\prime )_{h,i=1}^m\in Q^m$
be the matrix obtained by letting
$a_{hi}^\prime =a_{hi}$ for $i\leq r$,
and $a_{hi}^\prime =\delta _{hi}$ for
$r+1\leq i\leq m$. If there exists
$j_{t_0}^m\sigma $ such that
$f_M^m(j_{t_0}^m\sigma )=A^\prime $,
then $f_{M}^r(j_{t_0}^r\sigma )=A$. Therefore,
we can assume $r=m$. In this case, as
$A\in Q^m$, there exists a $m\times m$ matrix
$B=(b_{hi})_{h,i=1}^m$ such that $A=BB^t$.
Let $(v_1,\dotsc,v_m)$ be a positively
oriented orthonormal basis in $T_{x_0}M$.
It suffices to prove the existence of a curve
$\sigma \colon (t_0-\varepsilon ,t_0+\varepsilon )\to M$,
such that, $\sigma (t_0)=x_0$ and
$(\nabla _{T^\sigma }^{h-1}T^\sigma )_{t_0}
=\sum_{i=1}^mb_{hi}v_i$, $1\leq  h\leq  m$.
This fact is a consequence of the existence
and uniqueness theorem for ordinary differential
systems, taking the formulas \eqref{suc} into account.

If $f_M^r(j_{t_0}^r\sigma )=f_M^r(j_{t_0}^r\tau )$,
then the Gram matrices of the systems
$(\nabla _{T^\sigma }^{h-1}T^\sigma )_{t_0}$,
$(\nabla _{T^\tau }^{h-1}T^\tau )_{t_0}$,
$1\leq h\leq  r$, are equal and consequently,
there exists a linear isometry
$L\colon (T_{x_0}M,g_{x_0})\to (T_{x_0}M,g_{x_0})$
such that,
$L(\nabla _{T^\sigma }^{h-1}T^\sigma )_{t_0}
=(\nabla _{T^\tau }^{h-1}T^\tau )_{t_0}$
for $1\leq  h\leq  r$. As $(M,g)$ is of constant
curvature, there exists a local isometric embedding
$\phi $ of $(M,g)$ such that, $\phi (x_0)=x_0$,
$\phi _\ast (x_0)=L$, and one thus deduces
$\phi ^{(r)}(j_{t_0}^r\sigma )=j_{t_0}^r\tau $,
i.e., $j_{t_0}^r\sigma \sim _r\,j_{t_0}^r\tau $.
Hence, $f_M^r$ induces a continuous bijection
$\bar{f}_M^r\colon J_{t_0}^r(\mathbb{R},M)/\sim _r\to Q^r$,
which is a homeomorphism as $f_M^r$ is a proper map.
\end{proof}
\begin{remark}
The points in the interior of $Q^r$ correspond
with $r$-jets of curves such that
$(T_{t_0},
(\nabla _tT)_{t_0},
\dotsc,
(\nabla _t^{r-1}T _{t_0})$ are linearly independent.
In particular, $Q^{m-1}\backslash \partial Q^{m-1}$
correspond with the $(m-1)$-jets of Frenet curves.
\end{remark}
\begin{corollary}
The ring of differential invariants of order $r\leq m$
over a manifold of constant curvature is isomorphic
to $C^\infty (Q^r)$.
\end{corollary}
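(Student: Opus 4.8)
The plan is to prove that composition with $f_M^r$ is a ring isomorphism $(f_M^r)^\ast\colon C^\infty(Q^r)\to\mathcal{D}_r$, where $\mathcal{D}_r$ denotes the algebra of differential invariants of order $r$ on $J_{t_0}^r(\mathbb{R},M)$ (on all of $J^r(\mathbb{R},M)$ one merely picks up an extra $C^\infty(\mathbb{R})$-factor coming from the variable $t$, which the statement suppresses). First I would check that $(f_M^r)^\ast$ is well defined with image in $\mathcal{D}_r$ and is injective. Each component of $f_M^r$ is the function $j_{t_0}^r\sigma\mapsto g(\nabla_{T^\sigma}^{h-1}T^\sigma,\nabla_{T^\sigma}^{i-1}T^\sigma)(t_0)$, $1\le h\le i\le r$; it is polynomial in the jet coordinates by the formulas \eqref{suc}, hence smooth, and it is invariant under every isometry of $(M,g)$ by exactly the computation in the proof of Proposition \ref{curvaturas} (an isometry is affine, so it commutes with the operators $\nabla_{T^\sigma}^k$, and it preserves $g$). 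Thus $F\circ f_M^r$ is smooth and $\mathfrak{I}^0(M,g)$-invariant, i.e.\ lies in $\mathcal{D}_r$, for every $F\in C^\infty(Q^r)$; injectivity is immediate because $f_M^r$ is onto $Q^r$ by the preceding theorem.

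The content of the corollary is the surjectivity of $(f_M^r)^\ast$: every $I\in\mathcal{D}_r$ descends as $I=F\circ f_M^r$ with $F$ smooth on the manifold with corners $Q^r$. I would argue in three steps. (a) On the open set $O^r\subseteq J_{t_0}^r(\mathbb{R},M)$ --- dense by Proposition \ref{generic} --- of jets for which $T^\sigma,\nabla_{T^\sigma}T^\sigma,\dotsc,\nabla_{T^\sigma}^{r-1}T^\sigma$ are linearly independent, $f_M^r$ takes values in $\operatorname{int}Q^r$ and is a submersion; by the surjectivity argument in the preceding theorem two jets of $O^r$ with the same $f_M^r$-image differ by a local isometry, and since $(M,g)$ has constant curvature its isometry group is transitive on orthonormal frames, so (as $M$ is connected, and $r\le m-1$ leaves a full $SO$ of freedom on the orthogonal complement of the spanned subspace) the isometry can be taken in $\mathfrak{I}^0(M,g)$. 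Hence $I$ is constant on the fibres of $f_M^r$ over $\operatorname{int}Q^r$, so $I=F\circ f_M^r$ on $O^r$ with $F\in C^\infty(\operatorname{int}Q^r)$. (b) By density of $O^r$ and continuity of $I$, the invariant $I$ is constant on every fibre of $f_M^r$, so $F$ extends continuously to all of $Q^r$. (c) This extension is in fact smooth up to the corners; then $I=F\circ f_M^r$ with $F\in C^\infty(Q^r)$ and $(f_M^r)^\ast$ is onto.

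Step (c) --- together with the bookkeeping in step (a) --- is where I expect the real work to lie. The $\mathfrak{I}^0$ versus $\mathfrak{I}$ point in (a) is harmless precisely in the range $r\le m-1$, an incomplete frame carrying no orientation datum, which is the sharp use of the bound $r\le m$. The genuine obstacle is (c): $f_M^r$ is a submersion only over $\operatorname{int}Q^r$ and degenerates along the corners of $Q^r$ (there is no smooth matrix ``square root'' near singular positive semidefinite matrices), so one cannot simply patch local smooth sections. I would dispose of it by interpreting $C^\infty(Q^r)$ as the algebra of the quotient differentiable space $J_{t_0}^r(\mathbb{R},M)/\mathfrak{I}(M,g)$, which equals $Q^r$ by the preceding theorem, and invoking that for a proper group action the invariant smooth functions are exactly the pullbacks of the smooth functions on the quotient --- so that $\mathcal{D}_r$ is, as wanted, $(q^r)^\ast C^\infty(Q^r)=(f_M^r)^\ast C^\infty(Q^r)$ and nothing beyond the identification already established is needed. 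With (c) in hand, $(f_M^r)^\ast$ is a bijective homomorphism of rings and the corollary follows.
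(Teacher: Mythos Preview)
The paper states this corollary with no proof, treating it as an immediate consequence of the preceding theorem identifying $J_{t_0}^r(\mathbb{R},M)/{\sim_r}$ with $Q^r$. Your argument is therefore the only detailed one on offer, and its overall shape is correct: the key step~(c) is precisely the statement that for a proper Lie group action the smooth invariants coincide with the smooth functions on the quotient, and via the diffeomorphism $\Phi_\nabla^r$ of Proposition~\ref{proposition_diagram} the problem reduces to the diagonal $O(m)$-action on $(\mathbb{R}^m)^r$, where this is Schwarz's theorem (the polynomial invariants being the entries of the Gram matrix). That is what the paper is silently assuming.

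One point deserves sharpening. You correctly note that the $\mathfrak{I}^0$ versus $\mathfrak{I}$ issue is harmless for $r\le m-1$, since a reflection on the orthogonal complement of the span of $r<m$ vectors restores orientation; but you then write that this is ``the sharp use of the bound $r\le m$'', which blurs the case $r=m$. At $r=m$ the two orbit structures genuinely differ on the interior of $Q^m$: the signed volume $\mathrm{vol}_g(T^\sigma,\nabla_{T^\sigma}T^\sigma,\dotsc,\nabla_{T^\sigma}^{m-1}T^\sigma)$ is a smooth $\mathfrak{I}^0$-invariant that is not a function of the Gram matrix alone (only its square $\Delta_m$ is). Hence either the corollary tacitly reads ``differential invariant'' as $\mathfrak{I}$-invariant, or it is literally correct only for $r\le m-1$. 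Your proof goes through cleanly in the latter range; for $r=m$ you should state explicitly which group you are quotienting by rather than eliding the discrepancy.
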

\section{Some examples}
\subsection{Euclidean space}
\begin{theorem}
\label{Euclidean_invariants}
If $g=(dx^1)^2+\ldots +(dx^m)^2$ is the Euclidean
metric on $\mathbb{R}^m$ and $j_{t_0}^{m-1}\sigma
\in J^{m-1}(\mathbb{R},\mathbb{R}^m)$ is a Frenet
jet, then
\begin{equation*}
\dim \mathfrak{D}_{j_{t_0}^r\sigma }^r
=\left\{
\begin{array}{rl}
m+\tfrac{1}{2}(2m-r-1)r,
& 1\leq r\leq m-1,
\smallskip \\
m+\tbinom{m}{2},
& r\geq m.
\end{array}
\right.
\end{equation*}
Hence
\begin{equation*}
N_r
=\left\{
\begin{array}{rl}
1+\tbinom{r+1}{2},
& 1\leq r\leq m-1,
\smallskip \\
1+\frac{1}{2}m(2r-m+1),
& r\geq m.
\end{array}
\right.
\end{equation*}
Moreover, the functions $t$ and
$D_t^h\varkappa _i
\colon (\pi _{m-1}^m)^{-1}
\mathcal{F}^{m-1}(M)
\subset J^m(\mathbb{R},M)
\to \mathbb{R}$, $h+i\leq m-1$,
where
$\varkappa _0,\dotsc,\varkappa _{m-1}$
are defined in the formula
\emph{\eqref{varkappa}}, are a basis
for the algebra of differential invariants
of order $\leq m$.
\end{theorem}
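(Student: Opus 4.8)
The plan is to compute the rank of $\mathfrak{D}^r$ at a Frenet jet by explicit linear algebra with the Killing fields of $\mathbb{R}^m$, to read off the formulas for $N_r$ from Lemma~\ref{lemma_rank}, and then to pin down the asserted basis by a dimension count combined with the existence Theorem~\ref{ecsfrenet}. First I would use that the Killing fields of the Euclidean metric are exactly the fields $X=(B_h^i x^h+d^i)\,\partial/\partial x^i$ with $B=(B_h^i)\in\mathfrak{so}(m)$ and $d\in\mathbb{R}^m$, so $\dim\mathfrak{i}(M,g)=\binom{m}{2}+m=\binom{m+1}{2}$. Applying the prolongation formula \eqref{F^r} (with $f^i=B_h^ix^h+d^i$, hence $D_t^k f^i=B_h^i x_k^h$ for $k\ge1$) and evaluating at $j_{t_0}^r\sigma$, the condition $X^{(r)}_{j_{t_0}^r\sigma}=0$ becomes $d+B\,\sigma(t_0)=0$ together with $B\cdot U_{t_0}^{\sigma,k}=0$ for $1\le k\le r$, where $U_{t_0}^{\sigma,k}$ is the vector \eqref{U^sigma,k} written in the standard (i.e.\ normal) coordinates. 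Thus $\dim\mathfrak{D}^r_{j_{t_0}^r\sigma}=\binom{m+1}{2}-\dim K^r$ with $K^r=\{B\in\mathfrak{so}(m):B|_{V^r}=0\}$ and $V^r=\langle U_{t_0}^{\sigma,1},\dots,U_{t_0}^{\sigma,r}\rangle$. Since $\mathbb{R}^m$ is flat, Theorem~\ref{comparingFandN} gives $\mathcal{F}^{m-1}=\mathcal{N}^{m-1}$, so at a Frenet jet $U_{t_0}^{\sigma,1},\dots,U_{t_0}^{\sigma,m-1}$ are independent and $\dim V^r=r$ for $1\le r\le m-1$; a skew $B$ killing $V^r$ then maps $\mathbb{R}^m$ into $(V^r)^\perp$ and is freely prescribed by an element of $\mathfrak{so}((V^r)^\perp)\cong\mathfrak{so}(m-r)$, giving $\dim K^r=\binom{m-r}{2}$ and $\dim\mathfrak{D}^r_{j_{t_0}^r\sigma}=\binom{m+1}{2}-\binom{m-r}{2}=m+\tfrac12(2m-r-1)r$. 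For $r\ge m-1$ I would instead project $X^{(r)}=0$ by $\pi_{m-1}^r$ and invoke Theorem~\ref{stability} (the jet is in $\mathcal{N}^{m-1}$) to get $X=0$, so $K^r=0$ and $\dim\mathfrak{D}^r_{j_{t_0}^r\sigma}=m+\binom{m}{2}$. Noting that shrinking $V^r$ can only enlarge $K^r$, these values are the maximal ranks and are attained on the dense open set of curves in general position up to order $r$ (Proposition~\ref{generic}), which contains the Frenet jets and lies inside the set $\mathcal{U}^r$ of Lemma~\ref{lemma_rank}; then \eqref{N_r} yields $N_r=m(r+1)+1-\dim\mathfrak{D}^r_{j_{t_0}^r\sigma}$, which a short computation rewrites as $1+\binom{r+1}{2}$ for $1\le r\le m-1$ and $1+\tfrac12 m(2r-m+1)$ for $r\ge m$.

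For the basis statement I would first count: $t$ together with the $D_t^h\varkappa_i$ with $0\le i\le m-1$ and $h+i\le m-1$ are $1+\sum_{i=0}^{m-1}(m-i)=1+\binom{m+1}{2}=N_m$ functions; by Proposition~\ref{lemdelta}, $\varkappa_i$ has order $i+1$ for $i\le m-2$ and order $m$ for $i=m-1$, so each $D_t^h\varkappa_i$ with $h+i\le m-1$ has order $\le m$, and it is a differential invariant because curvatures are congruence invariants (Proposition~\ref{curvaturas}) and $D_t$ carries invariants to invariants. To see these $N_m$ functions are functionally independent, I would observe that on a jet $j_{t_0}^m\sigma$ they take the values $t_0$ and $(d^h\kappa_i^\sigma/dt^h)(t_0)$, $h+i\le m-1$, and then use Theorem~\ref{ecsfrenet}: prescribing the curvatures $\kappa_0,\dots,\kappa_{m-1}$ by polynomials (with $\kappa_i(t_0)>0$ for $i\le m-2$, hence positive near $t_0$) one realizes, together with $t_0$, every point of the nonempty open subset $\{\,(t_0;(\xi^h_i)):\xi^0_i>0\ \text{for}\ 0\le i\le m-2\,\}\subset\mathbb{R}^{N_m}$; thus the assembled $\mathbb{R}^{N_m}$-valued map has image of nonempty interior, hence a regular point, whence functional independence on a dense open subset of $(\pi_{m-1}^m)^{-1}\mathcal{F}^{m-1}(M)$.

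Finally I would conclude as follows: on $\mathcal{U}^m$ the involutive distribution $\mathfrak{D}^m$ has constant rank $m(m+1)+1-N_m$, so by the Frobenius theorem and Lemma~\ref{lemma_rank} every differential invariant of order $\le m$ is, locally, a function of any $N_m$ functionally independent differential invariants of order $\le m$; the system $t,\,D_t^h\varkappa_i$ ($h+i\le m-1$) is one such, hence a basis. I expect the main obstacle to be the rank computation together with the functional independence: one must be certain of working at a point of \emph{maximal} rank of $\mathfrak{D}^r$ (this is where flatness, through $\mathcal{F}^{m-1}=\mathcal{N}^{m-1}$, and the stability theorem enter), and the independence of the $N_m$ candidates is precisely what Theorem~\ref{ecsfrenet} delivers, by letting the jets of the curvatures be prescribed freely.
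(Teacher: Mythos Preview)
Your argument is correct, and the computation of $\dim\mathfrak{D}^r$ is essentially the paper's: you both reduce to determining which $B\in\mathfrak{so}(m)$ annihilate the span of $U_{t_0}^{\sigma,1},\dots,U_{t_0}^{\sigma,r}$, using flatness to identify these with the covariant derivatives of $T^\sigma$.

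The genuine difference is in the proof that $t,\,D_t^h\varkappa_i$ ($h+i\le m-1$) are functionally independent. The paper proves this by an explicit Jacobian computation: assuming a linear relation $\sum\lambda_h^i\,d(D_t^h\varkappa_i)=0$, it evaluates on $\partial/\partial x_m^j$, uses the commutator $[\partial/\partial x_r^j,D_t]=\partial/\partial x_{r-1}^j$ to reduce to the matrix $k_j^i=\partial\varkappa_{i-1}/\partial x_i^j$, and then computes $\det(k_j^i)$ via Gram--Schmidt on the sections $\mathcal{U}^k$, obtaining the explicit positive value
\[
\det(k_j^i)=\Bigl(\kappa_0^{\,m(m+1)/2}\kappa_1^{\,m-1}\kappa_2^{\,m-2}\cdots\kappa_{m-2}\Bigr)^{-1},
\]
after which a recurrence on $h+i$ finishes the argument. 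Your route is more conceptual: you use Theorem~\ref{ecsfrenet} to realize arbitrary finite jets of the curvatures, so the invariant map hits an open subset of $\mathbb{R}^{N_m}$, and Sard's theorem forces a regular point. This avoids the Gram--Schmidt computation entirely and makes transparent \emph{why} independence holds (free prescription of curvatures), at the cost of not yielding the explicit determinant and of giving independence a priori only on an open set; density then follows from analyticity of the $\varkappa_i$ on $\mathcal{F}^{m-1}(M)$, which you might state explicitly. Both approaches are valid; the paper's buys a pointwise result at every Frenet jet together with a closed formula, while yours is shorter and highlights the role of the existence theorem.
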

\begin{proof}
The Lie algebra $\mathfrak{i}(\mathbb{R}^m,g)$
has the basis constituted by the $m$ translations
$T_h=\partial /\partial x^h$, $1\leq h\leq m$,
and the $\frac{1}{2}m(m-1)$\ rotations
$R_{ij}=x^j\partial /\partial x^i
-x^i\partial /\partial x^j$,
$1\leq i\leq j\leq m$. As $T_1,\dotsc,T_{m}$
span $\mathfrak{D}^0$ over
$C^\infty (\mathbb{R}^m)$, one has
$\dim \mathfrak{D}_{(t,x)}^0=m$,
$\forall (t,x)\in J^0(\mathbb{R},M)
=\mathbb{R}\times M$.
Let $(x^i)_{i=1}^m$ be the only Euclidean
coordinates centred at $x_0=\sigma (t_0)$ such
that
$X_i^\sigma (t_0)=(\partial /\partial x^i)_{x_0}$,
$1\leq i\leq m$, where $(X_i^\sigma )_{i=1}^m$
denotes the Frenet frame for $\sigma $.
Let $\mathrm{prol}_{j_{t_0}^r\sigma }^r
\colon \mathfrak{i}(\mathbb{R}^m,g)
\to \mathfrak{D}_{j_{t_0}^r\sigma }^r$
be the mapping
$\mathrm{prol}_{j_{t_0}^r\sigma }^r(X)
=X_{j_{t_0}^r\sigma }^{(r)}$. According to
the formula \eqref{F^r} for the jet prolongation
of a vector field, a translation $T$ belongs to
$\ker \mathrm{prol}_{j_{t_0}^r\sigma }^r$
if and only if $T=0$, and the rotations
$R=a_j^ix^j\partial /\partial x^i$,
$A=(a_j^i)_{i,j=1}^m\in \mathfrak{so}(m)$,
belonging to
$\ker \mathrm{prol}_{j_{t_0}^r\sigma }^r$
are characterized by
\begin{equation*}
\sum _{k=0}^r\sum _{i,j=1}^m
a_j^ix_k^j(j_{t_0}^r\sigma )
\left.
\frac{\partial }{\partial x_k^i}
\right\vert _{j_{t_0}^r\sigma }=0,
\end{equation*}
or equivalently, $A\cdot U_{t_0}^{\sigma ,k}=0$,
$1\leq k\leq r$, with the same notations
as in the formula \eqref{U^sigma,k}.
As the metric is flat, one has
$U^{\sigma ,k}
=\nabla _{T^\sigma }^{k-1}T^\sigma $
for every $k\geq 1$, and we conclude
\begin{eqnarray*}
\left\langle
U_{t_0}^{\sigma ,1},\dotsc,U_{t_0}^{\sigma ,r}
\right\rangle
&=&
\left\langle
X_1^\sigma (t_0),
\dotsc,
X_r^\sigma (t_0)
\right\rangle \\
&=&
\left\langle
\partial /\partial x^1|_{x_0},
\dotsc,
\partial /\partial x^r|_{x_0}
\right\rangle .
\end{eqnarray*}
Hence $R\in \ker \mathrm{prol}_{j_{t_0}^r\sigma }^r$
if and only the kernel of the matrix $A$ contains
the subspace $\mathbb{R}^r\subset \mathbb{R}^m$
of $m$-uples whose last $m-r$ components vanish;
it thus follows that
$\ker \mathrm{prol}_{j_{t_0}^r\sigma }^{(r)}$
is generated by $R_{ij}$ for $r+1\leq i<j\leq m$,
when $r<m-1$; for $r\geq m-1$, we have
$\ker \mathrm{prol}_{j_{t_0}^r\sigma }^r=\{ 0\} $.
Hence
\begin{eqnarray*}
\dim \mathfrak{D}_{j_{t_0}^r\sigma }^r
&=& \tbinom{m}{2}-\tbinom{m-r}{2} \\
&=& \tfrac{1}{2}(2m-r-1)r.
\end{eqnarray*}
Moreover, if
$\sum _{0\leq h+i\leq m-1}
\lambda _h^id(D_t^h\varkappa _i)_{j_{t_0}^m\sigma }=0$,
then by applying this equation to
$\partial /\partial x_m^j|_{j_{t_0}^m\sigma }$
and taking the formula
$[\partial /\partial x_r^j,D_t]
=\partial /\partial x_{r-1}^j$ into account,
one obtains
\begin{eqnarray*}
0 &=& \sum\nolimits_{i=0}^{m-1}
\lambda _{m-1-i}^i
\frac{\partial }{\partial x_{m}^j}
\left(
D_t^{m-1-i}\varkappa _i
\right)
(j_{t_0}^m\sigma ) \\
&=& \lambda _0^{m-1}
\frac{\partial \varkappa _{m-1}}{\partial x_{m}^j}
(j_{t_0}^m\sigma )
+\lambda _1^{m-2}
\frac{\partial \varkappa _{m-2}}{\partial x_{m-1}^j}
(j_{t_0}^m\sigma )
+\ldots +\lambda _{m-1}^0
\frac{\partial \varkappa _0}{\partial x_1^j}
(j_{t_0}^m\sigma ) \\
&=& \sum_{i=1}^mk_j^i
\lambda _{m-i}^{i-1},\quad 1\leq j\leq m,
\end{eqnarray*}
where
\begin{equation*}
k_j^i=\dfrac{\partial \varkappa _{i-1}}
{\partial x_i^j}(j_{t_0}^m\sigma ),
\quad i,j=1,\dotsc,m.
\end{equation*}
By proceeding by recurrence on $h+i$, it suffices
to prove $\lambda _{m-i}^{i-1}=0$, $1\leq i\leq m$.

Let $\pi ^r\colon J^r(\mathbb{R},M)\to  \mathbb{R}$,
$\pi ^{\prime r}\colon J^r(\mathbb{R},M)\to M$
be the projections $\pi ^r(j_t^r\sigma )=t$,
$\pi ^{\prime r}(j_t^r\sigma )=\sigma (t)$,
and let $\mathcal{U}^k$ be the local section
of the vector bundle $(\pi ^{\prime m})^\ast TM$
defined by,
\begin{equation*}
\mathcal{U}^k=x_k^j\frac{\partial }{\partial x^j},
\quad 1\leq k\leq m.
\end{equation*}
The metric $g$ induces a positive-definite
scalar product on $(\pi ^{\prime m})^\ast TM$
also denoted by $g$. From the definition
of the open subset of Frenet jets
$\mathcal{F}^{m-1}(M)$ in $J^{m-1}(\mathbb{R},M)$
it follows that the system
$(\mathcal{U}^1,\dotsc,\mathcal{U}^{m-1})$
is linearly independent on the open subset
$\mathcal{F}^{m-1}(M)\times _{M}TM$ in $(\pi
^{\prime m-1})^\ast TM$. By applying the Gram-Schmidt
process on such open subset to the sections
$(\mathcal{U}^1,\dotsc,\mathcal{U}^m)$, one obtains
the following systems of local sections
$(\mathcal{X}_1,\dotsc,\mathcal{X}_m)$,
$(\mathcal{Y}_1,\dotsc,\mathcal{Y}_m)$
of $(\pi ^{\prime m})^\ast TM$:
\begin{eqnarray*}
\mathcal{Y}_i
&=& \mathcal{U}^i-\sum\nolimits_{h=1}^{i-1}g
\left(
\mathcal{U}^i,\mathcal{X}_h
\right) \mathcal{X}_h,
\quad 1\leq i\leq m, \\
\mathcal{X}_i
&=& \frac{\mathcal{Y}_i}{
\left\vert \mathcal{Y}_i
\right\vert },
\quad 1\leq i\leq m-1, \\
\mathcal{X}_{m}
&=& \mathcal{X}_1
\times \cdots \times \mathcal{X}_{m-1},
\end{eqnarray*}
and according to \cite[Theorem 4.2]{Gluck1}
one has $\varkappa _i=\dfrac{
\left\vert \mathcal{Y}_{i+1}
\right\vert }{
\left\vert \mathcal{Y}_1
\right\vert
\cdot
\left\vert
\mathcal{Y}_i
\right\vert }$,
$1\leq i\leq m-1$. From the very definitions,
one deduces
$\left.
\mathcal{U}^k
\right\vert _{j^m\sigma }=U^{\sigma ,k}$
(see \eqref{U^sigma,k}) and
$\left.
\mathcal{X}_i
\right\vert _{j^m\sigma }
=X_i^\sigma $ (see Proposition \ref{referenciafrenet}).
Hence for $2\leq i\leq m$, we obtain
\begin{eqnarray*}
\dfrac{\partial \varkappa _{i-1}}{\partial x_i^j}
&=& \frac{1}{
\left\vert
\mathcal{Y}_1
\right\vert
\cdot
\left\vert \mathcal{Y}_{i-1}
\right\vert }
\frac{\partial
\left\vert
\mathcal{Y}_i
\right\vert }
{\partial x_i^j},
\\
\left\vert
\mathcal{Y}_i
\right\vert ^2
&=& g
\left(
\mathcal{U}^i,\mathcal{U}^i
\right)
-\sum\nolimits _{h=1}^{i-1}g
\left(
\mathcal{U}^i,\mathcal{X}_h
\right) ^2,
\end{eqnarray*}
and taking derivatives with respect
to $\partial x_i^j$ on the second
formula, we have
\begin{eqnarray*}
\left\vert
\mathcal{Y}_i
\right\vert
\frac{\partial
\left\vert \mathcal{Y}_i
\right\vert }
{\partial x_i^j}
&=& x_i^j-\sum\nolimits_{h=1}^{i-1}g
\left(
\mathcal{U}^i,\mathcal{X}_h
\right) \mathcal{X}_h(x^j) \\
&=& \left(
\mathcal{U}^i-\sum\nolimits_{h=1}^{i-1}g
\left(
\mathcal{U}^i,\mathcal{X}_h
\right)
\mathcal{X}_h
\right) (x^j) \\
&=&\mathcal{Y}_i(x^j).
\end{eqnarray*}
Therefore
\begin{equation*}
k_j^i=\frac{\mathcal{Y}_i(x^j)}{
\left\vert \mathcal{Y}
_1\right\vert
\cdot
\left\vert
\mathcal{Y}_{i-1}
\right\vert
\cdot
\left\vert
\mathcal{Y}_i
\right\vert }
(j_{t_0}^m\sigma ),\quad 2\leq i\leq m.
\end{equation*}
Hence
\begin{multline*}
\det (k_j^i)_{i,j=1}^m=\det
\left(
\frac{\mathcal{Y}_i(x^j)}{
\left\vert
\mathcal{Y}_1
\right\vert
\cdot
\left\vert
\mathcal{Y}_{i-1}
\right\vert
\cdot
\left\vert
\mathcal{Y}_i
\right\vert }
(j_{t_0}^m\sigma )
\right) _{i,j=1}^m \\
=\tfrac{1}{
\left(
\left\vert
\mathcal{Y}_1
\right\vert ^{m+2}
\left\vert
\mathcal{Y}_{m}
\right\vert
\prod\nolimits_{a=2}^{m-1}
\left\vert
\mathcal{Y}_a
\right\vert ^2
\right) (
j_{t_0}^m\sigma )}
\cdot
\left\vert
\begin{array}{cccc}
\mathcal{Y}_1(x^1)
& \mathcal{Y}_1(x^2)
& \ldots
& \mathcal{Y}_1(x^m) \\
\mathcal{Y}_2(x^1)
& \mathcal{Y}_2(x^2)
& \ldots
& \mathcal{Y}_2(x^2) \\
\vdots
& \vdots
& \ddots
& \vdots \\
\mathcal{Y}_m(x^1)
& \mathcal{Y}_m(x^2)
& \ldots
& \mathcal{Y}_m(x^m)
\end{array}
\right\vert
(j_{t_0}^m\sigma ),
\end{multline*}
and accordingly,
\begin{equation*}
\det (k_j^i)_{i,j=1}^m
=\dfrac{
\det _{(X_1^\sigma (t_0),\dotsc,X_m^\sigma (t_0))}
\left(
\mathcal{Y}_1,\dotsc,\mathcal{Y}_m
\right)
}{
\left\vert
\mathcal{Y}_1
\right\vert ^{m+2}
\left\vert
\mathcal{Y}_m
\right\vert
\prod\nolimits_{a=2}^{m-1}
\left\vert
\mathcal{Y}_a
\right\vert ^2}
(j_{t_0}^m\sigma ).
\end{equation*}
As the vectors $\mathcal{Y}_1,\dotsc,\mathcal{Y}_m$
 are pairwise orthogonal its determinant is
 $\prod\nolimits_{a=1}^m
 \left\vert
 \mathcal{Y}_a
 \right\vert $. Hence
\begin{eqnarray*}
\det (k_j^i)_{i,j=1}^m
&=& \dfrac{1}{
\left\vert
\mathcal{Y}_1
\right\vert ^m
\prod\nolimits_{a=1}^{m-1}
\left\vert
\mathcal{Y}_a
\right\vert }
(j_{t_0}^m\sigma ) \\
&=& \frac{1}{\kappa _0^\sigma (t_0)^{\frac{m(m+1)}{2}}
\kappa _1^\sigma (t_0)^{m-1}\kappa _2^\sigma (t_0)^{m-2}
\cdots \kappa _{m-2}^\sigma (t_0)},
\end{eqnarray*}
which is positive. Hence $\lambda ^{i-1}_{m-i}=0$.
\end{proof}
\subsection{Few isometries}\label{FewIsom}
Below we consider Riemannian manifolds $(M,g)$ such that,
$\dim \mathfrak{i}(M,g)\! \leq \! \dim M$.
\begin{proposition}
\label{Few_isomtries}
If $(M,g)$ is a connected complete Riemannian manifold
of class $C^{\omega }$ and $\mathfrak{i}(M,g)$ admits
a basis $(X_1,\dotsc,X_l)$ such that the tangent vectors
$((X_1)_{x_0},\dotsc,(X_l)_{x_0})$ at a point $x_0\in M$
are linearly independent, then $N_r=mr+1+m-l$,
$\forall r\geq 0$. The order of asymptotic stability
of such manifolds is $r=1$; namely, for every $r\geq 2$
a basis of differential invariants of order $r$ can be
obtained by applying the operator $D_t$ to a basis
of first-order invariants successively.
\end{proposition}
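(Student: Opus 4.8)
The plan is to determine the generic rank of the distribution $\mathfrak{D}^r$ directly, to obtain $N_r$ from Lemma \ref{lemma_rank}, and then to read the asymptotic behaviour off the generating recurrence of Theorem \ref{Generating_Invariants}. First I would look at the linear map $\operatorname{prol}_{j_t^r\sigma}^r\colon \mathfrak{i}(M,g)\to \mathfrak{D}_{j_t^r\sigma}^r$, $X\mapsto X_{j_t^r\sigma}^{(r)}$, which is surjective by the very definition of $\mathfrak{D}^r$, whence $\dim\mathfrak{D}_{j_t^r\sigma}^r\le l$ for every jet. By the prolongation formula \eqref{F^r}, if $X=f^i\,\partial/\partial x^i$ then the $\partial/\partial x_0^i$-component of $X_{j_t^r\sigma}^{(r)}$ is $f^i(\sigma(t))$, so $X_{j_t^r\sigma}^{(r)}=0$ already forces $X_{\sigma(t)}=0$. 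Choosing a curve $\sigma$ and a parameter $t$ with $\sigma(t)=x_0$ and writing $X=\sum_a c_aX_a$, the relation $X_{x_0}=\sum_a c_a(X_a)_{x_0}=0$ together with the hypothesis of linear independence gives $c_1=\dots=c_l=0$; hence $\operatorname{prol}_{j_t^r\sigma}^r$ is injective over $x_0$ and $\dim\mathfrak{D}_{j_t^r\sigma}^r=l$ there. So the maximal value of $\dim\mathfrak{D}_\xi^r$ over $J^r(\mathbb{R},M)$ equals $l$, and the mechanism in the proof of Lemma \ref{lemma_rank} shows this maximum is attained on the dense open subset $\mathcal{U}^r$, i.e. $\operatorname{rk}\mathfrak{D}^r|_{\mathcal{U}^r}=l$. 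Substituting into \eqref{N_r} gives $N_r=m(r+1)+1-l=mr+1+m-l$ for every $r\ge 0$.

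For the asymptotic stability I would plug this value of $N_r$ into the recurrence \eqref{formula1}. One finds $k_0=N_0-1=m-l$ and $k_1=N_1-1-2k_0=(2m+1-l)-1-2(m-l)=l$, and then, inductively, if $k_i=0$ for $2\le i\le r-1$,
\[
k_r=N_r-1-(r+1)k_0-rk_1=(mr+m-l)-(r+1)(m-l)-rl=0 .
\]
Thus $k_r=0$ for all $r\ge 2$ (and $k_0+k_1=m$, in agreement with \eqref{formula2}). By Theorem \ref{Generating_Invariants} this says that for $r\ge 2$ no $r$-th order differential invariant lies outside the algebra generated by the first-order invariants together with their iterated $D_t$-derivatives; hence a basis of order-$r$ differential invariants, $r\ge 2$, is obtained from a basis of first-order invariants by applying $D_t$ repeatedly, which is precisely the assertion that the order of asymptotic stability equals $1$.

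The delicate point is the first step: I claim the generic rank of $\mathfrak{D}^r$ equals $l$ on the strength of a single favourable jet, namely any $r$-jet of a curve through $x_0$. This is legitimate exactly because of the argument in the proof of Lemma \ref{lemma_rank} --- the locus where $\dim\mathfrak{D}^r$ attains its (finite) maximum is open and dense --- so it is enough to exhibit one point where the dimension reaches the a priori upper bound $l$, and the injectivity of $\operatorname{prol}^r$ over $x_0$ does exactly that. Analyticity of $g$ would, in addition, realise the good locus as the complement of a proper analytic set, but it is not needed for the argument.
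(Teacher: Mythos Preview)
Your proof is correct and follows essentially the same route as the paper. For the first part both arguments compute $\operatorname{rk}\mathfrak{D}^r=l$ by observing that the projection of $X^{(r)}_{j_t^r\sigma}$ to order zero recovers $X_{\sigma(t)}$, so the prolongation map is injective wherever the Killing fields are pointwise independent; the paper phrases this as $(\pi_0^r)_\ast\colon\mathfrak{D}^r_{j_t^r\sigma}\to\mathfrak{D}^0_{\sigma(t)}$ being an isomorphism over the dense open set $O\subseteq M$ where $(X_1)_x,\dots,(X_l)_x$ are independent, while you phrase it via the kernel of $\operatorname{prol}^r$ at a single jet over $x_0$ and then invoke the maximum-rank mechanism of Lemma~\ref{lemma_rank}. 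For the second part your treatment is slightly more economical: you plug $N_r=mr+1+m-l$ into the recursion \eqref{formula1} of Theorem~\ref{Generating_Invariants} and read off $k_0=m-l$, $k_1=l$, $k_r=0$ for $r\ge 2$, whereas the paper redoes a small Jacobian computation to exhibit the independence of $D_tI_{l+1},\dots,D_tI_m$ and then appeals to Theorem~\ref{Generating_Invariants}; the content is the same.
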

\begin{proof}
According to the hypothesis of the statement,
the set of points $x\in M$ for which $((X_1)_x,\dotsc,(X_l)_x)$
are linearly independent, is a dense open subset $O\subseteq M$,
and for every $j_t^r\sigma \in J^r(\mathbb{R},O)$
the homomorphism $(\pi _0^r)_\ast
\colon \mathfrak{D}_{j_t^r\sigma }^r
\to \mathfrak{D}_{\sigma (t)}^0$
is an isomorphism. The formula for $N_r$ now follows from
Lemma \ref{lemma_rank}.

If $(t,I_i)_{i=l+1}^m$ is a basis of differential invariants
of order $0$, as $N_1=$ $2m+1-l$, then there exist $l$ new
invariants of (strict) order $1$, say $(I_1,\dotsc,I_l)$,
such that $(t,I_{l+1},\dotsc,I_m,D_t(I_{l+1}),\dotsc,
D_t(I_m),I_1,\dotsc,I_l)$ is a basis of differential
invariants of order $1$. Actually, if
\begin{equation*}
0=\sum\nolimits_{i=1}^{m-l}\lambda ^id
\left( D_t(I_{l+i})
\right)
=\sum\nolimits_{i=1}^{m-l}\lambda ^i
\left\{
x_1^k
\frac{\partial ^2I_{l+i}}
{\partial x^j\partial x^k}dx^j
+\frac{\partial I_{l+i}}
{\partial x^j}dx_1^j
\right\} ,
\end{equation*}
then $\lambda ^i$ must vanish, as
$\operatorname{rk}(\partial I_{l+i}/\partial x^j)=m-l$.
We can repeat this process indefinitely as in Theorem
\ref{Generating_Invariants}, taking account
of the fact that the number of differential invariants
of strict order $r$ in a basis of invariants is
$N_{r+1}-N_r=m$.
\end{proof}
\begin{remark}
\label{remark_1}
If $\dim \mathfrak{i}(M,g)=1$ or $2$,
then $\mathfrak{i}(M,g)$ always admits a basis
of linearly independent vector fields at some point
$x_0\in M$.
In fact, if $X$, $\rho X$, $\rho \in C^\infty (M)$,
are Killing vector fields for $g$, then
$0=L_{\rho X}g=d\rho \cdot c(X\otimes g)$,
where the dot denotes symmetric product and
$c\colon TM\otimes S^2T^\ast M\to  T^\ast M$
is the contraction operator. As the symmetric algebra
of every cotangent space is an integral domain,
it follows $c(X\otimes g)=0$, but in this case
$\rho X$ would be a Killing vector field for every
$\rho $, which contradicts the fact that
$\mathfrak{i}(M,g)$ is a finite-dimensional Lie algebra
(e.g., see \cite[VI, Theorem 3.3]{KN}).
\end{remark}
\begin{proposition}
[{cf.\ \protect\cite[(1.3)]{Griffiths},
\protect\cite[I.11,Theorem 1]{Jensen}}]
\label{left_invariant_metric}
Let $(G,g)$ be a connected Lie group with Lie
algebra $\mathfrak{g}$ endowed with a left-invariant
Riemannian metric. Two smooth curves
$\sigma _1,\sigma _2\colon \mathbb{R}\to G$ are
congruent with respect to $G$ if and only if
\begin{equation}
\label{gg}
\omega (T^{\sigma _1}(t))
=\omega (T^{\sigma _2}(t)),
\quad \forall t\in \mathbb{R},
\end{equation}
where $\omega \in \Omega ^1(G,\mathfrak{g})$
denotes the Maurer-Cartan form.
\end{proposition}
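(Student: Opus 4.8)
The plan is to establish both implications by a direct computation with the Maurer--Cartan form, using that the left translations $L_a$ are isometries of the left-invariant metric $g$, so that for curves in $G$ the relation ``congruent with respect to $G$'' means exactly that $\sigma_2=L_a\circ\sigma_1$ for some fixed $a\in G$ (the element $a$ being defined on all of $\mathbb{R}$ since $L_a$ is a global isometry). Thus the whole statement reduces to characterizing the existence of such an $a$ in terms of $\omega\circ T^{\sigma_1}$ and $\omega\circ T^{\sigma_2}$.

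For the direct implication I would simply suppose $\sigma_2=L_a\circ\sigma_1$, so $T^{\sigma_2}(t)=(L_a)_\ast T^{\sigma_1}(t)$, and then use the left-invariance of the Maurer--Cartan form, $\omega_g=(L_{g^{-1}})_\ast$ on $T_gG$, together with $\sigma_2(t)^{-1}=\sigma_1(t)^{-1}a^{-1}$, to get $\omega(T^{\sigma_2}(t))=(L_{\sigma_1(t)^{-1}})_\ast(L_{a^{-1}})_\ast(L_a)_\ast T^{\sigma_1}(t)=(L_{\sigma_1(t)^{-1}})_\ast T^{\sigma_1}(t)=\omega(T^{\sigma_1}(t))$. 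This is immediate.

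For the converse, assume $\omega(T^{\sigma_1}(t))=\omega(T^{\sigma_2}(t))$ for all $t$ and set $c(t)=\sigma_2(t)\,\sigma_1(t)^{-1}$, so that $\sigma_2(t)=c(t)\,\sigma_1(t)$. Differentiating this product via $\tfrac{d}{dt}\bigl(c(t)\sigma_1(t)\bigr)=(R_{\sigma_1(t)})_\ast T^c(t)+(L_{c(t)})_\ast T^{\sigma_1}(t)$ and then applying $(L_{\sigma_2(t)^{-1}})_\ast=(L_{\sigma_1(t)^{-1}})_\ast(L_{c(t)^{-1}})_\ast$, the two resulting terms simplify — using that left and right translations commute and that $(L_{g^{-1}})_\ast(R_g)_\ast=\mathrm{Ad}(g^{-1})$ on $\mathfrak{g}$ — to
\[
\omega(T^{\sigma_2}(t))=\omega(T^{\sigma_1}(t))+\mathrm{Ad}\bigl(\sigma_1(t)^{-1}\bigr)\bigl(\omega(T^c(t))\bigr).
\]
The hypothesis then forces $\mathrm{Ad}(\sigma_1(t)^{-1})(\omega(T^c(t)))=0$; since $\mathrm{Ad}(\sigma_1(t)^{-1})$ is a linear isomorphism and $\omega_{c(t)}\colon T_{c(t)}G\to\mathfrak{g}$ is an isomorphism, this yields $T^c(t)=0$ for every $t$. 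As the domain $\mathbb{R}$ is connected, $c$ is constant, say $c\equiv a\in G$, whence $\sigma_2=L_a\circ\sigma_1$, i.e. $\sigma_1$ and $\sigma_2$ are congruent with respect to $G$.

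The only genuinely delicate point is the bookkeeping in the converse: one must track carefully the various left and right translations and their differentials so that the mixed term collapses exactly to $\mathrm{Ad}(\sigma_1(t)^{-1})(\omega(T^c(t)))$ and nothing extra survives; everything else is routine. Alternatively, one could run the same argument with the right logarithmic derivative $g\mapsto(R_{g^{-1}})_\ast$ in place of $\omega$, which eliminates the $\mathrm{Ad}$ factor altogether, but the conclusion $T^c\equiv 0$, and hence $c$ constant, is reached in the same way.
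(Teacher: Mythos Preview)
Your proof is correct. Both implications are handled cleanly, and the key identity
\[
\omega(T^{\sigma_2}(t))=\omega(T^{\sigma_1}(t))+\mathrm{Ad}\bigl(\sigma_1(t)^{-1}\bigr)\bigl(\omega(T^{c}(t))\bigr)
\]
is derived correctly from the product rule for the multiplication map together with the commutation of left and right translations.

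The paper's proof of the converse takes a genuinely different route. Instead of forming the quotient curve $c(t)=\sigma_2(t)\sigma_1(t)^{-1}$ and showing it is constant, the paper fixes $\gamma\in G$ with $\sigma_1(0)=\gamma\cdot\sigma_2(0)$ and introduces on $\mathbb{R}\times G$ the time-dependent vector field $Z_{(t,\alpha)}=(\partial/\partial t,\ \omega(T^{\sigma_i}(t))_\alpha)$; it then checks that both $t\mapsto(t,\sigma_1(t))$ and $t\mapsto(t,L_\gamma\sigma_2(t))$ are integral curves of $Z$ sharing the same initial point, and concludes by uniqueness of solutions to ODEs. Your argument is more algebraic and self-contained---it avoids the auxiliary vector field and the appeal to ODE uniqueness, at the cost of a slightly more delicate bookkeeping with $\mathrm{Ad}$. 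The paper's argument, on the other hand, makes the reconstruction of a curve from its Darboux derivative (logarithmic derivative) explicit via a flow, which is closer in spirit to how one proves the general Cartan theorem on maps into Lie groups. Both approaches are standard; yours is arguably the quicker one for this particular statement.
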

\begin{proof}
If there exists an element $\gamma \in G$ such that
$\sigma _1=L_\gamma \circ \sigma _2$, then
\begin{eqnarray*}
\omega (T^{\sigma _1}(t))
&=& \omega ((L_\gamma )_\ast T^{\sigma _2}(t)) \\
&=& \left(
(L_{\gamma })^\ast \omega
\right)
(T^{\sigma _2}(t)) \\
&=& \omega (T^{\sigma _2}(t)).
\end{eqnarray*}
Conversely, assume the equation \eqref{gg} holds
and let $\gamma \in G$ be the only element such that
$\sigma _1(0)=\gamma \cdot \sigma _2(0)$. On the product
manifold $\mathbb{R}\times G$, let $Z$ be the vector field
defined as follows:
$Z_{(t,\alpha )}
=(\partial /\partial t,\omega (T^{\sigma _i}(t))_\alpha )$,
$\forall (t,\alpha )\in \mathbb{R}\times G$, $i=1,2$,
where $\omega (T^{\sigma _i}(t))_\alpha $ denotes
the evaluation of the left invariant vector field
$\omega (T^{\sigma _i}(t))\in \mathfrak{g}$ at
$\alpha \in G$ for $i=1,2$; i.e.,
\begin{equation}
\label{jj}
\omega (T^{\sigma _i}(t))_\alpha
=\left.
\frac{d}{du}
\right\vert _{u=0}
L_{\alpha }\circ \exp
\left(
u\omega (T^{\sigma _i}(t))
\right) .
\end{equation}
Hence $\tilde{\sigma }_i(t)=(t,\sigma _i(t))$
is an integral curve of $Z$, $i=1,2$. Moreover, the curve
$\tilde{\sigma}(t)=(t,(L_\gamma \circ \sigma _2)(t))$
is also an integral curve of $Z$. In fact, from \eqref{jj}
we obtain
\begin{eqnarray*}
T^{\tilde{\sigma}}(t)
&=&
\left(
\frac{\partial }{\partial t},(L_\gamma )_\ast
T^{\sigma _2}(t)
\right) \\
&=&
\left(
\frac{\partial }{\partial t},
\left. \frac{d}{du}\right\vert
_{u=0}L_{\gamma \cdot \gamma _2(t)}\circ \exp
\left(
u\omega ((L_\gamma )_\ast T^{\sigma _2}(t)
\right)
\right) \\
&=&
\left(
\frac{\partial }{\partial t},
\left.
\frac{d}{du}
\right\vert _{u=0}
L_{\gamma \cdot \gamma _2(t)}\circ \exp
\left(
u\omega (T^{\sigma _2}(t)
\right)
\right) \\
&=&\left(
\frac{\partial }{\partial t},
\left.
\frac{d}{du}
\right\vert
_{u=0}L_{\gamma \cdot \gamma _2(t)}\circ \exp
\left(
u\omega (T^{\sigma _1}(t)
\right)
\right) \\
&=&
\left(
\frac{\partial }{\partial t},
\omega (T^{\sigma _1}(t))_{\gamma \cdot \gamma _2(t)}
\right) .
\end{eqnarray*}
As $\tilde{\sigma }_1(0)
=(0,\sigma _1(0))
=(0,\gamma \cdot \sigma _2(0))
=\tilde{\sigma}(0)$, from the uniqueness of a solution
to a system of ordinary differential equations we conclude
$\tilde{\sigma }_1(t)=\tilde{\sigma }(t)$, $\forall t$,
i.e., $\sigma _1=L_\gamma \circ \sigma _2$.
\end{proof}
\begin{corollary}
\label{corollary_left_invariant}
Let $(G,g)$ be a connected Lie group with Lie algebra
$\mathfrak{g}$ endowed with a left-invariant Riemannian
metric such that $\dim \mathfrak{I}(G,g)=\dim G$.
Two smooth curves $\sigma _1,\sigma _2\colon \mathbb{R}\to G$
are congruent with respect to $\mathfrak{I}^0(G,g)$
if and only if \emph{\eqref{gg}} holds. Moreover,
a basis of invariants of order $\leq r$ is
$\{ t,(D_t)^k(I)\} _{k=0}^{r-1}$, where $I=(I_1,\dotsc,I_m)
\colon J^1(\mathbb{R},M)\to \mathfrak{g}$ are the functions
$I(j_t^1\sigma )=\omega (T^\sigma (t))$.
\end{corollary}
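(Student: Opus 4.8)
The plan is to deduce this statement from Proposition \ref{left_invariant_metric} together with the general stability and generation machinery already established. First I would observe that the hypothesis $\dim \mathfrak{I}(G,g) = \dim G$, combined with the fact that left translations act freely and transitively on $G$, forces the orbit of $\mathfrak{I}^0(G,g)$ through any point $x_0 \in G$ to be all of $G$; hence $\dim \mathfrak{i}(G,g) = \dim G = m$ and the tangent vectors of any basis of $\mathfrak{i}(G,g)$ are everywhere linearly independent, which puts us exactly in the situation of Proposition \ref{Few_isomtries} with $l = m$. The congruence equivalence $(\ref{gg})$ is then just the restatement of Proposition \ref{left_invariant_metric}, except that there congruence is with respect to the full group $G$ of left translations while here we ask for congruence with respect to $\mathfrak{I}^0(G,g)$; these coincide because $G \hookrightarrow \mathfrak{I}^0(G,g)$ via left translations, and since $\dim \mathfrak{I}^0(G,g) = \dim G$ and $G$ is connected, the image of $G$ is an open (hence, being a subgroup, also closed and therefore all) connected subgroup of $\mathfrak{I}^0(G,g)$. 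So $\mathfrak{I}^0(G,g) = \{L_\gamma : \gamma \in G\}$ and the two notions of congruence agree.

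For the statement about the basis of invariants, I would argue as follows. The functions $I_1,\dots,I_m \colon J^1(\mathbb{R},M) \to \mathbb{R}$ given by $I(j_t^1\sigma) = \omega(T^\sigma(t))$ are first-order invariants: since $\omega$ is left-invariant and $\mathfrak{I}^0(G,g)$ is generated by left translations, $\omega((L_\gamma)_* T^\sigma(t)) = \omega(T^\sigma(t))$, so $I_i \circ \phi^{(1)} = I_i$ for all $\phi \in \mathfrak{I}^0(G,g)$. Next, I would check that $t, I_1,\dots,I_m$ are functionally independent of strict order $1$ and that there are no nonconstant zero-order invariants other than functions of $t$ — the latter because the orbit through each point is all of $G$, so $N_0 = 1$. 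By Proposition \ref{Few_isomtries} (applied with $l = m$) the order of asymptotic stability is $1$ and $N_r = mr + 1$, so a basis of order-$\leq r$ invariants is obtained from $m$ strict-order-$1$ invariants by successive application of $D_t$; since $m$ independent strict-order-$1$ invariants must be functionally equivalent to $I_1,\dots,I_m$ (the map $j_t^1\sigma \mapsto (t,\omega(T^\sigma(t)))$ has surjective differential onto the quotient), one concludes $\{t, (D_t)^k I\}_{k=0}^{r-1}$ is a basis.

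The main obstacle I anticipate is the bookkeeping needed to justify that the particular functions $I_j(j_t^1\sigma) = \omega(T^\sigma(t))$ — as opposed to some abstract basis of strict-order-$1$ invariants — actually form a basis: one must verify the Jacobian rank condition, i.e., that $d I_1,\dots,d I_m$ are linearly independent modulo the pullback of functions of $t$ on the dense open set where everything is regular. This follows because in suitable local coordinates $\omega$ is given by $\omega = g(x)^{-1} dx$ for an invertible matrix-valued function $g$, so $\partial I_j / \partial x_1^k$ is an invertible matrix; the argument then parallels the independence computation in the proof of Proposition \ref{Few_isomtries}. Once that linear-algebra step is in place, everything else is a direct application of Proposition \ref{left_invariant_metric} and Proposition \ref{Few_isomtries}, and the proof is complete.
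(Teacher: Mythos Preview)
Your proposal is correct and follows essentially the approach the paper intends: the corollary is stated without proof, and the natural deduction is precisely the one you give --- identify $\mathfrak{I}^0(G,g)$ with the group of left translations (using $\dim\mathfrak{I}(G,g)=\dim G$ and connectedness), invoke Proposition~\ref{left_invariant_metric} for the congruence criterion, and invoke Proposition~\ref{Few_isomtries} with $l=m$ for the structure of the invariant ring, checking that the components of $\omega(T^\sigma)$ supply the required $m$ independent strict first-order invariants. The Jacobian check you flag is indeed the only point requiring a line of computation, and your argument via $\partial I_j/\partial x_1^k$ being invertible is exactly right.
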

\begin{proposition}
\label{proposition_independent_vectors}
If $(M,g)$ is a complete Riemannian
connected manifold of class $C^\omega $ such that
$\mathfrak{i}(M,g)$ admits a basis
$(X_1,\dotsc,X_m)$, $m=\dim M$, with linearly
independent tangent vectors
$((X_1)_{x_0},\dotsc,(X_m)_{x_0})$ at $x_0\in M$,
then $(M,g)$ is isometric to $(\mathfrak{I}^0(M,g),\bar{g})/H$,
where $H$ is a finite subgroup in $G=\mathfrak{I}^0(M,g)$,
and $\bar{g}$ is a left-invariant Riemannian metric.
\end{proposition}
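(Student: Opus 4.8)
The plan is to show that $G=\mathfrak{I}^0(M,g)$ acts transitively on $M$ with finite isotropy at $x_0$, and then to transport $g$ to a left-invariant metric on $G$ through the resulting covering $G\to M$.

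First I would analyse the orbit map $\mu\colon G\to M$, $\mu(\phi)=\phi(x_0)$. Its differential at the identity is the evaluation map $\mathfrak{g}\to T_{x_0}M$, $X\mapsto X_{x_0}$, which by hypothesis carries the basis $(X_1,\dotsc,X_m)$ onto a basis of $T_{x_0}M$; hence it is a linear isomorphism, so the isotropy subalgebra at $x_0$ is $\{0\}$ and $H:=G_{x_0}$ is discrete. From the equivariance $\mu\circ L_\gamma=\gamma\circ\mu$ (where $\gamma$ on the right acts on $M$ as an isometry) together with the fact that $dL_\gamma$ and $d\gamma$ are linear isomorphisms, one gets $\operatorname{rank}d\mu_\gamma=\operatorname{rank}d\mu_e=m$ for every $\gamma\in G$. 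Thus $\mu$ is a submersion, and since $\dim G=\dim M=m$ it is a local diffeomorphism; in particular the orbit $O:=G\cdot x_0=\mu(G)$ is open in $M$.

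Next I would invoke that the isometry group of a Riemannian manifold acts properly on it, and that $G$, being the identity component of $\mathfrak{I}(M,g)$, is a closed subgroup, so the $G$-action is proper as well; consequently its orbits are closed and its isotropy subgroups are compact. Hence $O$ is both open and closed in the connected manifold $M$, so $O=M$: the action is transitive. Moreover $H$ is compact and discrete, hence finite. Therefore $\mu$ descends to a diffeomorphism $G/H\cong M$, and since the finite group $H$ acts freely (and properly discontinuously) on $G$ by right translations, the quotient projection $\pi\colon G\to G/H\cong M$ is a smooth covering map.

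Finally, set $\bar g:=\pi^\ast g$; then $\pi\colon(G,\bar g)\to(M,g)$ is a Riemannian covering by construction. From $\pi\circ L_\gamma=\gamma\circ\pi$ with $\gamma$ an isometry of $g$ one gets $L_\gamma^\ast\bar g=\pi^\ast(\gamma^\ast g)=\pi^\ast g=\bar g$, so $\bar g$ is left-invariant; and from $\pi\circ R_h=\pi$ for $h\in H$ one gets $R_h^\ast\bar g=\bar g$, so $H$ acts on $(G,\bar g)$ by isometries. Hence the quotient Riemannian manifold $(G,\bar g)/H$ is well defined and $\pi$ induces an isometry $(G,\bar g)/H\cong(M,g)$, which is the assertion. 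I expect the only substantive point to be the passage from an open orbit to transitivity: everything else is formal once the isometric action is known to be proper, so the crux is to justify properness (equivalently, closedness of the orbit), which is where completeness of $(M,g)$ and the Lie-transformation-group structure of $\mathfrak{I}(M,g)$ are used.
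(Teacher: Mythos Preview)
Your proof is correct and follows essentially the same route as the paper: both use the equivariant orbit map $\mu\colon G\to M$, $\mu(\phi)=\phi(x_0)$, observe it has maximal rank by the hypothesis on $(X_i)_{x_0}$, conclude the orbit is open, invoke properness of the isometry action (this is exactly where completeness enters, via \cite{PalaisTerng}) to get closedness and hence transitivity with finite isotropy, and finally pull back $g$ along $\mu$ to obtain the left-invariant $\bar g$. Your argument is in fact slightly more explicit than the paper's in verifying that $\bar g$ is also right $H$-invariant (so that the quotient Riemannian structure is well defined), but the underlying strategy is identical.
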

\begin{proof}
The mapping $\mu \colon G\to  M$, $\mu (\phi )=\phi (x_0)$,
is $G$-equivariant, i.e.,
$\gamma \circ \mu =\mu \circ L_\gamma $,
where $\gamma \in G$ and $L_\gamma $ denotes the left
translation by $\gamma $. Hence $\mu $ is of constant rank
and $\operatorname{im}\mu $ is an open subset in $M$ by virtue
of the assumption, which we claim is the whole manifold $M$.
In fact, if $x\in \partial (\operatorname{im}\mu )$, then
there exists a sequence $x_n=\gamma _n(x_0)$ such that
$\lim x_n=x$; as the action of $G$ is proper (see
\cite[5.2.4]{PalaisTerng}), we conclude that there is
a convergent subsequence $\gamma _{n_k}\to \gamma $.
Hence $x=\gamma (x_0)$ and the image of $\mu $ is a closed
subset. Therefore the isotropy subgroup $H$ of the point $x_0$
is a finite subgroup (\cite[I, Corollary4.8]{KN}) in $G$
and $G/H\cong M$, where $H$ acts on the right on $G$.
The metric $\bar{g}=\mu ^\ast (g)$ is invariant under left
translations of elements in $G$; in fact,
\begin{equation*}
L_\gamma ^\ast (\bar{g})
=\left(
L_\gamma ^\ast \circ \mu ^\ast
\right)
(g)=(\gamma \circ \mu )^\ast (g)
=\mu ^\ast (\gamma ^\ast (g))
=\mu ^\ast (g)=\bar{g}.
\end{equation*}
\end{proof}
\begin{proposition}
\label{proposition_omega_tilde_H}
Let $\omega \colon TG\to  \mathfrak{g}$ be the Maurer-Cartan
form of a connected Lie group $G$ and let
$\omega _h\colon TG\to \mathfrak{g}/H$ be map given
by $\omega _h(X)=\omega (X)\operatorname{mod}H$,
$\forall X\in TG$, where $H$ is a finite subgroup acting
on the right on $\mathfrak{g}$ by setting,
$X\cdot h=\operatorname{Ad}_{h^{-1}}X$, $\forall h\in H$,
$\forall X\in \mathfrak{g}$, and $\operatorname{Ad}$
denoting the adjoint representation of $G$. As
$\omega _h$ is $H$-invariant, it induces a ``$1$-form''
$\tilde{\omega }_h$ on $M\cong G/H$\ with values
in the quotient $\mathfrak{g}/H$ (which is not a vector space).
With the same hypotheses
of \emph{Proposition \ref{proposition_independent_vectors}}
about $(M,g)$, two curves
$\sigma _1,\sigma _2\colon (a,b)\to M$
are congruent under $G$ if and only if,
$\tilde{\omega }_h(T^{\sigma _1}(t))
=\tilde{\omega }_h(T^{\sigma _2}(t))$, $\forall t\in (a,b)$.
\end{proposition}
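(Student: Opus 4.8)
The plan is to pass to the covering group and reduce the statement to \emph{Proposition \ref{left_invariant_metric}}. By \emph{Proposition \ref{proposition_independent_vectors}} we may identify $(M,g)$ with $(G,\bar g)/H$, where $G=\mathfrak{I}^0(M,g)$, $H$ is the (finite) isotropy subgroup acting on the right, $\bar g$ is left-invariant, and $p\colon G\to M\cong G/H$ is the quotient map; since $H$ is finite it acts freely and properly discontinuously, so $p$ is a finite analytic covering. The first thing I would record is that, for any curve $\sigma\colon(a,b)\to M$ and any lift $\hat\sigma\colon(a,b)\to G$ through $p$ (which exists, $(a,b)$ being simply connected), one has $\tilde\omega_h(T^\sigma(t))=\omega_h(T^{\hat\sigma}(t))=\omega(T^{\hat\sigma}(t))\bmod\operatorname{Ad}(H)$, and this is independent of the chosen lift: two lifts differ by a right translation $R_h$, $h\in H$, and $(R_h)^\ast\omega=\operatorname{Ad}_{h^{-1}}\omega$, which does not change the class modulo the $\operatorname{Ad}(H)$-action. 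This is exactly the $H$-invariance that makes $\tilde\omega_h$ well defined on $TM$.

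For the implication $(\Rightarrow)$ I would argue directly: if $\sigma_1=\gamma\cdot\sigma_2$ for some $\gamma\in G$ acting on $G/H$, then $L_\gamma\circ\hat\sigma_2$ is a lift of $\sigma_1$, and by left-invariance of the Maurer--Cartan form, $\tilde\omega_h(T^{\sigma_1}(t))=\omega(T^{L_\gamma\circ\hat\sigma_2}(t))\bmod\operatorname{Ad}(H)=\omega(T^{\hat\sigma_2}(t))\bmod\operatorname{Ad}(H)=\tilde\omega_h(T^{\sigma_2}(t))$ for all $t$.

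For $(\Leftarrow)$ I would fix analytic lifts $\hat\sigma_1,\hat\sigma_2\colon(a,b)\to G$. The hypothesis then says that for each $t$ there is $h\in H$ with $\omega(T^{\hat\sigma_1}(t))=\operatorname{Ad}_{h^{-1}}\omega(T^{\hat\sigma_2}(t))$; equivalently $(a,b)=\bigcup_{h\in H}A_h$, where $A_h=\{t:\omega(T^{\hat\sigma_1}(t))=\operatorname{Ad}_{h^{-1}}\omega(T^{\hat\sigma_2}(t))\}$. Each $A_h$ is the zero set of an analytic $\mathfrak g$-valued function on the connected interval $(a,b)$, hence either equals $(a,b)$ or has no accumulation point; since $H$ is finite and $(a,b)$ is uncountable, some $A_{h_0}$ must equal $(a,b)$. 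Then $\omega(T^{\hat\sigma_1}(t))=\omega(T^{R_{h_0}\circ\hat\sigma_2}(t))$ for all $t$, so by \emph{Proposition \ref{left_invariant_metric}} (whose ODE-uniqueness proof applies verbatim on an interval) there is $\gamma\in G$ with $\hat\sigma_1=L_\gamma\circ R_{h_0}\circ\hat\sigma_2$. Applying $p$ and using $p\circ R_{h_0}=p$ and $p\circ L_\gamma=\gamma\cdot p$ gives $\sigma_1=\gamma\cdot\sigma_2$, i.e.\ $\sigma_1$ and $\sigma_2$ are congruent under $G$.

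The crux of the argument --- and the only genuinely delicate step --- is the passage from the pointwise relation to a single global element $h_0\in H$; this is precisely where analyticity of the curves is essential. For merely smooth curves it can fail: along the fixed subspace of some $\operatorname{Ad}_h\neq\operatorname{id}$ one can interpolate, with a flat transition function, between $\omega(T^{\hat\sigma_1})=\omega(T^{\hat\sigma_2})$ on $t\le0$ and $\omega(T^{\hat\sigma_1})=\operatorname{Ad}_{h^{-1}}\omega(T^{\hat\sigma_2})$ on $t\ge0$, producing non-congruent curves with $\tilde\omega_h(T^{\sigma_1})=\tilde\omega_h(T^{\sigma_2})$; hence the statement is to be read under the standing $C^\omega$ hypotheses inherited from \emph{Proposition \ref{proposition_independent_vectors}}. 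The remaining points --- well-definedness of $\tilde\omega_h$, existence and analyticity of the lifts through $p$, and the two projection identities --- are routine and I would treat them briefly.
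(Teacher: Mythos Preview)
Your proof is correct and follows essentially the same route as the paper: lift the curves through the finite covering $p\colon G\to G/H$, use left-invariance of $\omega$ and the relation $(R_h)^\ast\omega=\operatorname{Ad}_{h^{-1}}\omega$, and reduce to Proposition~\ref{left_invariant_metric}. In fact you are more careful than the paper at the one genuinely delicate point: the paper passes directly from ``$\omega(T^{\gamma_1}(t))=\operatorname{Ad}_{h^{-1}}\omega(T^{\gamma_2}(t))$ for some $h\in H$'' to ``$\omega_h(T^{\gamma_1}(t))=\omega_h(T^{\gamma_2}(t))$'' as an equivalence, silently swapping the quantifiers $\exists h\,\forall t$ and $\forall t\,\exists h$; your analyticity/Baire argument (the $A_h$ are closed analytic zero sets whose finite union is the interval, so one of them is the whole interval) makes this swap rigorous, and your remark that it can fail for merely $C^\infty$ curves is a genuine clarification of the hypotheses.
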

\begin{proof}
The curve $\sigma _i$ can be lifted to
$\gamma _i\colon (a,b)\to G$, i.e.,
$\sigma _i(t)=\gamma _i(t)\operatorname{mod}H$,
for $i=1,2$, as $G\to  M\cong G/H$ is a covering.
The condition $\gamma (\sigma _1(t))=\sigma _2(t)$,
$\gamma \in G$, is equivalent to
$\gamma \cdot \gamma _1(t)=\gamma _2(t)\cdot h$
for some $h\in H$. From Proposition \ref{left_invariant_metric}
this last condition holds if and only if,
$\omega (T^{\gamma _1}(t))=\omega ((R_h)_\ast T^{\gamma _2}(t))$,
$\forall t$. As $\omega $ is left invariant, the previous equation
is equivalent to the following:
\begin{eqnarray*}
\omega (T^{\gamma _1}(t))
&=&
\omega ((L_{h^{-1}})_\ast (R_h)_\ast T^{\gamma _2}(t)) \\
&=&
\operatorname{Ad}\nolimits_{h^{-1}}(\omega (T^{\gamma _2}(t))),
\end{eqnarray*}
or equivalently, $\omega _h(T^{\gamma _1}(t))
=\omega _h(T^{\gamma _2}(t))$, thus concluding the proof.
\end{proof}
\begin{remark}
\label{remark_3}
As $H$ is finite, the adjoint representation of $H$
on $\mathfrak{g}$ is completely reducible and
a classical result (see \cite{Luna}) assures that the algebra
of smooth invariants $C^\infty (\mathfrak{g)}^H$ admits
a finite basis, i.e.,
$C^\infty (\mathfrak{g)}^H=C^\infty [I_1,\dotsc,I_k]$,
$I_1,\dotsc,I_k$ being $H$-invariant polynomial functions
on $\mathfrak{g}$. Two curves $\sigma _1$,
$\sigma _2$ are congruent if and only if,
$I_i(\tilde{\omega }_h(T^{\sigma _1}(t)))
=I_i(\tilde{\omega }_h(T^{\sigma _2}(t)))$
for $1\leq i\leq k$; i.e., $\tilde{\omega }_h$
can be replaced by the ordinary scalar invariants
$I_1\circ \tilde{\omega }_h,
\dotsc,
I_k\circ \tilde{\omega }_h$.
\end{remark}
For $\dim \mathfrak{I}^0(M,g)<\dim M$,
as a consequence of the results in this section,
we can state the following:
\begin{proposition}
\label{Geometric Meaning}
Assume $(M,g)$ is a complete Riemannian connected
manifold of class $C^\omega $ that satisfies
the following two conditions:
\emph{i)} $\dim \mathfrak{I}^0(M,g)=l<m$,
and \emph{ii)} there exists a basis
$(X_1,\dotsc,X_{l})$ for $\mathfrak{i}(M,g)$
such that the tangent vectors $((X_1)_{x_0},
\dotsc,(X_{l})_{x_0})$ are linearly
independent at $x_0\in M$. We set
$G=\mathfrak{I}^0(M,g)$.

Let $S\subset M$ be a slice at $x_0$, let $U$
be a $G$-invariant neighbourhood of $x_0$ and
let $r\colon U\to  G\cdot x_0$ be
a $G$-equivariant retraction of the inclusion
$G\cdot x_0\subset U$ such that,
\emph{1)} $r^{-1}(x_0)=S$
\emph{(cf.\ \cite[I, 5.1.11]{PalaisTerng})},
\emph{2)} the assumption \emph{ii)} above holds
for every $x\in S$. Then, $G_x=G_{x_0}$
for every $x\in S$, and we can define
$\hat{\omega }_h\colon TU\to \mathfrak{g}/H$,
with $H=G_{x_0}$, by setting,
$\hat{\omega }_h(X)=\tilde{\omega }_h(\pi _xX)$,
where
$\tilde{\omega }_h\colon T(G/H)\to \mathfrak{g}/H$
is defined in
\emph{Proposition \ref{proposition_omega_tilde_H}},
$\pi _x\colon T_{x}U\to T_x(G\cdot x)$ denotes
the orthogonal $g_{x}$-projection and
the canonical isomorphism $G\cdot x\cong G/H$
is used.

Let $q\colon U\to U/G\cong S/H$ be the quotient mapping.
If $(\bar{x}^1,\dotsc,\bar{x}^{m-l})$ is a coordinate
system on $S/H$, then $I_i=\bar{x}^i\circ q$ is a system
of $m-l$ differential invariants of order zero on $U$.
If $\sigma _1,\sigma _2\colon (a,b)\to  M$ are two
curves such that, $\sigma _1(t_0)=\sigma _2(t_0)=x_0$,
and $\phi \circ \sigma _1=\sigma _2$, for some $\phi \in G$,
then

\smallskip

\emph{a)} $\hat{\omega }_h(T^{\sigma _1}(t_0))
=\hat{\omega }_h(T^{\sigma _2}(t_0))$.

\smallskip

\emph{b)} The invariants $I_1,\dotsc,I_{m-l},I$, where
$I\colon J^1(R,U)\to \mathfrak{g}/H$, defined
by $I(j_t^1\sigma )=\hat{\omega }_h(T^\sigma (t))$,
are functionally independent.

\smallskip

\emph{c)} The invariants $t,I_i,D_tI_i$,
$1\leq i\leq m-l$, and $I$ are a basis of first-order
differential invariants and they generate the ring
of invariant functions of arbitrary order $r\geq 2$.
\end{proposition}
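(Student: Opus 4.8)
The plan is to exploit the local product structure furnished by the slice theorem and then to separate the contributions of the \emph{orbit directions}, which are controlled by $\hat{\omega}_h$, from those of the \emph{transverse directions}, which are controlled by the invariants $I_i$ pulled back from $U/G\cong S/H$. First I would fix the model: by the hypotheses i), ii) and \cite[I, Corollary~4.8]{KN} the orbit through $x_0$ has dimension $l$, so the isotropy $H=G_{x_0}$ is compact and $0$-dimensional, hence finite; condition 2) forces every orbit through a point of $S$ to have dimension $l$ as well, so the isotropy is locally constant along the slice and $G_x=H$. The slice theorem then identifies $U$ with the twisted product $G\times_H S$, under which $r$ becomes the projection onto the orbit factor $G/H$ and the orbit space $U/G$ is identified with $S/H$; thus $I_i=\bar{x}^i\circ q$ are genuine order-zero differential invariants. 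Because the orthogonal projection $\pi_x\colon T_xU\to T_x(G\cdot x)$ and the form $\tilde{\omega}_h$ of Proposition~\ref{proposition_omega_tilde_H} are $G$-equivariant and $H$-invariant by construction, $\hat{\omega}_h(X)=\tilde{\omega}_h(\pi_xX)$ is a well-defined, $G$-invariant, $\mathfrak{g}/H$-valued ``$1$-form'' on $U$, so $I(j^1_t\sigma)=\hat{\omega}_h(T^\sigma(t))$ is a (vector-valued) invariant of order $1$; replacing $\mathfrak{g}/H$ by the finite basis of $C^\infty(\mathfrak{g})^H$ of Remark~\ref{remark_3} turns it into a genuine system of scalar invariants.

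Part a) is then immediate. If $\phi\circ\sigma_1=\sigma_2$ with $\sigma_1(t_0)=\sigma_2(t_0)=x_0$, then $\phi(x_0)=x_0$, i.e.\ $\phi\in H$; since $\phi$ is an isometry fixing $x_0$ and preserving $G\cdot x_0$, it commutes with $\pi_{x_0}$, and $\tilde{\omega}_h$ is $H$-invariant, whence $\hat{\omega}_h(T^{\sigma_2}(t_0))=\hat{\omega}_h(\phi_\ast T^{\sigma_1}(t_0))=\hat{\omega}_h(T^{\sigma_1}(t_0))$.

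For b) and c) I would argue by a dimension count against Proposition~\ref{Few_isomtries}. At a generic $x$ one has $T_xM=T_x(G\cdot x)\oplus N_x$ with $\dim T_x(G\cdot x)=l$; the $I_i$ are $G$-invariant, so $dI_1,\dots,dI_{m-l}$ lie in, and by the count $m-l=\dim(S/H)$ span, the annihilator of $T_x(G\cdot x)$, hence are independent, while $\hat{\omega}_h|_{T_xM}$ is the composition $T_xM\xrightarrow{\pi_x}T_x(G\cdot x)\cong\mathfrak{g}\to\mathfrak{g}/H$ of surjections and is therefore a submersion onto its image. This gives the functional independence of $I_1,\dots,I_{m-l},I$ asserted in b). Adjoining $t$ and the $D_tI_i$, and using $N_r=mr+1+m-l$ from Proposition~\ref{Few_isomtries} together with $[\partial/\partial x^j_1,D_t]=\partial/\partial x^j$ (exactly as in the proof of that proposition, to see that the $D_tI_i$ are independent of the remaining functions), we obtain $1+2(m-l)+l=2m+1-l=N_1$ functionally independent first-order invariants, hence a basis of order-one differential invariants. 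Finally, Proposition~\ref{Few_isomtries} already yields that the order of asymptotic stability is $1$, so that exactly $N_{r+1}-N_r=m$ new functionally independent invariants appear at each order $r\geq1$ and may be chosen among the $D_t$-derivatives of the previous ones; the recursive Jacobian argument of Theorem~\ref{Generating_Invariants} then shows that $t$, the $D_t^kI_i$ with $0\leq k\leq r$, and the $D_t^k$-derivatives of $I$ with $0\leq k\leq r-1$ form a basis of order-$r$ invariants, so that $t,I_i,D_tI_i,I$ generate the ring of invariant functions of every order $r\geq 2$. This proves b) and c).

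The step I expect to be the main obstacle is the careful verification in the first paragraph that $\hat{\omega}_h$ is well defined and $G$-invariant on all of $U$: one must check that the orthogonal projection onto the orbits is compatible both with the $H$-action on the slice and with the lifting ambiguity $G\to G/H$ that Proposition~\ref{proposition_omega_tilde_H} already handles, and that the identification $U\cong G\times_H S$ intertwines $\hat{\omega}_h$ with $\tilde{\omega}_h$ on the orbit factor. The accompanying bookkeeping in b) — that the ``orbit part'' of the velocity detected by $\hat{\omega}_h$ and the ``slice position'' detected by the $I_i$ are genuinely complementary — is the other place where care is needed.
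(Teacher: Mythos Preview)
The paper does not give an explicit proof of this proposition: it is introduced with the phrase ``as a consequence of the results in this section, we can state the following'' and is immediately followed by the next subsection. Your write-up is therefore not competing with a paper proof but rather supplying the synthesis the authors leave to the reader, and you do so along exactly the lines they intend: the slice picture together with Proposition~\ref{Few_isomtries} for the dimension count $N_r=mr+1+m-l$ and asymptotic stability at order~$1$, Proposition~\ref{proposition_omega_tilde_H} and Remark~\ref{remark_3} for the definition and invariance of $\hat{\omega}_h$, and the Jacobian recursion of Theorem~\ref{Generating_Invariants} for the generation in c). The argument for a) and the independence count $1+2(m-l)+l=N_1$ in c) are clean and correct.

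One point deserves more care than you give it. From the slice theorem one obtains only $G_x\subseteq G_{x_0}=H$ for $x\in S$; your sentence ``the isotropy is locally constant along the slice and $G_x=H$'' does not follow from condition~2) alone, since equal \emph{dimension} of orbits gives only that $G_x$ is finite, not that it equals $H$. In the tube model $U\cong G\times_H S$ one has $G_{[e,s]}=H_s=\{h\in H:h\cdot s=s\}$, so $G_x=H$ for all $x\in S$ is equivalent to $H$ acting trivially on $S$, i.e.\ to the isotropy representation of $H$ on the normal space $N_{x_0}$ being trivial. Either supply that argument (or the appropriate genericity/restriction on $S$), or---what suffices for the construction---observe that in any case $G_x\subseteq H$ yields a covering $G/G_x\to G/H$, so $\tilde{\omega}_h$ still pulls back and $\hat{\omega}_h$ is well defined and $G$-invariant on all of $U$; the identification ``$G\cdot x\cong G/H$'' used in the statement should then be read through that covering. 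This is the only place your outline is thinner than it should be; the rest matches the paper's intended derivation.
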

\subsection{Surfaces}
\label{surf}
\begin{theorem}
\label{surfaces}
Let $(M,g)$ be a Riemannian bidimensional manifold
of class $C^\omega $, let
$\mathcal{O}^1\subset J^1(\mathbb{R},M)$ be the open
subset of immersive jets, which coincides with Frenet
curves in this case (see \emph{Remark \ref{remark_2}}),
let $(X_1^\sigma ,X_2^\sigma )$ be the Frenet frame
of an immersion $\sigma $, let $K^g$ be the Gaussian
curvature of $g$, and let $\nabla ^g$ be the Levi-Civita
connection of $g$. The functions
$I_i^g\colon \mathcal{O}^1\to  \mathbb{R}$,
$1\leq i\leq 4$, defined by
$I_1^g
\left( j_{t_0}^1\sigma
\right)
=g\left(
T_{t_0}^\sigma ,T_{t_0}^\sigma
\right) $,
$I_2^g
\left(
j_{t_0}^1\sigma
\right)
=\left(
dK^g
\right)
(X_1^\sigma )$,
$I_3^g
\left(
j_{t_0}^1\sigma
\right)
=\left(
dK^g\right)
\left(
X_2^\sigma
\right) $, and $I_4^g
\left(
j_{t_0}^1\sigma
\right)
=\left(
\nabla ^gdK^g
\right)
(X_1^\sigma ,X_1^\sigma )$ are invariant.
If $\mathcal{M}\to  M$ denotes the bundle
of Riemannian metrics on $M$ of class
$C^\omega $, then there exists a dense open
subset $U^5\subset J^5(\mathcal{M})$
such that for every metric $g$ for which
$j^5g$ takes values in $U^5$ the invariants
$(I_1^g,\dotsc,I_4^g) $ are functionally
independent and two Frenet curves $\sigma $,
$\bar{\sigma} $ with values in $(M,g)$
are congruent on some neighbourhoods
of $x_0=\sigma (t_0)$ and $
\bar{x}_0=\bar{\sigma}(t_0)$ if and only
if, the following conditions hold:
\begin{equation*}
\kappa _0^\sigma (t)
=\kappa _0^{\bar{\sigma}}(t),\quad \kappa
_1^\sigma (t)=\kappa _1^{\bar{\sigma}}(t),
\end{equation*}
for small enough $|t-t_0|$, and
\begin{eqnarray*}
\left(
dK^g
\right)
(X_1^\sigma )(x_0)
&=& \left(
dK^g
\right)
(X_1^{\bar{\sigma}})(\bar{x}_0), \\
\left(
dK^g
\right) (
X_2^\sigma )(x_0)
&=& \left(
dK^g
\right)
(X_2^{\bar{\sigma}})(\bar{x}_0), \\
\left(
\nabla ^gdK^g
\right)
(X_1^\sigma ,X_1^\sigma )
&=& \left(
\nabla ^gdK^g
\right)
(X_1^{\bar{\sigma}},X_1^\sigma ).
\end{eqnarray*}
\end{theorem}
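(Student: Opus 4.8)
The plan is to reduce everything to the general analytic criterion in Theorem~\ref{CGC} (or rather its corollary, Corollary~\ref{CorolCGC}) specialized to $m=2$, and then to show that on a two–dimensional manifold the infinite chain of conditions \eqref{DC} on covariant derivatives of the curvature tensor collapses to finitely many, in fact to the three displayed equations involving $dK^g$ and $\nabla^g dK^g$. The invariance of $I_1^g,\dots,I_4^g$ is the easy part: $I_1^g$ is the squared speed (invariant since isometries preserve $g$), while $I_2^g,I_3^g,I_4^g$ are instances of the functions $I^j_{i_1\dots i_{j+3},i}$ of Remark~\ref{remark_2} once one rewrites the curvature tensor of a surface in terms of the Gaussian curvature, so their invariance follows from Proposition~\ref{frenetframe}. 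The substantive content is therefore (i) that these conditions suffice for congruence, and (ii) that $(I_1^g,\dots,I_4^g)$ are generically functionally independent.

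\textbf{Reducing the curvature conditions.} On a surface the curvature tensor is determined by the Gaussian curvature: $R_4(X,Y,Z,W)=K^g\bigl(g(X,Z)g(Y,W)-g(X,W)g(Y,Z)\bigr)$, so for the orthonormal Frenet frame $(X_1^\sigma,X_2^\sigma)$ the only independent component of $R$ is $K^g$ itself, and the only independent component of $\nabla^{g,j}R$ at a point is the totally symmetric tensor $(\nabla^{g,j}K^g)(X_{i_1}^\sigma,\dots,X_{i_j}^\sigma)$. Thus condition \eqref{DC} at $x_0$ is equivalent to equality, for all $j\ge 0$, of $(\nabla^{g,j}K^g)(X_{i_1}^\sigma,\dots,X_{i_j}^\sigma)(x_0)$ with the barred analogue. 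The first step is to observe that, because the Frenet frame rotates according to the Frenet equations (a)--(d) of Proposition~\ref{referenciafrenet} driven by $\kappa_0^\sigma,\kappa_1^\sigma$, one can differentiate along the curve: knowing $\kappa_0^\sigma,\kappa_1^\sigma$ together with the values at $t_0$ of $dK^g(X_1^\sigma)$, $dK^g(X_2^\sigma)$ and $(\nabla^g dK^g)(X_1^\sigma,X_1^\sigma)$ one recovers, by taking $D_t$ and using $\dfrac{d}{dt}=\kappa_0^\sigma\nabla^g_{X_1^\sigma}$, the $1$-jet of all the higher functions $(\nabla^{g,j}K^g)(X_{i_1}^\sigma,\dots)$ along $\sigma$ near $t_0$. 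Here the crucial dimension-two input is that a symmetric $j$-tensor on a $2$-plane has $j+1$ independent components $(\nabla^{g,j}K^g)(X_1^{(a)},X_2^{(j-a)})$, and differentiating the $(j-1)$-case in the two directions $X_1^\sigma$ (giving the term $\nabla^g_{X_1}$) and using that $\nabla^g_{X_1}X_1=\kappa_1 X_2$ lets one climb from one level to the next; the one genuinely new datum at level $2$ beyond what is forced is $(\nabla^g dK^g)(X_1^\sigma,X_1^\sigma)$, which is exactly $I_4^g$. Granting this, Corollary~\ref{CorolCGC} applies and the three displayed curvature equations plus $\kappa_0^\sigma=\kappa_0^{\bar\sigma}$, $\kappa_1^\sigma=\kappa_1^{\bar\sigma}$ are necessary and sufficient.

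\textbf{Genericity of independence.} For the functional independence of $(I_1^g,\dots,I_4^g)$ on $\mathcal O^1$ one works on the bundle $\mathcal M\to M$ of analytic metrics: for a fixed point of $\mathcal O^1$, the values of $I_1^g$ and of $K^g$, $dK^g$, $\nabla^g dK^g$ at $x_0$ are polynomial (universal) expressions in the $5$-jet $j^5_{x_0}g$ of the metric, because $K^g$ involves second derivatives of $g$, $dK^g$ third, and $\nabla^g dK^g$ fourth — so all four invariants are functions on $J^5(\mathcal M)$ composed with the Frenet data. One then writes the Jacobian of $(I_1^g,\dots,I_4^g)$ with respect to the fibre coordinates of $\mathcal O^1\subset J^1(\mathbb R,M)$ (which is $2+2\cdot 2+1=5$ dimensional, so rank $4$ is the generic maximum) and exhibits an explicit $5$-jet of a metric for which this Jacobian has rank $4$ at some immersive jet; for instance a metric whose curvature has generic, non-degenerate first and second covariant derivatives at $x_0$ does the job, since then rotating the Frenet frame $(X_1^\sigma,X_2^\sigma)$ and varying the speed move $I_1^g,I_2^g,I_3^g,I_4^g$ independently. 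The set $U^5\subset J^5(\mathcal M)$ where rank $4$ is attained is then open, and it is non-empty by the example, hence — being defined by the non-vanishing of analytic functions — dense; this is where one invokes that the relevant jet-space functions are analytic (a polynomial non-identically zero is non-zero on a dense open set).

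\textbf{Main obstacle.} The delicate point is the closing-up argument in the reduction step: one must verify that level $j=2$ really is the last place where a new invariant enters, i.e. that $(\nabla^{g,j}K^g)(X_1^{(a)},X_2^{(j-a)})(t)$ for $j\ge 3$ and its $t$-derivatives are all determined by $\kappa_0^\sigma,\kappa_1^\sigma,I_2^g,I_3^g,I_4^g$ along the curve, together with the two structure constants of the Frenet equations. Concretely one has to check that applying $\nabla^g_{X_2^\sigma}$ never produces information not already accessible, which uses the contracted second Bianchi identity in dimension two (equivalently, that $\mathrm{Ric}=K^g g$ forces $dK^g$ to carry all the data of $\nabla^g\mathrm{Ric}$) to express $X_2^\sigma$-derivatives of $K^g$ in terms of $X_1^\sigma$-derivatives after one knows the $\kappa_i^\sigma$. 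I expect this bookkeeping — showing the chain \eqref{DC} terminates at $j=2$ — to be the heart of the proof; once it is in place, Corollary~\ref{CorolCGC} and the transversality/analyticity genericity argument finish the statement routinely.
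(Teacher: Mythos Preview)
Your approach diverges from the paper's at the crucial step, and the mechanism you propose to close the recursion does not work.

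The paper does not attempt to show directly that the conditions \eqref{DC} for $j\ge 3$ follow from those for $j\le 2$ by climbing through covariant derivatives of $K^g$. Instead it argues by a dimension count: the functions $I^j_{i_1\dots i_{j+3},i}$ of Remark~\ref{remark_2} all live on $\mathcal{O}^1\subset J^1(\mathbb{R},M)$, which for $m=2$ is five-dimensional. The paper writes $I_1^g,\dots,I_4^g$ explicitly in local coordinates, computes the Jacobian $\partial(I_1^g,\dots,I_4^g)/\partial(x,y,\dot x,\dot y)$ (this is where $j^5g$ enters: differentiating $I_4^g$ with respect to $x,y$ brings in third derivatives of $K^g$, hence fifth derivatives of $g$; your count stops one order short), and expresses it as a nonvanishing multiple of a quadratic form $\Phi$ in $(\dot x,\dot y)$ with coefficients polynomial in $j^5_{x_0}g$. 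The open set $U^5$ is the locus where this form is not identically zero. There the five invariants $(t,I_1^g,\dots,I_4^g)$ form a local coordinate system on $\mathcal{O}^1$, so every smooth function on $\mathcal{O}^1$---in particular every component $(\nabla^{g,j}K^g)(X^\sigma_{i_1},\dots,X^\sigma_{i_j})$ for arbitrary $j$---is a function of them. Since both curves lie in the \emph{same} $(M,g)$, this function is the same for $\sigma$ and $\bar\sigma$; equality of the five basic invariants at $j^1_{t_0}\sigma$ and $j^1_{t_0}\bar\sigma$ therefore forces condition (ii) of Corollary~\ref{CorolCGC} in full, and the corollary finishes the proof.

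Your recursive scheme, by contrast, tries to produce the higher components $(\nabla^{g,j}K^g)(X_2^\sigma,\dots,X_2^\sigma)$ from $X_1^\sigma$-derivatives and the Frenet equations. Differentiating along the curve only ever applies $\nabla^g_{X_1^\sigma}$, so at each level the pure $X_2^\sigma$-component is not reached (this is exactly the phenomenon noted in Remark~\ref{remark_1.5}); and the contracted second Bianchi identity is vacuous in dimension two (for $\mathrm{Ric}=K^g g$ it reduces to $dK^g=dK^g$), so it cannot supply the missing relation. The obstacle you flag is real, but it is resolved in the paper not by an algebraic identity among curvature derivatives, rather by the observation that all these quantities are functions on a five-dimensional space on which five independent invariants already exist.
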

\begin{proof}
Let
$\tilde{I}_i\colon \mathcal{O}^1\to \mathbb{R}$,
$0\leq i\leq 4$, be the invariant functions
defined as follows:
\begin{eqnarray*}
\tilde{I}_0(j_{t_0}^1\sigma )
&=& t_0, \\
\tilde{I}_1^g(j_{t_0}^1\sigma )
&=& \kappa _0^\sigma (t_0), \\
\tilde{I}_2^g(j_{t_0}^1\sigma )
&=& \nabla ^gR_4^g(X_1^\sigma ,
X_1^\sigma ,
X_2^\sigma ,
X_1^\sigma ,
X_2^\sigma )(\sigma (t_0)), \\
\tilde{I}_3^g(j_{t_0}^1\sigma )
&=&\nabla ^gR_4^g(X_2^\sigma ,
X_1^\sigma ,
X_2^\sigma ,
X_1^\sigma ,
X_2^\sigma )(\sigma (t_0)), \\
\tilde{I}_4^g(j_{t_0}^1\sigma )
&=&
\left(
\nabla ^g
\right) ^2
\!R_4^g(X_1^\sigma ,
\! X_1^\sigma ,
\! X_1^\sigma ,
\! X_2^\sigma ,
\! X_1^\sigma ,
\! X_2^\sigma )
(\sigma (t_0)),
\end{eqnarray*}
$R_4^g$ being the Riemann-Christoffel
tensor of $g$. In local coordinates
$x=x^1$, $y=x^2$, $\dot{x}=x_1^1$,
$\dot{y}=x_1^2$, we have
\begin{eqnarray}
\tilde{I}_1^g
&=& (I_1^g)^{\frac{1}{2}}
=\left(
g_{11}\dot{x}^2
+2g_{12}\dot{x}\dot{y}
+g_{22}\dot{y}^2
\right) ^{\frac{1}{2}},
\label{I1} \\
\tilde{I}_2^g
&=& I_2^g
=\left(
I_1^g
\right) ^{-1}
(\dot{x}K_x^g+\dot{y}K_y^g),
 \label{I2} \\
\tilde{I}_3^g
&=& I_3^g
=\left(
I_1^g
\right) ^{-1}
\det (g_{ij})^{-\frac{1}{2}}
\left(
\left(
g_{11}\dot{x}+g_{12}\dot{y}
\right)
K_y^g
-\left(
g_{12}\dot{x}+g_{22}\dot{y}
\right)
K_x^g
\right) ,
\label{I3} \\
\tilde{I}_4^g
&=&I_4^g
=\left(
I_1^g
\right) ^{-2}
\left(
\dot{x}^2K_{xx}^g
+2\dot{x}\dot{y}K_{xy}^g
+\dot{y}^{2}K_{yy}^g
\right) ,
\label{I4}
\end{eqnarray}
where $g=\sum _{i,j=1}^2g_{ij}dx^i\otimes dx^j$,
$g_{12}=g_{21}$, which proves that
$(I_1^g,\dotsc,I_4^g)$ are invariant functions
according to Theorem \ref{CGC}, and also that
$(\tilde{I}_0,\tilde{I}_1^g,\dotsc,\tilde{I}_4^g)$
are functionally independent if and only if
$(I_1^g,\bar{I}_2^g
=\tilde{I}_1^gI_2^g,\bar{I}_3^g
=\tilde{I}_1^gI_3^g,\bar{I}_4^g
=(\tilde{I}_1^g)^2I_4^g)$ are. Moreover,
as a simple---although rather long---computation
shows one has
\begin{equation*}
\frac{\partial
\left(
I_1^g,\bar{I}_2^g,\bar{I}_3^g,\bar{I}_4^g
\right)
}{\partial
\left( x,y,\dot{x},\dot{y}
\right) }
\left(
j_{t_0}^1\sigma
\right)
=g\left(
T_{t_0}^\sigma ,T_{t_0}^\sigma
\right)
\det (g_{ij}(x_0))^{-\frac{3}{2}}\Phi
\left(
j_{t_0}^1\sigma ,j_{x_0}^5g
\right) ,
\end{equation*}
where $\Phi $ is a polynomial function
of the form
\begin{equation*}
\Phi
\left(
j_{t_0}^1\sigma ,j_{x_0}^{5}g
\right)
=\Phi _{11}(j_{x_0}^5g)
\dot{x}(j_{t_0}^1\sigma )^2
+\Phi _{12}(j_{x_0}^{5}g)
\dot{x}(j_{t_0}^1\sigma )
\dot{y}(j_{t_0}^1\sigma )
+\Phi _{22}(j_{x_0}^{5}g)
\dot{y}(j_{t_0}^1\sigma )^2,
\end{equation*}
and $U^5$ is given by
$(\Phi _{11})^2+(\Phi _{12})^2+(\Phi _{22})^2>0$.

As $\dim J^1(\mathbb{R},M)=5$, we can conclude
by simply applying Corollary \ref{CorolCGC}
because the rest of invariants depends functionally
on $(\tilde{I}_0,\tilde{I}_1^g,\dotsc,\tilde{I}_4^g)$.
\end{proof}
\begin{remark}
From the previous theorem it follows that the order
of asymptotic stability\ of generic Riemannian surface
is $\leq 2$, which agrees with Theorem \ref{stability},
but the order of asymptotic stability is $\leq 1$,
indeed. In fact, we can solve the equations
\eqref{I1}--\eqref{I4} for $x$, $y$, $\dot{x}$,
$\dot{y}$ and taking derivatives with respect to
$D_t$ we conclude that $\kappa _1^\sigma $ can be written
as a function of the invariants $t,I_1^g,\dotsc,I_4^g$
and their total derivatives.
\end{remark}
\subsection{$3$-dimensional manifolds}\label{three_dim}
In this section, $(M,g)$ denotes a $3$-dimensional
simply connected complete Riemannian manifold
of non-constant curvature. Because of the dimensional
gap \cite[II, Theorem 3.2]{Kobayashi}
(also see \cite{Ku}), $\dim \mathfrak{I}(M,g)\leq 4$.
If $\dim \mathfrak{I}(M,g)\leq 2 $, then not
much more can be said about the structure of invariants
of $(M,g)$ in addition of what has been said in general
in the section \ref{FewIsom}; on the other hand,
if $\dim \mathfrak{I}(M,g)=3$ or $4$ and
$\mathfrak{I}(M,g)$ acts transitively on $M$,
then the structure of invariants can fully be determined,
as explained below.
\begin{theorem}
\label{4_isometries}
Let $(M,g)$ be a $3$-dimensional simply connected
complete Riemannian manifold such that,
i) $\dim \mathfrak{I}(M,g)=4$ and ii) $\mathfrak{I}(M,g)$
acts transitively on $M$. There exists a global unitary
vector field $Z$ on $M$ such that, $\phi \cdot Z=Z$
for every $\phi \in \mathfrak{I}^0(M,g)$ and
the function $I_1\colon J^1(\mathbb{R},M)\to \mathbb{R}$
given by
$I_1(j_t^1\sigma )=g(Z_{\sigma (t)},T^\sigma (t))$,
is a first-order differential  invariant.
A complete system of differential invariants is given
as follows: $t$, $\varkappa _0$, $I_1$, and $\varkappa _1$.
\end{theorem}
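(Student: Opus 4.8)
The plan is to proceed in three stages: first construct the invariant unit field $Z$ and check that $I_1$ is a differential invariant; then identify $\varkappa_0,I_1,\varkappa_1$ as a generating family in the sense of Theorem \ref{Generating_Invariants}; and finally invoke the converse part of Theorem \ref{completeness} to conclude completeness.

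\emph{Construction of $Z$.} Since $M$ is connected and $\mathfrak{I}(M,g)$ acts transitively, so does $G:=\mathfrak{I}^0(M,g)$; write $M=G/H$ with $H=G_{x_0}$, a compact subgroup with $\dim H=\dim G-\dim M=1$. The linear isotropy representation $H\to O(T_{x_0}M)$ is faithful (an isometry fixing a point together with its differential there is the identity), so the identity component $H^0$ is a circle acting on $T_{x_0}M$ as the rotations about a well-defined line $\ell_{x_0}\subset T_{x_0}M$. Transporting this by the $G$-action, using that $\phi$ conjugates $H_x$ to $H_{\phi(x)}$ and intertwines the isotropy representations, yields a smooth $G$-invariant line subbundle $\ell\subset TM$ with $\ell_x=\mathrm{Fix}(H_x^0\ \text{on}\ T_xM)$. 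Because $M$ is simply connected the real line bundle $\ell$ is orientable, hence trivial, so it admits a global unit section $Z$. For $\phi\in G$ one has $\phi_\ast Z_x=\epsilon(\phi)Z_{\phi(x)}$ with $\epsilon(\phi)\in\{\pm1\}$ independent of $x$ by connectedness of $M$, and $\epsilon\colon G\to\{\pm1\}$ is a continuous homomorphism, hence trivial since $G$ is connected; thus $\phi\cdot Z=Z$ for all $\phi\in G$. Consequently $I_1(\phi^{(1)}(j^1_t\sigma))=g(\phi_\ast Z_{\sigma(t)},\phi_\ast T^\sigma(t))=g(Z_{\sigma(t)},T^\sigma(t))=I_1(j^1_t\sigma)$, so $X^{(1)}(I_1)=0$ for every $X\in\mathfrak{i}(M,g)$ and $I_1$ is a first-order differential invariant; $t$, $\varkappa_0$ and $\varkappa_1$ are differential invariants by Proposition \ref{curvaturas}.

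\emph{The generating family.} Here $m=3$, so $\dim J^r(\mathbb{R},M)=3r+4$ and, by Corollary \ref{corollary_stability}, $N_r=3r$ for all $r\ge m-1=2$. Since $G$ is transitive on $M$ the only zero-order differential invariant is $t$, so $N_0=1$ and $k_0=0$; and the stabilizer in $\mathfrak{i}(M,g)$ of a $1$-jet $j^1_t\sigma$ consists of the Killing fields vanishing at $\sigma(t)$ whose covariant derivative there annihilates $T^\sigma(t)$, which for $T^\sigma(t)\notin\ell_{\sigma(t)}$ forces the field to be zero, whence $\operatorname{rk}\mathfrak{D}^1=4$ and $N_1=3$ generically. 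Feeding these values into \eqref{formula1} gives $k_1=2$, $k_2=1$, $k_3=0$ and $\sum_{i=0}^{m}k_i=m=3$. It then suffices, following the inductive scheme in the proof of Theorem \ref{Generating_Invariants}, to check that $\varkappa_0,I_1$ may serve as the two order-$1$ generators and $\varkappa_1$ as the order-$2$ generator, i.e. that $\{t,\varkappa_0,I_1\}$ has rank $3$ and $\{t,\varkappa_0,I_1,D_t\varkappa_0,D_tI_1,\varkappa_1\}$ has rank $6$ on a dense open subset of $(\pi^3_{2})^{-1}\mathcal F^{2}(M)$. The first reduces to the gradients of $\varkappa_0$ and $I_1$ in the velocity variables, namely $T^\sigma/|T^\sigma|$ and $Z_{\sigma(t)}$, being independent, i.e. $T^\sigma$ not proportional to $Z$, a generic condition; the second reduces, by the triangular form of the Jacobian in the top-order variables, to the gradient of $\varkappa_1$ in the acceleration variables, proportional to the second Frenet vector $X^\sigma_2$, not lying in $\langle T^\sigma,Z\rangle$, again generic. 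Hence the order-$3$ family $t,\varkappa_0,D_t\varkappa_0,D_t^2\varkappa_0,I_1,D_tI_1,D_t^2I_1,\varkappa_1,D_t\varkappa_1$ has rank $9=N_3$, so $t,\varkappa_0,I_1,\varkappa_1$ together with their $D_t$-derivatives locally generate the ring of differential invariants over a dense open subset of $(\pi^3_2)^{-1}\mathcal F^2(M)$; by asymptotic stability (Corollary \ref{corollary_stability}) the same holds at every order. The converse statement of Theorem \ref{completeness} now shows that $t,\varkappa_0,I_1,\varkappa_1$ form a complete system of differential invariants.

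The main obstacle is the functional-independence bookkeeping of the second stage, in particular verifying that $\varkappa_1$ is genuinely new — not expressible through $t,\varkappa_0,I_1$ and their total derivatives. This is precisely where the geometric role of $Z$, equivalently of $I_1$, replaces the classical torsion $\varkappa_2$, and it rests on the non-generic locus being the zero set of the explicit (polynomial) rank conditions above. Alternatively, completeness can be obtained directly from Corollary \ref{CorolCGC}: on a homogeneous $(M,g)$ the curvature invariants $I^{j}_{i_1\ldots i_{j+3},i}$ of Theorem \ref{CGC}, written in a Frenet frame, depend only on the angular parameter $g(Z,X^\sigma_1)=I_1/\varkappa_0$ and on $\varkappa_0,\varkappa_1$, so that matching $t,\varkappa_0,I_1,\varkappa_1$ forces all the data of Theorem \ref{CGC} to agree, $\varkappa_2$ included, and the two curves are congruent.
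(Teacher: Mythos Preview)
Your main argument is correct and follows a genuinely different route from the paper's. The paper invokes the explicit classification of such $3$-manifolds as the two-parameter family $(M_{\kappa,\tau},g_{\kappa,\tau})$ fibering over a surface of constant curvature, writes down the metric and the four Killing fields in fibred coordinates, identifies $Z=\partial/\partial z$ as the unit fibre direction, checks $[Z,X]=0$ for every Killing field, and then verifies the functional independence of $\tilde\varkappa_0,I_1$ and of $D_t\tilde\varkappa_0,D_tI_1,\varkappa_1$ by direct coordinate computation of the relevant restrictions to $\ker(\pi_{r-1}^r)_\ast$. Your construction of $Z$ via the linear isotropy representation and the simple-connectedness of $M$ is more intrinsic and avoids the classification entirely; your rank computations (stabiliser of a generic $1$-jet, the gradients $X_1^\sigma,Z,X_2^\sigma$ in the successive fibre directions) are the coordinate-free counterparts of the paper's calculations and lead to the same values $k_0=0$, $k_1=2$, $k_2=1$, $k_r=0$ for $r\ge3$. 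What your approach gains is independence from the model spaces; what the paper's gains is that the rank conditions are visibly polynomial and the open dense sets are explicit.

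Your closing ``alternative'' via Corollary~\ref{CorolCGC} is not complete as stated. The curvature scalars $I^{j}_{i_1\ldots i_{j+3},i}$, being $H^0$-invariant functions of the Frenet frame at $x_0$, depend on the full $H^0$-orbit of that frame in $SO(3)$, which is two-dimensional: concretely on both $g(Z,X_1^\sigma)$ and $g(Z,X_2^\sigma)$, not on $g(Z,X_1^\sigma)$ alone. One can recover $g(Z,X_2^\sigma)$ from $\varkappa_0,D_t\varkappa_0,\varkappa_1,I_1,D_tI_1$ (differentiate $I_1$ along $\sigma$ and use that $Z$ is Killing, which in your construction still has to be argued), and similarly $g(Z,X_3^\sigma)$ and $\varkappa_2$ from one further differentiation; but this is essentially the same computation as your main argument and does not give an independent shortcut. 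I would simply drop the alternative paragraph.
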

\begin{proof}
As is known (e.g., see \cite{Cartan}, \cite{Daniel},
\cite{Engel}), any simply connected complete homogeneous
$3$-manifold with $4$-dimensional isometry group,
fibres as an $1$-dimensional principal fibre bundle
over a homogeneous $2$-manifold of constant sectional
curvature $\kappa $. The orthogonal distribution
to the fibres is the kernel of a connection form
the curvature of which has constant norm $\tau $.
Such a principal bundle is usually denoted by
$(M_{\kappa ,\tau },g_{\kappa ,\tau })$. In a certain
fibred coordinate system $(x,y,z)$ the metric reads
as follows:
\begin{eqnarray}
g_{\kappa ,\tau }
&=& \frac{1}{
\left( 1+\frac{\kappa }{4}
\left(
x^2+y^2
\right)
\right) ^2}
\left(
dx^2+dy^2
\right)
\label{mmme} \\
&& +\left(
\frac{\tau }{1+\frac{\kappa }{4}
\left( x^2+y^2
\right) }
\left(
ydx-xdy
\right)
+dz
\right) ^2.
\notag
\end{eqnarray}
The lines parallel to the $z$ axis are the fibers
of $M_{\kappa ,\tau }$. Moreover, the tangent vector
field $Z=\partial /\partial z$ is unitary
and more exactly is the unitary infinitesimal generator
defined by the right action of the structure group
on $M_{\kappa ,\tau }$ as a principal bundle.

We claim that every
$\phi \in \mathfrak{I}^0(M_{\kappa ,\tau },
g_{\kappa ,\tau })$ maps fibres to fibres.
Working on the expression \eqref{mmme}
of the metric $g_{\kappa ,\tau }$, one can check
that the following vector fields are a basis
for $\mathfrak{i}(M_{\kappa ,\tau },g_{\kappa ,\tau })$:
\begin{eqnarray*}
X_1 &=& -2\kappa xy
\frac{\partial }{\partial x}
+(\kappa x^2-\kappa y^2-4)
\frac{\partial }{\partial y}
+4\tau x
\frac{\partial }{\partial z}, \\
X_2 &=& 2\kappa xy
\frac{\partial }{\partial y}
+(\kappa x^2-\kappa y^2+4)
\frac{\partial }{\partial x}
+4\tau y
\frac{\partial }{\partial z}, \\
X_3 &=& -y
\frac{\partial }{\partial x}
+x\frac{\partial }{\partial y}, \\
Z &=& \frac{\partial }{\partial z},
\end{eqnarray*}
As $[Z,X]=0$,
$\forall X\in \mathfrak{i}
(M_{\kappa ,\tau },g_{\kappa ,\tau }) $,
it follows $\phi \cdot Z=Z$,
$\forall \phi \in \mathfrak{I}^0
(M_{\kappa ,\tau },g_{\kappa ,\tau })$.
If
$\sigma _i\colon (a,b)\to  M_{\kappa ,\tau }$,
$i=1,2$, are two curves such that,
$\sigma _1=\phi \circ \sigma _2$ for certain
$\phi \in \mathfrak{I}^0(M_{\kappa ,\tau },
g_{\kappa ,\tau })$, then
$g_{\kappa ,\tau }(Z_{\sigma _1(t)},T^{\sigma _1}(t))
=g_{\kappa ,\tau }(Z_{\sigma _2(t)},T^{\sigma _2}(t))$,
$\forall t\in (a,b)$. Hence the function
$I_1$ in the statement is an invariant.

Moreover, from the formula for $N_r$
in Lemma \ref{lemma_rank} (resp.\ Corollary
\ref{corollary_stability}) in this case one has
$N_r=3r+4-\operatorname{rk}
\left.
\mathfrak{D}^r
\right\vert _{\mathcal{U}^r}$
(resp.\  $N_r=3r$, $\forall r\geq 4)$.
As $\mathfrak{I}(M_{\kappa ,\tau },
g_{\kappa ,\tau })$ acts transitively
on $M_{\kappa ,\tau }$ one has,
$\operatorname{rk}\mathfrak{D}_{(t,x)}^0=3$,
for all
$(t,x)\in \mathbb{R}\times M_{\kappa ,\tau }$;
hence $N_0=1$, and the problem is reduced to compute
$\operatorname{rk}\mathfrak{D}^r$ for $r=1,2,3$.
From the formula \eqref{F^r},
one obtains,
\begin{eqnarray*}
X_1^{(1)} &=& -2\kappa xy
\frac{\partial }{\partial x}
+(\kappa x^2-\kappa y^2-4)
\frac{\partial }{\partial y}
+4\tau x
\frac{\partial }{\partial z} \\
&& -2\kappa
\left(
x_1y+xy_1
\right)
\frac{\partial }{\partial x_1}
+2\kappa (xx_1-yy_1)
\frac{\partial }{\partial y_1}
+4\tau x_1
\frac{\partial }{\partial z_1}, \\
X_2^{(1)} &=&2 \kappa xy
\frac{\partial }{\partial y}
+(\kappa x^2-\kappa y^2+4)
\frac{\partial }{\partial x}
+4\tau y
\frac{\partial }{\partial z} \\
&&+2\kappa
\left(
x_1y+xy_1
\right)
\frac{\partial }{\partial y_1}
+2\kappa (xx_1-yy_1)
\frac{\partial }{\partial x_1}
+4\tau y_1
\frac{\partial }{\partial z_1}, \\
X_3^{(1)} &=& -y\frac{\partial }{\partial x}
+x\frac{\partial }{\partial y}
-y_1\frac{\partial }{\partial x_1}
+x_1\frac{\partial }{\partial y_1},
\\
Z^{(1)} &=&\frac{\partial }{\partial z}.
\end{eqnarray*}
As a computation shows, one has
$\operatorname{rk}
\left. \mathfrak{D}^1
\right\vert _{\mathcal{U}^1}=4$ with
$\mathcal{U}^1=\{j_t^1\sigma
:T^\sigma (t)\neq 0\}$.
As $\mathfrak{D}^r$ is spanned by four
vector fields, we conclude that
$\operatorname{rk}\mathfrak{D}^r\leq 4$,
and since $\operatorname{rk}\mathfrak{D}^r
=\operatorname{rk}\mathfrak{D}^{r-1}
+\operatorname{rk}\mathfrak{D}^{r,r-1}$
by virtue of \eqref{ExactSeq}, we finally
obtain $\operatorname{rk}
\left.
\mathfrak{D}^r
\right\vert _{\mathcal{U}^r}=4$,
$\forall r\geq 1$. Hence $N_r=3r$,
$\forall r\geq 1$.

For $r=1$, the invariants are $t$,
$\tilde{\varkappa }_0$, and $I_1$,
as they are functionally independent
because of their expressions below,
\begin{eqnarray*}
\tilde{\varkappa }_0
&=& \varkappa _0
-\left(
I_1
\right) ^2
=\frac{(x_1)^2+(y_1)^2}{
\left( 1+\frac{\kappa }{4}
\left( x^2+y^2
\right)
\right) ^2}, \\
I_1 &=& \frac{\tau }{1+\frac{\kappa }{4}
\left(
x^2+y^2
\right) }
\left(
yx_1-xy_1
\right)
+z_1.
\end{eqnarray*}
Moreover, as a calculation shows,
the curvature function
$\varkappa _1\colon \mathcal{F}^2(M)
\to \mathbb{R}$ is given by,
\begin{eqnarray*}
\left(
\varkappa _0
\right) ^6
\left(
\varkappa _1
\right) ^2
&=& (\varkappa _0)^2
\left(
\left( g_{\kappa ,\tau }
\right) _{11}
(x_2)^2
+\left(
g_{\kappa ,\tau }
\right) _{22}
(y_2)^2
+\left(
g_{\kappa ,\tau }
\right) _{33}
(z_2)^2
\right) \\
&& +2(\varkappa _0)^2
\left(
\left(
g_{\kappa ,\tau }
\right) _{12}
x_2y_2
+\left(
g_{\kappa ,\tau }
\right) _{13}
x_2z_2
+\left(
g_{\kappa ,\tau }
\right) _{23}
y_2z_2
\right) \\
&& +Ax_2+By_2+Cz_2
+\text{terms of order }\leq 1,
\end{eqnarray*}
$A,B,C$ being homogeneous polynomials
of degree $6$ in the following functions:

1) $\varkappa _0,D_t\varkappa _0,x_1,y_1,z_1$,

2) the local coefficients
$(g_{\kappa ,\tau })_{1\leq i\leq j\leq 3}$ of
$g_{\kappa ,\tau }$, and

3) the Christoffel symbols $\Gamma _{ij}^h$
of the Levi-Civita connection of $g_{\kappa ,\tau }$.

Following the proof of Theorem \ref{Generating_Invariants}
and by using the formula \eqref{formula1}, in the present
case one has, $k_0=N_0-1=0$, $k_1=N_1-1=2$,
$k_2=N_2-1-2k_1=1$. Hence there are two functionally
independent invariants of order $1$, namely
$\tilde{\varkappa}_0$, $I_1$, and one invariant
of order $2$, which is $\varkappa _1$. We claim
the invariants $t$, $\tilde{\varkappa}_0$, $I_1$,
$D_t\tilde{\varkappa }_0 $, $D_tI_1$, $\varkappa _1$
are generically functionally independent. In fact,
\begin{eqnarray*}
\left.
d\tilde{\varkappa }_0
\right\vert _{\ker (\pi _0^1)_\ast }
&=& \frac{2
\left( x_1\delta x_1+y_1\delta y_1
\right)
}
{\left(
1+\frac{\kappa }{4}
\left(
x^2+y^2
\right)
\right) ^2}, \\
\left.
dI_1
\right\vert _{\ker (\pi _0^1)_\ast }
&=& \frac{\tau
\left(
y\delta x_1-x\delta y_1
\right) }
{1+\frac{\kappa }{4}
\left(
x^2+y^2
\right) }
+dz_1,
\end{eqnarray*}
\begin{eqnarray*}
\left.
d(D_t\tilde{\varkappa}_0)
\right\vert _{\ker (\pi _1^2)_\ast }
&=& \frac{2
\left(
x_1\delta x_2+y_1\delta y_2
\right) }
{\left(
1+\frac{\kappa }{4}
\left(
x^2+y^2
\right)
\right) ^2}, \\
\left.
\left(
\varkappa _0
\right) ^4\varkappa _1d\varkappa _1
\right\vert _{\ker (\pi _1^2)_\ast }
&=& \left\{
\left(
g_{\kappa ,\tau }
\right) _{11}x_2
+\left(
g_{\kappa ,\tau }
\right) _{12}y_2
+\left(
g_{\kappa ,\tau }
\right) _{13}z_2
\right\}
\delta x_2 \\
&& +\left\{
\left( g_{\kappa ,\tau }
\right) _{12}x_2
+\left(
g_{\kappa ,\tau }
\right) _{22}y_2
+\left(
g_{\kappa ,\tau }
\right) _{23}z_2
\right\}
\delta y_2 \\
&& +\left\{
\left(
g_{\kappa ,\tau }
\right) _{13}x_2
+\left(
g_{\kappa ,\tau }
\right) _{23}y_2
+\left(
g_{\kappa ,\tau }
\right) _{33}z_2
\right\}
\delta z_2,
\end{eqnarray*}
where
$\delta x_r=dx_r|_{\ker (\pi _{r-1}^r)_\ast }$,
$\delta y_r=dy_r|_{\ker (\pi _{r-1}^r)_\ast }$,
$\delta z_{r}=dz_r|_{\ker (\pi _{r-1}^r)_\ast }$
for $r=1,2$.

As $3=\sum _{i=0}^3k_i$, by virtue of \eqref{formula2},
it follows $k_i=0$, for $i>2$; i.e., the subalgebra
generated by the invariants of order $<3$, and their
derivatives with respect to the operator $D_t$,
exhausts the algebra of invariants for every order
$i>2$, and the given system is thus complete.
\end{proof}
\begin{theorem}
\label{3_isometries}
Let $(M,g)$ be a $3$-dimensional simply connected
complete Riemannian manifold of class $C^\omega $
such that $\mathfrak{I}(M,g)$ is
a $3$-dimensional group acting transitively on $M$.
Then $(M,g)$ is isometric
to $(\mathfrak{I}^0(M,g),\bar{g})$, $\bar{g}$ being
a left-invariant Riemannian metric. A complete
system of invariants are: The function $t$ and
the three first-order invariants given by
$I_i(j_t^1\sigma )=\omega _i(T^\sigma (t))$,
$1\leq i\leq 3$, $(\omega _1,\omega _2,\omega _3)$
being a basis for the Maurer-Cartan forms.

Moreover, with the same notations and results
as in \emph{\cite{HaLee}}, the metrics $\bar{g}$
are the following:
\begin{enumerate}
\item[\emph{(i)}]
For the group $\widetilde{PSL}(2,\mathbb{R})$
(resp.\  $SU(2) $),
$\bar{g}=\lambda (\omega _1)^2+\mu (\omega _2)^2
+\nu (\omega _3)^2$,
$\lambda ,\mu ,\nu \in \mathbb{R}^+$,
with $\mu >\nu $ (resp.\  $\lambda >\mu >\nu $),
where $(\omega _1,\omega _2,\omega _3)$
is the dual basis of a basis $(X_1,X_2,X_3)$
of the algebra of left-invariant vector fields such that,
\begin{equation*}
\!\!\!
\begin{array}{llll}
\lbrack X_1,X_2]=X_3,
& [X_1,X_3]=-X_2,
& [X_2,X_3]=-X_1,
& G=\widetilde{PSL}(2,\mathbb{R}), \\
\lbrack X_1,X_2]=X_3, & [X_1,X_3]
=-X_2,
& [X_2,X_3]=X_1,
&
G=SU(2).
\end{array}
\end{equation*}
\item[\emph{(ii)}]
For the unimodular solvable group
$\mathbb{R}^2\rtimes _{\varphi }\mathbb{R}$,
with coordinates $(x,y,z)$, and
$\varphi (z)=\operatorname{diag}
(\exp (z),\exp (-z))$, $\omega _1=\exp (-z)dx$,
$\omega _2=\exp (z)dy$, $\omega _3=dz$,
\begin{equation*}
\bar{g}=(\omega _1)^2+(\omega _2)^2
+\nu (\omega _3)^2\text{ or }
\bar{g}=(\omega _1)^2
+\omega _1\otimes \omega _2
+\omega _2\otimes \omega _1
+\mu (\omega _2)^2
+\nu (\omega _3)^2,
\end{equation*}
$\mu >1$, $\nu >0$.
\item[\emph{(iii)}]
For the group $\tilde{E}_0(2)
=\mathbb{C}\rtimes \mathbb{R}$,
$\bar{g}=(\omega _1)^2
+\mu (\omega _2)^2
+\nu (\omega _3)^2$,
$\mu ,\nu \in \mathbb{R}^{+}$,
with $0<\mu <1$,
$\omega _1=\cos (2\pi z)dx+\sin (2\pi z)dy$,
$\omega _2=\sin (2\pi z)dx-\cos (2\pi z)dy$,
$\omega _3=dz$, $x+iy\in \mathbb{C}$,
$z\in \mathbb{R}$.
\item[\emph{(iv)}]
For the non-unimodular solvable group
$G_{c}=\mathbb{R}^2
\rtimes _{\varphi _c}\mathbb{R}$,
 with $c\neq 0$, and
\begin{equation*}
\varphi _c(z)=\exp
\left(
\begin{array}{cc}
0 & -cz \\
z & 2z
\end{array}
\right)
=\left(
\begin{array}{cc}
a_1^1(z) & a_2^1(z) \\
a_1^2(z) & a_2^2(z)
\end{array}
\right) ,\quad z\in \mathbb{R},
\end{equation*}
$\omega _1=a_1^1(-z)dx+a_2^1(-z)dy$,
$\omega _2=a_1^2(-z)dx+a_2^2(-z)dy$,
$\omega _3=dz$, we have
\begin{enumerate}
\item[\emph{(iv.a)}]
If $c<0$ or $c\geq 1$, then
$\bar{g}=(\omega _1)^2
+\mu (\omega _2)^2
+\nu (\omega _3)^2$,
$0<\mu \leq |c|$, $\nu >0$;
if $c=1$, the metrics
$\bar{g}=(\omega _1)^2
+\lambda (\omega _1\otimes \omega _2
+\omega _2\otimes \omega _1)
+\nu (\omega _3)^2 $,
$0<\lambda <1$, $\nu >0$,
must also be included.
\item[\emph{(iv.b)}]
If $0<c<1$, then
\begin{eqnarray*}
\bar{g} &=& \tfrac{1}{2}
\frac{(\mu +1)c-2}{c^2(c-1)}(\omega _1)^2
+\tfrac{1}{2}\frac{\mu -1}{c(c-1)}
\left(
\omega _1\otimes \omega _2
+\omega _2\otimes \omega _1
\right) \\
&&+\tfrac{1}{2}
\frac{\mu -1}{c-1}
(\omega _2)^2+\nu (\omega _3)^2,
\end{eqnarray*}
with $0\leq \mu <1$, $\nu >0$.
\end{enumerate}
\end{enumerate}
\end{theorem}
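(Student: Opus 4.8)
The plan is to establish the three assertions in turn: the first two reduce to results already proved in the paper, and the last is the classification of \cite{HaLee} read off in the present language.

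\emph{Reduction to a left-invariant metric.} Since $\mathfrak{I}(M,g)$ acts transitively and $M$ is connected, the identity component $G=\mathfrak{I}^0(M,g)$ also acts transitively: transitivity makes the evaluation map $\mathfrak{i}(M,g)\to T_xM$, $X\mapsto X_x$, surjective at every $x$, so the $G$-orbits are open, and being a partition of the connected manifold $M$ there is only one. As $\dim G=\dim\mathfrak{I}(M,g)=3=\dim M$, that evaluation map is in fact a linear isomorphism at every point; in particular any basis of $\mathfrak{i}(M,g)$ consists of vector fields which are linearly independent at $x_0$, so the hypotheses of Proposition \ref{proposition_independent_vectors} hold and $(M,g)$ is isometric to $(G,\bar g)/H$ with $\bar g$ left-invariant and $H$ a finite subgroup of $G$. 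To kill $H$ I would use that $M$ is simply connected: the homotopy exact sequence of the fibration $H\hookrightarrow G\to G/H\cong M$ gives $\pi_1(M)\to\pi_0(H)\to\pi_0(G)$, and with $\pi_1(M)=0$ and $G$ connected this forces $H$ to be connected, hence (being finite) trivial. Thus $(M,g)$ is isometric to $(G,\bar g)$ with $\bar g$ left-invariant.

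\emph{The complete system.} Since $(M,g)\cong(G,\bar g)$ we have $\dim\mathfrak{I}(G,\bar g)=3=\dim G$, so Corollary \ref{corollary_left_invariant} applies verbatim: two curves are congruent under $\mathfrak{I}^0(G,\bar g)$ if and only if $\omega(T^{\sigma_1}(t))=\omega(T^{\sigma_2}(t))$ for all $t$, where $\omega$ is the Maurer--Cartan form; equivalently, if and only if the three functions $I_i(j_t^1\sigma)=\omega_i(T^\sigma(t))$ agree for a basis $(\omega_1,\omega_2,\omega_3)$ of the Maurer--Cartan forms. The same corollary yields that $\{t,(D_t)^kI\}_{k=0}^{r-1}$ is a basis of differential invariants of order $\le r$, so $t,I_1,I_2,I_3$ generate a complete system of invariant functions at every order by repeated application of $D_t$; as in Proposition \ref{Few_isomtries}, the order of asymptotic stability here is $1$.

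\emph{The list of metrics.} It remains to decide which left-invariant metrics are admissible. By the classification of $3$-dimensional simply connected Lie groups, $G$ is one of $SU(2)$, $\widetilde{PSL}(2,\mathbb{R})$, the Heisenberg group, $\tilde E_0(2)$, the unimodular solvable group $\mathbb{R}^2\rtimes_\varphi\mathbb{R}$ with $\varphi(z)=\operatorname{diag}(e^z,e^{-z})$, a non-unimodular group $G_c$, or $\mathbb{R}^3$. The abelian case is flat (isometry group of dimension $6$) and the Heisenberg case always carries an isometry group of dimension $\ge 4$ (it is an $M_{0,\tau}$ of Theorem \ref{4_isometries}), so both are excluded by the hypothesis $\dim\mathfrak{I}(M,g)=3$. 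For each remaining group I would quote from \cite{HaLee} the normal form of a general left-invariant metric and the precise condition on its parameters for the isometry group to have dimension exactly $3$ (rather than $4$ or $6$); these conditions are exactly the strict inequalities recorded in items (i)--(iv) of the statement (all three eigenvalues distinct on $SU(2)$, $\mu>\nu$ on $\widetilde{PSL}(2,\mathbb{R})$, $0<\mu<1$ on $\tilde E_0(2)$, the various ranges on $G_c$, etc.), and the coframes $(\omega_1,\omega_2,\omega_3)$ displayed there are the left-invariant coframes dual to the Milnor-type bases used in \cite{HaLee}.

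The genuinely laborious step is the third: it requires going case by case through the left-invariant metrics on each admissible Lie group, computing or transcribing from \cite{HaLee} the dimension of the isometry group as a function of the metric parameters, and normalizing each metric into the stated form. Steps one and two are essentially bookkeeping on top of Proposition \ref{proposition_independent_vectors} and Corollary \ref{corollary_left_invariant}; the only delicate point there is the simple-connectedness argument eliminating the finite isotropy subgroup $H$.
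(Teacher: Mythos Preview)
Your first two steps coincide with the paper's: it likewise invokes Proposition~\ref{proposition_independent_vectors} for the Lie-group model and Proposition~\ref{proposition_omega_tilde_H}/Corollary~\ref{corollary_left_invariant} for the complete system. Your homotopy argument killing the finite isotropy $H$ is in fact more careful than the paper, which simply asserts the isometry with $(G,\bar g)$ without spelling this out.

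The difference is in the classification step. You defer entirely to \cite{HaLee} for the parameter conditions guaranteeing $\dim\mathfrak I(G,\bar g)=3$, whereas the paper verifies these conditions itself, case by case, using two internal tools. For the simple groups $\widetilde{PSL}(2,\mathbb R)$ and $SU(2)$ it invokes the Gordon/Ochiai--Takahashi result $\mathfrak X^R(G)\subseteq\mathfrak i(G,\bar g)\subseteq\mathfrak X^L(G)+\mathfrak X^R(G)$ and then solves the linear system $\bar g(X,[A,Y])+\bar g([A,X],Y)=0$ for a nonzero left-invariant Killing field $A$, obtaining precisely the distinct-eigenvalue conditions. For each solvable group it writes down $\mathfrak X^R(G)$ explicitly, assumes $\dim\mathfrak i=4$, pulls the unitary central field $Z$ from Theorem~\ref{4_isometries}, imposes $[Y_i,Z]=0$ and $L_Z\bar g=0$, and shows $Z=0$ under the stated inequalities. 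Your outline is correct, but be aware that \cite{HaLee} is primarily a curvature computation (the paper only cites it, e.g., for ``not of constant curvature''); the actual exclusion of a fourth Killing field is work the paper carries out on its own, and this is the substance you would still have to supply.
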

\begin{proof}
According to Proposition
\ref{proposition_independent_vectors}
the Riemannian manifold $(M,g)$ is isometric
to $(G=\mathfrak{I}^0(M,g),\bar{g})$,
$\bar{g}$ being a left-invariant Riemannian
metric and according to Proposition
\ref{proposition_omega_tilde_H}, $t$
and $I_1,I_2,I_3$ constitute a complete system
of invariants.

(i) Assume $G=\widetilde{PSL}(2,\mathbb{R})$
or $SU(2)$, and let $\mathfrak{X}^L(G)$
(resp.\ $\mathfrak{X}^R(G)$) be the Lie algebra
of left (resp.\ right) invariant vector fields.
As $G$ is simple and $\bar{g}$ is left invariant,
we have (see \cite[(5.1)Theorem]{Gordon},
\cite[Theorem 5]{O}),
\begin{equation*}
\mathfrak{X}^R(G)
\subseteq \mathfrak{i}(G,\bar{g})
\subseteq \mathfrak{X}^L(G)
+\mathfrak{X}^R(G).
\end{equation*}
As a calculation shows, $\mathfrak{X}^L(G)
\cap \mathfrak{X}^R(G)=\{ 0\} $.
Hence $\dim \mathfrak{i}(G,\bar{g})\geq 4$
if and only if there exists a left-invariant
Killing vector field $A\neq 0$. In this case
$(R_{\exp (tA)}
\circ L_{\exp (-tA)})^\ast \bar{g}
=\bar{g}$, and taking derivatives,
\begin{equation}
\label{adA}
\bar{g}
\left(
X,[A,Y]
\right)
+\bar{g}
\left(
[A,X],Y
\right)
=0,
\quad \forall X,Y\in \mathfrak{X}^L(G).
\end{equation}
We follow \cite{Milnor1}.
If $A=\sum _{i=1}^3a_iX_i$, $a_i\in \mathbb{R}$,
then the equation \eqref{adA} is equivalent
to the system $\sum\nolimits_{i=1}^3
a_i(c_{ij}^h\lambda _h+c_{ih}^j\lambda _j)=0 $,
$1\leq h\leq j\leq 3$,
$\bar{g}=\sum _{i=1}^3\lambda _i(\omega _i)^2$,
$c_{jk}^i$ being the structure constants
in the basis $(X_1,X_2,X_3)$.
For $\widetilde{PSL}(2,\mathbb{R})$
the previous system becomes,
$a_1(\lambda _2-\lambda _3)
=a_2(\lambda _3+\lambda _1)
=a_3(\lambda _1+\lambda _2)
=0$; hence $a_2=a_3=0$, and the value
of $a_1$ is necessarily $0$ if and only if
$\lambda _2\neq \lambda _3$. For $SU(2)$
the system is,
$\left(
\lambda _3-\lambda _2
\right)
a_1=\left(
\lambda _3-\lambda _1
\right)
a_2=\left(
\lambda
_2-\lambda _1
\right)
a_3=0$; hence $a_1=a_2=a_3=0$ for
$\lambda _1
\neq \lambda _2
\neq \lambda _3
\neq \lambda _1$.

\medskip

(ii) If $G=\mathbb{R}^2\rtimes _\varphi \mathbb{R}$,
then it is readily seen that
$\mathfrak{X}^R(G)
=\left\langle
Y_1,Y_2,Y_3
\right\rangle $,
with $Y_1=\partial /\partial x$,
$Y_2=\partial /\partial y$,
$Y_3=x\partial /\partial x
-y\partial /\partial y
+\partial /\partial z$. We know
(see \cite[Corollar 4.5]{HaLee})
that $(G,\bar{g})$ is not of constant
curvature.
If $\dim \mathfrak{i}(G,\bar{g})=4$,
then according to Theorem \ref{4_isometries}
there exists a unitary vector field
$Z=f\partial /\partial x
+g\partial /\partial y
+h\partial /\partial z$,
$f,g,h\in C^\infty (x,y,z)$,
in the center of $\mathfrak{i}(G,\bar{g})$.
From $[Y_1,Z]=[Y_2,Z]=0$, it follows
that $f$, $g$, and $h$ depend on $z$ only,
and by imposing $[Y_3,Z]=0$,
one obtains $f(z)=a\exp (z)$,
$g(z)=b\exp (z)$, $h(z)=c$, with
$a,b,c\in \mathbb{R}$, and the condition
$L_Z\bar{g}=0$ (for both classes of metrics
in the statement) is easily proved to imply
$a=b=c=0$; hence $\mathfrak{i}(G,\bar{g})$
coincides with $\mathfrak{X}^R(G)$.

\medskip

(iii) Similarly, in this case we have
$\mathfrak{X}^r(\tilde{E}_0(2))
=\left\langle
Y_1,Y_2,Y_3
\right\rangle $, with
$Y_1=\partial /\partial x$,
$Y_2=\partial /\partial y$,
$Y_3=2\pi y\partial /\partial x
-2\pi x\partial /\partial y
-\partial /\partial z$.
As in the previous case, the assumption
$\dim \mathfrak{i}(\tilde{E}_0(2),\bar{g})=4$
leads one to the following equations
for the vector field $Z$:
$f(z)=a\cos (2\pi z)+b\sin (2\pi z)$,
$g(z)=a\sin (2\pi z)-b\cos (2\pi z)$, $h(z)=c$,
with $a,b,c\in \mathbb{R}$. Taking account
of the fact that $\mu \neq 1$, the condition
$L_Z\bar{g}=0$ then implies $a=b=c=0$.

\medskip

(iv.a) In this case, $\mathfrak{X}^r(G_c)
=\left\langle
Y_1,Y_2,Y_3
\right\rangle $, with
$Y_1=\partial /\partial x$,
$Y_2=\partial /\partial y$,
$Y_3=-cy\partial /\partial x
+(x+2y)\partial /\partial y
+\partial /\partial z$. The assumption
$\dim \mathfrak{i}(G_c,\bar{g})=4$ leads
one to the following equations
for the vector field $Z$:
$f(z)=\alpha a_1^1(z)+\beta a_2^1(z)$,
$g(z)=\alpha a_1^2(z)+\beta a_2^2(z)$,
$h(z)=\gamma $, with
$\alpha ,\beta ,\gamma \in \mathbb{R}$.
By imposing $L_Z\bar{g}=0$ for any one
of the metrics $\bar{g}=(\omega _1)^2
+\mu (\omega _2)^2+\nu (\omega _3)^2$,
$0<\mu \leq |c|$, $\nu >0$,
or $\bar{g}=(\omega _1)^2
+\lambda (\omega _1\otimes \omega _2
+\omega _2\otimes \omega _1)
+\nu (\omega _3)^2$, $0<\lambda <1$,
$\nu >0$, when $c=1$, one concludes $Z=0$.

\medskip

(iv.b) This is case follows by proceeding
as in the previous one.
\end{proof}
\begin{remark}
The other metrics in \cite{HaLee} admit a group
of isometries of dimension either $4$ or $6$.
\end{remark}
The results in the sections \ref{ccc}, \ref{FewIsom},
and \ref{three_dim} allow one to solve the equivalence
problem completely in dimensions $2$ and $3$. In fact,
if $\dim M=2$, then $\dim \mathfrak{i}(M,g)\leq 3$;
if $\dim \mathfrak{i}(M,g)\leq 2$, then the results
in section \ref{FewIsom} apply, and if
$\dim \mathfrak{i}(M,g)=3$, then $(M,g)$ is
of constant curvature and the classification follows
from section \ref{ccc}. Similarly, if
$\dim M=3$, then the cases $\dim \mathfrak{i}(M,g)\leq 3$
also follow from section \ref{FewIsom}, the cases
$\dim \mathfrak{i}(M,g)=3$ or $4$, with $(M,g)$ Riemannian
homogeneous, follow from section \ref{three_dim}, and
the manifold is of constant curvature in the case
$\dim \mathfrak{i}(M,g)=6$. The non-homogeneous case
$\dim M=3$ and $\dim \mathfrak{i}(M,g)=4$ cannot
occurr as the isotropy subgroups of the action of
$\mathfrak{I}(M,g)$ should be $2$-dimensional tori,
which is not possible. Moreover, the non-homgeneous 
case $\dim M=3$ and $\dim \mathfrak{i}(M,g)=3$
is further covered by the following
\begin{proposition}
If $(M,g)$ is a $3$-dimensional Riemannian connected
manifold such that $\mathfrak{i}(M,g)
=\langle X_1,X_2,X_3\rangle $ is
$3$-dimensional but $(X_1)_x,(X_2)_x,(X_3)_x$ are linearly
dependent for every $x\in M$, then the orbits of the isometry
group $G=\mathfrak{I}^0(M,g)$ on $M$ are generically surfaces
of constant signed curvature. Let $U$ be a coordinate
neighbourhood of a generic point $x_0\in M$ with coordinates
$(x^1,x^2,x^3)$, $x^i\in V^i\subset \mathbb{R}$, such that
the foliation of the orbits of $G$ intersected with $U$
are given by $x^1=\mathrm{constant}$. Then $x^1$
is a differential invariant of order zero on $U$.
If $\pi _x\colon T_xU\to T_{x}(G\cdot x)$ denotes
the orthogonal $g_x$-projection, $x\in U$,then the function
$\bar{\varkappa }_0\colon J^1(\mathbb{R},U)\to \mathbb{R}$
defined by
$\bar{\varkappa }_0(j_t^1\sigma )
=\varkappa _0
\left(
\pi _x(\sigma ^\prime (t))
\right) $
is a first-order differential invariant on $U$.
The coordinates $(x^1,x^2,x^3)$ induce a decomposition
 $J^2(\mathbb{R},U)
 =J^2(\mathbb{R},V^1)\times J^2(\mathbb{R},V^2\times V^3)$.
Let $\pi _2$ be the projection onto the second factor.

The function $\bar{\varkappa }_1\colon J^2(\mathbb{R},U)
\to \mathbb{R}$ defined by
$\bar{\varkappa }_1(j_t^2\sigma )
=\varkappa _1(\pi _2(j_t^2\sigma ))$
is a second-order differential invariant on $U$,
where $\varkappa _1 $ is the curvature function
of the surfaces of the foliation.

Finally, the functions $x^1$, $\bar{\varkappa }_0$,
and $\bar{\varkappa }_1$ are a complete system
of differential invariants on $U$.
\end{proposition}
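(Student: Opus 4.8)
The plan is to proceed in four stages: analyse the orbit foliation, build a coordinate chart finely adapted to it, check that the three functions are differential invariants of the stated orders, and deduce completeness from the counting results of Section \ref{Differential_invariants}. I work near a generic point, so that the standing completeness (and, for the last assertion, analyticity) hypotheses of that section are in force. First, set $\mathcal{D}=\langle X_1,X_2,X_3\rangle$: by hypothesis $\operatorname{rk}\mathcal{D}_x\leq 2$ for every $x$, and on a dense open set $\Omega\subseteq M$ this rank is locally constant and, on the relevant component, equal to $2$ (if it were $\leq 1$ everywhere the orbits would be at most curves and the situation would fall under Section \ref{FewIsom}). Since $[X_i,X_j]=c_{ij}^kX_k$ with \emph{constant} structure constants we have $[X_i,X_j]_x\in\mathcal{D}_x$, so $\mathcal{D}$ is involutive on $\Omega$ and its leaves are precisely the $2$-dimensional orbits of $G^0=\mathfrak{I}^0(M,g)$. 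Each leaf $L$, with the metric induced by $g$, is acted on transitively by $G^0$ through isometries, so its Gaussian curvature---a smooth function invariant under a transitive isometry group---is constant; this gives the first assertion.

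The decisive step is the choice of chart. Shrink $U$ around $x_0$ so that the line bundle $\mathcal{D}^\perp$ is trivial, with unit section $\nu$. As $G^0$ preserves $g$ and carries each leaf to itself, it preserves $\mathcal{D}^\perp$ and unit normals, and connectedness of $G^0$ forces $\phi_*\nu=\nu\circ\phi$ for all $\phi\in G^0$, so the flow of $\nu$ commutes with $G^0$. Flowing the leaf $L_{c_0}\ni x_0$ along $\nu$ produces a $G^0$-equivariant diffeomorphism $U\cong V^1\times L_{c_0}$ on which $G^0$ acts trivially on the $V^1$-factor; take $x^1$ to be the $V^1$-coordinate and pull $(x^2,x^3)$ back from a chart of $L_{c_0}$. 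Then the leaves are exactly the level sets $\{x^1=\mathrm{const}\}$, every $\phi\in G^0$ acts as $(x^1,x^2,x^3)\mapsto(x^1,\Phi^2(x^2,x^3),\Phi^3(x^2,x^3))$ with $\Phi$ \emph{independent of} $x^1$, and this same $\Phi$ is an isometry of every leaf metric $g_c$ (the metric of $L_c$ read in the coordinates $(x^2,x^3)$). This is exactly what makes the splitting $J^2(\mathbb{R},U)=J^2(\mathbb{R},V^1)\times J^2(\mathbb{R},V^2\times V^3)$ and the projection $\pi_2$ $G^0$-equivariant, and it is the one genuinely delicate point: in an arbitrary chart merely adapted to the foliation, $G^0$ acts on distinct leaves ``differently'' and the naive ``geodesic curvature of the projected curve inside its leaf'' is \emph{not} an invariant.

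With this chart in hand the three invariance claims are routine. Since $x^1$ is constant along the orbits, $X(x^1)=0$ for $X\in\mathfrak{i}(M,g)$, so $x^1$ is an order-$0$ differential invariant. Because $\pi_x$ commutes with the leaf-preserving isometric $G^0$-action and $G^0$ preserves $g$-norms, $\bar\varkappa_0(j^1_t(\phi\circ\sigma))=\bar\varkappa_0(j^1_t\sigma)$, i.e. $X^{(1)}\bar\varkappa_0=0$, so $\bar\varkappa_0$ is an order-$1$ invariant. For $\gamma=\phi\circ\sigma$ one has $x^1(\gamma(t))=x^1(\sigma(t))=:c$ and $\pi_2(j^2_t\gamma)=(j^2\Phi)(\pi_2(j^2_t\sigma))$; as $\Phi$ is an isometry of $(V^2\times V^3,g_c)$ and the Frenet curvature of a curve in a surface is invariant by isometry (Proposition \ref{curvaturas}, applied inside the leaf), $\bar\varkappa_1(j^2_t\gamma)=\bar\varkappa_1(j^2_t\sigma)$, so $\bar\varkappa_1$ is an order-$2$ invariant.

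Finally, for completeness I would run the counting machinery. As the orbits are $2$-dimensional, $\operatorname{rk}\mathfrak{D}^0=2$ on $\mathcal{U}^0$, so $N_0=2$. A nonzero Killing field $X$ with $X^{(1)}_{j^1_t\sigma}=0$ must vanish at $p=\sigma(t)$ and satisfy $A_X(\dot\sigma(t))=0$, where $A_X=(\nabla X)_p$ is skew-adjoint; the isotropy algebra $\mathfrak{i}_p$ is $1$-dimensional, spanned by some $X_0$ with $A_{X_0}\neq 0$, and a nonzero skew endomorphism of $T_pM\cong\mathbb{R}^3$ has $1$-dimensional kernel---here equal to $(T_pL)^\perp$, since $A_{X_0}$ preserves $T_pL$. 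Hence for generic $\dot\sigma$ the stabilizer of the $1$-jet is trivial, $\operatorname{rk}\mathfrak{D}^1=3$ on $\mathcal{U}^1$, and $N_1=7-3=4$; by Corollary \ref{corollary_stability}, $\operatorname{rk}\mathfrak{D}^r=\dim\mathfrak{i}(M,g)=3$ for $r\geq 2$, so $N_2=10-3=7$. Substituting into \eqref{formula1} of Theorem \ref{Generating_Invariants} yields $k_0=k_1=k_2=1$ and $k_r=0$ for $r\geq 3$, with $\sum_i k_i=3=m$. Therefore, following the proof of Theorem \ref{Generating_Invariants} (checking at each order that the Jacobian of the functions accumulated so far together with their $D_t$-derivatives has maximal rank), the invariants $x^1$, $\bar\varkappa_0$, $\bar\varkappa_1$, together with $t$ and all their total derivatives, generate the ring of differential invariants on a dense open subset of $J^r(\mathbb{R},U)$ for every $r$; by Theorem \ref{completeness} this is equivalent to $\{x^1,\bar\varkappa_0,\bar\varkappa_1\}$ being a complete system of differential invariants on $U$. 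I expect Stage 2 to be the real obstacle; once the $G^0$-equivariant product chart is available, everything else is bookkeeping with Section \ref{Differential_invariants}.
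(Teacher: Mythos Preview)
Your proposal is correct and follows precisely the route the paper indicates: the paper omits the proof entirely, stating only that it ``is similar to that of Proposition~\ref{Geometric Meaning}'', and your argument is exactly that analogy worked out in detail---orbit foliation by $2$-surfaces, a transversal coordinate as the zero-order invariant, projection onto the orbit for the higher-order invariants, and the $k_r$-counting of Theorem~\ref{Generating_Invariants} together with Theorem~\ref{completeness} for completeness. Your Stage~2 (the $G^0$-equivariant product chart via the flow of the unit normal) is indeed the only nonroutine step and is handled correctly; the remaining Jacobian checks you defer are, as you say, bookkeeping.
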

The proof of the proposition above is similar to that
of Proposition \ref{Geometric Meaning} and it will
therefore be omitted.

We should also remark on the fact that the situation
of the previous proposition can happen even in simple
cases. For example, the isometry group of the metric
\begin{eqnarray*}
g & = & (1+x^2)dx^2+(1+y^2)dy^2+(1+z^2)dz^2
+xy\left(
dx\otimes dy+dy\otimes dx
\right) \\
& &
+xz\left(
dx\otimes dz+dz\otimes dx
\right)
+yz\left(
dy\otimes dz+dz\otimes dy
\right)
\end{eqnarray*}
on $\mathbb{R}^3$ is $O(3)$.


\begin{thebibliography}{99}
\bibitem{AVL}
D. V. Alekseevskij, A. M. Vinogradov, V. V. Lychagin,
\emph{Basic Ideas and Concepts of Differential Geometry}
Encyclopaedia Math.\ Sci., 28, Springer, Berlin, 1991,
pp.\ 1--264.

\bibitem{Arnold}
V. I. Arnold,
\emph{Mathematical Problems in Classical Physics},
Trends and Perspectives in Applied Mathematics, Appl.\
Math.\ Sci., Volume 100, Springer-Verlag, New York-Berlin,
1994, pp.\ 1--20.

\bibitem{Berard}
L. B\'erard Bergery, X. Chaurel,
\emph{A Generalization of Frenet's Frame for Non-degenerate
Quadratic Forms with any index}, S\'eminaire de th\'eorie
spectrale et geometrie, Grenoble, vol.\ \textbf{20}
(2002), 101--130.

\bibitem{Besse}
A. L. Besse,
\emph{Manifolds all of whose geodesics are closed},
with appendices by D. B. A. Epstein, J.-P. Bourguignon,
L. B\'erard-Bergery, M. Berger and J. L. Kazdan,
Ergebnisse der Mathematik und ihrer Grenzgebiete
\textbf{93}, Springer-Verlag, Berlin-New York, 1978.

\bibitem{Cartan}
E. Cartan,
\emph{Le\c{c}ons sur la Geom\'{e}trie des Espaces de Riemann},
Gauthier-Villars, \'{E}diteur, Paris, 1963.

\bibitem{Daniel}
B. Daniel,
\emph{Isometric immersions into }$3$\emph{-dimensional
homogeneous manifolds}, Comment.\ Math.\ Helv.\ \textbf{82}
(2007), no.\ 1, 87--131.

\bibitem{Ebin}
D. G. Ebin,
\emph{On the space of Riemannian metrics}, Bull.\
Amer.\ Math.\ Soc.\ \textbf{74} (1968), 1001--1003.

\bibitem{Ebin2}
D. G. Ebin, \emph{The manifold of Riemannian metrics},
Global Analysis (Proc.\ Sympos.\ Pure Math.\ , Volume XV,
Berkeley, California, 1968), Amer.\ Math.\ Soc., Providence,
R.I., 1970, pp.\ 11--40.

\bibitem{Engel}
S. Engel,
\emph{On the geometry and trigonometry
of homogeneous }$3$\emph{-manifolds with }$4$\emph{-dimensional
isometry group}, Math.\ Z. \textbf{254} (2006), no.\ 3, 439--459.

\bibitem{Gordon}
C. Gordon,
\emph{Riemannian isometry groups containing transitive reductive
subgroups}, Math.\ Ann.\ \textbf{248} (1980), no.\ 2, 185--192.

\bibitem{Gluck1}
H. Gluck,
\emph{Higher curvatures of curves in euclidean space},
Amer.\ Math.\ Monthly \textbf{73} (1966), 699--704.

\bibitem{Gluck2}
H. Gluck,
\emph{Higher curvatures of curves in euclidean space. II},
Amer.\ Math.\ Monthly \textbf{74} (1967), 1049--1056.

\bibitem{Gray1}
A. Gray,
\emph{The volume of a small geodesic ball of a Riemannian
manifold}, Michigan Math.\ J. \textbf{20} (1973), 329--344
(1974).

\bibitem{Green}
M. L. Green,
\emph{The moving frame, differential invariants and rigidity
theorems for curves in homogeneous spaces}, Duke Math.\ J.
\textbf{45} (1978), no.\ 4, 735--779.

\bibitem{Griffiths}
P. Griffiths,
\emph{On Cartan's method of Lie groups and moving frames
as applied to uniqueness and existence questions in
differential geometry}, Duke Math.\ J. \textbf{41} (1974),
775--814.

\bibitem{GK}
S. Gudmundsson, E. Kappos,
\emph{On the geometry of tangent bundles}, Expo.\ Math.\
\textbf{20} (2002), no.\ 1, 1--41.

\bibitem{HaLee}
K. Y. Ha, J. B. Lee,
\emph{Left invariant metrics and curvatures on simply
connected three-dimensional Lie groups}, Math.\  Nachr.\
\textbf{282} (2009), no.\ 6, 868--898.

\bibitem{Jensen}
G. R. Jensen,
\emph{Higher Order Contact of Submanifolds of Homogeneous
Spaces}, Lecture Notes in Math.\ \textbf{610}, Springer-Verlag,
Berlin, 1977.

\bibitem{Kobayashi}
S. Kobayashi,
\emph{Transformation Groups in Differential Geometry},
Springer-Verlag, Berlin, Heidelberg, New York, 1972.

\bibitem{KN}
S. Kobayashi, K. Nomizu,
\emph{Foundations of Differential Geometry}, John Wiley
\& Sons, Inc.\ (Interscience Division), New York,
Volume I, 1963; Volume II, 1969.

\bibitem{Ku}
H. T. Ku,
\emph{Automorphism groups of some geometric structures},
J. Differential Geom.\ \textbf{15} (1980), 381--392.

\bibitem{Kulkarni}
R. S. Kulkarni,
\emph{Index theorems of Atiyah-Bott-Patodi and curvature
invariants}, With a preface by Raoul Bott, S\'eminaire
de Math\'ematiques Sup\'erieures, no.\ 49 (\'Et\'e 1972),
Les Presses de l'Universit\'e de Montr\'eal, Montreal,
Que., 1975.

\bibitem{Kumpera}
A. Kumpera,
\emph{Invariants Differentiels d'un Pseudogroupe de Lie
I et II}, J. Differential Geometry \textbf{10} (1975),
289--345, 347--416.

\bibitem{Lee}
J. M. Lee,
\emph{Introduction to Smooth Manifolds}, Graduate
Text in Mathematics 218, Springer-Verlag, New York, 2003.

\bibitem{Levine}
H. I. Levine,
\emph{Singularities of Differentiable Mappings},
Proceedings of Liverpool Singularities-Symposium I,
Lecture Notes in Math.\ \textbf{192}, Springer-Verlag,
Berlin, 1971, pp.\ 1--89.

\bibitem{Luna}
D. Luna,
\emph{Fonctions diff\'{e}rentiables invariantes sous
l'op\'eration d'un groupe r\'eductif}, Ann.\ Inst.\
Fourier (Grenoble) \textbf{26} (1976), no.\ 1, ix,
33--49.

\bibitem{Mather}
J. N. Mather,
\emph{Stability of }$C^\infty $\emph{mappings: V,
Transversality}, Advances in Math.\ \textbf{4}
(1970), 301--336.

\bibitem{Milnor1}
J. Milnor,
\emph{Curvatures of left invariant metrics on Lie groups},
Advances in Math.\  \textbf{21} (1976), no.\ 3, 293--329.

\bibitem{N}
K. Nomizu,
\emph{On Local and Global Existence of Killing Vector
Fields}, Ann.\ of Math.\ \textbf{72} (1960), 105--120.

\bibitem{O}
T. Ochiai, T. Takahashi,
\emph{The group of isometries of a left invariant Riemannian
metric on a Lie group}, Math.\  Ann.\ \textbf{223} (1976),
91--96.

\bibitem{PalaisTerng}
R. S. Palais, C.-L. Terng,
\emph{Critical point theory and submanifold geometry},
Lecture Notes in Math.\ \textbf{1353}, Springer-Verlag,
Berlin, 1988.

\bibitem{Rochowski}
M. Rochowski,
\emph{The Frenet frame of an immersion}, J. Differential
Geometry \textbf{10} (1975), 181--200.

\bibitem{Struik}
D. J. Struik,
\emph{Lectures on Classical Differential Geometry},
2nd edition, Dover Publications, Inc., N. Y., 1961.

\bibitem{Thirring}
W. Thirring,
\emph{A Course in Mathematical Physics 2, Classical
Field Theory}, Springer-Verlag, New York, 1979.

\bibitem{Tougeron}
J-C Tougeron,
\emph{Id\'eaux de fonctions diff\'erentiables},
Ergebnisse der Mathematik und ihrer Grenzgebiete,
Band 71, Springer-Verlag, Berlin-New York, 1972.

\bibitem{Wolf}
J. A. Wolf,
\emph{Spaces of Constant Curvature}, McGraw-Hill
Book Co., New York-London-Sydney, 1967.

\bibitem{Yamaguchi}
K. Yamaguchi,
\emph{Geometrization of jet bundles},
Hokkaido Math.\ J. \textbf{12} (1983), no.\ 1,
27--40.

\bibitem{YI}
K. Yano, S. Ishihara,
\emph{Tangent and cotangent bundles: Differential Geometry},
Pure and Applied Mathematics, no.\ 16, Marcel Dekker,
Inc., New York, 1973.
\end{thebibliography}
\end{document}